\documentclass[11pt,reqno]{amsart}

\usepackage{lipsum}
\usepackage{amsmath}
\usepackage[utf8]{inputenc}
\usepackage{amssymb}
\usepackage{ragged2e}
\usepackage{xfrac}
\usepackage[dvipsnames,table]{xcolor}

\usepackage{amsthm}
\usepackage{amsfonts}
\usepackage{amssymb}
\usepackage{tikz}
\usepackage{color}
\usepackage{float}
\usepackage[makeroom]{cancel}
\usepackage[centertableaux]{ytableau}
\usepackage{mathdots}
\usepackage{multicol}
\usepackage{mathtools}
\usepackage{stmaryrd}
\usepackage{multirow}
\usepackage{pdflscape}
\usepackage[foot]{amsaddr}

\usepackage[left=2.5cm,right=2.5cm,top=2.5cm,bottom=2.5cm]{geometry}

\DeclareMathSymbol{\shortminus}{\mathbin}{AMSa}{"39}
\numberwithin{equation}{section}

\usepackage[colorlinks=true,citecolor=RoyalBlue,linkcolor=OrangeRed,breaklinks=false]{hyperref}

\newtheorem{teo}{Theorem}[section]
\newtheorem{cor}[teo]{Corollary}
\newtheorem{prop}[teo]{Proposition}
\newtheorem{lema}[teo]{Lemma}

\theoremstyle{definition}

\newtheorem{defin}[teo]{Definition}

\theoremstyle{remark}
\newtheorem{obs}[teo]{Remark}
\newtheorem{ex}[teo]{Example}

\title{A shifted Berenstein--Kirillov group and the cactus group}
\author{Inês Rodrigues}
\address{CEAFEL, University of Lisbon}
\email{\href{mailto:imarrodrigues@fc.ul.pt}{imarrodrigues@fc.ul.pt}}

\definecolor{lgray}{rgb}{0.65, 0.65, 0.65}
\definecolor{lblue}{rgb}{0.65, 0.65, 0.65}
\definecolor{llblue}{rgb}{0.85, 0.85, 0.85}

\begin{document}
\begin{abstract}
The Bender--Knuth involutions on semistandard Young tableaux are known to coincide with the tableau switching on horizontal border strips of two adjacent letters, together with the swapping of those letters. Motivated by this coincidence and using the shifted tableau switching due to Choi, Nam and Oh (2019), we consider a shifted version of the Bender--Knuth involutions and define a shifted version of the Berenstein--Kirillov group (1995). Similarly to the classical case, the shifted version of the Berenstein--Kirillov group also acts on the straight-shaped shifted tableau crystals introduced by Gillespie, Levinson and Purbhoo (2020), via partial Schützenberger involutions, thus coinciding with the action of the cactus group on the same crystal, due to the author. Following the works of Halacheva (2016, 2020), and Chmutov, Glick and Pylyavskyy (2020), on the relation between the actions of the Berenstein--Kirillov group and the cactus group on a crystal of straight-shaped Young tableaux, we also show that the shifted Berenstein--Kirillov group is isomorphic to a quotient of the cactus group. Not all the known relations that hold in the classic Berenstein--Kirillov group need to be satisfied by the shifted Bender--Knuth involutions, but the ones implying the relations of the cactus group are verified. Hence, we have an alternative presentation for the cactus group in terms of the shifted Bender--Knuth involutions. We also use the shifted growth diagrams due to Thomas and Yong (2016) to provide an alternative proof concerning the mentioned cactus group action.
\end{abstract}

\maketitle
\ytableausetup{smalltableaux}	

\section{Introduction}
The Bender--Knuth moves $t_i$ are well known involutions on semistandard Young tableaux 
\cite{BeKn72}, that act on adjacent letters $i$ and $i+1$ by interchanging their multiplicity, while leaving the other letters unchanged. The tableau switching, introduced by Benkart, Sottile and Stroomer 
\cite{BSS96}, is an involution on pairs of semistandard Young tableaux $(S,T)$, with $T$ extending $S$, that moves one through the other, obtaining a pair that is component-wise Knuth equivalent to $(T,S)$. 
The tableau switching on horizontal border strips of two adjacent letters $i$ and $i+1$, together with a swapping of the labels $i$ and $i+1$, is known to coincide with the classic Bender--Knuth involution $t_i$
\cite{BSS96,PV10}.
Berenstein and Kirillov 
\cite{BK95} studied explicit relations satisfied by the involutions $t_i$ 
\cite[Corollary 1.1]{BK95}, and introduced the Berenstein--Kirillov group $\mathcal{BK}$ (also known as Gelfand--Tsetlin group), the free group generated by the classic Bender--Knuth involutions $t_i$, for $i \in \mathbb{Z}_{>0}$, subject to the relations they satisfy on semistandard Young tableaux of any shape 
\cite{BK16,BK95,CGP16}. This group is well-defined although an explicit and comprehensive set of relations is not known. Some of the relations that are held by the  $t_i$ are listed in
\cite{BK16,BK95,Kir01}, and \cite[Theorem 1.6]{CGP16}, and they are recalled in Section \ref{subsec:bk}.

Chmutov, Glick and Pylyavskyy \cite{CGP16} studied, using semistandard growth diagrams, the relation between the group $\mathcal{BK}_n$, the subgroup of $\mathcal{BK}$ generated by $t_1, \ldots, t_{n-1}$, and the cactus group $J_n$ 
\cite{HenKam06}, concluding that $\mathcal{BK}_n$ is isomorphic to a quotient of $J_n$. Halacheva has remarked 
\cite[Remark 3.9]{Hala20} that this may also be concluded by noting that the action of the cactus group $J_n$ 
\cite[Section 10.2]{Hala16} agrees with the one of $\mathcal{BK}_n$ on $\mathfrak{gl}_n$-crystals of straight-shaped Young tableaux filled in $[n] = \{1, \ldots, n\}$. Considering the alternative set of generators $q_1, \ldots, q_{n-1}$ for $\mathcal{BK}_n$, where each $q_i := t_1 (t_2 t_1) \cdots (t_i t_{i-1} \cdots t_1)$ acts on a straight-shaped Young tableau via the partial Schützenberger involution, or evacuation, restricted to the alphabet $\{1,\dots,i+1\}$ \cite[Theorem 2.1]{BK95}. Chmutov, Glick and Pylyavskyy also refine their results concerning the cactus group quotient in \cite[Theorem 1.8]{CGP16} by showing precise implications between the cactus-type relations, satisfied by generators of $\mathcal{BK}_n$, and a subset of known relations \eqref{eq:bkrelations} and \eqref{eq:bkrelationextra} in the $\mathcal{BK}_n$, thereby yielding a presentation of the cactus group in terms Bender--Knuth generators.

Motivated by the tableau switching characterization of the Bender--Knuth involutions on semistandard Young tableaux
\cite{BSS96}, we introduce a shifted version of the Bender--Knuth involutions, here denoted $\mathsf{t}_i$, for shifted semistandard tableaux in a shifted tableau crystal due to Gillespie, Levinson and Purbhoo \cite{GLP17}, using the shifted tableau switching introduced by Choi, Nam and Oh 
\cite{CNO17}. Alternatively, we may use the type $C$ infusion on shifted standard tableaux due to Thomas and Yong 
\cite{TY09} together with the semistandardization of Pechenik and Yong
\cite{PY17}. We observe that genomic Bender--Knuth involutions have also been defined in a similar way on genomic tableaux, by Pechenik and Yong
\cite{PY17}.
The shifted Bender--Knuth involutions we present differ from the operators introduced by Stembridge 
\cite[Section 6]{Stem90}, which are not compatible with the canonical form requirement for the shifted tableau crystals considered here (see Remark \ref{rmk:stembridge}). Using the shifted Bender--Knuth involutions $\mathsf{t}_i$ as generators, we define a shifted analogue of the Berenstein--Kirillov group, denoted $\mathcal{SBK}$, with $\mathcal{SBK}_n$ being defined analogously. 

Following \cite{BK95}, the elements $\mathsf{q}_i:=\mathsf{t}_1 (\mathsf{t}_2 \mathsf{t}_1) \cdots (\mathsf{t}_i \mathsf{t}_{i-1} \cdots \mathsf{t}_1)$, for $1 \leq i \leq n-1$, also constitute an alternative set of generators for $\mathcal{SBK}_n$.  Similarly to the $\mathcal{BK}_n$ group, each generator $\mathsf{q}_i$ acts on a straight-shaped shifted semistandard tableau, via the shifted Schützenberger involution restricted to the primed alphabet $\{1', 1, \dots, i', i\}$. Thereby, as in the classic case \cite {CGP16,Hala16,Hala20},  the actions of the cactus group $J_n$  \cite[Theorem 5.7
]{Ro20b} (here Theorem \ref{teo:cact_evaci}) and of $\mathcal{SBK}_n$ agree on a straight-shaped shifted tableau crystal \cite{GLP17}. Thus, the shifted Berenstein--Kirillov group is isomorphic to a quotient of the cactus group (Theorem \ref{teo:cact_sbk}).

The shifted Bender--Knuth operators $\mathsf{t}_i$ also satisfy the $\mathcal{BK}$-type relations  \eqref{eq:bkrelations} and \eqref{eq:bkrelationextra}. Those are the relations satisfied by the generators $t_i$ in $\mathcal{BK}$ which are equivalent to the ones of the cactus group, as shown in 
\cite[Theorem 1.8]{CGP16} (here Theorem \ref{thm:rel_cact_bk}). Thus, we also have, similarly to the classic case \cite{CGP16}, another presentation of the cactus group via the shifted Bender--Knuth moves.

Not all known relations that hold in $\mathcal{BK}$ need to be satisfied by the shifted Bender-Knuth involutions, namely the relation $(\mathsf{t}_1 \mathsf{t}_2)^6 =1$ \cite[Proposition 1.3]{BK95} (\eqref{eq:bkrelations_special}) does not need to hold in $\mathcal{SBK}$ (see Example \ref{ex:t1t26}). As observed in
\cite[Remark 9]{CGP16}, the relation $(t_1 t_2)^6=1$ \eqref{eq:bkrelations_special} in $\mathcal{BK}$ does not follow from any cactus group relation. In fact, it is equivalent to the braid relations of the symmetric group $\mathfrak{S}_n$, satisfied by the type $A$ crystal reflection operators $\varsigma_i$, due to Lascoux and Schützenberger \cite{LaSchu81}, and rediscovered by Kashiwara \cite[Theorem 7.2.2]{Kash94}. These operators are elements of $\mathcal{BK}$ \cite[Proposition 1.4]{BK95}, and $\varsigma_i$ acts on a $\mathfrak{gl}_n$-crystal as a middle reflection of each $i$-string, which agrees with the partial Schützenberger involution restricted to the alphabet $\{i,i+1\}$, for $1 \leq  i \leq n-1$. 

The shifted crystal reflection operators $\sigma_i$, for $1 \leq i \leq n-1$ \cite[Definition 4.3]{Ro20b} are  also elements of $\mathcal{SBK}_n$, and  $\sigma_i$ acts on a shifted tableau crystal as a double reflection of each $\{i,i'\}$-coloured connected component, which agrees with the shifted Schützenberger involution restricted to the primed alphabet $\{i,i+1\}'$.  A relation of the type $(\mathsf{t}_1 \mathsf{t}_2)^{2 m} =1$ holds in $\mathcal{SBK}_n $ if and only if the relation $(\sigma_i\sigma_{i+1})^{m}=1$ does, where $m$ is a positive integer (see Proposition \ref{prop:braid_t6}). However, unlike type $A$ crystals, the shifted crystal reflection operators do not define an action of the symmetric group, thus none of the aforesaid relations holds for $m = 3$. It is not known whether some $m > 3$ exists such that the said relation holds \cite[Appendix A]{Ro20b}. It is an open question to find explicit relations in $\mathcal{SBK}$, beyond those listed in Proposition \ref{prop:rels_SBK}, that do not follow from the cactus group relations. Further relations for $\mathcal{SBK}$ seem to be intimately related with further relations satisfied by the shifted crystal reflection operators.

The proof in \cite[Theorem 5.7]{Ro20b} concerning a cactus group action on a shifted tableau crystal relies on the formulation of the Schützenberger involution as the unique set involution on a shifted tableau crystal satisfying certain conditions in terms of the shifted crystal operators 
\cite[Proposition 4.1]{Ro20b} (see Proposition \ref{prop:Schu}). Thus, the partial Schützenberger involutions, corresponding to the restrictions of the Schützenberger involutions to all primed subintervals of $[n]$, are also described in an analogous way 
\cite[Lemma 5.4]{Ro20b}, similarly to what is done in \cite[Definition 5.17]{HaKaRyWe20}.
Those set involutions on a shifted tableau crystal coincide with the \emph{shifted reversal} map, or the \emph{evacuation} on straight-shaped tableaux (Section \ref{ss:evac}), and its restrictions, and thus are regarded as explicit involutions on shifted tableaux. Sticking to this algorithmic formulation, we may use type $C$ growth diagrams, introduced by Thomas and Yong 
\cite{TY16}, together with the semistandardization process due to Pechenik and Yong 
\cite{PY17}, to obtain an alternative proof that the cactus group acts on a shifted tableau crystal via the restrictions of the reversal involution.

The type $C$ growth diagrams, for shifted standard tableaux, were introduced by Thomas and Yong
\cite{TY16}, together with generalizations for other cominuscule posets, and they generalize the classic growth diagrams for standard Young tableaux due to Fomin
\cite{Sta99}. These diagrams consist of saturated chains of shifted shapes encoding the shifted \textit{jeu de taquin} for shifted standard tableaux. Thus, they define type $C$ infusion, as well as the shifted promotion, evacuation and reversal, and the adequate restrictions. Like the classic growth diagrams 
\cite[Proposition A1.2.7]{Sta99}, the shifted ones may be computed via local growth rules \cite[Theorem 2.1]{TY16}. The symmetry of those rules shows that the type $C$ infusion, evacuation and reversal are involutions. 
Unlike the case for type $A$, shifted semistandard tableaux, being filled in a primed alphabet, are not encoded by a sequence of strict shapes and thus we do not have a semistandard-like growth diagrams as in 
\cite{CGP16}. However, the shifted semistandardization due to Pechenik and Yong
\cite{PY17} allows us to extend these notions for semistandard shifted tableaux. Thus, we are able to obtain an alternative proof, in Section \ref{sec:gd}, for the cactus group action on a shifted tableau crystal \cite[Theorem 5.7]{Ro20b} (here Theorem \ref{thm:cactusaction}), relying on the combinatorial description of the shifted reversal.

This paper is organized as follows. Section \ref{sec:background} provides the basic definitions and algorithms on shifted tableaux, in particular, the reversal and evacuation, as well as the main concepts regarding the shifted tableau switching 
\cite{CNO17}. We also emphasize that the shifted tableau switching agrees with the type $C$ infusion map on standard shifted tableaux, and thus the result on semistandard shifted tableaux may be recovered using the semistandardization map \cite{PY17}.
In Section \ref{sec:crystal}, we briefly recall the basic notions of the crystal-like structure on shifted tableaux, due to Gillespie, Levinson, and Purbhoo 
\cite{GLP17}, and an action of the cactus group 
\cite{Ro20a, Ro20b}, due to the author, on that crystal. In Section \ref{secBK}, we introduce the shifted Bender--Knuth involutions (Definition \ref{def:sbk_mov}), using the shifted tableau switching. Then, as in the classic case, we use those shifted Bender--Knuth involutions to define a shifted Berenstein--Kirillov group. Proposition \ref{prop:rels_SBK} shows that the known relations \eqref{eq:bkrelations} and \eqref{eq:bkrelationextra} satisfied by the classical Bender--Knuth involutions also hold among the shifted counterparts, with the exception of the relation $(\mathsf{t}_1\mathsf{t}_2)^6=1$. We then prove the main result (Theorem \ref{teo:cact_sbk}) which states that the shifted Berenstein--Kirillov group is isomorphic to a quotient of the cactus group and exhibit in \eqref{eq:cactus_alt_prst} an alternative presentation for the cactus group in terms of the shifted Bender--Knuth moves. 
In Section \ref{sec:gd}, we recall the notion of growth diagrams for shifted standard tableaux, as well as the local growth rules \cite{TY09}. Using the semistandardization \cite{PY17}, we are able to recover the shifted \textit{jeu de taquin}, type $C$ infusion, evacuation and reversal, as well as their restrictions, to semistandard shifted tableaux. We then provide an alternative proof of \cite[Theorem 5.7]{Ro20b}, using growth diagrams, that the cactus group acts on a shifted tableau crystal via the partial Schützenberger involutions.

An extended abstract of part of this work was accepted for publication in a proceedings volume of the Séminaire Lotharingien de Combinatoire.

\section{Background}\label{sec:background}
A \emph{strict partition} is a sequence $\lambda = (\lambda_1 > \cdots > \lambda_k)$ of distinct positive integers displayed in strictly decreasing order. The entries $\lambda_i$ are called the \emph{parts} of $\lambda$ and the \emph{length} of $\lambda$, denoted $\ell(\lambda)$, is the number of non-zero parts of $\lambda$. We denote by $|\lambda| := \lambda_1 + \cdots + \lambda_k$ the \emph{sum} of the parts of $\lambda$. A strict partition $\lambda$ is identified with its \emph{shifted shape} $S(\lambda)$ which consists of boxes placed in $\ell(\lambda)$ rows, with the $i$-th row having $\lambda_i$ boxes and being shifted $i-1$ units to the right. We use the English (or matrix) notation. The boxes in $\{(1,j), (2,j+1), (3,j+2), \ldots\}$ form a \emph{diagonal}, for $j \geq 1$. If $j=1$ it is called the \emph{main diagonal}. 
Given strict partitions $\lambda$ and $\mu$ such that $S(\mu) \subseteq S(\lambda)$, we write $\mu \subseteq \lambda$ and define the \emph{skew shifted shape} of $\lambda/\mu$ as $ S(\lambda/\mu) = S(\lambda) \setminus S(\mu)$ (see Figure \ref{fig:lambdamu}). Shapes of the form $\lambda/\emptyset$ are called \emph{straight} (or \emph{normal}). Any shifted shape $\lambda$ lies naturally in the ambient triangle of the \emph{shifted staircase shape} $\delta = (\lambda_1, \lambda_1-1, \ldots, 1)$. We define the \emph{complement} of $\lambda$ to be the strict partition $\lambda^{\vee}$ whose set of parts is the complement of the set of parts of $\lambda$ in $\{\lambda_1, \lambda_1-1, \ldots, 1\}$. In particular, $\emptyset^{\vee} = \delta$ (see Figure \ref{fig:lambdamu}).

\begin{figure}[h]
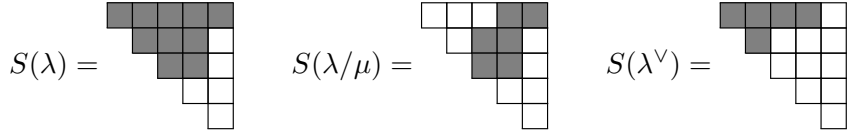

\begin{center}
$S(\lambda)=
	\ytableausetup{smalltableaux}	
	\begin{ytableau}
    *(gray)  & *(gray)  &*(gray)  &*(gray)  &*(gray)  \\
    \none & *(gray)  & *(gray)  & *(gray) &   \\
    \none & \none & *(gray) & *(gray)  & \\
    \none & \none & \none & & \\
    \none & \none & \none & \none &
  \end{ytableau}
  \qquad
  S(\lambda/\mu)=
	\ytableausetup{smalltableaux}	
	\begin{ytableau}
    {}  & {} & {} &*(gray)  &*(gray)  \\
    \none & {}  & *(gray)  & *(gray) &   \\
    \none & \none & *(gray) & *(gray) & \\
    \none & \none & \none & & \\
    \none & \none & \none & \none &
  \end{ytableau}
  \qquad
 S(\lambda^{\vee})=
	\ytableausetup{smalltableaux}	
	\begin{ytableau}
    *(gray)  & *(gray)  &*(gray)  &*(gray)  &   \\
    \none & *(gray)  &    &  &   \\
    \none & \none &   &  & \\
    \none & \none & \none & & \\
    \none & \none & \none & \none &
  \end{ytableau}
  $
 \end{center}
 \caption{The shapes of $\lambda$, $\lambda/\mu$ and $\lambda^{\vee}$, shaded in gray, for $\lambda= (5,3,2)$ and $\mu = (3,1)$.}
 \label{fig:lambdamu}
 \end{figure}

We consider $[n] := \{1 \!<\! \cdots \!<\! n\}$ and define the \emph{primed} alphabet  $[n]'$ to be $\{1'\!<\! 1 \!<\! \cdots \!<\! n' \!<\! n\}$. Following the notation in \cite{CNO17}, we will write $\mathbf{i}$ when referring to the letters $i$ and $i'$ without specifying whether they are primed. Given a string $w = w_1 \cdots w_m$ in the alphabet $[n]'$, the \emph{canonical form} \cite[Definition 2.1]{GLP17} of $w$ is the string obtained from $w$ by replacing the leftmost $\mathbf{i}$, if it exists, with $i$, for all $1 \leq i \leq n$. Two strings $w$ and $v$ are said to be \emph{equivalent} if they have the same canonical form. A \emph{word} $\hat{w}$ is defined to be an equivalence class $\hat{w}$ of the strings equivalent to $w$ \cite[Definition 2.2]{GLP17}. If $w$ is in canonical form, then it is said to be the \emph{canonical representative} of $\hat{w}$, while the other strings are called the \emph{representatives} of $\hat{w}$. Whenever there is no risk of ambiguity, we refer to $\hat{w}$ by its canonical representative $w$. The \emph{weight} of a word $w$ is $\mathsf{wt}(w) = (wt_1, \ldots, wt_n)$, where $wt_i$ is equal to the total number of $i$ and $i'$ in $w$. We remark that the weight of a word does not depend on the choice of representative, as the number of $i$ and $i'$ is the same for all representatives, for $i \in [n]$.
	
\begin{defin}
Given strict partitions $\lambda$ and $\mu$ such that $\mu \subseteq \lambda$, a \emph{shifted semistandard tableau} $T$ of shape $\lambda / \mu$ is a filling of $S(\lambda/\mu)$ with letters in $[n]'$ such that the entries are weakly increasing in each row and in each column, and there is at most one $i$ per column and one $i'$ per row, for any $i \geq 1$.
\end{defin}

The \emph{reading word} $w(T)$ of a shifted tableau is obtained by concatenating its rows, going from bottom to top. The \emph{weight} of $T$ is defined as $\mathsf{wt}(T) := \mathsf{wt}(w(T))$. A word or a shifted tableau are said to be \emph{standard} if their weight is $(1, \ldots, 1)$.

\begin{ex}
The following is a shifted semistandard tableau, with its reading word and weight:
$$T=\begin{ytableau}
{} & 1 & 1 & 2' & 2\\
\none & 2 & 3'\\
\none & \none & 3
\end{ytableau} \qquad w(T) = 323'112'2 \qquad \mathsf{wt}(T)=(2,3,2).$$
\end{ex}

We say that a tableau $T$ is in \emph{canonical form} if its reading word is in canonical form and, in that case, it is identified with its set of \emph{representatives}, that are obtained by possibly priming the entry corresponding to the first $i$ in $w(T)$, for all $i$. We denote the set of shifted semistandard tableaux of shape $\lambda/\mu$, on the alphabet $[n]'$, in canonical form, by $\mathsf{ShST}(\lambda/\mu,n)$.	

\begin{ex}
The tableau of the previous example is in canonical form, as the first occurrences of each letter is unprimed. Some of its representatives are listed below. Their reading words are representatives of the class of $w(T)$.
$$\begin{ytableau}
{} & 1 & 1 & 2' & 2\\
\none & 2 & 3'\\
\none & \none & 3
\end{ytableau} \qquad
\begin{ytableau}
{} & 1' & 1 & 2' & 2\\
\none & 2 & 3'\\
\none & \none & 3
\end{ytableau} \qquad
\begin{ytableau}
{} & 1 & 1 & 2' & 2\\
\none & 2' & 3'\\
\none & \none & 3
\end{ytableau} \qquad
\begin{ytableau}
{} & 1 & 1 & 2' & 2\\
\none & 2' & 3'\\
\none & \none & 3'
\end{ytableau} \qquad \ldots$$
\end{ex}

In the remaining of the article, we will consider the \emph{symmetric group} $\mathfrak{S}_n$ to be the Coxeter group generated by $\theta_1, \ldots, \theta_{n-1}$, subject to the relations
\begin{equation}
\theta_i^2 = 1, \qquad \theta_i \theta_j = \theta_j \theta_i,\,\text{for}\, |i-j|>1, \qquad (\theta_i \theta_{i+1})^3 = 1,\,\text{for}\, 1\leq i \leq n-2.
\end{equation}

The elements of $\mathfrak{S}_n$ are explicitly described by the permutations of $[n]$, and its generators $\theta_i$ are described by the simple transpositions\footnote{We use the cycle notation.} $(i,i+1)$, for $1 \leq i \leq n-1$. A permutation $\tau \in \mathfrak{S}_n$ acts naturally on a vector of $\mathbb{Z}^n$ as $\tau (v_1, \ldots, v_n) := (v_{\tau^{-1} (1)}, \ldots, v_{\tau^{-1}(n)})$, and on letters of the marked alphabet $\mathbf{x} \in [n]'$ as
\begin{equation}\label{eq:theta_pri}
\tau (\mathbf{x}) := \begin{cases}
\tau(x) &\text{if}\; \mathbf{x}=x\\
\tau(x)' &\text{if}\; \mathbf{x}=x'
\end{cases}.
\end{equation}
According to this action, given $\tau \in \mathfrak{S}_n$ and a word in the alphabet $[n]'$, we define $\tau (w_1 \cdots w_k)$ as the word $\tau(w_1) \cdots \tau(w_k)$, after canonicalizing, for $w_i \in [n]'$. Similarly, the action of $\tau$ is extended to fillings $T$ in $[n]'$ of a shifted shape (in particular, this includes shifted semistandard tableaux), defining $\tau(T)$ by the action of $\tau$ on the word of $T$. Given $1 \leq i < j \leq j$, we denote by $\theta_{i,j}$ the longest permutation in $\mathfrak{S}_{\{i, \ldots,j\}}$ embedded in $\mathfrak{S}_n$, i.e., $\theta_{i,j} = \theta_i (\theta_{i+1} \theta_i) \cdots (\theta_{j-1} \cdots \theta_i)$. In particular, $\theta_{1,n}$ is the longest permutation in $\mathfrak{S}_n$, also knows as the order reversing permutation.
	
\subsection{Shifted \textit{jeu de taquin}, evacuation and reversal}\label{ss:evac}
The shifted \emph{jeu de taquin} \cite{Sag87,Wor84} is defined similarly to the one for ordinary Young tableaux. A skew shape $S(\lambda/\mu)$ is said to be a \emph{border strip} if it contains no subset of the form $\{(i,j),(i+1,j+1)\}$ and a \emph{double border strip} if it contains no subset of the form $\{(i,j),(i+1,j+1),(i+2,j+2)\}$. 

\begin{defin}
Let $T \in \mathsf{ShST}(\lambda/\mu,n)$ and let $i \in [n]$. The tableau obtained from $T$ considering only the letters $i$ and $i'$ is called the \emph{$i$-border strip} of $T$, and is denoted by $T^i$.
\end{defin}

Given strict partitions $\nu \subseteq \mu \subseteq \lambda$, we say that $\lambda/\mu$ \emph{extends} $\mu/\nu$, and, in this case, we define
$$(\mu/\nu) \sqcup (\lambda/\mu) := \lambda/\nu.$$
Given $S$ and $T$ shifted semistandard tableaux, we say that $T$ \emph{extends} $S$ if the shape of $T$ extends the shape of $S$. In this case, we denote by $S \sqcup T$ the union of $S$ and $T$, obtained by overlapping the two tableaux, which is not necessarily a valid semistandard tableau. A shifted semistandard tableau $T$ filled in $[n]'$ is clearly a disjoint union of its $i$-border strips, for $i \in [n]$.

\begin{ex}
Considering $T = \begin{ytableau}
{} & 1 & 1 & 2' & 2\\
\none & 2 & 3'\\
\none & \none & 3
\end{ytableau}$, we have 

$$T = \begin{ytableau}
{} & 1 & 1 & {} & {}\\
\none & {} & {}\\
\none & \none & {}
\end{ytableau} \sqcup
\begin{ytableau}
{} & {} & {} & 2' & 2\\
\none & 2 & {}\\
\none & \none & {}
\end{ytableau} \sqcup
\begin{ytableau}
{} & {} & {} & {} & {}\\
\none & {} & 3'\\
\none & \none & 3
\end{ytableau}
 = T^1 \sqcup T^2 \sqcup T^3.$$
\end{ex}

A single box $b$ is said to be an \emph{inner corner} of a shape $\lambda/\mu$ if $\lambda/\mu$ extends $b$, and an \emph{outer corner} if $b$ extends $\lambda/\mu$.

\begin{defin}[{\cite[Section 6.4]{Wor84}}]\label{def:sh_jdt}
Let $T\in \mathsf{ShST}(\lambda/\mu,n)$.  An \emph{inner jeu de taquin slide} is the process in which an empty inner corner of the skew shape of $T$ is chosen and then either the entry to its right or the one below it is chosen to slide into the empty square, maintaining semistandardness. The process is then repeated on the obtained new empty square until it is an outer corner. An \emph{outer jeu de taquin slide} is the reverse process, starting with an outer corner. This process has an exception to the sliding rules when the empty box of an inner or outer slide enters in the diagonal. If an inner slide moves a box with $a'$ to the left into the diagonal and then moves a box with $a$ up from the diagonal, to the right of it, the former becomes unprimed (and vice versa for the corresponding outer slide), as illustrated by the following slide:

\begin{center}
\includegraphics[scale=0.4]{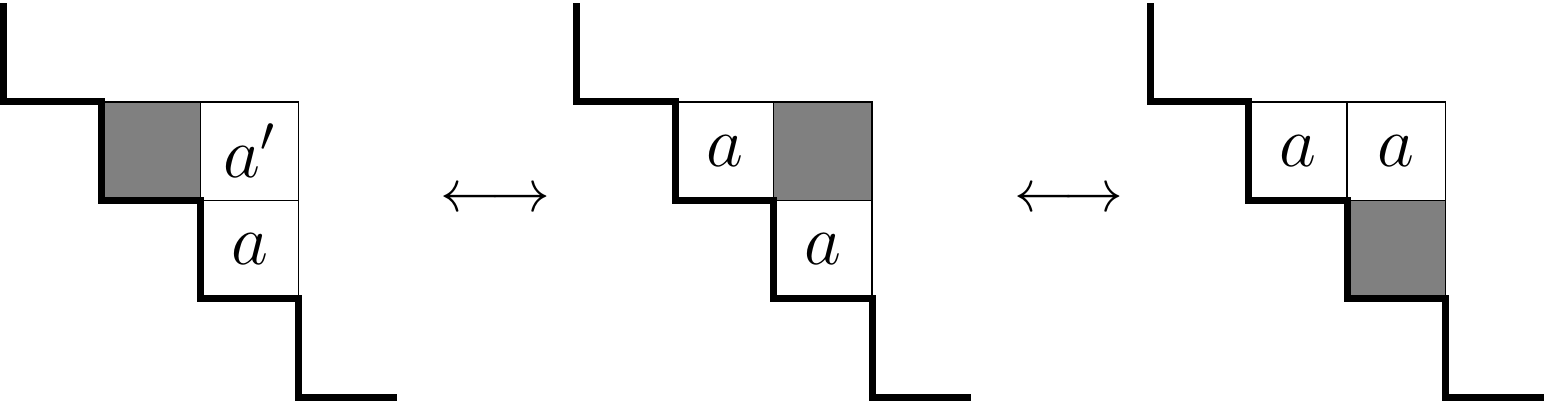}
\end{center}
\end{defin}

If $T$ is not in the canonical form, there is another exception to consider illustrated below (observe that result is in the same canonical class of the former case):

\begin{center}
\includegraphics[scale=0.4]{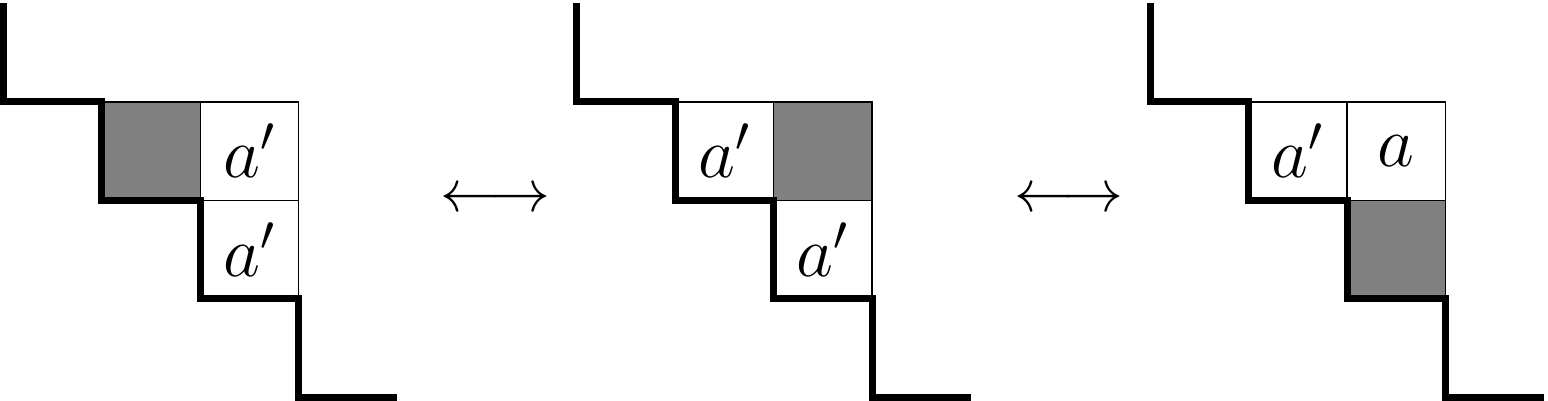}
\end{center}

The \emph{rectification} $\mathsf{rect}(T)$ of $T$ is the tableau obtained by applying any sequence of inner slides until a straight shape is obtained (it is known that any chosen sequence of slides produces the same straight-shaped tableau \cite[Theorem 11.1]{Sag87}). The \emph{rectification} of a word $w$ is the word of the rectification of any tableau with reading word $w$. Two tableaux are said to be \emph{shifted jeu de taquin equivalent} if they have the same rectification. An operator on shifted tableaux that commutes with the shifted \textit{jeu de taquin} is called \emph{coplactic}.

The \emph{standardization} of a word $w$, denoted $\mathsf{std}(w)$, is obtained by replacing the letters of any representative of $w$ with $1, \ldots, \ell(w)$, where $\ell(w)$ denotes the lenght of $w$, from least to greatest, reading right to left for primed entries, and left to right for unprimed entries \cite[Definition 2.8]{GLP17}. 

\begin{ex}\label{ex:std}
Let $T =\begin{ytableau}
1 & 1 & 2' & 2\\
\none & 2 & 2\\
\none & \none & 3
\end{ytableau}$, with reading word $w=322112'2$ and $\ell(w)=7$. Then,

$$\mathsf{std}(T) = \begin{ytableau}
1 & 2 & 3 & 6\\
\none & 4 & 5\\
\none & \none & 7
\end{ytableau}.
$$
\end{ex}

This process does not depend on the choice of the representative. The standardization of a shifted tableau $T$ is defined as the tableau with the same shape as $T$ with reading word $\mathsf{std}(w(T))$.

\begin{lema}[{\cite[Lemma 3.5]{GLP17}}]\label{standard}
If $s$ is a standard word in $[m]$, with $m =a_1+\cdots+a_k$, then there is at most one word $w$ of weight $(a_1, \dots , a_k)$ with standardization $\mathsf{std} (w) = s$.
\end{lema}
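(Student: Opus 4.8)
The plan is to reduce the statement to the uniqueness of a word with prescribed weight and standardization among \emph{all} words, using that standardization is a bijection onto standard words of the appropriate length when we remember, in addition, the weight. First I would fix a standard word $s$ in $[m]$ and suppose $w$ and $w'$ are two words of weight $(a_1,\dots,a_k)$ with $\mathsf{std}(w)=\mathsf{std}(w')=s$; the goal is to show $w=w'$ as words, i.e.\ that they have the same canonical representative. Since the weight determines, for each $i$, exactly which positions of $s$ correspond to letters $i$ or $i'$ (namely the block of positions carrying the values $a_1+\cdots+a_{i-1}+1,\dots,a_1+\cdots+a_i$ in $s$), the only freedom left is, within each such block, which positions are primed and which are not. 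So the heart of the matter is to show that the standardization rule — scan the $i$-letters from right to left if primed, left to right if unprimed, assigning consecutive integers — is invertible once we know which of the $m$ standardized values form the $i$-block.

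The key step, then, is a purely local analysis on a single letter $i$: given the set $P\subseteq[m]$ of positions occupied by the letter $\mathbf i$ in $w$, and given the restriction of $s$ to those positions (which is an increasing labelling of $P$ by $|P|$ consecutive integers), I would recover uniquely which entries of $w$ at those positions were primed. The observation is that in $\mathsf{std}(w)$ all primed $\mathbf i$'s receive \emph{smaller} labels than all unprimed $\mathbf i$'s (primes are standardized first, reading right to left), and moreover the primed occurrences, read left to right in $w$, get \emph{decreasing} labels, while the unprimed occurrences, read left to right, get \emph{increasing} labels. Hence if there are $r$ primed and $|P|-r$ unprimed occurrences of $\mathbf i$, then in $s$ restricted to $P$ the $r$ positions of $P$ that are \emph{rightmost} among those carrying the $r$ smallest labels are exactly the primed positions — and since $s$ restricted to $P$ is increasing, "the $r$ smallest labels" sit at the $r$ rightmost? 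No: increasing means the smallest label is at the leftmost position of $P$. So I must be careful: I would instead argue that knowing $r$ (which is determined by $w$ but a priori not by $s$) is the real issue, and then show $r$ itself is forced by semistandardness or, more simply here, that the statement only claims uniqueness, so two words $w,w'$ with the same $s$ and weight must have the same $r$ because $r$ equals the number of positions in $P$ whose $s$-label lies in a certain canonically determined subinterval. Concretely: since all primed values precede all unprimed values in the $\mathbf i$-block, there is a threshold, and the multiset of labels on $P$ is the same for $w$ and $w'$ (it is the $i$-block of $s$), so once the primed/unprimed split is shown to be determined by the positions via monotonicity, $w=w'$.

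The cleanest route, which I would actually follow, is by induction on the length: locate the position of the smallest label $1$ in $s$; by the standardization rule this is either the \emph{leftmost} unprimed occurrence of the smallest letter appearing unprimed, or the \emph{rightmost} primed occurrence of the smallest letter. Using the weight $(a_1,\dots,a_k)$ we know label $1$ belongs to letter $\mathbf 1$; then label $1$ is at the rightmost primed $1$ if any $1$ is primed, and otherwise at the leftmost $1$. In either case the position and the primed/unprimed status of that entry of $w$ is determined by $s$ and the weight alone (whether a $1'$ occurs is detected by comparing the position of label $1$ with the positions of labels $2,3,\dots$ in the $\mathbf 1$-block: if label $1$ sits to the right of some of them, there is priming). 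Removing that letter and decrementing the weight in coordinate $1$ (and relabelling $s$) reduces $\ell(w)$ by one, and the inductive hypothesis finishes the argument. The main obstacle I anticipate is handling the diagonal/canonical-form bookkeeping cleanly — one must make sure the argument produces the \emph{canonical representative} and not merely one of the representatives, and that the "detect a prime by relative position" criterion is stated so that it manifestly depends only on $s$ and the weight; once that is pinned down the induction is routine, and indeed the author likely just cites the standardization description and argues by a short position-counting argument rather than a full induction.
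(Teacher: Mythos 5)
This lemma is not proved in the paper at all: it is imported verbatim as \cite[Lemma 3.5]{GLP17}, so there is no in-paper argument to compare against, and your proposal has to stand on its own. Your final inductive argument does essentially work, but your second paragraph contains a false claim and a circular step that you should excise rather than leave half-repaired. The parenthetical ``the restriction of $s$ to those positions \ldots is an increasing labelling'' is wrong: on the $\mathbf{i}$-block the labels \emph{decrease} left to right across the primed occurrences and increase across the unprimed ones, which is exactly why the question is nontrivial. Likewise, ``$r$ equals the number of positions in $P$ whose $s$-label lies in a certain canonically determined subinterval'' is circular, since that subinterval is $\{c+1,\dots,c+r\}$ and depends on $r$. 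The honest crux, which you circle but never state, is that $r$ is pinned down by the canonical-form convention: in the canonical representative the leftmost letter of each $\mathbf{i}$-block is unprimed, and the unprimed occurrences receive the labels $c+r+1,\dots,c+t$ in left-to-right order, so the leftmost position $p_1$ of the block satisfies $s(p_1)=c+r+1$. Hence $r=s(p_1)-c-1$ is read off from $s$ and the weight, the primed positions are exactly those of the block with label $<s(p_1)$, and the canonical representative is reconstructed in one stroke --- no induction needed. Your third-paragraph induction (peel off the entry with the current smallest label, detect its primed status by whether that label sits to the right of some other label of its block) is a correct unrolling of the same observation, and the reduction does preserve canonicity, so it goes through; but you should either adopt the closed-form version above or clearly discard the muddled second paragraph, since as written it would not survive refereeing.
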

 
As a consequence, any shifted semistandard tableau is completely determined (up to canonical form) by its shape, weight and standardization. Thus, given a standard tableau $T$ of shape $\lambda/\mu$ and a composition $\nu$ (i.e., a vector of non-negative integers) such that $|\nu| = |\lambda| - |\mu|$, there exists at most one semistandard tableau with the same shape of $T$ and weight $\nu$. The process to obtain it, if it exists, is known as \emph{shifted semistandardization} and was introduced by Pechenik and Yong \cite[Section 9.1]{PY17}. Let $\nu$ be a composition and define, for $k = 1, \ldots, \ell(\nu)$,
\begin{equation}\label{eq:cal_p_nu}
\mathcal{P}_k (\nu) := \{1 + \sum\limits_{i<k} \nu_i, 2 + \sum\limits_{i<k} \nu_i, \ldots, \sum\limits_{i\leq k} \nu_i \}.
\end{equation} 
That is, $\mathcal{P}_1 = \{1, \ldots, \nu_1\}$, $\mathcal{P}_2 =\{\nu_1 +1, \ldots, \nu_1 + \nu_2\}$, etc. Note that each $\mathcal{P}_k(\nu)$ has cardinality $\nu_k$. 

\begin{defin}[{\cite[Section 9.1]{PY17}}]\label{def:sstd}
Given a shifted standard tableau $T$, its \emph{semistandardization (with respect to $\nu$)}, denoted $\mathsf{sstd}_{\nu} (T)$, is given by the following process:
	\begin{enumerate}
	\item Replace each letter $i$ with $k_i$, for the unique $k$ such that $i \in \mathcal{P}_k(\nu)$.
	\item Then, replace each $k_i$ with $k'$, if there exists a $k_j$ south-west of $k_i$ with $i < j$, or with $k$, otherwise.
	\item If the obtained filling is a semistandard tableau, then $\nu$ is said to be \emph{admissible} for $T$ and $\mathsf{sstd}_{\nu}(T)$ is set to be that tableau. Otherwise, $\mathsf{sstd}_{\nu} (T)$ is said to be undefined. 
	\end{enumerate}
\end{defin} 
	 
Note that, if $\nu$ is admissible for $T$, then $\mathsf{wt}(\mathsf{sstd}_{\nu}(T)) = \nu$. Moreover, if $T \in \mathsf{ShST}(\lambda/\mu,n)$ has weight $\nu$, then $\nu$ is admissible for $\mathsf{std}(T)$ and $\mathsf{sstd}_{\nu} (\mathsf{std}(T)) = T$ \cite[Lemma 9.5]{PY17}. A shifted tableau in these conditions is said to be \emph{$\nu$-Pieri filled}. As a consequence, $\mathsf{std}$ defines a bijection between the set of shifted semistandard tableaux of shape $\lambda/\mu$ and weight $\nu$ and the set of $\nu$-Pieri filled shifted semistandard tableaux of the same shape, whose inverse is given by $\mathsf{sstd}_{\nu}$ \cite[Theorem 9.6]{PY17}.

\begin{ex}
Let $T= \begin{ytableau}
1 & 2 & 3 & 6\\
\none & 4 & 5\\
\none & \none & 7
\end{ytableau}$ be a shifted standard tableau and let $\nu = (2,4,1)$. We have:
$$\mathcal{P}_1 (\nu) = \{1,2\} \qquad \mathcal{P}_2(\nu) = \{3,4,5,6\} \qquad \mathcal{P}_3 (\nu) = \{7\}.$$
Then, the semistandardization of $T$ with respect to $\nu$ is obtained as follows:

$$\begin{ytableau}
1 & 2 & 3 & 6\\
\none & 4 & 5\\
\none & \none & 7
\end{ytableau}
\longrightarrow
\begin{ytableau}
1_1 & 1_2 & 2_3 & 2_6\\
\none & 2_4 & 2_5\\
\none & \none & 3_7
\end{ytableau}
\longrightarrow
\begin{ytableau}
1 & 1 & 2' & 2\\
\none & 2 & 2\\
\none & \none & 3
\end{ytableau}.$$
\end{ex}

Given $\nu$ a strict partition, there exists a unique (up to canonical form) shifted tableau of shape and weight equal to $\nu$. This is known as the \emph{Yamanouchi tableau} $Y_{\nu}$, and its $i$-th row consists only of unprimed $i$'s. A word $w$ on the alphabet $[n]'$ with weight $\nu$, a strict partition, is said to be \emph{ballot} (or \emph{lattice}, or \emph{Yamanouchi}) if its rectification is $w(Y_\nu)$. A shifted semistandard tableau $T$ of weight $\nu$ is said to be \emph{Littlewood--Richardson--Stembridge} (LRS) if $\mathsf{rect}(T)=Y_{\nu}$, or, equivalently, if its reading word is \emph{ballot}, with weight $\nu$. The \emph{shifted Littlewood--Richardson coefficient} $f_{\mu \nu}^{\lambda}$ is defined as the number of LRS tableaux of shape $\lambda/\mu$ and weight $\nu$, if $|\lambda| = |\mu| + |\nu|$ (for this and other formulations, see \cite{Stem89, Wor84}).

\begin{defin}[\cite{Sag87}]
Two words $w$ and $v$ on an alphabet $[n]'$ are said to be \emph{shifted Knuth equivalent}, denoted $w \equiv_k v$, if one can be obtained from the other by applying a sequence of the following Knuth moves on adjacent letters
	\begin{enumerate}
	\item $bac \longleftrightarrow bca$ if, under the standardization ordering, $a < b < c$.
	\item $acb \longleftrightarrow cab$ if, under the standardization ordering, $a < b < c$.
	\item $ab \longleftrightarrow ba$ if these are the first two letters.
	\item $aa \longleftrightarrow aa'$ if these are the first two letters.
	\end{enumerate}
\end{defin}

Two shifted semistandard tableaux are shifted Knuth equivalent if their reading words are shifted Knuth equivalent \cite[Theorem 12.2]{Sag87}\label{jdtknuth}, or, equivalently, if they have the same rectification \cite[Theorem 6.4.17]{Wor84} or if their words have the same Worley--Sagan insertion tableau \cite{Sag87}. Two shifted semistandard tableaux are \emph{shifted dual equivalent} (or coplactic equivalent) if they have the same shape after applying any sequence (including the empty one) of shifted \textit{jeu de taquin} slides to both. Equivalently, two tableaux are shifted dual equivalent if their words have the same recording tableau under the shifted Robinson--Schensted \cite{Haim92,Sag87,Wor84}.  In particular, any two tableaux of the same straight shape are shifted dual equivalent \cite[Corollary 2.5]{Haim92}.

Given $T\in \mathsf{ShST}(\lambda/\mu, n)$, its \emph{complement} in $[n]'$ is the tableau $\mathsf{c}_n (T)$ obtained by reflecting $T$ along the anti-diagonal in the shifted stair shape $\delta = (\lambda_1, \lambda_1 -1, \ldots, 1)$, i.e., sending each box in $(i,j)$ to $(\lambda_1 - j +1, \lambda_1-i+1)$, followed by replacing each unprimed entry $i$ with $\theta_{1,n} (i)'$ and each primed entry $i'$ with $\theta_{1,n}(i)$, where, we recall, $\theta_{1,n}$ denotes the longest permutation in $\mathfrak{S}_n$. Hence, if $T$ is of shape $\lambda/\mu$, then $\mathsf{c}_n (T)$ is of shape $ \mu^{\vee} / \lambda^{\vee}$, and if $\mathsf{wt}(T) = (wt_1, \ldots, wt_n)$, then $\mathsf{wt}(\mathsf{c}_n(T)) = \theta_{1,n}(\mathsf{wt}(T)) = (wt_n, \ldots, wt_1)$. The following result is due to Haiman.

\begin{teo}[{\cite[Theorem 2.13]{Haim92}}]\label{teo:haiman}
Given $T\in \mathsf{ShST}(\lambda/\mu,n)$\footnote{The result also holds for ordinary Young tableaux.}, there exists a unique tableau $T^e$ that is shifted Knuth equivalent to $\mathsf{c}_n (T)$ and shifted dual equivalent to $T$.
\end{teo}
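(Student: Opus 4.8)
The plan is to mimic Haiman's original argument in the shifted setting, using the two equivalence relations (shifted Knuth equivalence and shifted dual equivalence) as coordinates for skew tableaux. First I would recall the key structural fact, which for straight shapes is already quoted in the excerpt: any two tableaux of the same straight shape are shifted dual equivalent (Haiman's Corollary 2.5). Combined with the fact that shifted Knuth equivalence classes (equivalently, having the same Worley--Sagan insertion tableau) are in bijection with straight-shaped tableaux, this gives the principle that a skew tableau $T$ is uniquely determined by the pair (its shifted Knuth class, its shifted dual equivalence class): if $U$ and $V$ have the same rectification and the same recording tableau under shifted Robinson--Schensted, then $U = V$. This ``uniqueness of coordinates'' statement is really the engine of the theorem.

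Next I would prove existence. Take $\mathsf{c}_n(T)$, which has shape $\mu^\vee/\lambda^\vee$, and rectify it: let $R = \mathsf{rect}(\mathsf{c}_n(T))$, a straight-shaped tableau of shape $\rho$ say, where $\rho$ is the common rectification shape. One checks $\rho$ has the same size as $\lambda/\mu$. Also rectify $T$ to get $\mathsf{rect}(T)$, of some straight shape $\sigma$ with $|\sigma| = |\lambda/\mu|$; in fact, by the standard theory of shifted \textit{jeu de taquin}, the rectification shape of a skew tableau depends only on its shape, so actually $\sigma = \rho$ (both skew shapes $\lambda/\mu$ and $\mu^\vee/\lambda^\vee$ rectify to the same straight shape — this is the shifted analogue of the classical fact and should be extractable from Worley--Sagan or, more cleanly, from the growth-diagram symmetry mentioned later in the introduction). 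Now define $T^e$ as the unique skew tableau of shape $\lambda/\mu$ whose rectification is $R$ and whose recording tableau (shifted dual equivalence class) is that of $T$ — it exists because one can run shifted reverse-\textit{jeu de taquin} on $R$ along the sequence of slides that rectifies $T$, which lands in shape $\lambda/\mu$ and produces a tableau dual equivalent to $T$ by construction. By definition this $T^e$ is shifted dual equivalent to $T$, and it is shifted Knuth equivalent to $\mathsf{c}_n(T)$ since both rectify to $R$.

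Finally, uniqueness follows from the coordinate principle above: if $T'$ is also shifted Knuth equivalent to $\mathsf{c}_n(T)$ and shifted dual equivalent to $T$, then $T'$ has the same rectification as $\mathsf{c}_n(T)$ (namely $R$) and the same recording tableau as $T$, hence the same recording tableau as $T^e$; two skew tableaux with the same insertion and recording tableaux under shifted Robinson--Schensted are equal, so $T' = T^e$. A subtlety to flag: the canonical-form / priming conventions must be respected throughout, so ``unique'' here means unique up to canonical form, and one should check that the shifted dual equivalence and Knuth equivalence relations quoted in the excerpt are the canonical-form-compatible versions — the excerpt's phrasing of Sagan's and Haiman's results suggests they are, but verifying that the reverse-slide construction stays in canonical form (using the second slide exception displayed in Definition \ref{def:sh_jdt}) is the kind of bookkeeping one must not skip.

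The main obstacle I expect is \emph{not} the logical skeleton, which is a faithful copy of the type $A$ argument, but rather establishing cleanly that the two skew shapes $\lambda/\mu$ and $\mu^\vee/\lambda^\vee$ rectify to the same straight shape (so that a reverse rectification of $R$ into shape $\lambda/\mu$ is actually possible) and that shifted reverse-\textit{jeu de taquin} behaves as a genuine inverse at the level of recording tableaux in the shifted, primed setting — the diagonal exceptions make the shifted \textit{jeu de taquin} less symmetric than its classical counterpart, and it is precisely there that a naive transcription of Haiman's proof could break. I would handle this either by invoking the shifted growth-diagram formalism of Thomas--Yong (which makes the shape-matching and the involutivity transparent for standard tableaux) and then transporting to the semistandard case via the semistandardization map $\mathsf{sstd}_\nu$, or by citing the relevant lemmas of Worley and Sagan directly.
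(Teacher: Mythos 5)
The paper does not prove this theorem at all — it is imported verbatim from Haiman — so there is no internal proof to compare against; the closest thing is the paragraph right after the statement, which describes exactly the existence construction you use (rectify, evacuate, reverse the recorded slides, and invoke Haiman's Corollaries 2.5, 2.8 and 2.9 for the dual equivalence). Your overall skeleton — existence by reverse-sliding $R=\mathsf{rect}(\mathsf{c}_n(T))$ into shape $\lambda/\mu$ along the reversed rectification sequence of $T$, and uniqueness from the fact that a skew tableau is determined by its shifted Knuth class together with its shifted dual equivalence class — is the right one and is Haiman's.

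There is, however, one genuinely false step. You assert that ``the rectification shape of a skew tableau depends only on its shape'' and use this to conclude $\sigma=\rho$. That claim is wrong: distinct fillings of the same skew shape rectify to distinct straight shapes in general, which is precisely why $\mathsf{ShST}(\lambda/\mu,n)$ decomposes into components isomorphic to various $\mathsf{ShST}(\nu,n)$ with multiplicities $f^{\lambda}_{\mu\nu}$ (the paper's own example $\mathsf{ShST}((3,1)/(1),4)$ has components rectifying to $(3)$ and to $(2,1)$). The fact you actually need — that $\mathsf{rect}(\mathsf{c}_n(T))$ has the same straight shape as $\mathsf{rect}(T)$, so that $T$'s reversed slide sequence can be applied to $R$ at all — is true, but for a different reason: since $\mathsf{c}_n$ preserves shifted Knuth equivalence (Worley's Lemma 7.1.4, quoted in the paper), one has $\mathsf{rect}(\mathsf{c}_n(T))=\mathsf{rect}(\mathsf{c}_n(\mathsf{rect}(T)))=\mathsf{evac}(\mathsf{rect}(T))$, and evacuation is shape-preserving. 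Without this correction the existence half of your argument does not start, because $R$ might not sit in the straight shape from which the reverse slides depart. A secondary, smaller point: the ``coordinate principle'' (shifted Knuth equivalent and shifted dual equivalent imply equal) does not follow from Corollary 2.5 plus the bijection between Knuth classes and straight tableaux alone; it needs the injectivity of the shifted Robinson--Schensted map on words together with the fact that dual equivalent tableaux share a shape, which you do invoke later, so you should lean on that citation rather than present the principle as a consequence of the two facts you list first.
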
 
This unique tableau is known as the \emph{shifted reversal} of $T$. If $T$ is straight-shaped, then this is known as the \emph{shifted evacuation} and denoted $\mathsf{evac}(T)$.

\begin{prop}[{\cite[Definition 7.1.5, Lemma 7.1.6]{Wor84}}]
Given $T \in \mathsf{ShST}(\nu,n)$, its \emph{(shifted) evacuation} is defined as $\mathsf{evac} (T) := \mathsf{rect} (\mathsf{c}_n (T))$. Then, $\mathsf{evac}(T)$ has the same shape as $T$ and it is shifted Knuth equivalent to $\mathsf{c}_n (T)$. 
\end{prop}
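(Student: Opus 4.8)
The plan is to derive both assertions from Haiman's Theorem~\ref{teo:haiman} together with the fact that shifted Knuth equivalence is detected by rectification (\cite[Theorem 6.4.17]{Wor84}). Write $R := \mathsf{rect}(\mathsf{c}_n(T))$, which is a well-defined straight-shaped shifted semistandard tableau by the confluence of inner slides (\cite[Theorem 11.1]{Sag87}). I would first dispatch the Knuth-equivalence claim: since $R$ is already straight-shaped, the empty sequence of inner slides rectifies it, so $\mathsf{rect}(R) = R = \mathsf{rect}(\mathsf{c}_n(T))$; thus $R$ and $\mathsf{c}_n(T)$ share a rectification and are therefore shifted Knuth equivalent, i.e.\ $\mathsf{evac}(T) \equiv_k \mathsf{c}_n(T)$.

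For the shape, I would apply Theorem~\ref{teo:haiman} to the straight-shaped $T$, producing the shifted reversal $T^e$, which is shifted Knuth equivalent to $\mathsf{c}_n(T)$ and shifted dual equivalent to $T$. Reading off the definition of shifted dual equivalence with the empty sequence of slides, $T^e$ has the same (straight) shape $\nu$ as $T$, whence $\mathsf{rect}(T^e) = T^e$. Combining with the previous step, $R \equiv_k \mathsf{c}_n(T) \equiv_k T^e$, so $R$ and $T^e$ have equal rectifications; since both are straight-shaped, these rectifications are $R$ and $T^e$ respectively, forcing $\mathsf{evac}(T) = R = T^e$. In particular $\mathsf{evac}(T)$ has shape $\nu$, the shape of $T$, and this simultaneously shows that the present definition of $\mathsf{evac}(T)$ agrees with the one via the reversal introduced just above.

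The genuinely delicate point, and the one I expect to be the main obstacle, lies not in this chain of equivalences but in the hypotheses feeding into it: one must check that $\mathsf{c}_n(T)$ really is a legitimate (if possibly non-canonical) shifted semistandard tableau — of shape $\delta/\nu^{\vee}$ — so that rectification, the reading word and the Knuth machinery all apply to it, and one must track canonical forms under $\mathsf{c}_n$, observing that priming or unpriming the first occurrence of a letter is a shifted Knuth move ($aa \leftrightarrow aa'$) and hence leaves the class unchanged. A more self-contained alternative would track the shape directly along a fixed sequence of inner slides performed on $\mathsf{c}_n(T)$, using that shifted dual equivalence is preserved under slides; but this essentially reproves part of Haiman's theorem, so routing through Theorem~\ref{teo:haiman} is the economical choice — and it is precisely the phenomenon that the rectification shape of a skew tableau need not be determined by its skew shape alone that makes such a theorem necessary here.
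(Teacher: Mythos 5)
Your proposal is correct. Note, however, that the paper does not actually prove this proposition: it is imported wholesale from Worley's thesis (Definition 7.1.5 and Lemma 7.1.6), so there is no in-paper argument to compare against. Your derivation is a legitimate self-contained alternative: the Knuth-equivalence claim follows immediately from the characterization of shifted Knuth equivalence via common rectification together with $\mathsf{rect}(R)=R$ for the straight-shaped $R=\mathsf{rect}(\mathsf{c}_n(T))$, and the shape claim follows by invoking Haiman's Theorem~\ref{teo:haiman} for the straight-shaped $T$, noting that dual equivalence (with the empty slide sequence) forces $T^e$ to have shape $\nu$, and then using that two Knuth-equivalent straight-shaped tableaux coincide to get $R=T^e$. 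As a bonus, this simultaneously establishes the identity $\mathsf{evac}(T)=T^e$ for straight shapes, which the paper only asserts in the paragraph following the proposition. Your flagged delicate point --- that $\mathsf{c}_n(T)$, of shape $\delta/\nu^{\vee}$, is a genuine (possibly non-canonical) shifted semistandard tableau so that rectification and the Knuth machinery apply, and that canonical form is only perturbed by the move $aa\leftrightarrow aa'$ --- is exactly the right thing to check, and the paper's own setup (the definition of $\mathsf{c}_n$ and the stated shape $\mu^{\vee}/\lambda^{\vee}$) supplies it. The only mild caveat is that routing through Haiman's theorem uses a strictly stronger tool than Worley's original lemma presumably does, but within the logical organization of this paper (where Theorem~\ref{teo:haiman} is stated before the proposition) there is no circularity.
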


Since the operator $\mathsf{c}_n$ preserves shifted Knuth equivalence \cite[Lemma 7.1.4]{Wor84}, the \emph{shifted reversal} operator may be defined as the coplactic extension of evacuation, in the sense that, we may first rectify $T$, then apply the evacuation operator, and then perform outer \textit{jeu de taquin} slides, in the reverse order defined by the previous rectification, to get a tableau $T^e$ with the same shape of $T$. 
From \cite[Corollaries 2.5, 2.8 and 2.9]{Haim92}, this tableau $T^e$ is shifted dual equivalent to $T$, besides being shifted Knuth equivalent to $\mathsf{c}_n (T)$. In particular, $\mathsf{evac}(T) = T^e$ for $T$ a straight-shaped tableau. 
This process can be re-written with the aid of the shifted tableau switching (Proposition \ref{prop:rev_sw}) to be introduced in the next section. In Section \ref{sec:gd}, we also present a growth diagram to compute the reversal (Proposition \ref{prop:rev_gr}).

\begin{ex}
Consider the following tableau in $\mathsf{ShST}(\nu,3)$, with $\nu = (4,2,1)$:
$$T = \begin{ytableau}
1 & 1 & 1 & 1\\
\none & 2 & 2\\
\none & \none & 3
\end{ytableau}.$$
To obtain $\mathsf{evac}(T)$ we first compute $\mathsf{c}_3 (T)$ and then rectify it:

\begin{align*}
T=\begin{ytableau}
1 & 1 & 1 & 1\\
\none & 2 & 2\\
\none & \none & 3
\end{ytableau} &\xrightarrow{\mathsf{c}_3}
\mathsf{c}_3 (T) = \begin{ytableau}
{} & {} & *(lblue) & 3'\\
\none & 1 & 2' & 3'\\
\none & \none & 2 & 3'\\
\none & \none & \none & 3
\end{ytableau} \rightarrow
\begin{ytableau}
{} & *(lblue)& 2' & 3'\\
\none & 1 & 2 & 3'\\
\none & \none & 3 & 3
\end{ytableau}
\rightarrow
\begin{ytableau}
*(lblue) & 1 & 2' & 3'\\
\none & 2 & 3' & 3\\
\none & \none & 3
\end{ytableau}
\rightarrow
\begin{ytableau}
1 & 2' & 3' & 3\\
\none & 2 & 3'\\
\none & \none & 3
\end{ytableau} = \mathsf{evac}(T).
\end{align*}
\end{ex}

\begin{ex}\label{ex:reversal}
Consider the following tableau in $\mathsf{ShST}(\lambda/\mu,3)$, with $\lambda = (6,5,3,1)$ and $\mu=(4,2)$:

$$
T= \begin{ytableau}
{} & {} & {} & 1' & 1\\
\none & {} & 1 & 1\\
\none & \none & 2 & 2\\
\none & \none & \none & 3
\end{ytableau}.
$$
To compute the reversal $T^e$, we first rectify $T$, recording in reverse order the outer corners resulting of the sequence of inner \textit{jeu de taquin} slides. Then, we compute the evacuation of the obtained straight-shaped tableau and perform outer \textit{jeu de taquin} slides defined by the outer corners of the previous sequence, from the smallest to the largest.

$$\begin{ytableau}
{} & {} & {} & 1' & 1\\
\none & {} & 1 & 1\\
\none & \none & 2 & 2\\
\none & \none & \none & 3
\end{ytableau}
\overset{\mathsf{rect}}\longrightarrow
\begin{ytableau}
1 & 1 & 1 & 1 & {\color{lgray} \bullet_3}\\
\none & 2 & 2 & {\color{lgray} \bullet_1}\\
\none & \none & 3 & {\color{lgray} \bullet_2}\\
\none & \none & \none & {\color{lgray} \bullet_4}
\end{ytableau}
\xrightarrow{\mathsf{evac}}
\begin{ytableau}
1 & 2' & 3' & 3 & {\color{lgray} \bullet_3}\\
\none & 2 & 3' & {\color{lgray} \bullet_1}\\
\none & \none & 3 & {\color{lgray} \bullet_2}\\
\none & \none & \none & {\color{lgray} \bullet_4}
\end{ytableau}\longrightarrow
\begin{ytableau}
{} & {} & {} & 2' & 3'\\
\none & {} & 1 & 3'\\
\none & \none & 2 & 3'\\
\none & \none & \none & 3
\end{ytableau} = T^e.
$$
\end{ex}

\subsection{Shifted tableau switching and type $C$ infusion}\label{sec:switching}
In this section we recall the \emph{shifted tableau switching} algorithm for shifted semistandard tableau due to Choi, Nam and Oh \cite{CNO17}, which will be used later in Section \ref{secBK} to introduce a shifted version of the Bender--Knuth involutions. We also recall how the evacuation and reversal algorithms of Section \ref{ss:evac} can be formulated using the shifted tableau switching.

The tableau switching algorithm for type $A$ is an involution that, given a pair of tableaux $(S,T)$, with $S$ extending $T$, moves one trough another, using switches similar to the \textit{jeu de taquin} slides, regarding the boxes in $S$ as inner corners, and keeping semistandardness, whitin each of the alphabets, in the intermediate steps \cite{BSS96}. Any chosen sequence of those switches produces the same final result 
\cite[Theorem 2.2]{BSS96}. This is not the case for the \emph{shifted tableau switching}, which must be performed following a determined sequence of switches, similarly to the type $A$ \emph{infusion map} 
\cite{TY09,TY16}. As observed in \cite[Remark 8.1]{CNO17}, the resulting pair obtained by the shifted tableau switching can be recovered alternatively, using the type $C$ infusion map of Thomas and Yong \cite{TY09} on a pair of standardized tableaux, followed by the \emph{semistandardization} of Pechenik and Yong \cite{PY17}. The infusion map on type $A$ standard tableaux is a special case of the tableau switching process \cite{BSS96}, in which the order to perform the switches is determined by the entries of the standardization of the inner-most tableau. Unlike the case for ordinary Young tableaux, the shifted tableau switching process comprehends a determined sequence of switches to be performed, which agrees with the one prescribed by the type $C$ infusion map on shifted standard tableaux (Proposition \ref{prop:sw_infu}). Furthermore, it is compatible with standardization \cite[Remark 3.8]{CNO17}. This will be illustrated in Example \ref{ex:inf_sstd}.

We begin by recalling the definitions of the shifted tableau switching for pairs $(A,B)$ of border strip shifted tableaux, with $B$ extending $A$, and for pairs of shifted semistandard tableaux $(S,T)$, with $T$ extending $S$. We omit most of the details and proofs, and refer to \cite{CNO17}. Recall that $\mathbf{i}$ denotes either the letters $i$ or $i' \in [n]'$.
	
\begin{defin}[{\cite[Definition 3.1]{CNO17}}] Let $S(\lambda/\mu)$ be a double border strip, i.e., a shape that does not contain a subset of the form $\{(i,j),(i+1,j+1),(i+2,j+2)\}$. A \emph{shifted perforated $\mathbf{a}$-tableau} in $\lambda/\mu$ is a filling of some of the boxes of $S(\lambda/\mu)$ with letters $a$, $a'$ $\in [n]'$ such that no $a'$-boxes are south-east to any $a$-boxes, there is at most one $a$ per column and one $a'$ per row, and the main diagonal has at most one $\mathbf{a}$.
\end{defin}

The \emph{shape} of $A$, a perforated $\mathbf{a}$-tableau in a double border strip $S(\lambda/\mu)$, consists of the $\mathbf{a}$-filled boxes of $S(\lambda/\mu)$, and is denoted by $sh(A)$. Given a perforated $\mathbf{a}$-tableau $A$ and a perforated $\mathbf{b}$-tableau $B$, the pair $(A,B)$ is said to be a \emph{shifted perforated $(\mathbf{a},\mathbf{b})$-pair} of shape $\lambda/\mu$ if $S(\lambda/\mu)$ is the disjoint union of $sh(A)$ and $sh(B)$. In this case, we denote by $A\sqcup B$ the filling obtained by overlapping $A$ and $B$. 

\begin{ex}
The following are shifted perforated $\mathbf{1}$- and $\mathbf{2}$-tableaux, that form a shifted perforated $(\mathbf{1,2})$-pair of shape $(6,4,3)/(3,1)$:
$$A = \begin{ytableau}
{} & {} & {} & *(lightgray) 1' & *(lightgray) 1 & {} \\
\none & {} & *(lightgray) 1' & {} & {}\\
\none & \none & *(lightgray) 1 & {} & *(lightgray) 1
\end{ytableau}
\qquad
B = \begin{ytableau}
{} & {} & {} & {} & {} & 2' \\
\none & {} & {} & 2' & 2 \\
\none & \none & {} & 2 & {}
\end{ytableau}
\qquad
A \sqcup B = \begin{ytableau}
{} & {} & {} & *(lightgray) 1' & *(lightgray) 1 & 2' \\
\none & {} & *(lightgray) 1' & 2' & 2\\
\none & \none & *(lightgray) 1 & 2 & *(lightgray) 1
\end{ytableau}.
$$
\end{ex}

If $(A,B)$ is a shifted perforated $(\mathbf{a,b})$-pair, one can interchange an $\mathbf{a}$-box with a $\mathbf{b}$-box in $A \sqcup B$ subject to the following moves, called \emph{(shifted) switches}, illustrated in Figure \ref{fig:switches}.

\begin{figure}[h]
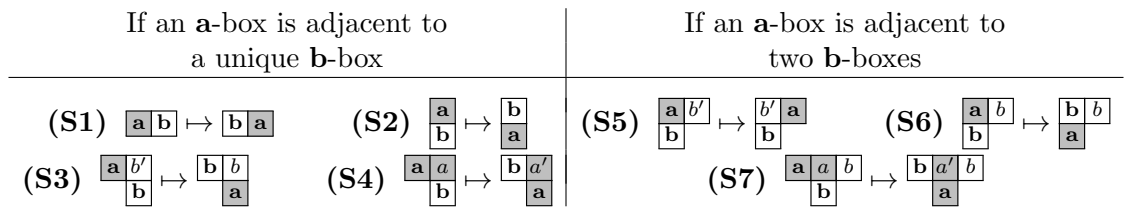

\begin{center}
\begin{tabular}{c|c}
If an $\mathbf{a}$-box is adjacent to & If an $\mathbf{a}$-box is adjacent to\\
a unique $\mathbf{b}$-box  &  two $\mathbf{b}$-boxes  \\
\hline
\\[-0.5em]
\textbf{(S1)}\; $
	\ytableausetup{smalltableaux}	
	\begin{ytableau}
    *(lightgray)\mathbf{a} & \mathbf{b}
  \end{ytableau} \mapsto
  \begin{ytableau}
  \mathbf{b} & *(lightgray)\mathbf{a}  \end{ytableau}$
  \hspace{2em}
  \textbf{(S2)}\; $\begin{ytableau}
    *(lightgray)\mathbf{a}\\
    \mathbf{b}
  \end{ytableau} \mapsto
  \begin{ytableau}
  \mathbf{b}\\
  *(lightgray)\mathbf{a}  \end{ytableau}$ & \textbf{(S5)}\; $\begin{ytableau}
    *(lightgray)\mathbf{a} & b'\\
    \mathbf{b}
  \end{ytableau} \mapsto
  \begin{ytableau}
  b' & *(lightgray)\mathbf{a} \\ \mathbf{b}  \end{ytableau}$ 
  \hspace{2em}
  \textbf{(S6)}\; $\begin{ytableau}
    *(lightgray)\mathbf{a} & b\\
    \mathbf{b}
  \end{ytableau} \mapsto
  \begin{ytableau}
  \mathbf{b} &  b \\*(lightgray)\mathbf{a}  \end{ytableau}$\\
  \\[-1em]

\textbf{(S3)}\; $\begin{ytableau}
    *(lightgray)\mathbf{a} & {b'}\\
    \none & \mathbf{b}
  \end{ytableau} \mapsto
  \begin{ytableau}
  \mathbf{b} &  b \\ \none & *(lightgray)\mathbf{a}  \end{ytableau}$
 \hspace{2em}
  \textbf{(S4)} \;$\begin{ytableau}
    *(lightgray)\mathbf{a} & *(lightgray)a\\
    \none & \mathbf{b}
  \end{ytableau} \mapsto
  \begin{ytableau}
  \mathbf{b} & *(lightgray){a'} \\
  \none & *(lightgray)\mathbf{a}  \end{ytableau}$ &
    \textbf{(S7)} \;$\begin{ytableau}
    *(lightgray)\mathbf{a} & *(lightgray){a} & b\\
    \none & \mathbf{b}
  \end{ytableau} \mapsto
  \begin{ytableau}
  \mathbf{b} & *(lightgray){a'} & b\\
  \none & *(lightgray)\mathbf{a}  \end{ytableau}$ 
\\
\end{tabular}
\end{center}
\caption{The shifted switches \cite[Section 3]{CNO17}.}
\label{fig:switches}
\end{figure}

Note that these switches correspond to the shifted \textit{jeu de taquin}, regarding the $\mathbf{a}$-boxes as empty inner corners. The switches \textbf{(S3)}, \textbf{(S4)}, \textbf{(S7)} are called the \emph{diagonal switches} and can only be performed when $\mathbf{a}$ and $\mathbf{b}$ are in the main diagonal. A $\mathbf{a}$-box is said to be \emph{fully switched} if it can't be switched with any $\mathbf{b}$-boxes, and that $A \sqcup B$ if \emph{fully switched} if every $\mathbf{a}$-box is fully switched.

\begin{obs}\label{rmk:sw_jdt}
With the exception of \textbf{(S4)} and \textbf{(S7)}, the shifted switches in Figure \ref{fig:switches} correspond to shifted \textit{jeu de taquin} moves, regarding the $\mathbf{a}$-boxes as empty corners.
\end{obs}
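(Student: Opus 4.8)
The plan is to verify the statement by inspecting the seven switches of Figure \ref{fig:switches} one at a time and matching each against the slide rules of Definition \ref{def:sh_jdt}, treating every shaded $\mathbf{a}$-box as an empty inner corner and every $\mathbf{b}$-box as an ordinary entry; recall that in the shifted tableau switching it is the $\mathbf{b}$-boxes that travel, sliding ``inward'' into the $\mathbf{a}$-holes. The only subtle point to keep in mind is that a single shifted \textit{jeu de taquin} slide fills the empty box with its right neighbour or its lower neighbour, the choice being forced by semistandardness within the $\mathbf{b}$-alphabet, and that no entry changes its priming during a slide except via the diagonal exception, which always turns a primed entry into an unprimed one (never the reverse) and only for the single box that crosses the main diagonal.

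First I would dispatch the non-diagonal switches. In \textbf{(S1)} (resp.\ \textbf{(S2)}) the $\mathbf{a}$-hole has exactly one occupied neighbour, a $\mathbf{b}$-box to its right (resp.\ directly below it), so the unique legal slide moves that box left (resp.\ up), which is literally the picture drawn for \textbf{(S1)} (resp.\ \textbf{(S2)}). In \textbf{(S5)} and \textbf{(S6)} the $\mathbf{a}$-hole has a $\mathbf{b}$-box both to its right and directly below; running through the two possibilities for the lower box in each case, one checks that exactly one of the two candidate moves keeps rows and columns weakly increasing while respecting the ``at most one unprimed entry per column and one primed entry per row'' condition, and that this forced move is precisely the one displayed. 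Hence \textbf{(S1)}, \textbf{(S2)}, \textbf{(S5)} and \textbf{(S6)} are shifted \textit{jeu de taquin} moves. The one place where care is needed here is this forced-move analysis for \textbf{(S5)} and \textbf{(S6)}, which uses the precise tie-breaking convention for equal candidate entries; this is only implicit in Definition \ref{def:sh_jdt} and is to be imported from \cite{Sag87, Wor84}.

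It remains to handle the genuinely diagonal switches. In \textbf{(S3)} the $\mathbf{a}$-hole sits at a main-diagonal cell $(k,k)$, with a $b'$ in cell $(k,k+1)$ and a $\mathbf{b}$ in cell $(k+1,k+1)$; since cell $(k+1,k)$ does not exist, the only legal first move slides $b'$ left into the diagonal, after which the second move slides the box now below-right up into cell $(k,k+1)$, and the diagonal exception of Definition \ref{def:sh_jdt} unprimes the former $b'$, reproducing exactly the right-hand side of \textbf{(S3)}. Finally, for \textbf{(S4)} and \textbf{(S7)} I would argue that no slide can reproduce them: here the cell $(k,k+1)$ adjacent to the diagonal $\mathbf{a}$-hole is itself an $\mathbf{a}$-box, so, viewing both as empty, a slide from $(k,k)$ has no occupied right or lower neighbour, while a slide from $(k,k+1)$ carries the $\mathbf{b}$ up only as far as $(k,k+1)$ and leaves the diagonal cell empty; the switches \textbf{(S4)} and \textbf{(S7)} instead move the $\mathbf{b}$ all the way into $(k,k)$ \emph{and} replace the $\mathbf{a}$-entry $a$ in cell $(k,k+1)$ by $a'$. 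Since a slide never primes an unprimed entry, these two switches are genuine exceptions, which is the content of the remark; the whole argument is thus a direct comparison of Figure \ref{fig:switches} with Definition \ref{def:sh_jdt}, the only mild obstacle being the tie-break bookkeeping in step two.
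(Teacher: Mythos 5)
The paper states this remark without proof --- it is an observation recorded immediately after Figure \ref{fig:switches} --- so there is no argument of the paper's to compare against; your case-by-case matching of \textbf{(S1)}--\textbf{(S7)} with the slide rules of Definition \ref{def:sh_jdt} is exactly the routine verification the paper leaves implicit, and it is correct, including the identification of \textbf{(S3)} with the exceptional diagonal slide and the observation that \textbf{(S4)} and \textbf{(S7)} move $\mathbf{b}$ diagonally and prime an unprimed $a$, which no slide can do. The one inessential quibble is your worry about a ``tie-breaking convention'' in \textbf{(S5)} and \textbf{(S6)}: no tie-break is needed, since the displayed move is forced outright by weak increase of rows and columns together with the at-most-one-$b$-per-column and at-most-one-$b'$-per-row conditions.
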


\begin{defin}[Shifted switching process \cite{CNO17}]
Let $T = A \sqcup B$ be a perforated $\mathbf{(a,b)}$-pair and suppose that $T := A \sqcup B$ is not fully switched. The \emph{shifted switching process} from $T$ to $\varsigma^m(T)$, with $m$ the least integer such that $\varsigma^m(T)$ is fully switched, is obtained as follows: choose the rightmost $a$-box in $A$ that is a neighbour to the north or west of a $\mathbf{b}$-box, if it exists, otherwise, choose the bottommost $a'$-box in the same conditions, and then apply the adequate switch among \textbf{(S1)}-\textbf{(S7)}, obtaining $\varsigma(T)$. The process is repeated until $\varsigma^m (T)$ is fully switched, where $\varsigma^i (T) := \varsigma (\varsigma^{i-1} (T))$, for $i\geq 2$. We then set $\mathsf{SP}_1(A,B) := \varsigma^m (T)^b$ and $\mathsf{SP}_2(A,B) := \varsigma^m (T)^a$, the tableaux obtained from $\varsigma^m (T)$ considering only the letters $\{b',b\}$ and $\{a',b\}$ respectively, and define
$$\mathsf{SP}(A,B) := (\mathsf{SP}_1 (A,B), \mathsf{SP}_2 (A,B)).$$
This process is depicted by the algorithm in Figure \ref{fig:SP}.
\end{defin}

\begin{figure}[h]
\includegraphics[scale=1]{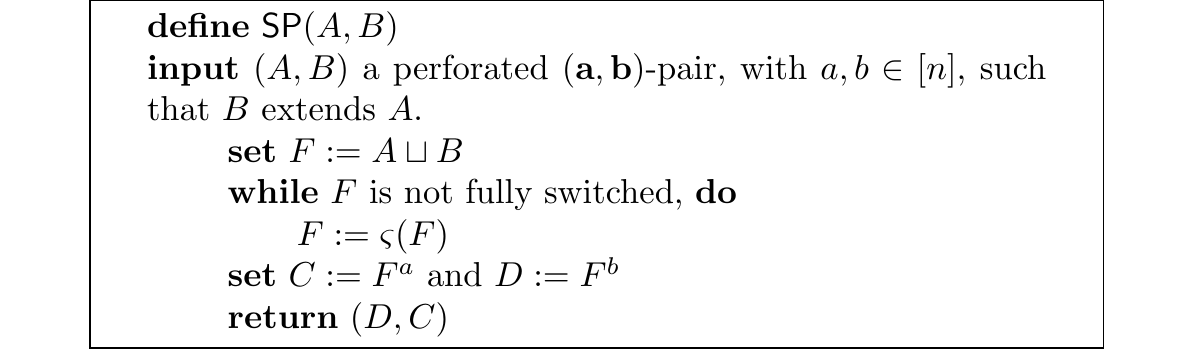}
\caption{Shifted tableau switching for shifted perforated $(\mathbf{a,b})$-pairs \cite[Algorithm 1]{CNO17}.}
\label{fig:SP}
\end{figure}

\begin{ex}
Consider the shifted perforated $(\mathbf{1,2})$-pair of the previous example, which is not fully switched.
$$A \sqcup B = \begin{ytableau}
{} & {} & {} & *(lightgray) 1' & *(lightgray) 1 & 2' \\
\none & {} & *(lightgray) 1' & 2' & 2\\
\none & \none & *(lightgray) 1 & 2 & *(lightgray) 1
\end{ytableau}.$$
The leftmost box filled with 1 (unprimed) is in position $(1,5)$, and it is adjacent to two $\mathbf{2}$-boxes. Hence, we apply the $\mathbf{(S5)}$ switch, obtaining:

$$A \sqcup B = \begin{ytableau}
{} & {} & {} & *(lightgray) 1' & *(lightgray) 1 & 2' \\
\none & {} & *(lightgray) 1' & 2' & 2\\
\none & \none & *(lightgray) 1 & 2 & *(lightgray) 1
\end{ytableau} \xrightarrow{\mathbf{(S5)}} \begin{ytableau}
{} & {} & {} & *(lightgray) 1' & 2' & *(lightgray) 1 \\
\none & {} & *(lightgray) 1' & 2' & 2\\
\none & \none & *(lightgray) 1 & 2 & *(lightgray) 1
\end{ytableau} = \varsigma(A \sqcup B).$$
\end{ex}

This $1$-box is now fully switched. Continuing the shifted switching process, until all $\mathbf{1}$-boxes are fully switched, we obtain:

\begin{align*}
\varsigma(A \sqcup B) = \begin{ytableau}
{} & {} & {} & *(lightgray) 1' & 2' & *(lightgray) 1 \\
\none & {} & *(lightgray) 1' & 2' & 2\\
\none & \none & *(lightgray) 1 & 2 & *(lightgray) 1
\end{ytableau}
&\xrightarrow{\mathbf{(S1)}}
\begin{ytableau}
{} & {} & {} & *(lightgray) 1' & 2' & *(lightgray) 1 \\
\none & {} & *(lightgray) 1' & 2' & 2\\
\none & \none & 2& *(lightgray) 1 & *(lightgray) 1
\end{ytableau}
\xrightarrow{\mathbf{(S5)}}
\begin{ytableau}
{} & {} & {} & *(lightgray) 1' & 2' & *(lightgray) 1 \\
\none & {} & 2' & *(lightgray) 1' & 2\\
\none & \none & 2& *(lightgray) 1 & *(lightgray) 1
\end{ytableau}\\
&\xrightarrow{\mathbf{(S1)}}
\begin{ytableau}
{} & {} & {} & *(lightgray) 1' & 2' & *(lightgray) 1 \\
\none & {} & 2' & 2 & *(lightgray) 1' \\
\none & \none & 2& *(lightgray) 1 & *(lightgray) 1
\end{ytableau}
\xrightarrow{\mathbf{(S5)}}
\begin{ytableau}
{} & {} & {} & 2' & *(lightgray) 1' & *(lightgray) 1 \\
\none & {} & 2' & 2 & *(lightgray) 1' \\
\none & \none & 2& *(lightgray) 1 & *(lightgray) 1
\end{ytableau} = \varsigma^5 (A \sqcup B).
\end{align*}

\begin{obs}
Unlike the tableau switching for Young tableaux \cite{BSS96}, the shifted version depends on the order in which the $\mathbf{a}$-boxes are chosen \cite[Remark 3.7 (i)]{CNO17arxiv}. For instance, if one applies \textbf{(S6)} (corresponding to choose the box with $2'$) instead of \textbf{(S1)} (corresponding to the box with $1$, i.e., the rightmost $1$-box), the obtained filling is not a valid $(\mathbf{1},\mathbf{2})$-pair, as the second row is not weakly increasing:
$$\begin{ytableau}
{} & {} & *(lblue)1' & 2\\
\none & *(lblue)1 & 2
\end{ytableau} \overset{\textbf{(S6)}}\longrightarrow
\begin{ytableau}
{} & {} & 2 & 2\\
\none & *(lblue)1 & *(lblue)1'
\end{ytableau}.$$
\end{obs}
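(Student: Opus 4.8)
The statement is a remark backed by a single explicit computation, so the plan is simply to carry out and check that computation. First I would fix the shifted perforated $(\mathbf{1},\mathbf{2})$-pair $(A,B)$ inside the border strip $S((4,2)/(2))$ in which $A$ carries $1'$ in box $(1,3)$ and $1$ in box $(2,2)$, while $B$ carries $2$ in box $(1,4)$ and $2$ in box $(2,3)$, so that $A\sqcup B$ is the left-hand filling in the remark. I would check at once that $(4,2)/(2)$ is a (double) border strip and that $A$ and $B$ are individually valid shifted perforated tableaux. The point of this particular choice is that \emph{both} $\mathbf{1}$-boxes have a $\mathbf{2}$-box immediately to the east, hence both are simultaneously eligible to be switched first; this is exactly what makes the dependence on the selection order a genuine phenomenon rather than a vacuous one.

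Next I would run the shifted switching process with the prescribed selection rule. The only unprimed $\mathbf{1}$-box eligible for a switch is the $1$ in $(2,2)$; its sole $\mathbf{2}$-neighbour lies due east and is off the main diagonal, so no diagonal switch is triggered and move \textbf{(S1)} is applied. After this the $1$ just produced is fully switched, and the remaining eligible box --- the $1'$ in $(1,3)$ --- again has a single $\mathbf{2}$-neighbour due east, so a second \textbf{(S1)} terminates the process. I would then verify that the output $\mathsf{SP}(A,B)$ is a legitimate perforated $(\mathbf{2},\mathbf{1})$-pair, each component being a valid shifted perforated tableau.

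Then I would repeat the procedure but, at the first step, pick instead the $1'$ in $(1,3)$, violating the rule that unprimed $\mathbf{a}$-boxes are exhausted first. It is flanked by two $\mathbf{2}$-boxes, one unprimed to the east and one to the south, so the applicable move is \textbf{(S6)}, which produces exactly the right-hand filling of the remark. Its $\mathbf{1}$-component has second row reading $1,1'$, which is not weakly increasing (since $1'<1$ in the primed order), so it is not a valid shifted perforated $\mathbf{1}$-tableau; in particular this filling is not a legitimate perforated pair at all, and a fortiori it differs from $\mathsf{SP}(A,B)$.

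Finally I would conclude: the two runs of the process on the same input yield different outcomes --- indeed a non-prescribed choice may fail to produce a valid perforated pair at all --- so the shifted tableau switching genuinely depends on the order in which the $\mathbf{a}$-boxes are selected, which is precisely the content of the remark. For contrast, one recalls that no such phenomenon occurs for ordinary Young tableaux, where by \cite[Theorem~2.2]{BSS96} the order in which the inner boxes are processed is immaterial. There is no substantive obstacle here --- the argument is a finite check. The main points requiring care are the selection of a minimal example in which two distinct $\mathbf{a}$-boxes are simultaneously eligible, so that the ``wrong'' choice is genuinely available, and the bookkeeping of the priming conventions in move \textbf{(S6)} together with the main-diagonal exceptions, so that the intermediate fillings come out correct.
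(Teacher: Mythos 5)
Your verification is correct and follows exactly the paper's approach: the remark's content is precisely the displayed computation, and you carry it out faithfully, correctly reading the chosen box as the $1'$ at $(1,3)$ (the paper's parenthetical ``the box with $2'$'' appears to be a typo for $1'$). Your additional run of the prescribed order, showing that it does produce a valid perforated pair while the alternative choice does not, is a harmless strengthening of the same argument.
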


This process is well defined and it is an involution 
\cite[Theorem 3.5]{CNO17}. It may be extended to pairs of shifted semistandard tableaux $(S,T)$, with $T$ extending $S$.
The result is denoted by $\mathsf{SW}(S,T) := (\mathsf{SW}_1(S,T), \mathsf{SW}_2(S,T))$, where $\mathsf{SW}_1 (S,T) = T'$ and $\mathsf{SW}_2(S,T)=S'$ as depicted in Figure \ref{fig:SW}. 

The shifted tableau switching $\mathsf{SW}$ for pairs of shifted semistandard tableaux is also well defined \cite[Theorem 3.6]{CNO17} and it is an involution 
\cite[Theorem 4.3]{CNO17}. If $S$ is straight-shaped, then $\mathsf{SW}_1(S,T) = \mathsf{rect}(T)$. Similar to the type $A$ case \cite{Az18,BSS96}, if $T$ is a LRS tableau, then so it is $\mathsf{SW}_2(S,T)$, for any straight-shaped shifted $S$ extended by $T$ \cite[Theorem 4.3]{CNO17}. Thus, considering $S := Y_{\mu}$, we have a bijection that sends $T$, a LRS tableau of shape $\lambda/\mu$ and weight $\nu$, to $\mathsf{SW}_2 (Y_{\mu},T)$, a LRS tableau of shape $\lambda/\nu$ and weight $\nu$, giving the symmetry $f^{\lambda}_{\mu\nu} = f^{\lambda}_{\nu\mu}$.

\begin{figure}[h]
\includegraphics[scale=1]{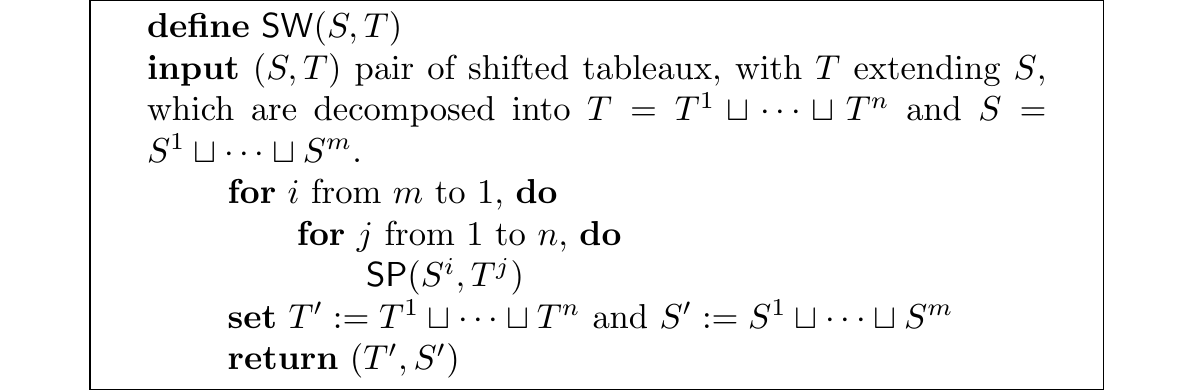}
\caption{Shifted tableau switching for pairs of shifted tableaux \cite[Algorithm 2]{CNO17}.}
\label{fig:SW}
\end{figure}

This shifted tableau switching is compatible with standardization \cite[Remark 3.8]{CNO17}, i.e., 
\begin{equation}\label{eq:sw_std}
\begin{split}
\mathsf{SW} (\mathsf{std}(S),T) &= (id \times \mathsf{std}) \circ \mathsf{SW}(S,T)\\
\mathsf{SW} (S, \mathsf{std}(T)) &= (\mathsf{std} \times id) \circ \mathsf{SW}(S,T)
\end{split}
\end{equation}
where $id \times \mathsf{std}$ denotes the usual Cartesian product of maps, i.e., $(id \times \mathsf{std})(S,T) = (S, \mathsf{std}(T))$. Moreover, since the switches may be regarded as \textit{jeu de taquin} slides, the pair $\mathsf{SW}(S,T)$ is component-wise shifted Knuth equivalent to $(T,S)$, for any pair of shifted semistandard tableaux $(S,T)$, with $T$ extending $S$. Moreover, rewriting \cite[Corollaries 2.8 and 2.9]{Haim92} in terms of the shifted tableau switching yields the following.

\begin{prop}[{\cite[Proposition 3.2]{CNOrev}}]\label{prop:sw_dual}
Let $S$ and $T$ be shifted semistandard tableaux in the same dual equivalence class (in particular, $S$ and $T$ have the same shape). Let $W$ be a semistandard shifted tableau. Then,
	\begin{enumerate}
	\item If $S$ and $T$ extend $W$, then $\mathsf{SW}_2(W,S) = \mathsf{SW}_2(W,T)$, and $\mathsf{SW}_1(W,S)$ is shifted dual equivalent to $\mathsf{SW}_1(W,T)$.
	\item If $W$ extends $S$ and $T$, then $\mathsf{SW}_1(S,W) = \mathsf{SW}_1(T,W)$, and $\mathsf{SW}_2(S,W)$ is shifted dual equivalent to $\mathsf{SW}_2(T,W)$.
	\end{enumerate}
\end{prop}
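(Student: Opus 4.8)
The plan is to reduce to standard tableaux and then recognise the statement as the shifted tableau switching incarnation of the classical behaviour of shifted jeu de taquin along dual equivalence classes. First I would pass to standard tableaux: by the standardization compatibility \eqref{eq:sw_std}, together with the facts that $\mathsf{std}$ commutes with shifted jeu de taquin slides — hence carries a shifted dual equivalence class into a shifted dual equivalence class, and conversely — and that a shifted semistandard tableau is determined by its shape, weight and standardization (Lemma \ref{standard}), it suffices to prove both items when $S$, $T$ and $W$ are standard; the semistandard statements then follow by unstandardizing, the weights in play being fixed since $\mathsf{SW}$ is componentwise shifted Knuth equivalent to the swapped pair. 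For standard tableaux the shifted tableau switching is literally an iterated shifted jeu de taquin: by Remark \ref{rmk:sw_jdt} every switch among \textbf{(S1)}--\textbf{(S7)} is a shifted jeu de taquin move, the diagonal switches \textbf{(S3)}, \textbf{(S4)}, \textbf{(S7)} encoding the diagonal exception of Definition \ref{def:sh_jdt}, and since \textbf{(S4)} and \textbf{(S7)} require two boxes of the same inner alphabet they never occur when the inner tableau is standard, so then no switch alters a priming of an inner box. By Proposition \ref{prop:sw_infu}, $\mathsf{SW}$ on a standard pair is the type $C$ infusion of \cite{TY09}: one vacates the cells of the inner tableau one at a time, in decreasing order of their labels (an order depending only on the inner tableau), each vacation being a single shifted inner jeu de taquin slide of the current outer tableau, with the terminal cells of the successive holes recording the outer output component.

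For item (1), where $W$ is the inner tableau, let $v_k, v_{k-1}, \dots, v_1$ be the cells of $W$ in the order they are vacated; this sequence, and hence the composite $F$ of the corresponding shifted inner slides, depends only on $W$. Thus $\mathsf{SW}_1(W,S)=F(S)$ and $\mathsf{SW}_1(W,T)=F(T)$, where at each step the intermediate shapes produced from $S$ and from $T$ agree, because a single shifted jeu de taquin slide preserves shifted dual equivalence — directly from the definition: if $X$ and $Y$ are shifted dual equivalent and $J$ is a slide at a fixed corner of their common shape, then prefixing any further slide sequence by $J$ is again a slide sequence, so the resulting shapes coincide. Iterating, $\mathsf{SW}_1(W,S)$ is shifted dual equivalent to $\mathsf{SW}_1(W,T)$. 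Moreover the cell at which the $\ell$-th hole terminates is determined by the shapes just before and just after that slide together with $v_\ell$, and all these shapes agree for $S$ and for $T$; since for standard inner $W$ the vacated box keeps its priming, the recording tableaux coincide, i.e. $\mathsf{SW}_2(W,S)=\mathsf{SW}_2(W,T)$.

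Item (2), where $W$ extends $S$ and $T$, is where the argument genuinely changes, and this is the main obstacle: the inner tableaux $S$ and $T$ are now only shifted dual equivalent, not equal, so they prescribe different sequences of vacated cells and the ``fixed sequence'' device of (1) is unavailable. Here $\mathsf{SW}_1(S,W)$ and $\mathsf{SW}_1(T,W)$ are the tableau $W$ put through the two sequences of shifted inner slides prescribed by $S$ and by $T$ respectively; since $S$ and $T$ are shifted dual equivalent standard tableaux they are linked by a chain of shifted elementary dual equivalences, each a local move on three or four consecutive labels, and each such move leaves the outcome of the iterated shifted jeu de taquin on $W$ unchanged while carrying the recording tableau $\mathsf{SW}_2(S,W)$ to one shifted dual equivalent to $\mathsf{SW}_2(T,W)$ — this is exactly \cite[Corollaries 2.8 and 2.9]{Haim92}, which, under the identification of the previous paragraph, is the assertion of (2). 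Alternatively one can run the induction through the switching involution, using Theorem \ref{teo:haiman} for the uniqueness needed to match Knuth classes and shapes, or argue via the symmetric local growth rules of the type $C$ growth diagrams recalled later in the paper.

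To summarise the expected difficulty: the real content is the precise translation between the shifted tableau switching and iterated shifted jeu de taquin — in particular verifying that for a standard inner tableau the switching is exactly the prescribed sequence of shifted inner slides, and that the diagonal switches \textbf{(S3)}, \textbf{(S4)}, \textbf{(S7)} do no harm to the shape data on which everything rests. Once that is in place, item (1) is a direct unwinding of the definition of shifted dual equivalence and item (2) is Haiman's corollaries; the only residual care is bookkeeping the primings on returning to semistandard tableaux through \eqref{eq:sw_std}, which I expect to be routine, along the lines of \cite[Remark 3.8]{CNO17}.
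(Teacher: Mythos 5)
Your argument is correct and follows essentially the same route as the paper, which offers no proof of its own here: it cites \cite[Proposition 3.2]{CNOrev} and remarks that the statement is a rewriting of \cite[Corollaries 2.8 and 2.9]{Haim92} in terms of the shifted tableau switching, and your reduction to standard tableaux via \eqref{eq:sw_std} and Lemma \ref{standard}, the identification of switching on standard pairs with iterated shifted \textit{jeu de taquin} (only \textbf{(S1)}--\textbf{(S2)} occurring), and the appeal to Haiman for item (2) is precisely that rewriting made explicit. Your direct-from-the-definition argument for item (1) is likewise the content of Haiman's Corollary 2.8, so there is no substantive divergence.
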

 
\begin{cor}\label{cor:sw_rev_com}
Let $S$ and $T$ be shifted semistandard tableaux such that $T$ extends $S$. Then,
	\begin{enumerate}
	\item $(\mathsf{SW}_1 (S,T))^e = \mathsf{SW}_1 (S,T^e)$.
	\item $(\mathsf{SW}_2 (S,T))^e = \mathsf{SW}_2 (S^e,T)$.
	\end{enumerate}
\end{cor}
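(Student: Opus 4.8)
The plan is to deduce Corollary~\ref{cor:sw_rev_com} directly from Proposition~\ref{prop:sw_dual} together with the characterization of the reversal $X\mapsto X^e$ via Haiman's theorem (Theorem~\ref{teo:haiman}): $X^e$ is the unique tableau shifted Knuth equivalent to $\mathsf{c}_n(X)$ and shifted dual equivalent to $X$. So to prove each identity it suffices to check that the right-hand side has the same shape as the left-hand side, is shifted dual equivalent to the left-hand side, and is shifted Knuth equivalent to the $\mathsf{c}_n$-complement of the left-hand side.

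For part (1): set $U := \mathsf{SW}_1(S,T)$, which has the same shape as $S$ (the inner shape is fixed under switching), and $V := \mathsf{SW}_1(S,T^e)$, which has the same shape as $S$ as well, so the shapes match. For dual equivalence, recall that $T^e$ is shifted dual equivalent to $T$; since $T$ and $T^e$ both extend $S$, part (1) of Proposition~\ref{prop:sw_dual} gives that $\mathsf{SW}_1(S,T)$ is shifted dual equivalent to $\mathsf{SW}_1(S,T^e)$, i.e. $U^e$ is shifted dual equivalent to $V$ (using that $X\mapsto X^e$ preserves dual equivalence classes, which follows from the coplactic description of reversal in Section~\ref{ss:evac}). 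For Knuth equivalence, I would use that $\mathsf{SW}(S,T)$ is component-wise shifted Knuth equivalent to $(T,S)$, so $U \equiv_k T$ and hence $U^e \equiv_k T^e$ (as $X\mapsto X^e$ sends Knuth class of $X$ to the Knuth class of $\mathsf{c}_n(X)$, and this is compatible with the component-wise Knuth equivalence); similarly $V = \mathsf{SW}_1(S,T^e) \equiv_k T^e \equiv_k \mathsf{c}_n(T)$. Then $U^e \equiv_k \mathsf{c}_n(T)$ as well, so $U^e$ and $V$ are shifted Knuth equivalent, of the same shape, and shifted dual equivalent; by the uniqueness in Theorem~\ref{teo:haiman} applied to $\mathsf{SW}_1(S,T)$ — more precisely, $(\mathsf{SW}_1(S,T))^e$ is \emph{the} tableau Knuth equivalent to $\mathsf{c}_n(\mathsf{SW}_1(S,T))$ and dual equivalent to $\mathsf{SW}_1(S,T)$ — I conclude $U^e = V$. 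Part (2) is entirely analogous, now using part (2) of Proposition~\ref{prop:sw_dual}: $S^e$ is dual equivalent to $S$, both are extended by $T$, so $\mathsf{SW}_2(S,T)$ is dual equivalent to $\mathsf{SW}_2(S^e,T)$, and $\mathsf{SW}_2(S,T)\equiv_k S$ while $\mathsf{SW}_2(S^e,T)\equiv_k S^e \equiv_k \mathsf{c}_n(S)$, so both sides land in the same Knuth class; uniqueness closes the argument.

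The main technical point to nail down carefully is the interaction between the reversal operator $X\mapsto X^e$ and the component-wise shifted Knuth equivalence $\mathsf{SW}(S,T)\sim(T,S)$: one needs that if $X\equiv_k Y$ then $X^e\equiv_k Y^e$ and $\mathsf{c}_n(X)\equiv_k\mathsf{c}_n(Y)$, and that these are the correct "inputs'' so that Haiman's uniqueness identifies $(\mathsf{SW}_i(S,T))^e$ with $\mathsf{SW}_i(\cdot,\cdot)$. This is where Haiman's \cite[Corollaries 2.5, 2.8, 2.9]{Haim92} and the coplactic-extension description of reversal from Section~\ref{ss:evac} do the work; once one observes that $\mathsf{SW}_1$ fixes the inner shape and $\mathsf{SW}_2$ fixes the outer shape, so that the requisite shapes genuinely agree, the rest is a direct application of Theorem~\ref{teo:haiman} and Proposition~\ref{prop:sw_dual}. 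I do not expect any serious obstacle beyond bookkeeping; the statement is essentially "reversal commutes with switching in the obvious way because both are governed by Knuth and dual equivalence classes.''
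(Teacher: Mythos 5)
Your proof is correct and follows essentially the same route as the paper: Proposition \ref{prop:sw_dual} supplies the dual-equivalence half, the component-wise shifted Knuth equivalence of $\mathsf{SW}(S,T)$ with $(T,S)$ together with the coplacticity of $\mathsf{c}_n$ supplies the Knuth-equivalence half, and Haiman's uniqueness (Theorem \ref{teo:haiman}) closes the argument. One small inaccuracy that does not affect validity: $\mathsf{SW}_1(S,T)$ does not have the same shape as $S$ (it shares only the inner boundary of $S$, its outer boundary being some new strict partition $\gamma$); the agreement of shapes you need is already forced by the dual equivalence, so that separate shape check is both redundant and, as stated, incorrect.
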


\begin{proof}
By definition, $T$ is shifted dual equivalent to $T^e$, hence, by Proposition \ref{prop:sw_dual}, we have that $\mathsf{SW}_1 (S,T)$ is shifted dual equivalent to $\mathsf{SW}_1 (S,T^e)$. The shifted tableau switching algorithm ensures that $\mathsf{SW}_1 (S,T^e)$ is shifted Knuth equivalent to $T^e$, and since the operator $\mathsf{c}_n$ is coplactic, we have
$$\mathsf{SW}_1 (S,T^e) \equiv_k T^e \equiv_k \mathsf{c}_n (T) \equiv_k \mathsf{c}_n (\mathsf{SW}_1 (S,T)).$$
Since $\mathsf{SW}_1 (S,T^e)$ is shifted dual equivalent to $\mathsf{SW}_1 (S,T)$ and shifted dual equivalent to $\mathsf{c}_n (\mathsf{SW}_1 (S,T))$, we have that $(\mathsf{SW}_1 (S,T))^e = \mathsf{SW}_2 (S,T^e)$. The proof for the second statement is similar.
\end{proof}

The following result ensures that the shifted tableau switching is also compatible with canonical form.
\begin{prop}
Let $S$, $S'$, $T$ and $T'$ be shifted semistandard tableaux filled in $[n]'$, not necessarily in canonical form, such that $T$ extends $S$ and $T'$ extends $S'$. Suppose that $S$ and $S'$ have the same canonical form, and so do $T$ and $T'$.
Then,
\begin{enumerate}
\item $\mathsf{SW}_1(S,T)$, $\mathsf{SW}_1(S',T)$, $\mathsf{SW}_1(S,T')$ and $\mathsf{SW}_1(S',T')$ have the same canonical form.
\item $\mathsf{SW}_2(S,T)$, $\mathsf{SW}_2(S',T)$, $\mathsf{SW}_2(S,T')$ and $\mathsf{SW}_2(S',T')$ have the same canonical form.
\end{enumerate}
\end{prop}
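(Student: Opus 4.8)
The plan is to bypass any direct analysis of the switching process on non-canonical tableaux and instead reduce the statement to the uniqueness principle coming from Lemma~\ref{standard}: a shifted semistandard tableau is determined, up to canonical form, by the triple (shape, weight, standardization). So it suffices to show that, as $(X,Y)$ ranges over the four pairs $(S,T)$, $(S',T)$, $(S,T')$, $(S',T')$, the tableaux $\mathsf{SW}_1(X,Y)$ all share a common shape, a common weight and a common standardization, and likewise for $\mathsf{SW}_2(X,Y)$; then their canonical forms coincide, which is exactly assertions (1) and (2).

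First I would pin down the standardizations. Composing the two identities in \eqref{eq:sw_std} — applying the first to the pair $(S,T)$, which gives $\mathsf{SW}(\mathsf{std}(S),T)=(\mathsf{SW}_1(S,T),\mathsf{std}(\mathsf{SW}_2(S,T)))$, and then the second to the pair $(\mathsf{std}(S),T)$ — yields
\[
\mathsf{SW}(\mathsf{std}(S),\mathsf{std}(T))=\bigl(\mathsf{std}(\mathsf{SW}_1(S,T)),\ \mathsf{std}(\mathsf{SW}_2(S,T))\bigr),
\]
and the same identity with $(S,T)$ replaced by $(S',T)$, $(S,T')$ or $(S',T')$. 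Since $\mathsf{std}$ is independent of the chosen representative, the hypotheses give $\mathsf{std}(S)=\mathsf{std}(S')$ and $\mathsf{std}(T)=\mathsf{std}(T')$, so the left-hand side is literally the same pair of standard tableaux for all four choices of $(X,Y)$. Hence $\mathsf{std}(\mathsf{SW}_1(X,Y))$ and $\mathsf{std}(\mathsf{SW}_2(X,Y))$ are each independent of $(X,Y)$. Because $\mathsf{std}$ preserves the underlying shape of a tableau, the shapes of $\mathsf{SW}_1(X,Y)$ and of $\mathsf{SW}_2(X,Y)$ are then also each independent of $(X,Y)$.

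It remains to handle the weights. The switches \textbf{(S1)}--\textbf{(S7)} only relocate the $\mathbf{a}$- and $\mathbf{b}$-boxes, possibly priming or unpriming an entry, so the $\mathbf{a}$-part and the $\mathbf{b}$-part each keep their weight throughout the switching process; equivalently, this follows from the component-wise shifted Knuth equivalence of $\mathsf{SW}(X,Y)$ with $(Y,X)$ together with the fact that shifted Knuth moves preserve weight. Thus $\mathsf{wt}(\mathsf{SW}_1(X,Y))=\mathsf{wt}(Y)$ and $\mathsf{wt}(\mathsf{SW}_2(X,Y))=\mathsf{wt}(X)$; since $\mathsf{wt}(S)=\mathsf{wt}(S')$ and $\mathsf{wt}(T)=\mathsf{wt}(T')$, these are again each independent of $(X,Y)$. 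Combining the last two paragraphs, on the four input pairs $\mathsf{SW}_1$ produces tableaux with one common shape, one common weight and one common standardization, hence one common canonical form, and the same holds for $\mathsf{SW}_2$.

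I expect the only delicate point to be the bookkeeping when composing the two halves of \eqref{eq:sw_std}: one must verify that standardizing the inner tableau $S$ leaves the outer output $\mathsf{SW}_1$ unchanged (so that the second identity may be applied on top of the first without interference), which is precisely what \eqref{eq:sw_std} encodes once it is unwound through the conventions $\mathsf{SW}_1(S,T)=T'$, $\mathsf{SW}_2(S,T)=S'$. Everything else is routine; in particular no new compatibility of the switching algorithm with canonical form has to be proved from the switch rules themselves, since it is extracted from compatibility with standardization plus the uniqueness in Lemma~\ref{standard}.
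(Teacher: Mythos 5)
Your proof is correct, but it takes a genuinely different route from the paper's. The paper argues directly on the switching process: it checks, switch by switch through \textbf{(S1)}--\textbf{(S7)}, that the southwesternmost occurrence of each letter $\mathbf{i}$ keeps its relative position, and that the prescribed order of switches (rightmost unprimed $i$ first, then lowest primed $i'$) is insensitive to which representative is chosen, so that the output's canonical class is unchanged. You instead bypass the switch rules entirely and reduce the statement to two facts the paper already invokes elsewhere: compatibility of $\mathsf{SW}$ with standardization, \eqref{eq:sw_std}, which gives $\mathsf{SW}(\mathsf{std}(X),\mathsf{std}(Y)) = (\mathsf{std}\times\mathsf{std})\circ\mathsf{SW}(X,Y)$ and hence shows that $\mathsf{std}(\mathsf{SW}_k(X,Y))$ depends only on the pair $(\mathsf{std}(X),\mathsf{std}(Y))$, which is identical for all four input pairs; and the uniqueness principle of Lemma \ref{standard}, by which shape, weight and standardization determine the canonical class. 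The composition of the two halves of \eqref{eq:sw_std} that you need is exactly the manipulation already carried out in the proof of Proposition \ref{prop:sw_infu}, and the weight bookkeeping is immediate from the component-wise shifted Knuth equivalence of $\mathsf{SW}(X,Y)$ with $(Y,X)$, so no new verification is required. Your argument is arguably cleaner and more robust, since it does not rest on a case check that the paper's proof leaves largely implicit (``this is verified for each switch''); the trade-off is that it leans on the external compatibility statement \cite[Remark 3.8]{CNO17}, and it yields only the equality of outputs, whereas the paper's local analysis additionally shows that the switching order itself is unaffected by the choice of representative.
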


\begin{proof}
It suffices to show that for each $i \in [n]$, the southwesternmost occurrence of $\mathbf{i}$ maintains its relative position. This is verified for each switch \textbf{(S1)}-\textbf{(S7)}. Moreover, the switching algorithm states that one must start with the rightmost unprimed $i$ that has neighbours to its south or east, and then proceeding to the lowest primed $i'$. Hence, the switching path is going from right to left, and then from bottom to top. Therefore, the lowest and leftmost $\mathbf{i}$ is either the last unprimed $i$ or the first primed $i'$, leaving the switching order unchanged.
\end{proof}

\begin{ex}
Consider the following pair of shifted semistandard tableau $(S,T)$, with $T$ (in gray background) extending $S$ (in white background):
$$(S,T) = 
\begin{ytableau}
1 & 1 & 2' & *(lblue) 1 & *(lblue) 2'\\
\none & 2 & *(lblue) 1 & *(lblue)2\\
\none & \none & *(lblue)2 & *(lblue)3
\end{ytableau}$$
To apply the shifted tableau switching $\mathsf{SW}$ to $(S,T)$, we first compute $\mathsf{SP}(S^2, T^1)$:

$$(S,T) = \begin{ytableau}
1 & 1 & 2' & *(lblue) 1 & *(lblue) 2'\\
\none & 2 & *(lblue) 1 & *(lblue)2\\
\none & \none & *(lblue)2 & *(lblue)3
\end{ytableau} \xrightarrow{\textbf{(S1)}}
\begin{ytableau}
1 & 1 & 2' & *(lblue) 1 & *(lblue) 2'\\
\none & *(lblue) 1 & 2 & *(lblue)2\\
\none & \none & *(lblue)2 & *(lblue)3
\end{ytableau} \xrightarrow{\textbf{(S1)}}
\begin{ytableau}
1 & 1 & *(lblue) 1 & 2' & *(lblue) 2'\\
\none & *(lblue) 1 & 2 & *(lblue)2\\
\none & \none & *(lblue)2 & *(lblue)3
\end{ytableau}.
$$
Then, we compute $\mathsf{SP}(S^2,T^2)$:
$$\begin{ytableau}
1 & 1 & *(lblue) 1 & *(lblue) 2' & 2'\\
\none & *(lblue) 1 & 2 & *(lblue)2\\
\none & \none & *(lblue)2 & *(lblue)3
\end{ytableau} \xrightarrow{\textbf{(S6)}}
\begin{ytableau}
1 & 1 & *(lblue) 1 & *(lblue) 2' & 2'\\
\none & *(lblue) 1 & *(lblue)2  & *(lblue)2\\
\none & \none & 2 & *(lblue)3
\end{ytableau} \xrightarrow{\textbf{(S5)}}
\begin{ytableau}
1 & 1 & *(lblue) 1 & *(lblue) 2' & 2'\\
\none & *(lblue) 1 & *(lblue)2  & *(lblue)2\\
\none & \none & 2 & *(lblue)3
\end{ytableau}
$$
Continuing the process, we have:
\begin{align*}
\begin{ytableau}
1 & 1 & *(lblue) 1 & *(lblue) 2' & 2'\\
\none & *(lblue) 1 & *(lblue)2  & *(lblue)2\\
\none & \none & 2 & *(lblue)3
\end{ytableau} & \xrightarrow{\textbf{(S1)}} 
\begin{ytableau}
1 & 1 & *(lblue) 1 & *(lblue) 2' & 2'\\
\none & *(lblue) 1 & *(lblue)2  & *(lblue)2\\
\none & \none & *(lblue)3 & 2
\end{ytableau} \xrightarrow{\textbf{(S7)}} 
\begin{ytableau}
*(lblue)1 & 1' & *(lblue) 1 & *(lblue) 2' & 2'\\
\none & 1 & *(lblue)2  & *(lblue)2\\
\none & \none & *(lblue)3 & 2
\end{ytableau} \xrightarrow{\textbf{(S1)}} 
\begin{ytableau}
*(lblue)1 & *(lblue) 1 & 1' & *(lblue) 2' & 2'\\
\none & 1 & *(lblue)2  & *(lblue)2\\
\none & \none & *(lblue)3 & 2
\end{ytableau} \xrightarrow{\textbf{(S1)}}
\begin{ytableau}
*(lblue)1 & *(lblue) 1 & 1' & *(lblue) 2' & 2'\\
\none & *(lblue)2 & 1 & *(lblue)2\\
\none & \none & *(lblue)3 & 2
\end{ytableau}\\ 
&\xrightarrow{\textbf{(S1)}}
\begin{ytableau}
*(lblue)1 & *(lblue) 1 & 1' & *(lblue) 2' & 2'\\
\none & *(lblue)2 & *(lblue)2 & 1\\
\none & \none & *(lblue)3 & 2
\end{ytableau} \xrightarrow{\textbf{(S5)}}
\begin{ytableau}
*(lblue)1 & *(lblue) 1 & *(lblue) 2' & 1' & 2'\\
\none & *(lblue)2 & *(lblue)2 & 1\\
\none & \none & *(lblue)3 & 2
\end{ytableau} = \mathsf{SW}(S,T).
\end{align*}
\end{ex}

The algorithm to compute the reversal of a shifted tableau $T \in \mathsf{ShST}(\lambda/\mu,n)$ may be described using the shifted tableau switching \cite{CNOrev}. 

\begin{prop}[{\cite[Definition 4.5]{CNOrev}}]\label{prop:rev_sw}
Let $T \in \mathsf{ShST}(\lambda/\mu,n)$ and let $U$ and $W$ be shifted standard tableaux of shape $\mu$. Let $W' :=  \mathsf{SW}_2(W,T)$ and $U' :=  \mathsf{SW}_2(U,T)$. Then,
$$\mathsf{SW}(\mathsf{evac}(\mathsf{rect}(T)), W') = \mathsf{SW}(\mathsf{evac}(\mathsf{rect}(T)), U'),$$
and we have
$$\mathsf{SW}(\mathsf{evac}(\mathsf{rect}(T)), W') = (W, T^e).$$ 

\end{prop}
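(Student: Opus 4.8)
The plan is to deduce this from the characterization of the reversal in Theorem~\ref{teo:haiman} together with the dual-equivalence and Knuth-equivalence behaviour of the shifted tableau switching collected in Proposition~\ref{prop:sw_dual} and Corollary~\ref{cor:sw_rev_com}. The first observation is that the statement $\mathsf{SW}(\mathsf{evac}(\mathsf{rect}(T)), W') = \mathsf{SW}(\mathsf{evac}(\mathsf{rect}(T)), U')$ is essentially immediate from Proposition~\ref{prop:sw_dual}(2): both $U$ and $W$ are standard of the same straight shape $\mu$, so by \cite[Corollary 2.5]{Haim92} they are shifted dual equivalent, and hence $W' = \mathsf{SW}_2(W,T)$ and $U' = \mathsf{SW}_2(U,T)$ are shifted dual equivalent while having the same first component $\mathsf{SW}_1(W,T) = \mathsf{SW}_1(U,T) = \mathsf{rect}(T)$ (here I use that switching out a straight-shaped tableau rectifies the other). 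In particular $W'$ and $U'$ extend the common tableau $\mathsf{evac}(\mathsf{rect}(T))$, which also has shape $\mathsf{rect}(T)$'s shape; applying Proposition~\ref{prop:sw_dual}(1) to the dual-equivalent pair $(W', U')$ over $\mathsf{evac}(\mathsf{rect}(T))$ gives equal second components and dual-equivalent first components. One still needs the first components to be \emph{equal}, not merely dual equivalent — but they are also Knuth equivalent (the switching algorithm outputs something Knuth equivalent to the tableau being moved, and $W'\equiv_k U'$ since both are Knuth equivalent to $T$), and a tableau is determined by its Knuth class together with its dual equivalence class, so they coincide. This settles the first displayed equality.

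For the second equality $\mathsf{SW}(\mathsf{evac}(\mathsf{rect}(T)), W') = (W, T^e)$, write $R := \mathsf{rect}(T)$ and $E := \mathsf{evac}(R)$, and let $(X, Y) := \mathsf{SW}(E, W')$ with $X = \mathsf{SW}_1(E, W')$ and $Y = \mathsf{SW}_2(E, W')$. I would identify $X$ and $Y$ separately by pinning down their Knuth class and their dual equivalence class, then invoking Theorem~\ref{teo:haiman} / Haiman's uniqueness. For $Y = \mathsf{SW}_2(E, W')$: since switching is component-wise Knuth equivalent to the swapped pair, $Y \equiv_k E = \mathsf{evac}(R) \equiv_k \mathsf{c}_n(R) \equiv_k \mathsf{c}_n(T)$ (using coplacticity of $\mathsf{c}_n$ and $R \equiv_k T$). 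On the other hand, using Proposition~\ref{prop:sw_dual} again, the shape/dual-equivalence class of $Y$ can be traced back: $W'$ is dual equivalent to $\mathsf{SW}_2(W, \mathsf{std}(T))$-type data, and the key point is that $Y$ should be dual equivalent to $T$. So $Y$ is Knuth equivalent to $\mathsf{c}_n(T)$ and dual equivalent to $T$; by Theorem~\ref{teo:haiman} this forces $Y = T^e$. For $X = \mathsf{SW}_1(E, W')$: the switching algorithm gives $X \equiv_k W'$, and $W'$ has shape $\mu$... wait, rather: $X$ is Knuth equivalent to $W'$, which is Knuth equivalent to $T$, so $X$ rectifies to $R$. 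And the shape of $X$ is the shape of $W$, namely $\mu$. But I claim $X$ is in fact dual equivalent to $W$: since $W$ is standard of shape $\mu$ and there is essentially a unique such dual equivalence class among tableaux of that shape, and the switching preserves the dual equivalence class of the output first component as the shape forces it. Hence $X$ has shape $\mu$ and — being standard-shaped appropriately — equals $W$ by uniqueness. (More carefully: the involutivity of $\mathsf{SW}$, Corollary~\ref{cor:sw_rev_com}, and the fact that $\mathsf{SW}(W, T) = (\mathsf{rect}(T), W') = (R, W')$ when $W$ is straight-shaped give $\mathsf{SW}(R, W') = (W, \mathsf{SW}_2(R, W'))$; then one computes $\mathsf{SW}_2(E, W')$ versus $\mathsf{SW}_2(R, W')$ using $E = \mathsf{evac}(R)$ and Corollary~\ref{cor:sw_rev_com}(1)–(2), and applies coplacticity to slide from $R$ to $E$.)

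Let me sketch the cleanest route, which uses the involution property directly. Since $W$ is straight-shaped of shape $\mu$, we have $\mathsf{SW}_1(W, T) = \mathsf{rect}(T) = R$ and $\mathsf{SW}_2(W, T) = W'$, i.e., $\mathsf{SW}(W, T) = (R, W')$. Applying the involution $\mathsf{SW}$, $\mathsf{SW}(R, W') = (W, T)$. Now by Corollary~\ref{cor:sw_rev_com}(1), $(\mathsf{SW}_1(R, W'))^e = \mathsf{SW}_1(R, (W')^e)$; and since $R$ is straight-shaped, $\mathsf{SW}_1(R, \cdot) = \mathsf{rect}(\cdot)$, while on straight shape $\mathsf{rect}$ is trivial, so this says $(\cdot)$-consistency. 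The substantive step is to relate $\mathsf{SW}(E, W')$ to $\mathsf{SW}(R, W') = (W, T)$ by noting $E = \mathsf{evac}(R) = R^e$ and pushing the $e$ through the first slot via Corollary~\ref{cor:sw_rev_com}(2): $\mathsf{SW}_2(E, W') = \mathsf{SW}_2(R^e, W') = (\mathsf{SW}_2(R, W'))^e$... but $\mathsf{SW}_2(R, W')$ — careful, $W'$ extends $R$, so $\mathsf{SW}_2(R, W')$ is a tableau of the shape of $R$; from $\mathsf{SW}(R, W') = (W, T)$ we have $\mathsf{SW}_2(R, W') = T$, hence $\mathsf{SW}_2(E, W') = T^e$. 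Similarly $\mathsf{SW}_1(E, W') = \mathsf{SW}_1(R^e, W')$; to handle this one uses that $\mathsf{SW}_1$ of a straight-shaped first argument is rectification, which is insensitive to replacing $R$ by $R^e$ up to the output landing in shape $\mu$ — and a shape-$\mu$ tableau Knuth equivalent to $W'$ (equivalently to $T$, equivalently rectifying to $R$) is necessarily $W$, since $W$ rectifies to $R$ (as $R = \mathsf{evac}$... no: $R = \mathsf{rect}(T)$, and $W$ rectifies to some straight tableau; the match is forced because there is a unique shape-$\mu$ tableau in the Knuth class of $T$ that is also dual equivalent to $W$, by Haiman). Thus $\mathsf{SW}(E, W') = (W, T^e)$, as claimed. \textbf{The main obstacle} I anticipate is the bookkeeping to verify that $W'$ genuinely extends $E = \mathsf{evac}(\mathsf{rect}(T))$ and that the shapes line up so that Corollary~\ref{cor:sw_rev_com} is applicable in both slots — in particular checking that $\mathsf{SW}_2(W,T)$ has the correct outer shape to be switched against the evacuated tableau, and confirming that the dual-equivalence uniqueness argument (Haiman's theorem) closes the gap between "dual equivalent and Knuth equivalent" and "equal" in each of the two identifications.
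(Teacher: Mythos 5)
Your treatment of the main identity $\mathsf{SW}_2(\mathsf{evac}(\mathsf{rect}(T)),W')=T^e$ is correct, and your ``cleanest route'' is a legitimate repackaging of the paper's argument: where the paper establishes directly that $\mathsf{SW}_2(\mathsf{evac}(\mathsf{rect}(T)),U')$ is dual equivalent to $\mathsf{SW}_2(\mathsf{rect}(T),U')=T$ (via Proposition \ref{prop:sw_dual}(2) and involutivity of $\mathsf{SW}$) and Knuth equivalent to $\mathsf{c}_n(T)$, then invokes the uniqueness of Theorem \ref{teo:haiman}, you instead apply Corollary \ref{cor:sw_rev_com}(2) to $(\mathsf{SW}_2(R,W'))^e=\mathsf{SW}_2(R^e,W')$ with $R=\mathsf{rect}(T)$, $R^e=\mathsf{evac}(R)$ and $\mathsf{SW}_2(R,W')=T$. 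Since that corollary is itself proved by exactly the Haiman-uniqueness argument, the two routes are the same in substance; yours is marginally shorter but relies on the corollary having been stated for the second slot. Your identification of the first component, $\mathsf{SW}_1(\mathsf{evac}(\mathsf{rect}(T)),W')=\mathsf{rect}(W')=W$, is also correct and is in fact more than the paper's own proof establishes (the paper only argues about $\mathsf{SW}_2$).

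There is, however, one genuine error, in your argument for the first displayed equality. You claim $W'\equiv_k U'$ ``since both are Knuth equivalent to $T$.'' This is backwards: since $\mathsf{SW}(W,T)$ is component-wise Knuth equivalent to $(T,W)$, one has $\mathsf{SW}_1(W,T)=\mathsf{rect}(T)\equiv_k T$ but $W'=\mathsf{SW}_2(W,T)\equiv_k W$, and likewise $U'\equiv_k U$. Two distinct standard tableaux of the same straight shape $\mu$ are dual equivalent but not Knuth equivalent, so the Knuth-equivalence step fails, and with it your upgrade of ``dual-equivalent first components'' to ``equal first components.'' Indeed that upgrade cannot succeed: by your own (correct) computation the first components are $W$ and $U$ respectively, so the displayed equality of full pairs $\mathsf{SW}(\mathsf{evac}(\mathsf{rect}(T)),W')=\mathsf{SW}(\mathsf{evac}(\mathsf{rect}(T)),U')$ is false whenever $U\neq W$. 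What is true — and what Proposition \ref{prop:sw_dual}(1) gives you immediately, with no Knuth argument needed — is equality of the second components, i.e.\ the well-definedness of $T^e$ independently of the chosen filling of $\mu$; this is all the proposition's first assertion can mean, and it is all the paper's proof establishes.
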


\begin{proof}
Since they have the same straight shape, $\mathsf{rect}(T)$ is shifted dual equivalent to $\mathsf{evac} (\mathsf{rect}(T))$. Thus, by Proposition \ref{prop:sw_dual}, $\mathsf{SW}_2(\mathsf{evac}(\mathsf{rect}(T)), U')$ is dual equivalent to 
$$\mathsf{SW}_2(\mathsf{rect}(T), U') = \mathsf{SW}_2 \big(\mathsf{SW}_1(U,T),\mathsf{SW}_2(U,T)\big) = \mathsf{SW}_2 (\mathsf{SW}(U,T)) = T.$$ 
Furthermore, since $\mathsf{SW}(\mathsf{evac}(\mathsf{rect}(T)),U')$ is component-wise shifted Knuth equivalent to $(U', \mathsf{evac}(\mathsf{rect} (T)))$, we have
$$\mathsf{SW}_2(\mathsf{evac}(\mathsf{rect}(T)), U') \equiv_k \mathsf{evac} (\mathsf{rect}(T)) \equiv_k \mathsf{c}_n (\mathsf{rect}(T)) \equiv_k \mathsf{c}_n (T).$$
The result then follows from the uniqueness of Theorem \ref{teo:haiman}.
\end{proof}

\begin{ex}
To illustrate this procedure, we use the same tableau in Example \ref{ex:reversal}, filling the inner shape $\mu$ with a standard tableau $U$. We note that, since $U = U^1 \sqcup \cdots \sqcup U^{|\mu|}$ is standard, then each $U^i$ consists of a single box filled with (unprimed) $i$. Thus, the switches \textbf{(S4)} and \textbf{(S7)} will not be used during the shifted tableau switching process.
$$\begin{ytableau}
{} & {} & {} & 1' & 1\\
\none & {} & 1 & 1\\
\none & \none & 2 & 2\\
\none & \none & \none & 3
\end{ytableau}
\longrightarrow
\begin{ytableau}
*(lblue)1 & *(lblue)2 & *(lblue)3 & 1' & 1\\
\none & *(lblue)2 & 1 & 1\\
\none & \none & 2 & 2\\
\none & \none & \none & 3
\end{ytableau}
\overset{\mathsf{SW}}\longrightarrow
\begin{ytableau}
1 & 1 & 1 & 1 & *(lblue)3\\
\none & 2 & 2 & *(lblue)1\\
\none & \none & 3 & *(lblue)2\\
\none & \none & \none & *(lblue)4
\end{ytableau}
\overset{\mathsf{evac} \times id}\longrightarrow
\begin{ytableau}
1 & 2' & 3' & 3 & *(lblue)3\\
\none & 2 & 3' & *(lblue)1\\
\none & \none & 3 & *(lblue)2\\
\none & \none & \none & *(lblue)4
\end{ytableau}
\overset{\mathsf{SW}}\longrightarrow
\begin{ytableau}
*(lblue)1 & *(lblue)2 & *(lblue)3 & 2' & 3'\\
\none & *(lblue)4 & 1 & 3'\\
\none & \none & 2 & 3'\\
\none & \none & \none & 3
\end{ytableau}= (U, T^e).
$$
\end{ex}

As remarked before, the shifted tableau switching process could be obtained by first standardizing the involved tableaux, apply the \emph{type $C$ infusion involution} \cite{TY09}, and then the \emph{shifted semistandardization} process \cite{PY17}. This is due to the shifted tableau switching being compatible with standardization and the fact that, on shifted standard tableaux, the order in which the shifted are performed (see Figure \ref{fig:SW}) agrees with the one determined by the type $C$ infusion map (see Lemma \ref{lem:sw_inf_std} below).

\begin{defin}[\cite{TY09}]\label{def:infusionC}
Let $(S,T)$ be a pair of shifted standard tableaux, of shapes $\mu/\nu$ and $\lambda/\mu$ (thus, $T$ extends $S$), respectively. The \emph{type $C$ infusion} of the pair $(S,T)$, denoted by $\mathsf{infusion}(S,T) := (\mathsf{infusion}_1 (S,T), \mathsf{infusion}_2 (S,T))$, is the pair $(X,Y)$ of standard tableaux of shapes $\gamma/\nu$ and $\lambda/\gamma$, for some strict partition $\gamma$ with $|\gamma|=|\lambda|-|\mu|$, obtained in the following way: 
	\begin{enumerate}
	\item Let $m$ be the largest entry of $S$. Then, its box is a inner corner for $\lambda/\mu$, and we perform \textit{jeu de taquin} on $T$ starting with that inner corner, until an outer corner is obtained. Place $m$ on that outer corner and never move it again for the duration of the process.
	\item Repeat the last step for the remaining entries of $S$, going from the largest to the smallest.
	\item Then, $X$ is tableau obtained after performing all the shifted \textit{jeu de taquin} slides on $T$ determined by the entries of $S$, and $Y$ is the tableau obtained by placing the entries of $S$ on the resulting outer corners.
	\end{enumerate}
\end{defin}
The shifted tableau $\mathsf{infusion}_1 (S,T)$ is then the result of applying shifted \textit{jeu de taquin} inner slides to $T$ (determined by $S$), and $\mathsf{infusion}_2 (S,T)$ encodes the order in which those slides were performed. In particular, if $S$ has straight-shape, then $\mathsf{infusion}_1 (S,T) = \mathsf{rect}(T)$. 

If $(S,T)$ is a pair of shifted standard tableaux, then there are no repeated entries, nor primed ones, thus the algorithm to compute $\mathsf{SW}(S,T)$ requires only the switches $\mathbf{(S1)}$ and $\mathbf{(S2)}$ of Figure \ref{fig:switches}. This switches correspond to shifted \textit{jeu de taquin} slides in a shifted standard tableau, as the exceptional slide (see Definition \ref{def:sh_jdt}) cannot occur. Moreover, the algorithm for the shifted tableau switching in Figure \ref{fig:SW} states that the shifted switches must be performed from the largest entry of $S$ to the smallest, which agrees with the order defined by the type $C$ infusion (Definition \ref{def:infusionC}). Thus, we have the following.

\begin{lema}\label{lem:sw_inf_std}
Let $(S,T)$ be a pair of shifted standard tableaux, with $T$ extending $S$. Then, $\mathsf{SW}(S,T) = \mathsf{infusion} (S,T)$.
\end{lema}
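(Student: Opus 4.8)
The plan is to unwind both sides of the asserted identity $\mathsf{SW}(S,T) = \mathsf{infusion}(S,T)$ to their defining algorithms and observe that they literally coincide step by step. The key point is that for a pair of \emph{standard} shifted tableaux $(S,T)$ the only data distinguishing the two procedures — (i) which switches/slides are available at each step, and (ii) the order in which the boxes of $S$ are processed — agree. First I would verify point (i): since $S$ and $T$ are standard, no entry is repeated and no entry is primed, so an $\mathbf{a}$-box is never adjacent to two $\mathbf{b}$-boxes of the same value and the diagonal exception of Definition \ref{def:sh_jdt} cannot be triggered. Hence among the seven switches $\mathbf{(S1)}$--$\mathbf{(S7)}$ only $\mathbf{(S1)}$ (move left) and $\mathbf{(S2)}$ (move up) can ever be applied, and these are exactly the two elementary shifted \textit{jeu de taquin} moves used in Definition \ref{def:infusionC}. (This is essentially Remark \ref{rmk:sw_jdt}, restricted to the standard setting.)

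Next I would verify point (ii): the order of processing. In the switching algorithm of Figure \ref{fig:SW}, the $\mathbf{a}$-boxes of $S = S^1 \sqcup \cdots \sqcup S^m$ are handled border strip by border strip from the largest label down to the smallest; since $S$ is standard each $S^k$ is a single box carrying the value $k$, so the switching process empties the box of $S$ labelled $m$ first, slides the relevant entry of $T$ through, then moves to the box labelled $m-1$, and so on. This is word-for-word the order prescribed in Definition \ref{def:infusionC}(1)--(2): start from the largest entry $m$ of $S$, perform \textit{jeu de taquin} on $T$ from that inner corner to an outer corner, freeze the entry $m$ there, and repeat downward. Because at each stage the elementary moves available (by point (i)) are the same and the choice of which inner corner to vacate next is the same, the two recursions produce the identical sequence of intermediate fillings; in particular the resulting outer corners, hence the recording tableau, coincide. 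Therefore $\mathsf{SW}_1(S,T) = \mathsf{infusion}_1(S,T)$ and $\mathsf{SW}_2(S,T) = \mathsf{infusion}_2(S,T)$.

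The only genuinely substantive check — and the place where the shifted case is not quite as trivial as type $A$ — is confirming that within a single inner slide the two procedures really make the same elementary choices, i.e. that the shifted \textit{jeu de taquin} slide used by the infusion map (which, per Definition \ref{def:sh_jdt}, has its own rule at the main diagonal) agrees with the sequence of $\mathbf{(S1)}$/$\mathbf{(S2)}$ switches used by $\mathsf{SW}$. But for standard tableaux the diagonal exception never fires (there is no priming to flip), so the slide is the ordinary "move in the smaller neighbour" rule, which is exactly what $\mathbf{(S1)}$ and $\mathbf{(S2)}$ encode when the $\mathbf{a}$-box plays the role of the empty box. I expect this diagonal-case bookkeeping to be the main (and only mild) obstacle; once it is dispatched, the identity follows by a direct induction on $m$, the number of boxes of $S$, comparing the two algorithms one frozen entry at a time. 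I would phrase the final writeup as: "Both $\mathsf{SW}$ and $\mathsf{infusion}$ are computed by the same sequence of elementary shifted \textit{jeu de taquin} slides on $T$, performed in the same (decreasing) order dictated by $S$; the statement follows by comparing Definition \ref{def:infusionC} with the algorithm of Figure \ref{fig:SW}."
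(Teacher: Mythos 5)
Your proposal is correct and follows essentially the same route as the paper: the paper's justification (given in the paragraph preceding the lemma) is precisely that standardness rules out repeated and primed entries, so only the switches $\mathbf{(S1)}$ and $\mathbf{(S2)}$ occur, these are ordinary shifted \textit{jeu de taquin} moves with no diagonal exception, and the largest-to-smallest processing order of the switching algorithm matches that of the type $C$ infusion. Your writeup is somewhat more detailed in spelling out the induction on the boxes of $S$, but the substance is identical.
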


\begin{ex}\label{ex:infu}
Consider the following pair of shifted standard tableaux
$$(S,T) = \begin{ytableau}
1 & 2 & 3 & *(lblue) 2 & *(lblue) 3\\
\none & 4 & *(lblue) 1 & *(lblue)5\\
\none & \none & *(lblue)4 & *(lblue)6
\end{ytableau}.$$
To compute $\mathsf{infusion}(S,T)$ we start with the largest entry of $S$, and regarding its box as inner corner, perform \textit{jeu de taquin} slides:
$$\begin{ytableau}
1 & 2 & 3 & *(lblue) 2 & *(lblue) 3\\
\none & 4 & *(lblue) 1 & *(lblue)5\\
\none & \none & *(lblue)4 & *(lblue)6
\end{ytableau} \longrightarrow
\begin{ytableau}
1 & 2 & 3 & *(lblue) 2 & *(lblue) 3\\
\none & *(lblue) 1 & 4 & *(lblue)5\\
\none & \none & *(lblue)4 & *(lblue)6
\end{ytableau} \longrightarrow
\begin{ytableau}
1 & 2 & 3 & *(lblue) 2 & *(lblue) 3\\
\none & *(lblue) 1 & *(lblue)4 & *(lblue)5\\
\none & \none & 4 & *(lblue)6
\end{ytableau} \longrightarrow
\begin{ytableau}
1 & 2 & 3 & *(lblue) 2 & *(lblue) 3\\
\none & *(lblue) 1 & *(lblue)4 & *(lblue)5\\
\none & \none & *(lblue)6 & 4 
\end{ytableau}$$

Continuing with the next largest entries of $S$, we obtain:
\begin{align*}
\begin{ytableau}
1 & 2 & 3 & *(lblue) 2 & *(lblue) 3\\
\none & *(lblue) 1 & *(lblue)4 & *(lblue)5\\
\none & \none & *(lblue)6 & 4 
\end{ytableau} &\longrightarrow 
\begin{ytableau}
1 & 2 & *(lblue) 2 & 3 & *(lblue) 3\\
\none & *(lblue) 1 & *(lblue)4 & *(lblue)5\\
\none & \none & *(lblue)6 & 4 
\end{ytableau} \longrightarrow
\begin{ytableau}
1 & 2 & *(lblue) 2 & *(lblue)3 & 3\\
\none & *(lblue) 1 & *(lblue)4 & *(lblue)5\\
\none & \none & *(lblue)6 & 4 
\end{ytableau} \longrightarrow
\begin{ytableau}
1 & 2 & *(lblue) 2 & *(lblue)3 &  3\\
\none & *(lblue) 1 & *(lblue)4 & *(lblue)5\\
\none & \none & *(lblue)6 & 4 
\end{ytableau} \longrightarrow
\begin{ytableau}
1 & *(lblue)1 & *(lblue) 2 & *(lblue)3 & 3\\
\none &  2 & *(lblue)4 & *(lblue)5\\
\none & \none & *(lblue)6 & 4 
\end{ytableau}\\
&\longrightarrow
\begin{ytableau}
1 & *(lblue)1 & *(lblue) 2 & *(lblue)3 & 3\\
\none & *(lblue)4 & 2 & *(lblue)5\\
\none & \none & *(lblue)6 & 4 
\end{ytableau} \longrightarrow
\begin{ytableau}
1 & *(lblue)1 & *(lblue) 2 & *(lblue)3 & 3\\
\none & *(lblue)4 & *(lblue)5 & 2\\
\none & \none & *(lblue)6 & 4 
\end{ytableau} \longrightarrow
\begin{ytableau}
1 & *(lblue)1 & *(lblue) 2 & *(lblue)3 & 3\\
\none & *(lblue)4 & *(lblue)5 & 2\\
\none & \none & *(lblue)6 & 4 
\end{ytableau} \longrightarrow 
\begin{ytableau}
*(lblue)1 & 1 & *(lblue) 2 & *(lblue)3 &  3\\
\none & *(lblue)4 & *(lblue)5 & 2\\
\none & \none & *(lblue)6 & 4 
\end{ytableau}\\ 
&\longrightarrow
\begin{ytableau}
*(lblue)1 & *(lblue)2 & 1 & *(lblue)3 & 3\\
\none & *(lblue)4 & *(lblue)5 & 2\\
\none & \none & *(lblue)6 & 4 
\end{ytableau} \longrightarrow
\begin{ytableau}
*(lblue)1 & *(lblue)2 & *(lblue) 3 & 1 & 3\\
\none & *(lblue)4 & *(lblue)5 & 2\\
\none & \none & *(lblue)6 &4 
\end{ytableau} = \mathsf{infusion}(S,T).
\end{align*}
\end{ex}

\begin{prop}\label{prop:sw_infu}
Let $(S,T)$ be a pair of shifted semistandard tableaux, with $T$ extending $S$, and such that $\mathsf{wt}(T)=\nu_T$ and $\mathsf{wt}(S) = \nu_S$. Then,
$$\mathsf{SW}(S,T) = (\mathsf{sstd}_{\nu_T} \times \mathsf{sstd}_{\nu_S}) \circ \mathsf{infusion} (\mathsf{std}(S),\mathsf{std}(T)).$$
\end{prop}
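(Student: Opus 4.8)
The plan is to combine the two compatibility facts that have already been established in the excerpt. First I would recall equation \eqref{eq:sw_std}, which says that the shifted tableau switching commutes with standardization in each argument:
$$\mathsf{SW}(\mathsf{std}(S),T) = (id \times \mathsf{std}) \circ \mathsf{SW}(S,T), \qquad \mathsf{SW}(S,\mathsf{std}(T)) = (\mathsf{std} \times id) \circ \mathsf{SW}(S,T).$$
Applying both, $\mathsf{SW}(\mathsf{std}(S),\mathsf{std}(T)) = (\mathsf{std} \times \mathsf{std}) \circ \mathsf{SW}(S,T)$. Next, by Lemma \ref{lem:sw_inf_std}, since $(\mathsf{std}(S),\mathsf{std}(T))$ is a pair of shifted \emph{standard} tableaux with $\mathsf{std}(T)$ extending $\mathsf{std}(S)$, we have $\mathsf{SW}(\mathsf{std}(S),\mathsf{std}(T)) = \mathsf{infusion}(\mathsf{std}(S),\mathsf{std}(T))$. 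Hence
$$\mathsf{infusion}(\mathsf{std}(S),\mathsf{std}(T)) = (\mathsf{std} \times \mathsf{std}) \circ \mathsf{SW}(S,T).$$

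The remaining task is to invert the standardization on each component. Write $\mathsf{SW}(S,T) = (T',S')$, so that $T' = \mathsf{SW}_1(S,T)$ and $S' = \mathsf{SW}_2(S,T)$. I would argue that $\mathsf{wt}(T') = \nu_T$ and $\mathsf{wt}(S') = \nu_S$: indeed, the switching algorithm merely relocates the $\mathbf{a}$- and $\mathbf{b}$-boxes through one another without changing how many of each letter appear (in the semistandard setting, $\mathsf{SW}_1(S,T)$ carries the letters originally in $T$ and $\mathsf{SW}_2(S,T)$ those originally in $S$, up to the priming conventions, which do not affect weight), so the weight of each component is preserved. Then $\mathsf{std}(T') = \mathsf{infusion}_1(\mathsf{std}(S),\mathsf{std}(T))$ is a shifted \emph{standard} tableau of the same shape as $T'$ and, because $T'$ has weight $\nu_T$, it is precisely $\mathsf{std}$ applied to a $\nu_T$-Pieri filled tableau; by Pechenik--Yong (\cite[Lemma 9.5, Theorem 9.6]{PY17}, recalled in the excerpt after Definition \ref{def:sstd}), $\nu_T$ is admissible for $\mathsf{std}(T')$ and $\mathsf{sstd}_{\nu_T}(\mathsf{std}(T')) = T'$. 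The same applies to $S'$ with $\nu_S$. Applying $\mathsf{sstd}_{\nu_T} \times \mathsf{sstd}_{\nu_S}$ to both sides of the displayed identity therefore yields
$$\mathsf{SW}(S,T) = (T',S') = (\mathsf{sstd}_{\nu_T} \times \mathsf{sstd}_{\nu_S}) \circ \mathsf{infusion}(\mathsf{std}(S),\mathsf{std}(T)),$$
which is the claim.

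The main obstacle I anticipate is the bookkeeping around canonical form and the priming exceptions: one must make sure that ``$\mathsf{std}(\mathsf{SW}_i(S,T))$'' and ``$\mathsf{infusion}_i(\mathsf{std}(S),\mathsf{std}(T))$'' really are equal as standard tableaux (not merely of the same shape), and that the priming produced by step (2) of $\mathsf{sstd}_\nu$ reconstructs exactly the canonical-form representative output by $\mathsf{SW}$. Both points are already covered by the cited results — the compatibility with standardization \cite[Remark 3.8]{CNO17} behind \eqref{eq:sw_std}, Lemma \ref{lem:sw_inf_std}, and the $\mathsf{std}$/$\mathsf{sstd}_\nu$ bijection of \cite[Theorem 9.6]{PY17} together with the earlier Proposition on compatibility of $\mathsf{SW}$ with canonical form — so the proof is essentially a matter of chaining them in the right order and checking that the weights of the output components are as expected. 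I would keep the write-up short, citing \eqref{eq:sw_std}, Lemma \ref{lem:sw_inf_std}, and the Pechenik--Yong semistandardization bijection, and spend one sentence justifying the weight claim for $\mathsf{SW}_1$ and $\mathsf{SW}_2$.
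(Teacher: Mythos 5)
Your proposal is correct and follows essentially the same route as the paper's proof: it chains the standardization compatibility \eqref{eq:sw_std}, Lemma \ref{lem:sw_inf_std}, and the Pechenik--Yong fact that $\mathsf{sstd}_{\nu}\circ\mathsf{std}$ is the identity on tableaux of weight $\nu$, together with the observation that $\mathsf{SW}_1(S,T)$ and $\mathsf{SW}_2(S,T)$ have weights $\nu_T$ and $\nu_S$. The only difference is presentational (you invert the standardization at the end rather than starting from the identity $(\mathsf{sstd}_{\nu_T}\times\mathsf{sstd}_{\nu_S})\circ(\mathsf{std}\times\mathsf{std})\circ\mathsf{SW}(S,T)=\mathsf{SW}(S,T)$), which changes nothing of substance.
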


\begin{proof}
Since $\mathsf{SW}_1(S,T)$ and $\mathsf{SW}_2(S,T)$ have weights $\nu_{T}$ and $\nu_{S}$, respectively, then by \cite[Lemma 9.5]{PY17} we have
$$(\mathsf{sstd}_{\nu_T} \times \mathsf{sstd}_{\nu_S}) \circ (\mathsf{std} \times \mathsf{std}) ( \mathsf{SW}(S,T)) = \mathsf{SW}(S,T).$$ 
By Lemma \ref{lem:sw_inf_std} and \eqref{eq:sw_std}, we have
\begin{align*}
(\mathsf{sstd}_{\nu_T} \times &\mathsf{sstd}_{\nu_S}) \circ \mathsf{infusion} (\mathsf{std}(S),\mathsf{std}(T)) =\\ 
&=(\mathsf{sstd}_{\nu_T} \times \mathsf{sstd}_{\nu_S}) \circ \mathsf{SW} (\mathsf{std}(S),\mathsf{std}(T))\\
&= (\mathsf{sstd}_{\nu_T} \times \mathsf{sstd}_{\nu_S}) \circ (id \times \mathsf{std}) \circ \mathsf{SW} (S,\mathsf{std}(T))\\
&=(\mathsf{sstd}_{\nu_T} \times \mathsf{sstd}_{\nu_S}) \circ (id \times \mathsf{std}) \circ (\mathsf{std} \times id) \circ \mathsf{SW} (S,T)\\
&= (\mathsf{sstd}_{\nu_T} \times \mathsf{sstd}_{\nu_S}) \circ (\mathsf{std} \times \mathsf{std}) \circ \mathsf{SW} (S,T)\\
&= \mathsf{SW}(S,T).
\end{align*}
\end{proof}

\begin{ex}\label{ex:inf_sstd}
We illustrate the process with the shifted tableau pair $(S,T)$ from a previous example:
$$(S,T) = 
\begin{ytableau}
1 & 1 & 2' & *(lblue) 1 & *(lblue) 2'\\
\none & 2 & *(lblue) 1 & *(lblue)2\\
\none & \none & *(lblue)2 & *(lblue)3
\end{ytableau} \longrightarrow 
\begin{ytableau}
1 & 2 & 3 & *(lblue) 2 & *(lblue) 3\\
\none & 4 & *(lblue) 1 & *(lblue)5\\
\none & \none & *(lblue)4 & *(lblue)6
\end{ytableau} = (\mathsf{std} (S), \mathsf{std}(T)).$$
From Example \ref{ex:infu}, we have
$$\begin{ytableau}
1 & 2 & 3 & *(lblue) 2 & *(lblue) 3\\
\none & 4 & *(lblue) 1 & *(lblue)5\\
\none & \none & *(lblue)4 & *(lblue)6
\end{ytableau} \overset{\mathsf{infusion}}\longrightarrow
\begin{ytableau}
*(lblue)1 & *(lblue)2 & *(lblue) 3 & 1 & 3\\
\none & *(lblue)4 & *(lblue)5 & 2\\
\none & \none & *(lblue)6 &4 
\end{ytableau}.$$
Since we have $\mathsf{wt} (T) = (2,3,1)$ and $\mathsf{wt}(S) = (2,2)$, we now apply the semistandardization process with respect to this compositions, respectively:

$$
\begin{ytableau}
*(lblue)1 & *(lblue) 2 & *(lblue) 3 & 1 & 3\\
\none & *(lblue)4 & *(lblue)5 & 2\\
\none & \none & *(lblue)6 & 4
\end{ytableau} \longrightarrow
\begin{ytableau}
*(lblue)1_1 & *(lblue) 1_2 & *(lblue) 2_3 & 1_1 & 2_3\\
\none & *(lblue)2_4 & *(lblue)2_5 & 1_2\\
\none & \none & *(lblue)3_6 & 2_4
\end{ytableau} \longrightarrow
\begin{ytableau}
*(lblue)1 & *(lblue) 1 & *(lblue) 2' & 1' & 2'\\
\none & *(lblue)2 & *(lblue)2 & 1\\
\none & \none & *(lblue)3 & 2
\end{ytableau} = \mathsf{SW}(S,T).$$
\end{ex}

The authors in \cite{CNO17} present another algorithm for tableaux of straight shape, that coincides with the shifted evacuation (Section \ref{ss:evac}), using the shifted tableau switching\footnote{The authors use the terminology \emph{shifted generalized evacuation} for this algorithm.}. We consider the auxiliary alphabet $-[n]' := \{-n' \!<\! -n \!<\! \cdots \!<\! -1' \!<\! -1\}$ and $-[n]' \sqcup [n]' := \{-n' \!<\! -n \!<\! \cdots \!<\! -1' \!<\! -1 \!<\! 1' \!<\! 1 \!<\! \cdots \!<\! n' \!<\! n\}$. Given $T \in \mathsf{ShST}(\lambda/\mu,n)$ and $k \in [n]$, we define $\mathsf{neg}_k (T)$ to be the tableau (filled in $-[n]' \sqcup [n]'$) obtained from $T$ by replacing each $k$ with $-k$ and each $k'$ with $-k'$, leaving the remaining letters unchanged. If $T$ is a shifted tableau filled in $-[n]'$, we define $\mathsf{d}_n (T)$ to be the tableau (filled in $[n]'$) obtained from $T$ by replacing each $-i$ with $\theta_{1,n} (i)$ and each $-i'$ with $\theta_{1,n}(i')$, that is,
\begin{equation}\label{eq:d_neg}
\mathsf{d}_n (T) = \theta_{1,n} \mathsf{neg}_1^{-1} \cdots \mathsf{neg}_n^{-1} (T).
\end{equation}
Consider the algorithm presented in Figure \ref{fig:SGE}, defined on the alphabet $-[n]' \sqcup [n]'$ (we note that the use of negative entries ensure that after those will not move again after being fully switched). This algorithm coincides with the shifted evacuation for straight-shaped tableaux \cite[Theorem 5.6]{CNO17}.

\begin{figure}[h!]
\includegraphics[scale=1]{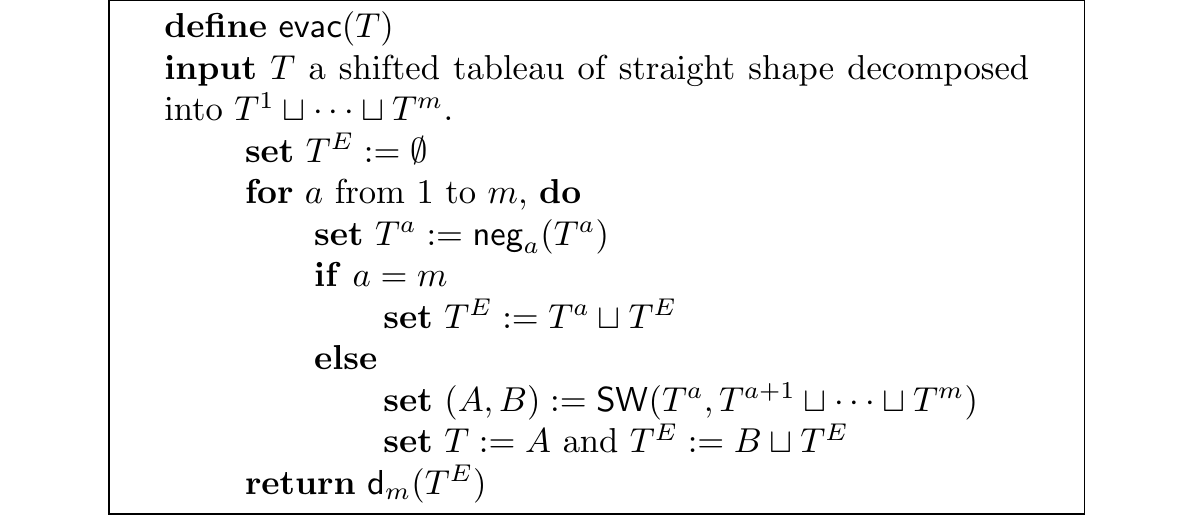}
\caption{The shifted evacuation algorithm \cite[Algorithm 4]{CNO17}.}
\label{fig:SGE}
\end{figure}

Given $T \in \mathsf{ShST}(\nu,n)$, the algorithm in Figure \ref{fig:SGE} may be easily modified to obtain a restriction $\mathsf{evac}_k$ to the alphabet $\{1, \ldots, k\}'$, for $k \leq n$, by applying $\mathsf{evac}$ to $T^1 \sqcup \cdots \sqcup T^k$ and maintaining $T^{k+1} \sqcup \cdots \sqcup T^n$ unchanged. This is depicted in Figure \ref{fig:SGE_k}. It is clear that $\mathsf{evac}_n = \mathsf{evac}$.

\begin{figure}[h]
\includegraphics[scale=1]{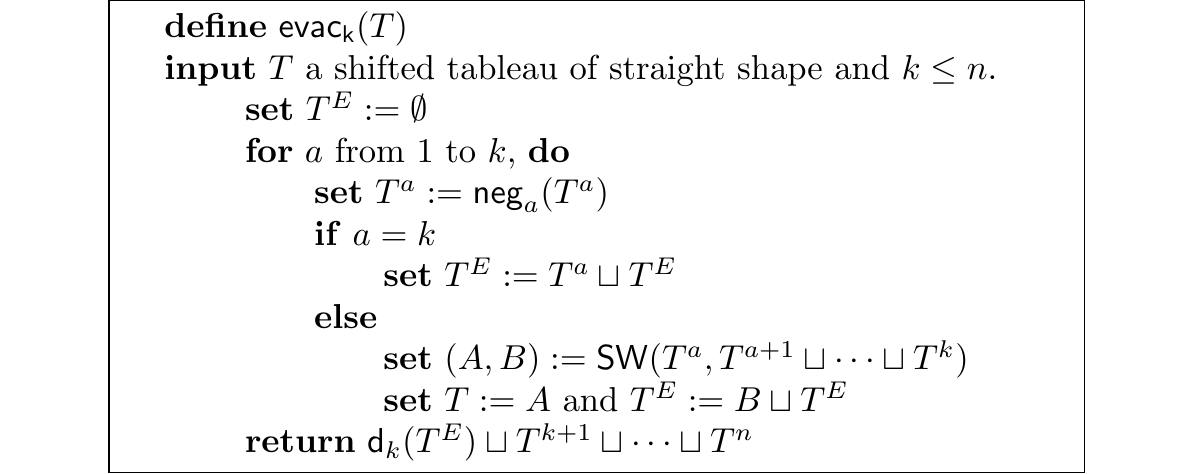}
\caption{The shifted evacuation algorithm, restricted to the letters $[1,k]'$.}
\label{fig:SGE_k}
\end{figure}

\begin{ex}
Let $T=\begin{ytableau}
1 & 1 & 2' & 2 & 3\\
\none & 2 & 2 & 3'\\
\none & \none & 3
\end{ytableau}$. Computing $\mathsf{evac}(T)$ with the Algorithm in Figure \ref{fig:SGE}, we have

\begin{align*}
T = \begin{ytableau}
1 & 1 & 2' & 2 & 3\\
\none & 2 & 2 & 3'\\
\none & \none & 3
\end{ytableau} &\xrightarrow{\mathsf{neg}_1} \begin{ytableau}
*(lgray)\shortminus 1 & *(lgray) \shortminus 1 & 2' & 2 & 3\\
\none & 2 & 2 & 3'\\
\none & \none & 3
\end{ytableau} \xrightarrow{\mathbf{(S5)}}
\begin{ytableau}
*(lgray) \shortminus 1 & 2' & *(lgray) \shortminus 1 & 2 & 3\\
\none & 2 & 2 & 3'\\
\none & \none & 3
\end{ytableau} \xrightarrow{\mathbf{(S6)}}
\begin{ytableau}
*(lgray) \shortminus 1 & 2' & 2 & 2 & 3\\
\none & 2 & *(lgray) \shortminus 1 & 3'\\
\none & \none & 3
\end{ytableau} \xrightarrow{\mathbf{(S3)}}
\begin{ytableau}
2 & 2 & 2 & 2 & 3\\
\none & *(lgray) \shortminus 1 & *(lgray) \shortminus 1 & 3'\\
\none & \none & 3
\end{ytableau}\\
&\xrightarrow{\mathbf{(S5)}}
\begin{ytableau}
2 & 2 & 2 & 2 & 3\\
\none & *(lgray) \shortminus 1 &  3' & *(lgray) \shortminus 1\\
\none & \none & 3
\end{ytableau} \xrightarrow{\mathbf{(S3)}}
\begin{ytableau}
2 & 2 & 2 & 2 & 3\\
\none & 3 &  3 & *(lgray) \shortminus 1\\
\none & \none & *(lgray) \shortminus 1
\end{ytableau} \xrightarrow{\mathsf{neg}_2}
\begin{ytableau}
*(lgray) \shortminus 2 & *(lgray) \shortminus 2 & *(lgray) \shortminus 2 & *(lgray) \shortminus 2 & 3\\
\none & 3 &  3 & \shortminus 1\\
\none & \none & \shortminus 1
\end{ytableau}
\xrightarrow{\mathbf{(S1)}}
\begin{ytableau}
*(lgray) \shortminus 2 & *(lgray) \shortminus 2 & *(lgray) \shortminus 2 & 3 & *(lgray) \shortminus 2\\
\none & 3 &  3 & \shortminus 1\\
\none & \none & \shortminus 1
\end{ytableau}\\
&\xrightarrow{\mathbf{(S6)}}
\begin{ytableau}
*(lgray) \shortminus 2 & *(lgray) \shortminus 2 & 3 & 3 & *(lgray) \shortminus 2\\
\none & 3 &  *(lgray) \shortminus 2 & \shortminus 1\\
\none & \none & \shortminus 1
\end{ytableau}
\xrightarrow{\mathbf{(S7)}}
\begin{ytableau}
3 & *(lgray) \shortminus 2' & 3 & 3 & *(lgray) \shortminus 2\\
\none & *(lgray) \shortminus 2 &  *(lgray) \shortminus 2 & \shortminus 1\\
\none & \none & \shortminus 1
\end{ytableau}
\xrightarrow{\mathbf{(S1)}}
\begin{ytableau}
3 & 3 & *(lgray) \shortminus 2' & 3 & *(lgray) \shortminus 2\\
\none & *(lgray) \shortminus 2 &  *(lgray) \shortminus 2 & \shortminus 1\\
\none & \none & \shortminus 1
\end{ytableau}
\xrightarrow{\mathbf{(S1)}}
\begin{ytableau}
3 & 3 & 3 & *(lgray) \shortminus 2' & *(lgray) \shortminus 2\\
\none & *(lgray) \shortminus 2 &  *(lgray) \shortminus 2 & \shortminus 1\\
\none & \none & \shortminus 1
\end{ytableau}\\
&\xrightarrow{\mathsf{neg}_3}
\begin{ytableau}
*(lgray) \shortminus 3 & *(lgray) \shortminus 3 & *(lgray) \shortminus 3 &  \shortminus 2' & \shortminus 2\\
\none & \shortminus 2 & \shortminus 2 & \shortminus 1\\
\none & \none & \shortminus 1
\end{ytableau}
\xrightarrow{\mathsf{d}_3}
\begin{ytableau}
1 & 1 & 1 & 2' & 2\\
\none & 2 & 2 & 3\\
\none & \none & 3
\end{ytableau} = \mathsf{evac}(T).
\end{align*}
\end{ex}

Similarly to the case for ordinary Young tableaux \cite[Section 2.2, (5)]{PV10} \cite[Section 5]{BSS96}, the shifted evacuation algorithms, in Figures \ref{fig:SGE} and \ref{fig:SGE_k}, may be easily extended to skew shapes, by removing in both algorithms the requirement for the input to have a straight shape. We denote these operators by $\widetilde{\mathsf{evac}}$ and $\widetilde{\mathsf{evac}}_k$. However, we note that, similarly to the ordinary Young tableaux case \cite[Section 5]{BSS96}, the involution $\widetilde{\mathsf{evac}}$ is different from the reversal (Section \ref{ss:evac}), as in general, given $T \in \mathsf{ShST}(\lambda/\mu,n)$, we have $\widetilde{\mathsf{evac}} (T) \neq T^e$, since $\widetilde{\mathsf{evac}} (T)$ does not need to be shifted Knuth equivalent to $\mathsf{c}_n (T)$ or to $\mathsf{evac}(\mathsf{rect}(T))$.

\begin{ex}\label{ex:tilevac}
Let $T = \begin{ytableau}
{} & {} & {} & 1' & 1\\
\none & {} & 1 & 1\\
\none & \none & 2 & 2\\
\none & \none & \none & 3
\end{ytableau}$. Then, $\widetilde{\mathsf{evac}} (T) =  \begin{ytableau}
{} & {} & {} & 2 & 3\\
\none & {} & 1 & 3'\\
\none & \none & 2 & 3'\\
\none & \none & \none & 3
\end{ytableau} \neq T^e$ (see Example \ref{ex:reversal}).
\end{ex}

\section{Shifted tableau crystal structure and a cactus group action}\label{sec:crystal}
Gillespie, Levinson and Purbhoo \cite{GLP17} introduced a crystal-like structure on $\mathsf{ShST}(\lambda/\mu,n)$. This shifted tableau crystal is not a crystal for any known quantized enveloping algebra, unlike the one in \cite{AsOg18, GHPS18}, which is a crystal for the quantum queer Lie superalgebra.

\subsection{Shifted tableau crystals}
A shifted tableau crystal consists on a crystal-like structure of $\mathsf{ShST}(\lambda/\mu,n)$, together with primed and unprimed raising and lowering operators $E_i$, $E_i'$, $F_i$ and $F_i'$, lenght functions $\varphi_i$ and $\varepsilon_i$, for each $i \in I := [n-1]$, and a weight function. For the sake of brevity, we omit these definitions and refer to the original work in \cite{GL19,GLP17}. We will use the notation $\mathsf{ShST}(\lambda/\mu,n)$ to denote both the set and this crystal-like structure. It may be regarded as a directed, acyclic graph with weighted vertices, and $i$-coloured labelled double edges, solid ones for unprimed operators, and dashed ones for primed operators (see Figure \ref{fig:crystal}). This graph is partitioned into $i$-\emph{strings}, which are the $\{i',i\}$-connected components of $\mathsf{ShST}(\lambda/\mu,n)$, for each $i \in I$. There are two possible arrangements for these strings \cite[Section 3.1]{GL19} \cite[Section 8]{GLP17}: \emph{separated strings}, consisting of two $i$-labelled chains of equal length, connected by $i'$-labelled edges, and \emph{collapsed strings} a double chain of both $i$- and $i'$-labelled edges. Moreover, the primed and unprimed operators considered separately yield a type $A$ Kashiwara crystal.

Additionally, $\mathsf{ShST}(\lambda/\mu,n)$ decomposes into connected components, each one having a unique highest weight element (an element for which all primed and unprimed raising operators are undefined) corresponding to a LRS tableau, and a unique lowest weight element (defined analogously with lowering operators), the reversal of it. Thus, each of these connected components is isomorphic, via rectification, to $\mathsf{ShST}(\nu,n)$, for some strict partition $\nu$ \cite[Corollary 6.5]{GLP17}, with $\mathsf{ShST}(\nu,n)$ appearing with multiplicity the shifted Littlewood-Richardson coefficient $f^{\lambda}_{\mu \nu}$,
$$\mathsf{ShST}(\lambda/\mu,n) \simeq \bigsqcup\limits_{\nu} \mathsf{ShST}(\nu,n)^{f^{\lambda}_{\mu \nu}}.$$

\begin{figure}[h!]
\begin{center}
\includegraphics[scale=0.5]{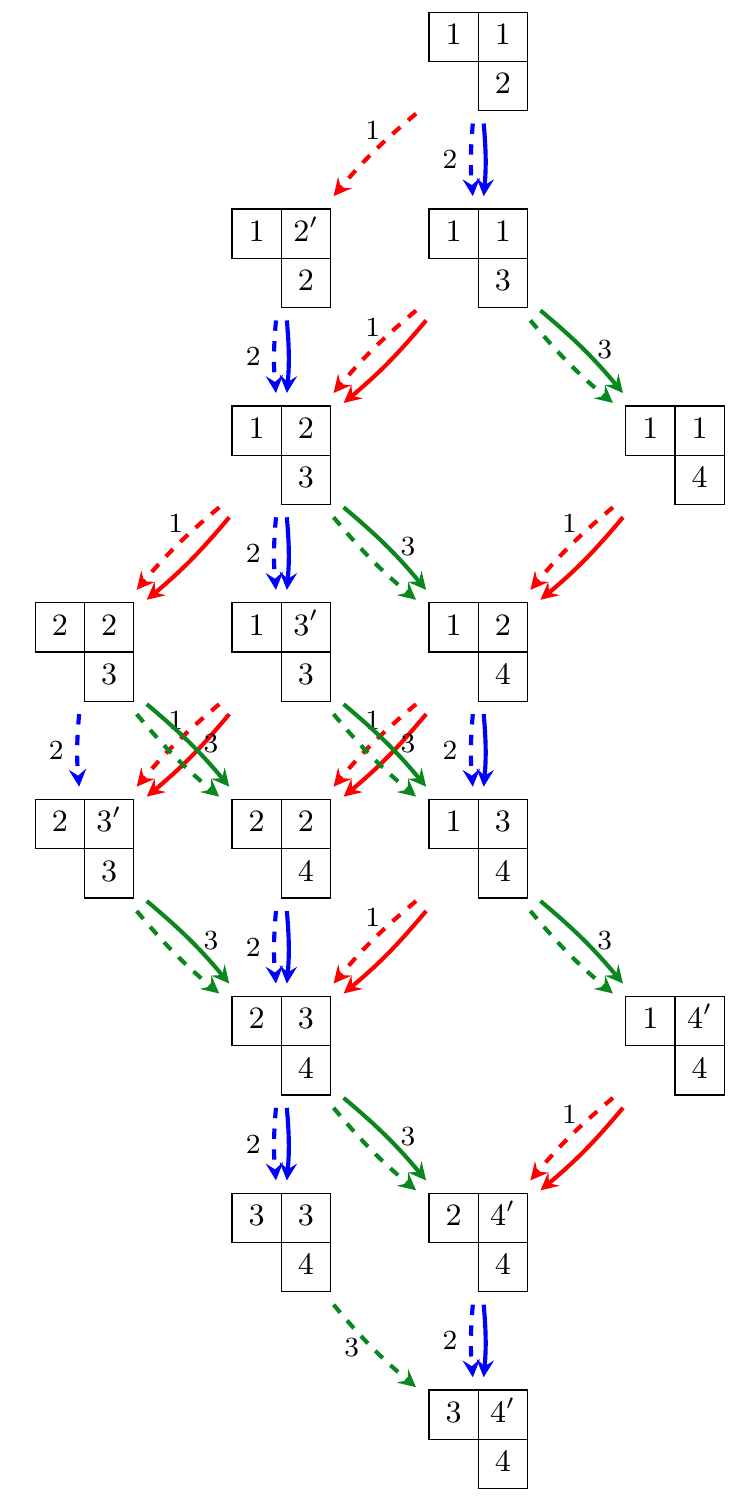}
\hspace{1cm}
\vrule{}
\hspace{1cm}
\includegraphics[scale=0.4]{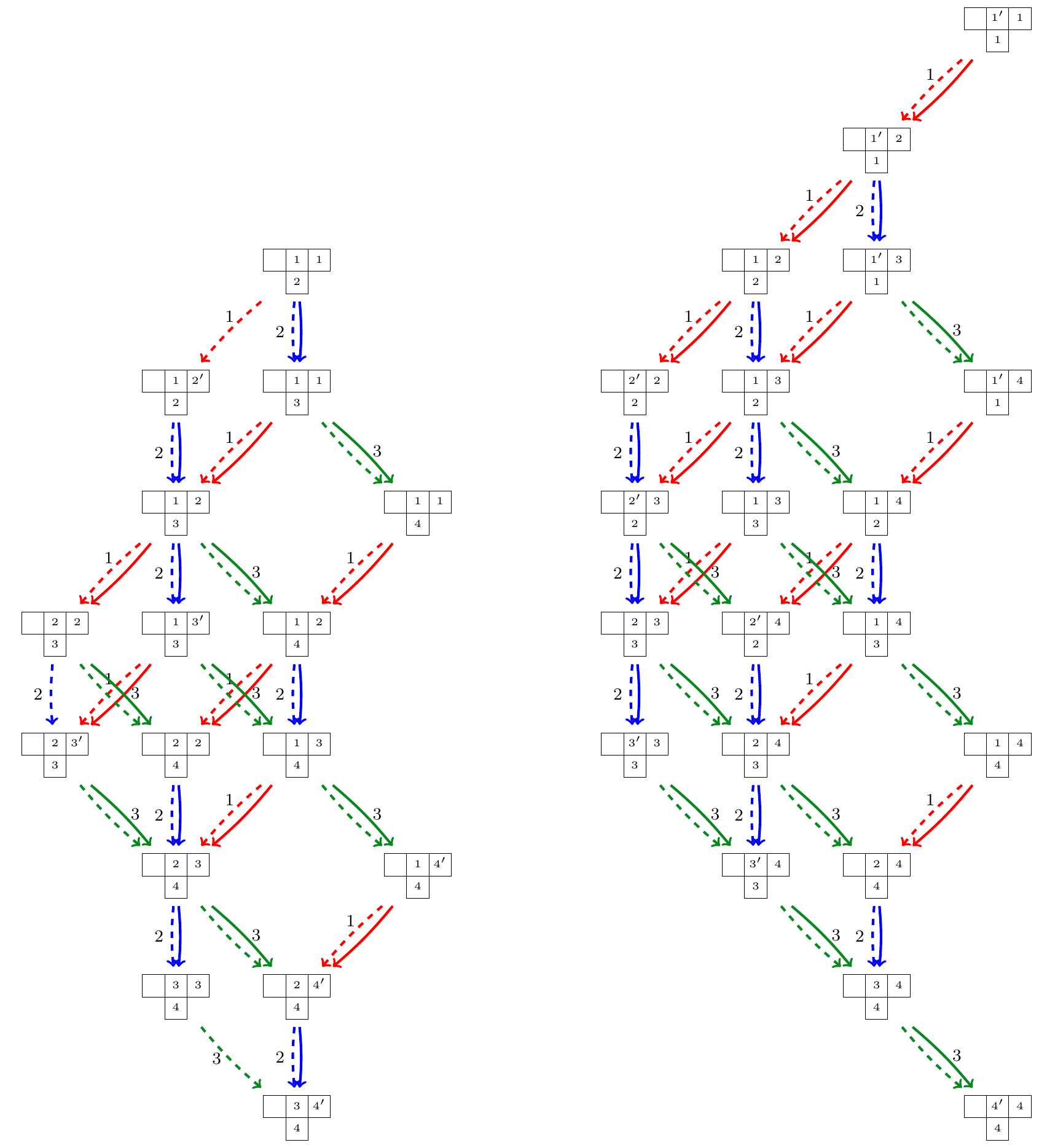}
\end{center}
\caption{On the left, a shifted tableau crystal graph $\mathsf{ShST}((2,1),4)$. On the right, a shifted tableau crystal graph $\mathsf{ShST}((3,1)/(1),4)$, which has two connected components, isomorphic via rectification to the crystal graphs of $\mathsf{ShST}((3),4)$ and $\mathsf{ShST}((2,1),4)$. The operators $F_1,F_1'$ are in red, the $F_2,F_2'$ in blue, and $F_3,F_3'$ in green.}
\label{fig:crystal}
\end{figure}

\subsection{The shifted Schützenberger involution and the crystal reflection operators}
The shifted Schützenberger or Lusztig involution is defined on a shifted tableau crystal \cite[Section 2.3.1]{GL19} in the same fashion as for type $A$ Young tableau crystal. It is realized by the shifted evacuation (for straight shapes) or the shifted reversal (for skew shapes). The shifted crystal reflection operators $\sigma_i$, for $i \in I$, were introduced in \cite[Definition 4.3]{Ro20b}, using the crystal operators. They coincide with the restriction of the Schützenberger involution the intervals of the form $\{i,i+1\}'$. Unlike what happens for the type $A$ case, they do not satisfy the braid relations of $\mathfrak{S}_n$ , thus not yielding a natural action of this group on $\mathsf{ShST}(\lambda/\mu,n)$.

\begin{prop}[{\cite[Proposition 4.1]{Ro20b}}]\label{prop:Schu}
Let $\mathsf{ShST}(\nu,n)$ denote a shifted tableau crystal with $Y_\nu$ as highest weight and $\mathsf{evac}(Y_\nu)$ as lowest weight elements. Then, there exists a unique map of sets $\eta: \mathsf{ShST}(\nu,n) \longrightarrow \mathsf{ShST}(\nu,n)$ that satisfies the following, for all $T\in \mathsf{ShST}(\nu,n)$ and for all $i \in I$:
	\begin{enumerate}
	\item $E'_i \eta (T) = \eta F'_{n-i} (T)$.
	\item $E_i \eta (T) = \eta F_{n-i} (T)$.	
	\item $F'_i \eta (T) = \eta E'_{n-i} (T)$.
	\item $F_i \eta (T) = \eta E_{n-i} (T)$.
	\item $\mathsf{wt}(\eta(T)) = \theta_{1,n} (\mathsf{wt}(T))$.
	\end{enumerate}
This map is defined on $\mathsf{ShST}(\lambda/\mu,n)$ by extending it, by coplacity of the crystal operators, to its connected components. It coincides with the evacuation $\mathsf{evac}$ in $\mathsf{ShST}(\nu,n)$, and with the reversal $e$ on the connected components of $\mathsf{ShST}(\lambda/\mu,n)$.
\end{prop}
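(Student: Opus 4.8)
The plan is to establish existence and uniqueness of $\eta$ on straight shapes first, then extend by coplacticity. For uniqueness, suppose $\eta$ and $\eta'$ both satisfy (1)--(5). Since $\mathsf{ShST}(\nu,n)$ is connected with a unique highest weight element $Y_\nu$, any element $T$ can be written as $T = X_{i_k} \cdots X_{i_1}(Y_\nu)$ where each $X_{i_j}$ is one of $F_{i_j}$ or $F'_{i_j}$. Conditions (1)--(4) force $\eta$ to intertwine each lowering operator $F_i$ (resp.\ $F'_i$) with the corresponding raising operator $E_{n-i}$ (resp.\ $E'_{n-i}$), so $\eta(T) = Y_{n-i_1} \cdots Y_{n-i_k}(\eta(Y_\nu))$ where $Y$ denotes the matching raising operator; and since $Y_\nu$ is the unique highest weight element while $\eta(Y_\nu)$ must (by the intertwining relations applied in reverse) be annihilated by all $E_i, E'_i$, hence be the unique lowest weight element, namely $\mathsf{evac}(Y_\nu)$. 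Thus $\eta$ is completely determined, giving uniqueness. One subtlety: I must check the expression for $\eta(T)$ is independent of the chosen path of operators from $Y_\nu$ to $T$ — this follows because the relations (1)--(4) are exactly compatible with whatever relations the crystal operators themselves satisfy, so two paths yielding the same $T$ map under $\eta$ to two paths yielding the same element.

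For existence, the natural candidate is $\mathsf{evac}$ on straight shapes. I would verify that $\mathsf{evac}$ satisfies (1)--(5). Property (5), $\mathsf{wt}(\mathsf{evac}(T)) = \theta_{1,n}(\mathsf{wt}(T))$, follows from $\mathsf{evac}(T) = \mathsf{rect}(\mathsf{c}_n(T))$ and the fact that $\mathsf{c}_n$ reverses the weight while $\mathsf{rect}$ preserves it, both recorded in Section~\ref{ss:evac}. For (1)--(4), the cleanest route is to use that $\mathsf{c}_n$ conjugates the crystal operators $F_i \leftrightarrow$ (the operator corresponding to $E_{n-i}$ under complementation) — this is essentially the statement that complementation is an anti-automorphism of the crystal reversing the index set $[n-1] \to [n-1]$, $i \mapsto n-i$ — together with the fact that $\mathsf{rect}$ is coplactic, hence commutes with all crystal operators (as $\mathsf{ShST}(\lambda/\mu,n)$ is built so that the crystal operators are coplactic, and rectification is an isomorphism of crystals). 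Composing, $\mathsf{evac} = \mathsf{rect} \circ \mathsf{c}_n$ has the required intertwining behavior. I expect this to reduce to checking the base case on standard tableaux via standardization (using \eqref{eq:sw_std} and the standardization-compatibility of the crystal operators), where the statement is the known type-$A$-flavored fact about the shifted Schützenberger involution.

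For the skew case, the extension is forced: each connected component of $\mathsf{ShST}(\lambda/\mu,n)$ is isomorphic via rectification to some $\mathsf{ShST}(\nu,n)$ by \cite[Corollary 6.5]{GLP17}, and since the crystal operators are coplactic, transporting $\eta$ through this isomorphism is well-defined and independent of the choice of rectification path; uniqueness on each component follows from uniqueness on $\mathsf{ShST}(\nu,n)$. That this transported map coincides with the reversal $e$ is exactly the content of the coplactic definition of $e$ in Section~\ref{ss:evac}: $e$ was defined as the coplactic extension of $\mathsf{evac}$, i.e.\ rectify, apply $\mathsf{evac}$, un-rectify, which is precisely transport through the rectification isomorphism. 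The main obstacle I anticipate is the careful bookkeeping in verifying (1)--(4) for $\mathsf{evac}$ — specifically pinning down the exact interaction of the complementation map $\mathsf{c}_n$ with both the primed and unprimed operators $E_i, E'_i, F_i, F'_i$, including the diagonal exceptional behavior; the separated-versus-collapsed $i$-string dichotomy from \cite[Section 8]{GLP17} will need to be handled in both cases, and one must confirm the index shift $i \mapsto n-i$ is the correct one and that primed operators go to primed operators under complementation.
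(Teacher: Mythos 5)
First, a point of comparison: the paper does not prove this proposition at all. It is imported verbatim from \cite[Proposition 4.1]{Ro20b} and used as a black box, so there is no in-paper proof to measure your argument against; what follows is an assessment of your proposal on its own terms.

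Your uniqueness argument is the standard one and is essentially correct: connectivity of $\mathsf{ShST}(\nu,n)$ with unique highest weight $Y_\nu$, together with relations (1)--(4), propagates the value $\eta(Y_\nu)$ to every vertex, and the path-independence worry you raise is vacuous for uniqueness (a function satisfying the relations gives the same value along any path by definition). Two slips, though: if $T = X_{i_k}\cdots X_{i_1}(Y_\nu)$ then $\eta(T) = E_{n-i_k}\cdots E_{n-i_1}(\eta(Y_\nu))$, not the reversed composition you wrote; and $\eta(Y_\nu)$ is forced to be annihilated by all $F_i, F'_i$ (via (3)--(4) and the fact that $E_{n-i}(Y_\nu), E'_{n-i}(Y_\nu)$ are undefined), not by all $E_i, E'_i$ --- annihilation by the raising operators would make it a \emph{highest} weight element.

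The genuine gap is in the existence half. The claim that $\mathsf{c}_n$ ``conjugates the crystal operators, reversing the index set $i \mapsto n-i$ and sending primed operators to primed operators'' is precisely the nontrivial content of the proposition, and you assert it rather than prove it. It cannot be dispatched as ``the known type-$A$ fact'': the operators $E_i, E'_i, F_i, F'_i$ of \cite{GLP17} are defined on words in the primed alphabet via lattice-walk/critical-substring rules tied to the canonical form (the first occurrence of each letter is forced unprimed), and $\mathsf{c}_n$ swaps primed and unprimed entries while reversing the reading word, so one must check that this interacts correctly with canonicalization and with the separated-versus-collapsed string dichotomy --- exactly the bookkeeping you defer. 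Moreover $\mathsf{c}_n(T)$ lives on the skew shape $\mu^\vee/\lambda^\vee$, so the intertwining has to be formulated and verified for the composite $\mathsf{rect}\circ\mathsf{c}_n$, using coplacticity of the operators to pass through $\mathsf{rect}$; reducing to standard tableaux via standardization does not obviously help, since the primed operators act trivially there and the content of conditions (1) and (3) would be lost. As written, the existence proof is a plan, not a proof, and the plan's central step is the theorem.
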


This map is called the \emph{Schützenberger or Lusztig involution} and we use the notation $\eta$ for both straight shaped and skew tableaux. It is indeed an involution on the set of vertices of $\mathsf{ShST}(\nu,n)$, that reverses all arrows and indices, thus sending the highest weight element to the lowest and vice versa. It is coplactic and a weight-reversing, shape-preserving involution. We may define a restriction of the Schützenberger involution to the interval $[i,j]' := \{i' < i < \cdots < j' < j\}$, for $1 \leq i < j \leq n$. Given $T \in \mathsf{ShST}(\lambda/\mu,n)$, let $T^{i,j} := T^i \sqcup T^{i+1} \sqcup \cdots \sqcup T^j$. In particular, we have $T^{1,n} = T$. 

\begin{defin}\label{def:schu_ij}
Let $T \in \mathsf{ShST}(\lambda/\mu,n)$ and let $1 \leq i < j \leq n$. The \emph{partial Schützenberger involution} restricted to $[i,j]'$ is the map $\eta_{i,j}: \mathsf{ShST}(\lambda/\mu,n) \longrightarrow \mathsf{ShST}(\lambda/\mu,n) $ defined as
$$\eta_{i,j}(T) = T^{1,i-1} \sqcup \eta(T^{i,j}) \sqcup T^{j+1,n}.$$
In particular, we have $\eta_{1,n} (T) = \eta(T)$.
\end{defin}
The \emph{partial Schützenberger involutions} $\eta_{i,j}$ are also the unique set maps on $\mathsf{ShST}(\nu,n)$ satisfying certain conditions in terms of the crystal reflection operators indexed in $[i,j-1]$, similarly to Proposition \ref{prop:Schu} (see \cite[Lemma 5.4]{Ro20b}), and are extended by coplacity to $\mathsf{ShST}(\lambda/\mu,n)$, as before.

The \emph{crystal reflection operators} were originally defined by Lascoux and Schützenberger \cite{LaSchu81} in the Young tableau crystal of type $A$, as involutions sending each $i$-string to itself by reflection over its middle axis, for all $i \in I$. They coincide with the restrictions of the Schützenberger involutions to the tableaux consisting of the letters $i,i+1$, maintaining the others unchanged. The \emph{shifted crystal reflection operators} $\sigma_i$, for $i \in I$, are defined using shifted tableau crystal operators \cite[Definition 4.3]{Ro20b}. They are also involutions that send each $i$-string to itself, but through a double reflection over its vertical and horizontal middles axes, and they coincide with the restriction of the shifted Schützenberger involution to $\{i,i+1\}'$.

\begin{teo}[\cite{Ro20b, Ro20a}]\label{thm:sigma_schu}
Let $T\in \mathsf{ShST}(\lambda/\mu,n)$ and $i \in I$. Then, we have $\sigma_i (T) = \eta_{i,i+1} (T)$ and $\mathsf{wt}(\sigma_i(T)) = \theta_i (\mathsf{wt}(T))$. Moreover, the shifted crystal reflection operators satisfy the following relations:
\begin{enumerate}
\item $\sigma_i^2 = 1$.
\item $\sigma_i \sigma_j = \sigma_j \sigma_i$, for $|i-j|>1$.
\end{enumerate}
\end{teo}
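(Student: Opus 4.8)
The plan is to establish the three claims in Theorem \ref{thm:sigma_schu} in order: first the identity $\sigma_i(T)=\eta_{i,i+1}(T)$, then the weight formula, and finally the two relations (1) and (2). The key observation is that $\sigma_i$ was defined in \cite[Definition 4.3]{Ro20b} purely in terms of the crystal operators $E_i,E_i',F_i,F_i'$ (it acts on each $i$-string by a double reflection over its vertical and horizontal middle axes), while $\eta_{i,i+1}$ is, by Definition \ref{def:schu_ij}, the map $T\mapsto T^{1,i-1}\sqcup\eta(T^{i,i+1})\sqcup T^{i+2,n}$, where $\eta$ on $\mathsf{ShST}(\{i,i+1\}',2)$-type components is characterized by the conditions of Proposition \ref{prop:Schu} (with $n=2$, so $\theta_{1,2}$ is the transposition and $n-i$ swaps the two colours). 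Both maps therefore act only within the $\{i,i'\}$-coloured connected components, fixing all other border strips, so it suffices to compare them on a single $i$-string. Using the classification of $i$-strings into \emph{separated} and \emph{collapsed} types recalled in Section \ref{sec:crystal} (from \cite[Section 3.1]{GL19}), one checks on each string shape that the double reflection defining $\sigma_i$ is exactly the involution reversing all $i$- and $i'$-arrows, which is precisely what the partial Schützenberger involution $\eta_{i,i+1}$ does — this is the content of \cite[Lemma 5.4]{Ro20b}, which gives the analogue of Proposition \ref{prop:Schu} for $\eta_{i,j}$ in terms of the reflection operators indexed in $[i,j-1]$. For the skew case, both sides are extended by coplacity to connected components, so the straight-shape identity propagates.

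For the weight formula, once $\sigma_i(T)=\eta_{i,i+1}(T)$ is known, I would invoke the weight-behaviour of $\eta$: restricted to the interval $\{i,i+1\}'$, condition (5) of Proposition \ref{prop:Schu} says $\eta$ reverses the weight on those two coordinates, i.e. it swaps $wt_i$ and $wt_{i+1}$ while leaving the other entries of $\mathsf{wt}(T)$ fixed (since $\eta_{i,i+1}$ only touches $T^{i,i+1}$). That is exactly the action of the simple transposition $\theta_i$ on $\mathsf{wt}(T)\in\mathbb{Z}^n$, giving $\mathsf{wt}(\sigma_i(T))=\theta_i(\mathsf{wt}(T))$.

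For relation (1), $\sigma_i^2=1$: since $\eta$ is an involution (as stated after Proposition \ref{prop:Schu}, being realized by $\mathsf{evac}$ on straight shapes and the reversal on skew components, both involutions by the symmetry of the switches / growth rules), and $\eta_{i,i+1}$ is built from $\eta$ applied to the single border-strip union $T^{i,i+1}$ while the complementary parts are untouched, $\eta_{i,i+1}\circ\eta_{i,i+1}=\mathrm{id}$ follows immediately. For relation (2), $\sigma_i\sigma_j=\sigma_j\sigma_i$ when $|i-j|>1$: the intervals $\{i,i+1\}'$ and $\{j,j+1\}'$ are then disjoint, so $\eta_{i,i+1}$ operates only on $T^{i,i+1}$ and $\eta_{j,j+1}$ only on $T^{j,j+1}$, and since $T$ is the disjoint union of its border strips $T^1\sqcup\cdots\sqcup T^n$, these two operations act on disjoint blocks and hence commute; one should just check that reassembling the disjoint union $T^{1,i-1}\sqcup\eta(T^{i,i+1})\sqcup\cdots$ is associative and order-independent in the obvious way, which is immediate from the definition of $\sqcup$.

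The main obstacle is the first claim $\sigma_i(T)=\eta_{i,i+1}(T)$: this requires genuinely matching the crystal-theoretic definition of $\sigma_i$ (a double reflection of each $i$-string) against the Schützenberger-involution characterization, and the subtlety is that in the shifted setting an $i$-string can be either separated or collapsed, so one must verify the coincidence separately for each arrangement — in particular confirming that the "double reflection over vertical and horizontal axes" correctly accounts for the interchange of primed and unprimed edges under $\eta$. Everything else (the weight formula and the two relations) is then essentially bookkeeping about disjoint unions of border strips and the involutivity of $\eta$. I expect the proof in \cite{Ro20b} handles the string-by-string verification, so here it may suffice to cite \cite[Proposition 4.1, Lemma 5.4]{Ro20b} and reduce to those statements.
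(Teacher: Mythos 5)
The paper does not prove Theorem \ref{thm:sigma_schu} at all: it is imported by citation from \cite{Ro20b, Ro20a}, so there is no internal argument to compare against. Your reconstruction is sound and defers the only genuinely hard step (the string-by-string check that the double reflection of each $\{i,i'\}$-component coincides with the restricted Schützenberger involution, separately for separated and collapsed strings) to exactly the statements the paper itself cites, while the weight formula, involutivity, and the commutation for $|i-j|>1$ are handled by correct bookkeeping on disjoint border strips.
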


\begin{ex}\label{ex:sigma2}
Let $T= \begin{ytableau}
1 & 1 & 1 & 2' & 2\\
\none & 2 & 2 & 3\\
\none & \none & 3
\end{ytableau}$. To compute $\sigma_2 (T)$ we consider the skew tableau $T^{2,3}$ and apply the Schützenberger involution (or reversal):

$$T^{2,3} = \begin{ytableau}
{} & {} & {} & 2' & 2\\
\none & 2 & 2 & 3\\
\none & \none & 3
\end{ytableau} \longrightarrow
\begin{ytableau}
*(lblue)1 & *(lblue)2 & *(lblue)3 & 2' & 2\\
\none & 2 & 2 & 3\\
\none & \none & 3
\end{ytableau} \overset{\mathsf{SW}}\longrightarrow
\begin{ytableau}
2 & 2 & 2 & 2 & *(lblue) 3\\
\none & 3 & 3 & *(lblue)1\\
\none & \none & *(lblue) 2
\end{ytableau} \xrightarrow{\mathsf{evac} \times id}
\begin{ytableau}
2 & 2 & 3' & 3 & *(lblue)3\\
\none & 3 & 3 & *(lblue)1\\
\none & \none & *(lblue)2
\end{ytableau} \overset{\mathsf{SW}}\longrightarrow
\begin{ytableau}
*(lblue)1 & *(lblue)2 & *(lblue)3 & 2 & 3'\\
\none & 2 & 3' & 3\\
\none & \none & 3
\end{ytableau} = \eta(T^{2,3}).  
$$ 
Thus, we have 
$$\sigma_2 (T) = \begin{ytableau}
1 & 1 & 1 & 2 & 3'\\
\none & 2 & 3' & 3\\
\none & \none & 3
\end{ytableau}.$$
\end{ex}

Unlike the type $A$ crystals, the reflection operators $\sigma_i$ do not define an action of the symmetric group $\mathfrak{S}_n$ on $\mathsf{ShST}(\lambda/\mu,n)$ because the braid relations $(\sigma_i \sigma_{i+1})^3 = 1$ do not need not hold.

\begin{ex}\label{ex:braid}
Let $\mathsf{ShST}(\lambda,3)$ where $\lambda=(5,3,1)$, and consider the shifted semistandard tableau
$$T = \begin{ytableau}
1 & 1 & 1 & 1 & {3'}\\
\none & 2 & 2 & {3'}\\
\none & \none & 3\end{ytableau}.$$
Then, we have
$$
\sigma_1 \sigma_2 \sigma_1 (T)=
\begin{ytableau}
1 & 1 & 1 & 2 & 3\\
\none & 2 & {3'} & 3\\
\none & \none & 3\end{ytableau}
 \neq
 \begin{ytableau}
1 & 1 & 1 & {2'} & {3'}\\
\none & 2 & {3'} & 3\\
\none & \none & 3\end{ytableau}
= \sigma_2 \sigma_1 \sigma_2 (T).$$
\end{ex}

However, we have the following result, as in \cite[Section 3.2]{AzMaCo09} for ordinary LR tableaux, ensuring that the longest permutation $\theta_{1,n} \in \mathfrak{S}_n$ acts on a connected component of $\mathsf{ShST}(\lambda/\mu,n)$ by interchanging the highest and lowest weight elements. Thus, we have an action of $\theta_{1,n}$, via the corresponding composition of shifted crystal reflection operators, recovering the reversal of LRS tableaux.

\begin{teo}\label{thm:sigma_longp}
Let $T^{\mathsf{high}}$ be a LRS tableau in $\mathsf{ShST}(\lambda/\mu,n)$. Let $\theta_{1,n} = \theta_{i_1} \cdots \theta_{i_k}$ denote the longest permutation in $\mathfrak{S}_n$. Then, $\theta_{1,n}$ acts on a connected component of $\mathsf{ShST}(\lambda/\mu,n)$ by sending its highest weight element $T^{\mathsf{high}}$ to the lowest weight element $T^{\mathsf{low}}$, i.e., 
$$\theta_{1,n} \cdot T^{\mathsf{high}} = \sigma_{i_1} \cdots \sigma_{i_k} (T^{\mathsf{high}}) = \eta(T^{\mathsf{high}}) = T^{\mathsf{low}}.$$
\end{teo}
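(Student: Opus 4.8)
The plan is to sidestep the (generally failing) braid relations for the operators $\sigma_i$ altogether, and instead to identify $\sigma_{i_1}\cdots\sigma_{i_k}(T^{\mathsf{high}})$ by its weight. First I would set $\nu := \mathsf{wt}(T^{\mathsf{high}})$, which is a strict partition since $T^{\mathsf{high}}$ is LRS, and let $\mathcal{C}$ be the connected component of $\mathsf{ShST}(\lambda/\mu,n)$ containing $T^{\mathsf{high}}$. Recall from \cite{GLP17} that rectification gives a crystal isomorphism $\mathcal{C} \simeq \mathsf{ShST}(\nu,n)$ carrying $T^{\mathsf{high}}$ to $Y_\nu$; by Proposition \ref{prop:Schu} the involution $\eta$ restricts to $\mathcal{C}$ and sends $T^{\mathsf{high}}$ to $T^{\mathsf{low}}$, so it only remains to prove $\sigma_{i_1}\cdots\sigma_{i_k}(T^{\mathsf{high}}) = \eta(T^{\mathsf{high}})$.

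Next I would observe that each $\sigma_i$ fixes every $i$-string setwise (Theorem \ref{thm:sigma_schu}), hence preserves connected components, so $\sigma_{i_1}\cdots\sigma_{i_k}(T^{\mathsf{high}}) \in \mathcal{C}$. Using the weight covariance $\mathsf{wt}(\sigma_i(T)) = \theta_i(\mathsf{wt}(T))$ from Theorem \ref{thm:sigma_schu} and iterating, one gets
$$\mathsf{wt}\big(\sigma_{i_1}\cdots\sigma_{i_k}(T^{\mathsf{high}})\big) = \theta_{i_1}\cdots\theta_{i_k}(\nu) = \theta_{1,n}(\nu) = (\nu_n,\ldots,\nu_1),$$
and note this holds for \emph{any} expression $\theta_{i_1}\cdots\theta_{i_k}$ of $\theta_{1,n}$, reduced or not.

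The crux is then to show $T^{\mathsf{low}}$ is the \emph{unique} vertex of $\mathcal{C}$ of weight $\theta_{1,n}(\nu)$. For this I would use that $\eta$ is a weight-reversing involution of $\mathcal{C}$ (Proposition \ref{prop:Schu}), hence a bijection between the vertices of $\mathcal{C}$ of weight $\theta_{1,n}(\nu)$ and those of weight $\nu$; and the set of weight-$\nu$ vertices of $\mathcal{C}$ is the singleton $\{T^{\mathsf{high}}\}$, because any such vertex rectifies to the unique tableau of $\mathsf{ShST}(\nu,n)$ of weight $\nu$, namely $Y_\nu$, and rectification is a bijection on $\mathcal{C}$. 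Hence the weight-$\theta_{1,n}(\nu)$ vertex of $\mathcal{C}$ is $\eta(T^{\mathsf{high}}) = T^{\mathsf{low}}$, and combining with the weight computation above forces $\sigma_{i_1}\cdots\sigma_{i_k}(T^{\mathsf{high}}) = \eta(T^{\mathsf{high}}) = T^{\mathsf{low}}$.

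I do not expect a genuine obstacle: the argument only invokes the weight covariance of the $\sigma_i$ (Theorem \ref{thm:sigma_schu}), the characterization of $\eta$ as a coplactic weight-reversing involution sending highest to lowest weight (Proposition \ref{prop:Schu}), uniqueness of $Y_\nu$, and the rectification isomorphism $\mathcal{C}\simeq\mathsf{ShST}(\nu,n)$ of \cite{GLP17}. The only point to state carefully is that $\sigma_{i_1}\cdots\sigma_{i_k}(T^{\mathsf{high}})$ never leaves $\mathcal{C}$ — immediate since each $\sigma_i$ preserves components — so that the uniqueness of the weight-$\theta_{1,n}(\nu)$ vertex may be invoked inside $\mathcal{C}$ rather than in all of $\mathsf{ShST}(\lambda/\mu,n)$. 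This also clarifies why Example \ref{ex:braid} is consistent with the statement: there the tableau is not of highest weight, and away from $T^{\mathsf{high}}$ the composition really does depend on the chosen word for $\theta_{1,n}$.
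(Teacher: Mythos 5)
Your argument is correct: the weight covariance of the $\sigma_i$ (Theorem \ref{thm:sigma_schu}) plus the uniqueness of the weight-$\theta_{1,n}(\nu)$ vertex in the component (obtained from $\eta$ being a weight-reversing involution and $Y_\nu$ being the unique weight-$\nu$ tableau) pins down $\sigma_{i_1}\cdots\sigma_{i_k}(T^{\mathsf{high}})$ independently of the chosen word for $\theta_{1,n}$. The paper itself gives no written proof of Theorem \ref{thm:sigma_longp}, only a pointer to the analogous statement for ordinary LR tableaux in the cited reference, and your weight-uniqueness argument is precisely the standard one that citation relies on, so this is essentially the intended proof.
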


\subsection{An action of the cactus group}\label{sec:cactusaction}
Halacheva \cite{Hala16} showed that there is a natural action of the cactus group $J_{\mathfrak{g}}$ on any $\mathfrak{g}$-crystal, for $\mathfrak{g}$ a complex, reductive, finite-dimensional Lie algebra. In particular, the cactus group $J_n$ (corresponding to $\mathfrak{g}=\mathfrak{gl}_n$) acts internally on the type $A$ crystal of semistandard Young tableux $\mathsf{SSYT}(\lambda/\mu,n)$ (here considering any partitions), via the partial Schützenberger involutions, which correspond to partial evacuations on $\mathsf{SSYT}(\nu,n)$. Following a similar approach, it was shown in \cite[Theorem 5.7]{Ro20b} that there is a natural action of $J_n$ on a shifted tableau crystal $\mathsf{ShST}(\lambda/\mu,n)$. This action is realized by the restrictions of the Schützenberger involution to all primed intervals of $[n]'$ (thus, containing in particular the shifted crystal reflection operators). We recall the definition of the cactus group as in \cite{HenKam06}.

\begin{defin}[\cite{HenKam06}]\label{def:cactus}
The \emph{$n$-fruit cactus group} $J_n$ is the free group with generators $s_{i,j}$, for $1 \leq i < j \leq n$, subject to the relations:
	\begin{enumerate}
	\item $s_{i,j}^2 = 1$.
	\item $s_{i,j} s_{k,l} = s_{k,l} s_{i,j}$ for $[i,j] \cap [k,l] = \emptyset$.
	\item $s_{i,j}s_{k,l} = s_{i+j-l,i+j-k} s_{i,j}$ for $[k,l] \subseteq [i,j]$.
	\end{enumerate}
\end{defin}

Note that there is an epimorphism $J_n \longrightarrow \mathfrak{S}_n$, sending $s_{i,j}$ to $\theta_{i,j}$. The kernel of this surjection is known as the \emph{pure cactus group} and denoted by $PJ_n$ (see \cite[Section 3.4]{HenKam06}). Moreover, the first and third relations ensure that the elements of the form $s_{1,k}$ generate $J_n$, for $1<k\leq n$, since any $s_{i,j}$ may be written as
\begin{equation}\label{eq:cactus_s1k}
s_{i,j} = s_{1,j} s_{1,j-i+1} s_{1,j}.
\end{equation}

It was shown in \cite{Ro20b} that the cactus group $J_n$ acts on a shifted tableau crystal $\mathsf{ShST}(\lambda/\mu,n)$ via the partial shifted Scützenberger involutions $\eta_{i,j}$, for $1 \leq i < j \leq n$, of Definition \ref{def:schu_ij}. An example is shown in Figure \ref{fig:crystalcactus}. 

\begin{figure}[h]
\begin{center}
\includegraphics[scale=0.5]{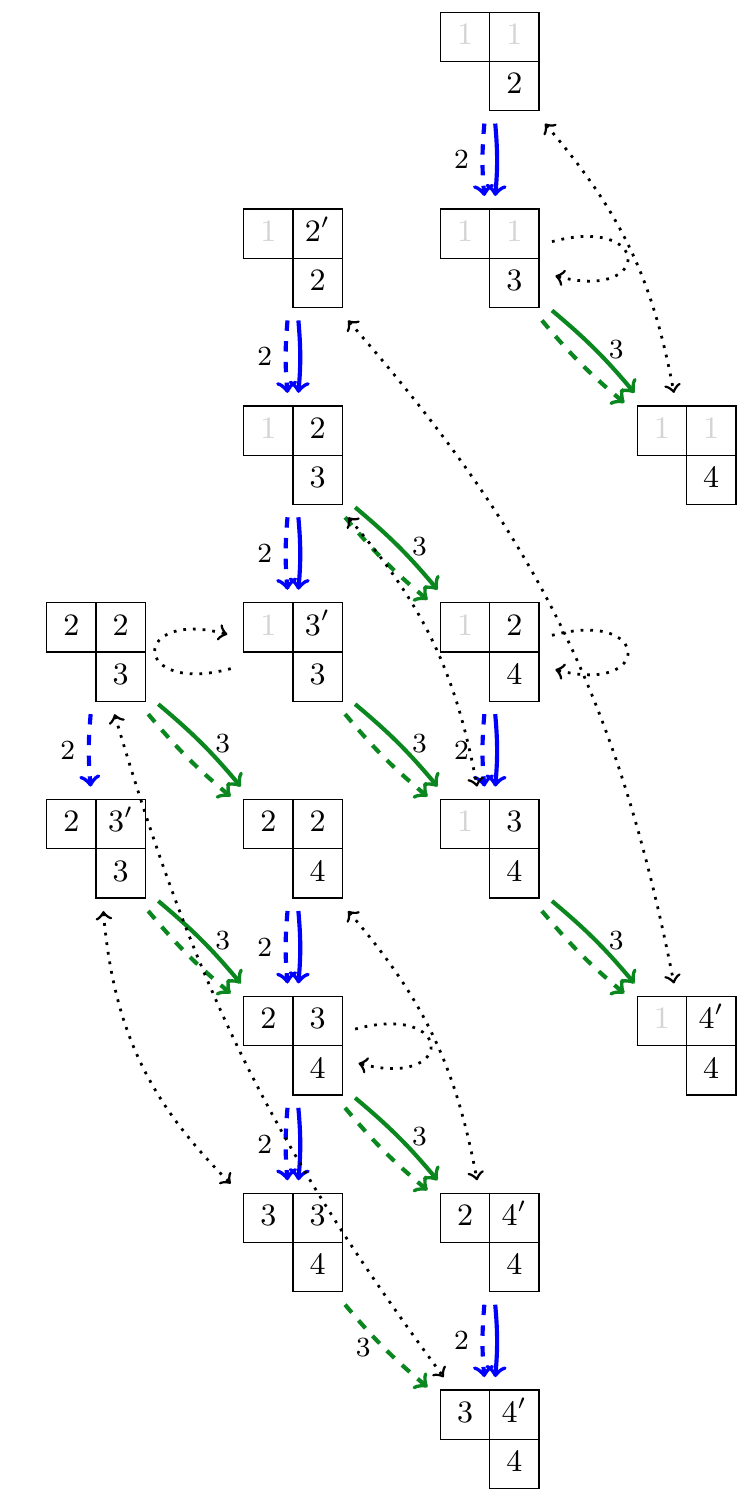}
\end{center}
\caption{The action of $s_{2,4}$ via $\eta_{2,4}$ on $\mathsf{ShST}(\nu,4)$, with $\nu = (2,1)$.}
\label{fig:crystalcactus}
\end{figure}

\begin{teo}[{\cite[Theorem 5.7]{Ro20b}}]\label{thm:cactusaction}
There is a natural action of the $n$-fruit cactus group $J_n$ on a shifted tableau crystal $\mathsf{ShST}(\lambda/\mu,n)$ given by the group homomorphism:

\begin{align*}
\phi: J_n & \longrightarrow \mathfrak{S}_{\mathsf{ShST}(\lambda/\mu,n)}\\
s_{i,j} & \longmapsto \eta_{i,j}
\end{align*}
for $1 \leq i < j \leq n$.
\end{teo}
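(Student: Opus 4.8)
\section*{Proof proposal}

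The plan is to check that the assignment $s_{i,j}\mapsto\eta_{i,j}$ respects each of the three families of defining relations of $J_n$ in Definition \ref{def:cactus}; by the universal property of a group presentation this is exactly what is needed to obtain the homomorphism $\phi$. Since every $\eta_{i,j}$ is coplactic and shape-preserving and the reversal is the coplactic extension of evacuation, a relation between composites of the $\eta_{i,j}$ that holds on every straight-shaped crystal $\mathsf{ShST}(\nu,n)$ automatically holds on every connected component of $\mathsf{ShST}(\lambda/\mu,n)$, hence on $\mathsf{ShST}(\lambda/\mu,n)$. On straight shapes I would use two tools: the structural formula $\eta_{i,j}(T)=T^{1,i-1}\sqcup\eta(T^{i,j})\sqcup T^{j+1,n}$ of Definition \ref{def:schu_ij}, and the characterizations of $\eta$ and of the partial involutions by crystal operators, namely Proposition \ref{prop:Schu} and its partial analogue \cite[Lemma 5.4]{Ro20b}. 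The latter states that $\eta_{i,j}$ is the unique set map fixing the border strips $T^c$ with $c\notin[i,j]$ and acting on the $[i,j]'$-content $T^{i,j}$ as the Schützenberger involution of the (skew) shifted crystal on the alphabet $[i,j]'$; equivalently, the unique map satisfying the conjugation relations $\eta_{i,j}E_c=F_{i+j-1-c}\eta_{i,j}$ (together with the primed versions and the ones with $E$ and $F$ interchanged) for $c\in[i,j-1]$, the corresponding commutations for the remaining colours, and $\mathsf{wt}(\eta_{i,j}(T))_c=\mathsf{wt}(T)_{i+j-c}$ for $c\in[i,j]$ with $\mathsf{wt}(\eta_{i,j}(T))_c=\mathsf{wt}(T)_c$ otherwise.

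The relation $s_{i,j}^2=1$ is immediate: $\eta$ is an involution (by the uniqueness in Theorem \ref{teo:haiman} together with $\mathsf{c}_n^2=\mathrm{id}$ and the coplacity of $\mathsf{c}_n$), and it preserves the value of every entry, hence the decomposition of a tableau into its border strips; thus $(\eta_{i,j}(T))^{i,j}=\eta(T^{i,j})$ and $(\eta_{i,j}(T))^c=T^c$ for $c\notin[i,j]$, whence $\eta_{i,j}^2=\mathrm{id}$. For the relation $s_{i,j}s_{k,l}=s_{k,l}s_{i,j}$ with $[i,j]\cap[k,l]=\emptyset$, say $j<k$: the operator $\eta_{i,j}$ leaves $T^{j+1,n}$ untouched, in particular $T^{k,l}$, while $\eta_{k,l}$ leaves $T^{1,k-1}$ untouched, in particular $T^{i,j}$; tracing the border-strip decomposition through both composites shows that $\eta_{i,j}\eta_{k,l}(T)$ and $\eta_{k,l}\eta_{i,j}(T)$ are each equal to $T^{1,i-1}\sqcup\eta(T^{i,j})\sqcup T^{j+1,k-1}\sqcup\eta(T^{k,l})\sqcup T^{l+1,n}$.

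The relation $s_{i,j}s_{k,l}=s_{i+j-l,\,i+j-k}\,s_{i,j}$ for $[k,l]\subseteq[i,j]$ is the substantive one; written out it reads $\eta_{i,j}\,\eta_{k,l}\,\eta_{i,j}=\eta_{i+j-l,\,i+j-k}$, and $k\le l$ guarantees $[i+j-l,i+j-k]\subseteq[i,j]$, so both sides fix $T^c$ for $c\notin[i,j]$. I would restrict attention to the $[i,j]'$-content $T^{i,j}$, on which $\eta_{i,j}$ is the full Schützenberger involution of the shifted crystal on the alphabet $[i,j]'$ and $\eta_{k,l}$ is the partial one for $[k,l]'\subseteq[i,j]'$, and then verify, using the uniqueness in \cite[Lemma 5.4]{Ro20b} applied inside this sub-crystal, that $\eta_{i,j}\,\eta_{k,l}\,\eta_{i,j}$ meets the defining conditions of $\eta_{i+j-l,i+j-k}$. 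For a colour $c\in[i+j-l,\,i+j-k-1]$ this is an index chase: applying in turn the conjugation relation of $\eta_{i,j}$ ($c\mapsto i+j-1-c$), of $\eta_{k,l}$ ($c\mapsto k+l-1-c$, legitimate since $i+j-1-c\in[k,l-1]$), and of $\eta_{i,j}$ again (legitimate since $k+l-i-j+c\in[k,l-1]\subseteq[i,j-1]$) carries $E_c$ to $F_{(i+j-l)+(i+j-k)-1-c}$, exactly the conjugation relation of $\eta_{i+j-l,i+j-k}$; composing the three weight substitutions likewise reverses the coordinates in $[i+j-l,i+j-k]$ and fixes the rest; and the ``fixes the complementary content'' clause is checked by tracking how the outer $\eta_{i,j}$ exchanges the letter-ranges $[i,k-1]'$ and $[l+1,j]'$ with $[i+j-k+1,j]'$ and $[i,i+j-l-1]'$, on which the inner $\eta_{k,l}$ acts trivially. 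By uniqueness this forces $\eta_{i,j}\,\eta_{k,l}\,\eta_{i,j}=\eta_{i+j-l,i+j-k}$.

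The step I expect to be the main obstacle is precisely this last piece of bookkeeping at the edges: confirming that the relevant crystal operators really do commute with $\eta_{k,l}$ (resp. $\eta_{i,j}$) at the boundary colours $c\in\{k-1,l\}$ (resp. $\{i-1,j\}$), and that the complement-fixing clause of \cite[Lemma 5.4]{Ro20b} survives conjugation, which is where one must use that $\eta_{k,l}$ moves the strips $T^k,\dots,T^l$ as one shape-preserving block rather than strip by strip. A preliminary point to dispatch is that each $\eta_{i,j}$ is a well-defined endomorphism of $\mathsf{ShST}(\lambda/\mu,n)$, that is, that $T^{1,i-1}\sqcup\eta(T^{i,j})\sqcup T^{j+1,n}$ is again a shifted semistandard tableau in canonical form; this follows from the coplacity of $\eta$ together with the compatibility of the shifted tableau switching with canonical form recorded in Section \ref{sec:switching}.
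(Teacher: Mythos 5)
Your proposal is correct in outline, but it takes a genuinely different route from the proof given in this paper --- in fact it essentially reconstructs the \emph{original} proof of \cite[Theorem 5.7]{Ro20b}, to which Section \ref{sec:gd} is explicitly offering an alternative. You verify the three cactus relations directly against the uniqueness characterization of the partial Schützenberger involutions in terms of crystal operators (Proposition \ref{prop:Schu} and \cite[Lemma 5.4]{Ro20b}), reducing the substantive relation $\eta_{i,j}\eta_{k,l}\eta_{i,j}=\eta_{i+j-l,i+j-k}$ to an index chase through the conjugation relations $E_c\mapsto F_{i+j-1-c}$; your arithmetic there checks out, and you correctly flag that the real work is the boundary bookkeeping for colours outside $[i+j-l,i+j-k-1]$ and the complement-fixing clause. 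The paper instead stays entirely inside the combinatorial description of reversal: it first reduces the third cactus relation to the case $i=1$ by a purely group-theoretic manipulation using $\eta_{i,j}=\eta_{1,j}\eta_{1,j-i+1}\eta_{1,j}$, and then proves $\eta_{1,j}\eta_{k,l}=\eta_{j-l+1,j-k+1}\eta_{1,j}$ by comparing two shifted growth diagrams segment by segment (Figure \ref{fig:gr_cactus_geral}), invoking the symmetry of the local growth rules, Proposition \ref{prop:sw_dual} and Corollary \ref{cor:sw_rev_com}. Your approach buys a shorter, more conceptual argument, but it imports the crystal-operator machinery of \cite{Ro20b} as a black box; the paper's approach is longer but self-contained in the \textit{jeu de taquin}/tableau-switching combinatorics, which is precisely the stated purpose of the growth-diagram proof. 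Both are valid; just be aware that what you have written is the proof the paper is deliberately not giving.
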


Recall that, given $T \in \mathsf{ShST}(\nu,n)$, $\mathsf{evac}_j (T) = \mathsf{evac} (T^{1,j}) \sqcup T^{j+1,n} = \eta_{1,j}(T)$. As a consequence, the next results follow from \eqref{eq:cactus_s1k} and from $\phi$ being an homomorphism.

\begin{cor}\label{cor:eta_cactus}
Let $T \in \mathsf{ShST}(\lambda/\mu,n)$ and $1 \leq i < j \leq n$. Then,
$$\eta_{i,j} (T) = \eta_{1,j} \eta_{1,j-i+1} \eta_{1,j} (T).$$
In particular, for $T \in \mathsf{ShST}(\nu,n)$, we have
$$\eta_{i,j}(T) = \mathsf{evac}_{j} \mathsf{evac}_{j-i+1} \mathsf{evac}_{j} (T).$$
\end{cor}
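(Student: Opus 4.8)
The plan is to deduce both identities directly from Theorem \ref{thm:cactusaction}, which asserts that $\phi\colon J_n \to \mathfrak{S}_{\mathsf{ShST}(\lambda/\mu,n)}$ with $s_{i,j} \mapsto \eta_{i,j}$ is a group homomorphism, combined with the cactus-group relation \eqref{eq:cactus_s1k}. Essentially everything is already in place; the argument is formal.

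First I would record why \eqref{eq:cactus_s1k} holds, i.e.\ why $s_{i,j} = s_{1,j}\,s_{1,j-i+1}\,s_{1,j}$ in $J_n$. Applying relation (3) of Definition \ref{def:cactus} to the nested pair $[1,j-i+1] \subseteq [1,j]$ gives $s_{1,j}\,s_{1,j-i+1} = s_{i,j}\,s_{1,j}$; multiplying on the right by $s_{1,j}$ and using $s_{1,j}^2 = 1$ from relation (1) yields the claim. Then, applying the homomorphism $\phi$ of Theorem \ref{thm:cactusaction} to both sides and using that $\phi$ preserves products, I obtain
$$\eta_{i,j} = \phi(s_{i,j}) = \phi(s_{1,j})\,\phi(s_{1,j-i+1})\,\phi(s_{1,j}) = \eta_{1,j}\,\eta_{1,j-i+1}\,\eta_{1,j}$$
as maps on $\mathsf{ShST}(\lambda/\mu,n)$, which is the first assertion.

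For the second assertion I would restrict to the straight-shaped case $T \in \mathsf{ShST}(\nu,n)$. There, as recalled immediately before the statement, $\eta_{1,k}(T) = \mathsf{evac}(T^{1,k}) \sqcup T^{k+1,n} = \mathsf{evac}_k(T)$ for every $k \leq n$. Substituting $\eta_{1,j} = \mathsf{evac}_j$ and $\eta_{1,j-i+1} = \mathsf{evac}_{j-i+1}$ into the identity above gives $\eta_{i,j}(T) = \mathsf{evac}_j\,\mathsf{evac}_{j-i+1}\,\mathsf{evac}_j(T)$, as desired.

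There is no real obstacle: the entire content sits in Theorem \ref{thm:cactusaction} (well-definedness of $\phi$ as a homomorphism) and in the relation \eqref{eq:cactus_s1k}. The only subtlety worth a remark is that $\eta_{i,j}$ on skew shapes is defined (Definition \ref{def:schu_ij}) from the straight-shaped case by coplactic extension, so the identification $\eta_{1,k} = \mathsf{evac}_k$ is literally valid only on straight shapes — which is precisely why the $\mathsf{evac}_k$-form of the identity is stated only for $T \in \mathsf{ShST}(\nu,n)$, while the $\eta$-form holds on all of $\mathsf{ShST}(\lambda/\mu,n)$.
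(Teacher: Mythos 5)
Your proposal is correct and follows exactly the paper's route: the paper likewise derives the identity by applying the homomorphism $\phi$ of Theorem \ref{thm:cactusaction} to the relation \eqref{eq:cactus_s1k} and then identifying $\eta_{1,k}$ with $\mathsf{evac}_k$ on straight shapes. Your explicit derivation of \eqref{eq:cactus_s1k} from relations (1) and (3) of Definition \ref{def:cactus}, and your remark on why the $\mathsf{evac}$-form is restricted to straight shapes, are both accurate.
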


\begin{teo}\label{teo:cact_evaci}
There is a natural action of the $n$-fruit cactus group on a shifted tableau crystal $\mathsf{ShST}(\nu,n)$, given by the group homomorphism, for $1<i\leq n$:

\begin{align*}
\widehat{\phi}: J_n & \longrightarrow \mathfrak{S}_{\mathsf{ShST}(\nu,n)}\\
s_{1,i} & \longmapsto \mathsf{evac}_i.
\end{align*}
\end{teo}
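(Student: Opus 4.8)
The plan is to deduce this directly from Theorem \ref{thm:cactusaction}, by specializing to straight shapes and restricting attention to the generating subset $\{s_{1,i} : 1 < i \leq n\}$ of $J_n$. First I would invoke Theorem \ref{thm:cactusaction} in the case $\lambda/\mu = \nu$, which already produces a group homomorphism $\phi \colon J_n \longrightarrow \mathfrak{S}_{\mathsf{ShST}(\nu,n)}$ with $\phi(s_{i,j}) = \eta_{i,j}$ for all $1 \leq i < j \leq n$; in particular this is a bona fide action of $J_n$ on $\mathsf{ShST}(\nu,n)$, so all the defining relations of $J_n$ are automatically respected.

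Next I would identify $\phi$ on the generators $s_{1,i}$ with the maps $\mathsf{evac}_i$. For $T \in \mathsf{ShST}(\nu,n)$ the portion $T^{1,i} = T^1 \sqcup \cdots \sqcup T^i$ has straight shape, so by Proposition \ref{prop:Schu} the involution $\eta$ restricted to it coincides with the shifted evacuation; hence, by Definition \ref{def:schu_ij}, $\eta_{1,i}(T) = \eta(T^{1,i}) \sqcup T^{i+1,n} = \mathsf{evac}(T^{1,i}) \sqcup T^{i+1,n} = \mathsf{evac}_i(T)$, which is precisely the identity recorded in the remark preceding Corollary \ref{cor:eta_cactus}. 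Thus $\phi(s_{1,i}) = \mathsf{evac}_i$ for every $1 < i \leq n$. Finally, since by \eqref{eq:cactus_s1k} the elements $s_{1,i}$, $1 < i \leq n$, generate $J_n$, the homomorphism $\phi$ is completely determined by these values, so setting $\widehat{\phi} := \phi$ delivers the asserted homomorphism $\widehat{\phi} \colon J_n \longrightarrow \mathfrak{S}_{\mathsf{ShST}(\nu,n)}$, $s_{1,i} \longmapsto \mathsf{evac}_i$.

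I do not expect a genuine obstacle here: the relational burden of $J_n$ is already carried by $\phi$, and the only points needing care are the elementary identification $\eta_{1,i} = \mathsf{evac}_i$ on straight shapes and the observation that restricting to the generating subset $\{s_{1,i}\}$ loses no relations (automatic once $\phi$ is known to be a homomorphism). For completeness I would add a remark pointing to the alternative, more self-contained route taken in Section \ref{sec:gd}, where one instead verifies directly — via type $C$ growth diagrams and the semistandardization of \cite{PY17} — that the operators $\mathsf{evac}_i$ satisfy $\mathsf{evac}_i^2 = \mathrm{id}$ together with the images under \eqref{eq:cactus_s1k} of relations (2) and (3) of Definition \ref{def:cactus}, which is the genuinely combinatorial content behind Theorem \ref{thm:cactusaction}.
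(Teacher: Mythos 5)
Your proposal is correct and follows essentially the same route as the paper: both deduce the result from Theorem \ref{thm:cactusaction} by restricting $\phi$ to straight shapes, identifying $\mathsf{evac}_i = \eta_{1,i} = \phi(s_{1,i})$, and invoking \eqref{eq:cactus_s1k} to see that the generators $s_{1,i}$ determine the homomorphism. Your version merely spells out the identification $\eta_{1,i} = \mathsf{evac}_i$ in slightly more detail than the paper's two-line argument.
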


\begin{proof}
Since $\phi$ is an homomorphism from $J_n$ to $\mathfrak{S}_{\mathsf{ShST}(\lambda/\mu,n)}$, in particular it is an homomorphism from $J_n$ to $\mathfrak{S}_{\mathsf{ShST}(\nu,n)}$. The result then follows from \eqref{eq:cactus_s1k}, as we have $\widehat{\phi} (s_{1,i}) = \mathsf{evac}_i = \eta_{1,i} = \phi (s_{1,i})$.
\end{proof}

\section{A shifted Berenstein--Kirillov group}\label{secBK}
In this section we introduce a shifted version of the Bender--Knuth involutions for shifted semistandard tableaux. Stembridge has defined Bender--Knuth moves for shifted tableaux 
\cite[Section 6]{Stem90}, but they differ from the ones we introduce, as they are not preserve classes of canonical form (see Remark \ref{rmk:stembridge}). For ordinary Young tableaux, the Bender--Knuth involutions on letters $\{i,i+1\}$ are known to coincide with the tableau switching applied to horizontal border strips filled with the same letters 
\cite[Proposition 2.6]{BSS96}, \cite[Section 4.1]{PV10},
 together with a swapping of the letters. Thus, it is natural to use the shifted version of that algorithm, introduced by Choi, Nam and Oh 
 \cite{CNO17}, to define the shifted Bender--Knuth moves, or, equivalently, the type $C$ infusion map due to Thomas and Yong 
\cite{TY09} on standardized tableaux, followed by the shifted semistandardization process of Pechenik and Yong \cite{PY17}. As in \cite{BK95}, we are then able to recover the shifted evacuation, promotion, and shifted crystal reflection operators.

We then use the shifted Bender--Knuth involutions to introduce a shifted version of the Berenstein--Kirillov group. Following the works of Halacheva 
\cite{Hala16,Hala20} and Chmutov, Glick and Pylyavskyy 
\cite{CGP16}, we show that the shifted Berenstein--Kirillov group is isomorphic to a quotient of the cactus group and give an alternative presentation for the cactus group in terms of the shifted Bender--Knuth involutions. 

\subsection{Shifted Bender--Knuth involutions}	
We now introduce the \emph{shifted Bender--Knuth involutions} $\mathsf{t}_i$, for $i \in \mathbb{Z}_{>0}$, which will yield another presentation for the cactus group $J_n$. We first fix some notation. Given $i \in I = [n-1]$, recall that $\theta_i \in \mathfrak{S}_n$ denotes the simple transposition $(i,i+1)$. We write the cyclic permutation $\zeta_i = \theta_i \theta_{i-1} \cdots \theta_1$ as  $\zeta_i := (1,i+1,i, \ldots,2) \in \mathfrak{S}_n$. We recall that these permutations act on letters of the marked alphabet $[n]'$ as in \eqref{eq:theta_pri}.

\begin{defin}\label{defSWIJ} Let $T^{i_1}, \ldots , T^{i_n}$ be a sequence of $i_k$-border strips, with $\mathbf{i_k} \in [n]'$ and such that $\{i_1,\dots,i_n\}=[n]$. Suppose that  $T^{i_{k+1}}$ extends $T^{i_{k}}$, for $1 < k< n$. Consider $T := T^{i_1} \sqcup \cdots \sqcup T^{i_n}$, a shifted skew shape filled in the alphabet $[n]'$ (that is not necessarily a shifted semistandard filling).

\begin{enumerate}
\item Let $i,j \in [n]$ be such that $T^j$ extends $T^i$. We define $\mathsf{SP}_{i,j} (T)$ to be the filling of the shape of $T$ obtained by leaving each $T^k$ unchanged, for $k \neq i,j$, and replacing $T^i \sqcup T^j$ with  $\mathsf{SP}_1(T^i,T^j) \sqcup \mathsf{SP}_2(T^i,T^j)$.

\item We also define $\mathsf{SW}_{i_k | i_{k+1}, \ldots, i_{k+l}} (T) := \mathsf{SP}_{i_k, i_{k+l}} \mathsf{SP}_{i_k, i_{k+l-1}} \cdots \mathsf{SP}_{i_k, i_{k+1}} (T)$.
\end{enumerate}
\end{defin}

\begin{ex}
Let $T = \begin{ytableau}
1 & 1 & 2' & 2 & 3\\
\none & 2 & 2 & 3'\\
\none & \none & 3
\end{ytableau}$. Then, to compute $\mathsf{SP}_{2,3} (T)$ we have:
$$
\begin{ytableau}
1 & 1 & *(lblue)2' & *(lblue)2 & *(llblue)3\\
\none & *(lblue)2 & *(lblue)2 & *(llblue)3'\\
\none & \none & *(llblue)3
\end{ytableau} \overset{\textbf{(S6)}}\longrightarrow
\begin{ytableau}
1 & 1 & *(lblue)2' & *(llblue)3' & *(llblue)3\\
\none & *(lblue)2 & *(lblue)2 & *(lblue)2\\
\none & \none & *(llblue)3
\end{ytableau} \overset{\textbf{(S4)}}\longrightarrow
\begin{ytableau}
1 & 1 & *(lblue)2' & *(llblue)3' & *(llblue)3\\
\none & *(llblue)3 & *(lblue)2' & *(lblue)2\\
\none & \none & *(lblue)2
\end{ytableau} \overset{\textbf{(S1),(S1)}}\longrightarrow
\begin{ytableau}
1 & 1 & *(llblue)3' & *(llblue)3 & *(lblue)2'\\
\none & *(llblue)3 & *(lblue)2' & *(lblue)2\\
\none & \none & *(lblue)2
\end{ytableau} = \mathsf{SP}_{2,3} (T).$$ 
To compute $\mathsf{SW}_{1|2,3} (T)$, first apply the shifted tableau switching to the pair $(T^1, T^2)$, obtaining $(\tilde{T}^2,\tilde{T}^1)$, and then apply it again to the pair $(\tilde{T}^{1}, T^3)$: 

\begin{align*}
&\begin{ytableau}
*(lblue)1 & *(lblue)1 & *(llblue)2' & *(llblue)2 & 3\\
\none & *(llblue)2 & *(llblue)2 & 3'\\
\none & \none & 3
\end{ytableau} \overset{\textbf{(S5)}}\longrightarrow
\begin{ytableau}
*(lblue)1  & *(llblue)2' & *(lblue)1 & *(llblue)2 & 3\\
\none & *(llblue)2 & *(llblue)2 & 3'\\
\none & \none & 3
\end{ytableau} \overset{\textbf{(S6)}}\longrightarrow
\begin{ytableau}
*(lblue)1  & *(llblue)2' & *(llblue)2 & *(llblue)2 & 3\\
\none & *(llblue)2 & *(lblue)1 & 3'\\
\none & \none & 3
\end{ytableau}\overset{\textbf{(S3)}}\longrightarrow
\begin{ytableau}
*(llblue)2 & *(llblue)2 & *(llblue)2 & *(llblue)2 & 3\\
\none & *(lblue)1 & *(lblue)1 & 3'\\
\none & \none & 3
\end{ytableau} = \mathsf{SP}_{1,2} (T)\\
&\longrightarrow \begin{ytableau}
2 & 2 & 2 & 2 & *(llblue)3\\
\none & *(lblue)1 & *(lblue)1 & *(llblue)3'\\
\none & \none & *(llblue)3
\end{ytableau} \overset{\textbf{(S5)}}\longrightarrow
\begin{ytableau}
2 & 2 & 2 & 2 & *(llblue)3\\
\none & *(lblue)1 & *(llblue)3' & *(lblue)1 \\
\none & \none & *(llblue)3
\end{ytableau} \overset{\textbf{(S3)}}\longrightarrow
\begin{ytableau}
2 & 2 & 2 & 2 & *(llblue)3\\
\none & *(llblue)3 & *(llblue)3 & *(lblue)1 \\
\none & \none & *(lblue)1
\end{ytableau} = \mathsf{SP}_{1,3} \mathsf{SP}_{1,2} (T) = \mathsf{SW}_{1|2,3} (T).
\end{align*}
\end{ex}

We remark that $\mathsf{SP}_{i,j}$ and $\mathsf{SW}_{K|J}$ in general do not yield shifted semistandard tableaux, as the rows and columns may not be weakly increasing, as shown in the previous example, but they may be composed with adequate permutations of $\mathfrak{S}_n$, acting as in \eqref{eq:theta_pri} on the entries in $[n']$, ensuring that the resulting filling is a valid shifted semistandard tableau.

\begin{lema}\label{lem:comutspi}
Let $1 \leq i < j \leq n$ and $T \in \mathsf{ShST}(\lambda/\mu,n)$, such that $T^j$ extends $T^i$. Then,
\begin{enumerate}
\item $\mathsf{wt}(\mathsf{SP}_{i,j} (T)) = \mathsf{wt}(T)$\footnote{The weight of a filling of a shifted shape, not necessarily a valid shifted semistandard tableau, is defined as before.}.
\item $\mathsf{SP}_{j,i}\mathsf{SP}_{i,j}=1$.
\item $\tau \mathsf{SP}_{i,j} (T) = \mathsf{SP}_{\tau(i), \tau(j)} \tau (T)$, for any permutation $\tau \in \mathfrak{S}_n$.
\end{enumerate}
\end{lema}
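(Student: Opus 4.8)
The plan is to treat the three parts in order, relying only on three facts that are already available: (a) each switch \textbf{(S1)}--\textbf{(S7)} of Figure \ref{fig:switches} carries an $\mathbf{i}$-box to an $\mathbf{i}$-box and a $\mathbf{j}$-box to a $\mathbf{j}$-box and preserves the underlying set of boxes; (b) the border-strip switching $\mathsf{SP}$ is an involution \cite[Theorem 3.5]{CNO17}; and (c) the shifted tableau switching is compatible with canonical form, as shown above (the proposition preceding Corollary \ref{cor:sw_rev_com}).

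For (1): by Definition \ref{defSWIJ}, $\mathsf{SP}_{i,j}$ fixes every $T^k$ with $k\neq i,j$ and replaces $T^i\sqcup T^j$ by $\mathsf{SP}_1(T^i,T^j)\sqcup\mathsf{SP}_2(T^i,T^j)$, which occupies the same boxes $sh(T^i)\sqcup sh(T^j)$. By (a) the switches only permute $\mathbf{i}$-boxes among themselves and $\mathbf{j}$-boxes among themselves (possibly priming or unpriming entries, which does not affect the weight, since $i$ and $i'$ are counted together), so $\mathsf{wt}(\mathsf{SP}_{i,j}(T))=\mathsf{wt}(T)$; compatibility with canonical form guarantees the same after canonicalizing. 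For (2): since $T^j$ extends $T^i$, in $\mathsf{SP}_{i,j}$ the strip $T^i$ plays the role of the inner (moving) tableau and $T^j$ the outer one, and after switching $\mathsf{SP}_1(T^i,T^j)$ (the $\mathbf{j}$-tableau) is inner while $\mathsf{SP}_2(T^i,T^j)$ (the $\mathbf{i}$-tableau) extends it. Hence $\mathsf{SP}_{j,i}$ applied to $\mathsf{SP}_{i,j}(T)$ is precisely the border-strip switching applied to the pair $\bigl(\mathsf{SP}_1(T^i,T^j),\mathsf{SP}_2(T^i,T^j)\bigr)$, which by (b) returns $(T^i,T^j)$; as the strips $T^k$ with $k\neq i,j$ were never moved, $\mathsf{SP}_{j,i}\mathsf{SP}_{i,j}(T)=T$.

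Part (3) is the substantive one, and I would prove it by factoring the $\tau$-action as relabelling followed by canonicalization. Write $\rho$ for the naive substitution $\mathbf{x}\mapsto\tau(\mathbf{x})$ on the entries of a filling (preserving primes, \emph{without} canonicalizing), so that $\tau=\mathrm{can}\circ\rho$ on fillings in $[n]'$. Since $\rho$ changes no shape, it carries $T^i$ to the $\tau(i)$-strip of $\rho(T)$ and $T^j$ to its $\tau(j)$-strip and preserves the relation ``$T^j$ extends $T^i$'', so $\mathsf{SP}_{\tau(i),\tau(j)}$ is applicable to $\rho(T)$ (note that Definition \ref{defSWIJ} does not demand the first subscript be the smaller index, so this is meaningful even when $\tau$ reverses the order of $i$ and $j$). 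The crux is the claim that the switching process is $\rho$-equivariant: the rule selecting the next $\mathbf{a}$-box to move and the choice of switch \textbf{(S1)}--\textbf{(S7)} depend only on the positions and primings of the $\mathbf{a}$- and $\mathbf{b}$-boxes and on which strip is inner, never on the numerical values $a,b$; hence $\rho\,\mathsf{SP}_{i,j}(T)=\mathsf{SP}_{\tau(i),\tau(j)}\,\rho(T)$. Applying $\mathrm{can}$ to both sides and using (c) to commute $\mathrm{can}$ past $\mathsf{SP}_{\tau(i),\tau(j)}$ yields $\tau\,\mathsf{SP}_{i,j}(T)=\mathrm{can}\bigl(\rho\,\mathsf{SP}_{i,j}(T)\bigr)=\mathrm{can}\bigl(\mathsf{SP}_{\tau(i),\tau(j)}\rho(T)\bigr)=\mathsf{SP}_{\tau(i),\tau(j)}\,\mathrm{can}(\rho(T))=\mathsf{SP}_{\tau(i),\tau(j)}\,\tau(T)$.

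The main obstacle is the $\rho$-equivariance claim in (3): one must check, switch by switch, that the rules of Figure \ref{fig:switches} and the selection rule are genuinely phrased in terms of the $\mathbf{a}/\mathbf{b}$ roles (and which strip is inner) rather than the values $a,b$, so that a value-relabelling is harmless; the only delicate cases are the diagonal switches \textbf{(S3)}, \textbf{(S4)}, \textbf{(S7)} and the priming changes they induce, but these are still local and value-independent. One must also keep track throughout of the convention of Definition \ref{defSWIJ} that the first subscript of $\mathsf{SP}$ names the inner strip, since $\tau$ may send a pair $i<j$ to a pair $\tau(i)>\tau(j)$. Once equivariance and canonical-form compatibility are in hand, (1) and (2) are routine and the closing chain of equalities in (3) is pure bookkeeping.
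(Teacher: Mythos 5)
Your proposal is correct and follows essentially the same route as the paper: part (1) from the fact that switches only move boxes (with $i$ and $i'$ counted together in the weight), part (2) from the involutivity of $\mathsf{SP}$, and part (3) from the observation that the switching rules depend only on the roles of the two letters and not on their values, so relabelling commutes with the process. Your treatment of (3) — factoring $\tau$ into a naive relabelling followed by canonicalization and invoking compatibility with canonical form — is in fact more careful than the paper's one-sentence justification of the same equivariance claim.
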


\begin{proof}
To prove the first statement, we note that the shifted tableau switching solely moves boxes, not changing the total weight. For the second statement, we assume, without loss of generality, that $T = A \sqcup B$, with $A = T^i$ and $B=T^j$. Then, $\mathsf{SP}_{i,j} (A \sqcup B) = \mathsf{SP}_1(A,B) \sqcup \mathsf{SP}_2(A,B)$, where $\mathsf{SP}_1(A,B)$ is filled in $\{j',j\}$ and $\mathsf{SP}_2(A,B)$ is filled in $\{i',i\}$. Then, since the shifted tableau switching is an involution \cite[Theorem 4.3]{CNO17}, we have
\begin{align*}
\mathsf{SP}_{j,i} \big( &\mathsf{SP}_1(A,B) \sqcup \mathsf{SP}_2(A,B) \big) =\\
&= \mathsf{SP}_1\big( \mathsf{SP}_1(A,B), \mathsf{SP}_2(A,B)  \big) \sqcup \mathsf{SP}_2\big( \mathsf{SP}_1(A,B), \mathsf{SP}_2(A,B)  \big)\\
&= \mathsf{SP}_1\big( \mathsf{SP}(A,B) \big) \sqcup \mathsf{SP}_2\big( \mathsf{SP}(A,B) \big)\\
&= A \sqcup B.
\end{align*}

For the last assertion, we note that applying the shifted tableau switching to the pair $(T^i,T^j)$, followed by the action of a permutation $\tau \in \mathfrak{S}_n$ is the same as first apply the permutation $\tau$ to the letters in $T$, and then compute the shifted tableau switching to the pair that previously corresponded to $(T^i, T^j)$, which is now $(T^{\tau(i)}, T^{\tau(j)})$. 
\end{proof}

We may now define the operators $\mathsf{t}_i$, for $i \in \mathbb{Z}_{>0}$, for shifted semistandard tableaux.

\begin{defin}\label{def:sbk_mov}
Given $T \in \mathsf{ShST}(\lambda/\mu,n)$, for $n>1$, and $i \in I$, we define the \emph{shifted Bender--Knuth move} $\mathsf{t}_i$ as  
$$\mathsf{t}_i (T) := \theta_i \mathsf{SP}_{i,i+1} (T) = \mathsf{SP}_{i+1,i} \theta_i (T).$$
\end{defin}

\begin{ex}
Let $T = \begin{ytableau}
1 & 1 & 1 & 2' & 2\\
\none & 2 & 2 & 3\\
\none & \none & 3
\end{ytableau}$.
Then, we have
\begin{align*}
T = \begin{ytableau}
1 & 1 & 1 & 2' & 2\\
\none & 2 & 2 & 3\\
\none & \none & 3
\end{ytableau} &\longrightarrow
\begin{ytableau}
*(llblue)1 & *(llblue)1 & *(llblue)1 & *(lblue)2' & *(lblue)2\\
\none & *(lblue)2 & *(lblue)2 & 3\\
\none & \none & 3
\end{ytableau} \overset{\mathbf{(S5)}}\longrightarrow
\begin{ytableau}
*(llblue)1 & *(llblue)1 & *(lblue)2' & *(llblue)1 & *(lblue)2\\
\none & *(lblue)2 & *(lblue)2 & 3\\
\none & \none & 3
\end{ytableau} \overset{\mathbf{(S1)}}\longrightarrow
\begin{ytableau}
*(llblue)1 & *(llblue)1 & *(lblue)2' & *(lblue)2 & *(llblue)1\\
\none & *(lblue)2 & *(lblue)2 & 3\\
\none & \none & 3
\end{ytableau} \overset{\mathbf{(S5)}}\longrightarrow
\begin{ytableau}
*(llblue)1 & *(lblue)2' & *(llblue)1 & *(lblue)2 & *(llblue)1\\
\none & *(lblue)2 & *(lblue)2 & 3\\
\none & \none & 3
\end{ytableau}\\
&\overset{\mathbf{(S6)}}\longrightarrow
\begin{ytableau}
*(llblue)1 & *(lblue)2' & *(lblue)2 & *(lblue)2 & *(llblue)1\\
\none & *(lblue)2 & *(llblue)1 & 3\\
\none & \none & 3
\end{ytableau} \overset{\mathbf{(S3)}}\longrightarrow
\begin{ytableau}
*(lblue)2 & *(lblue)2 & *(lblue)2 & *(lblue)2 & *(llblue)1\\
\none & *(llblue)1 & *(llblue)1 & 3\\
\none & \none & 3
\end{ytableau} \overset{\theta_1}\longrightarrow
\begin{ytableau}
1 & 1 & 1 & 1 & 2\\
\none & 2 & 2 & 3\\
\none & \none & 3
\end{ytableau} = \mathsf{t}_1 (T).
\end{align*}

\begin{align*}
T = \begin{ytableau}
1 & 1 & 1 & 2' & 2\\
\none & 2 & 2 & 3\\
\none & \none & 3
\end{ytableau} &\overset{\theta_2}\longrightarrow
\begin{ytableau}
1 & 1 & 1 & *(llblue)3' & *(llblue)3\\
\none & *(llblue)3 & *(llblue)3 & *(lblue)2\\
\none & \none & *(lblue)2
\end{ytableau} \overset{\mathbf{(S7)}}\longrightarrow
\begin{ytableau}
1 & 1 & 1 & *(llblue)3' & *(llblue)3\\
\none & *(lblue)2 & *(llblue)3' & *(lblue)2\\
\none & \none & *(llblue)3
\end{ytableau} \overset{\mathbf{(S1)}}\longrightarrow
\begin{ytableau}
1 & 1 & 1 & *(llblue)3' & *(llblue)3\\
\none & *(lblue)2 & *(lblue)2 & *(llblue)3'\\
\none & \none & *(llblue)3
\end{ytableau} = \mathsf{t}_2 (T).
\end{align*}
\end{ex}

\begin{obs}
A shifted Bender--Knuth in may be formulated in terms of type $C$ infusion and semistandardization. The tableau $\mathsf{t}_1 (T)$, as in the previous example, may be computed as follows:

\begin{align*}
T = \begin{ytableau}
1 & 1 & 1 & 2' & 2\\
\none & 2 & 2 & 3\\
\none & \none & 3
\end{ytableau} &\longrightarrow
\begin{ytableau}
*(llblue)1 & *(llblue)1 & *(llblue)1 & *(lblue)2' & *(lblue)2\\
\none & *(lblue)2 & *(lblue)2 & {\color{lgray} 3}\\
\none & \none & {\color{lgray} 3}
\end{ytableau} \overset{\mathsf{std}\times \mathsf{std}}\longrightarrow
\begin{ytableau}
*(llblue)1 & *(llblue)2 & *(llblue)3 & *(lblue)1 & *(lblue)4\\
\none & *(lblue)2 & *(lblue)3 & {\color{lgray} 3}\\
\none & \none & {\color{lgray} 3}
\end{ytableau} \longrightarrow
\begin{ytableau}
*(llblue)1 & *(llblue)2 & *(lblue)1  & *(llblue)3 & *(lblue)4\\
\none & *(lblue)2 & *(lblue)3 & {\color{lgray} 3}\\
\none & \none & {\color{lgray} 3}
\end{ytableau} \longrightarrow
\begin{ytableau}
*(llblue)1 & *(llblue)2 & *(lblue)1 & *(lblue)4 & *(llblue)3 \\
\none & *(lblue)2 & *(lblue)3 & {\color{lgray} 3}\\
\none & \none & {\color{lgray} 3}
\end{ytableau}\\
&\longrightarrow
\begin{ytableau}
*(llblue)1 & *(lblue)1 & *(llblue)2 & *(lblue)4 & *(llblue)3 \\
\none & *(lblue)2 & *(lblue)3 & {\color{lgray} 3}\\
\none & \none & {\color{lgray} 3}
\end{ytableau} \longrightarrow
\begin{ytableau}
*(llblue)1 & *(lblue)1 & *(lblue)3 & *(lblue)4 & *(llblue)3 \\
\none & *(lblue)2 & *(llblue)2 & {\color{lgray} 3}\\
\none & \none & {\color{lgray} 3}
\end{ytableau} \longrightarrow
\begin{ytableau}
*(lblue)1 & *(llblue)1 & *(lblue)3 & *(lblue)4 & *(llblue)3 \\
\none & *(lblue)2 & *(llblue)2 & {\color{lgray} 3}\\
\none & \none & {\color{lgray} 3}
\end{ytableau} 
\longrightarrow
\begin{ytableau}
*(lblue)1 & *(lblue)2 & *(lblue)3 & *(lblue)4 & *(llblue)3 \\
\none & *(llblue)1 & *(llblue)2 & {\color{lgray} 3}\\
\none & \none & {\color{lgray} 3}
\end{ytableau}.
\end{align*}
Then, the semistandardization process with respect to $wt_2 = (4)$ and $wt_1 = (3)$ yields:

$$\begin{ytableau}
*(lblue)1 & *(lblue)2 & *(lblue)3 & *(lblue)4 & *(llblue)3 \\
\none & *(llblue)1 & *(llblue)2 & {\color{lgray} 3}\\
\none & \none & {\color{lgray} 3}
\end{ytableau}
\overset{\mathsf{sstd}_{(4)} \times \mathsf{sstd}_{(3)}}\longrightarrow
\begin{ytableau}
1 & 1 & 1 & 1 & 2\\
\none & 2 & 2 & 3\\
\none & \none & 3
\end{ytableau} = \mathsf{t}_1 (T).$$
\end{obs}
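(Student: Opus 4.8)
The plan is to read the asserted description off Definition~\ref{def:sbk_mov} together with Proposition~\ref{prop:sw_infu}; the only point that needs a genuine argument is that the restricted switching $\mathsf{SP}_{i,i+1}$ occurring in $\mathsf{t}_i$ is a single shifted perforated switching applied to the pair $(T^i,T^{i+1})$, so that Proposition~\ref{prop:sw_infu} applies verbatim. First I would unfold the definitions: by Definition~\ref{def:sbk_mov}, $\mathsf{t}_i(T)=\theta_i\,\mathsf{SP}_{i,i+1}(T)$, and by Definition~\ref{defSWIJ}\,(1) the filling $\mathsf{SP}_{i,i+1}(T)$ leaves every border strip $T^k$ with $k\neq i,i+1$ unchanged and replaces $T^i\sqcup T^{i+1}$ by $\mathsf{SP}_1(T^i,T^{i+1})\sqcup\mathsf{SP}_2(T^i,T^{i+1})$. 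So it suffices to identify $\mathsf{SP}(T^i,T^{i+1})$ with the shifted tableau switching $\mathsf{SW}(T^i,T^{i+1})$ of the pair $(T^i,T^{i+1})$, with $T^{i+1}$ extending $T^i$.

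The next step is to check that $(T^i,T^{i+1})$ really is a shifted perforated $(\mathbf{i},\mathbf{i+1})$-pair: each of $T^i,T^{i+1}$ is a border strip and $T^i\sqcup T^{i+1}$ is a double border strip. This is a short squeezing argument from the row/column conditions on $T$: entries weakly increase along any diagonal, and if $(a,b)$ and $(a{+}1,b{+}1)$ both lie in $\lambda$ then, $\lambda$ being strict, so does $(a,b{+}1)$, whose value is squeezed between those of $(a,b)$ and $(a{+}1,b{+}1)$; a south-east diagonal triple inside $T^i\sqcup T^{i+1}$ would therefore force either two equal unprimed letters in column $b{+}1$ or two equal primed letters in row $a$, which is excluded. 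Hence the shifted tableau switching of Figure~\ref{fig:SW}, applied to two tableaux each carrying a single colour and overlapping in a double border strip, collapses to a single switching process, so that $\mathsf{SW}(T^i,T^{i+1})=(\mathsf{SP}_1(T^i,T^{i+1}),\mathsf{SP}_2(T^i,T^{i+1}))$.

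Finally I would invoke Proposition~\ref{prop:sw_infu} with $(S,T):=(T^i,T^{i+1})$. Writing $\nu_{i+1}:=\mathsf{wt}(T^{i+1})$ and $\nu_i:=\mathsf{wt}(T^i)$ --- each supported on a single coordinate, i.e. the one-part composition $(wt_{i+1})$, resp. $(wt_i)$, after dropping zero coordinates --- the proposition gives
\[
\mathsf{SP}(T^i,T^{i+1})=(\mathsf{sstd}_{\nu_{i+1}}\times\mathsf{sstd}_{\nu_i})\circ\mathsf{infusion}\bigl(\mathsf{std}(T^i),\mathsf{std}(T^{i+1})\bigr),
\]
itself a consequence of Lemma~\ref{lem:sw_inf_std} (switching $=$ infusion on standard tableaux), the compatibility \eqref{eq:sw_std} of switching with standardization, and the fact that $\mathsf{sstd}_\nu$ inverts $\mathsf{std}$ on weight-$\nu$ tableaux \cite[Lemma 9.5]{PY17}. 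Composing with $\theta_i$ and re-overlaying the unchanged $T^k$ gives the procedure in the statement: $\mathsf{sstd}_{\nu_{i+1}}(\mathsf{infusion}_1)$ is a filling in $\{(i{+}1)',i{+}1\}$ with $wt_{i+1}$ cells and $\mathsf{sstd}_{\nu_i}(\mathsf{infusion}_2)$ one in $\{i',i\}$ with $wt_i$ cells, and the final $\theta_i$ interchanges these two alphabets --- precisely the exchange of the multiplicities of $i$ and $i+1$ that a Bender--Knuth move performs; the displayed example is the case $i=1$. The only delicate step is the middle one, justifying that $\mathsf{SP}$ applies to $(T^i,T^{i+1})$ and agrees with the switching used inside $\mathsf{t}_i$; once the double-border-strip property is in hand the rest is a direct application of Proposition~\ref{prop:sw_infu}, with no new combinatorics.
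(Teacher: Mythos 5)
Your proposal is correct and follows exactly the route the paper intends: the remark is stated without proof precisely because it is the composite of Definition \ref{def:sbk_mov} ($\mathsf{t}_i = \theta_i\,\mathsf{SP}_{i,i+1}$, which by Definition \ref{defSWIJ} acts only on the pair $(T^i,T^{i+1})$) with Proposition \ref{prop:sw_infu} (switching equals semistandardization composed with type $C$ infusion on standardizations), which is what you invoke. Your additional verification that $T^i\sqcup T^{i+1}$ is a double border strip, so that $\mathsf{SP}(T^i,T^{i+1})$ is well defined and coincides with $\mathsf{SW}(T^i,T^{i+1})$, is a point the paper leaves implicit (it is inherited from \cite{CNO17}), and your squeezing argument for it is sound.
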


\begin{prop}\label{prop:ti_props}
The shifted Bender--Knuth operators $\mathsf{t}_i$ satisfy the following, for any $i \in I$:
\begin{enumerate}
	\item $\mathsf{t}_i^2 = 1$.
	\item $\mathsf{t}_i \mathsf{t}_j = \mathsf{t}_j \mathsf{t}_i$, for $|i-j| >1$.
	\item $\mathsf{wt}(\mathsf{t}_i (T))= \theta_i (\mathsf{wt}(T))$, for any $T \in \mathsf{ShST}(\lambda/\mu,n)$.
\end{enumerate}
Thus, $\mathsf{t}_i$ defines a bijection between the set of shifted semistandard tableaux of shape $\lambda/\mu$ and weight $\nu$, and the set of shifted semistandard tableaux of the same shape and weight $\theta_i(\nu)$.
\end{prop}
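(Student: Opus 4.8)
The plan is to establish the three listed properties in order, deriving each directly from the definition $\mathsf{t}_i(T) = \theta_i\,\mathsf{SP}_{i,i+1}(T) = \mathsf{SP}_{i+1,i}\,\theta_i(T)$ together with Lemma \ref{lem:comutspi}, and then read off the bijection statement as an immediate corollary of (1) and (3). First I would verify that the two expressions for $\mathsf{t}_i(T)$ in Definition \ref{def:sbk_mov} actually agree: by Lemma \ref{lem:comutspi}(3) with $\tau = \theta_i$, we have $\theta_i\,\mathsf{SP}_{i,i+1}(T) = \mathsf{SP}_{\theta_i(i),\theta_i(i+1)}\,\theta_i(T) = \mathsf{SP}_{i+1,i}\,\theta_i(T)$, so the definition is consistent. (One should also note that the output is genuinely a shifted semistandard tableau in canonical form; this uses the compatibility of the switching with canonical form proved earlier, together with the observation that the $\theta_i$-twist restores weak increase along rows and columns — I would point to the discussion following Definition \ref{defSWIJ} rather than re-derive it.)

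For (1), compute $\mathsf{t}_i^2(T) = \theta_i\,\mathsf{SP}_{i,i+1}\,\theta_i\,\mathsf{SP}_{i,i+1}(T)$. Using Lemma \ref{lem:comutspi}(3) to slide the inner $\theta_i$ past the outer $\mathsf{SP}_{i,i+1}$, this becomes $\theta_i\,\theta_i\,\mathsf{SP}_{\theta_i(i),\theta_i(i+1)}\,\mathsf{SP}_{i,i+1}(T) = \theta_i^2\,\mathsf{SP}_{i+1,i}\,\mathsf{SP}_{i,i+1}(T)$. Now $\theta_i^2 = 1$ in $\mathfrak{S}_n$, and $\mathsf{SP}_{i+1,i}\,\mathsf{SP}_{i,i+1} = 1$ by Lemma \ref{lem:comutspi}(2) (which in turn rests on the shifted tableau switching being an involution, \cite[Theorem 4.3]{CNO17}). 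Hence $\mathsf{t}_i^2 = 1$.

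For (2), take $|i-j| > 1$, so that the border strips involved in $\mathsf{SP}_{i,i+1}$ and in $\mathsf{SP}_{j,j+1}$ use disjoint sets of letters $\{i,i+1\}$ and $\{j,j+1\}$, and likewise $\theta_i$ and $\theta_j$ commute in $\mathfrak{S}_n$. The key point is that $\mathsf{SP}_{i,i+1}$ only rearranges the $i$- and $(i+1)$-border strips while leaving all other $T^k$ untouched (by Definition \ref{defSWIJ}(1)), so $\mathsf{SP}_{i,i+1}$ and $\mathsf{SP}_{j,j+1}$ commute as operators; combining this with $\theta_i\theta_j = \theta_j\theta_i$ and with Lemma \ref{lem:comutspi}(3) to move the permutations across the switching operators gives $\mathsf{t}_i\mathsf{t}_j = \mathsf{t}_j\mathsf{t}_i$. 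For (3): $\mathsf{SP}_{i,i+1}$ preserves the total weight of the filling by Lemma \ref{lem:comutspi}(1), so $\mathsf{wt}(\mathsf{SP}_{i,i+1}(T)) = \mathsf{wt}(T)$, and then applying $\theta_i$ permutes the multiplicities of the letters $i$ and $i+1$, i.e. $\mathsf{wt}(\mathsf{t}_i(T)) = \theta_i(\mathsf{wt}(T))$. Finally, for the bijection claim: (3) shows $\mathsf{t}_i$ maps $\mathsf{ShST}(\lambda/\mu,n)$ with weight $\nu$ into the set with weight $\theta_i(\nu)$, and (1) shows $\mathsf{t}_i$ is its own two-sided inverse, hence a bijection between these two sets. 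I expect the only delicate point — and the thing most worth spelling out carefully rather than asserting — is the commutativity of $\mathsf{SP}_{i,i+1}$ with $\mathsf{SP}_{j,j+1}$ for $|i-j|>1$, since it relies on the precise statement that the switching process for one pair of adjacent letters leaves the border strips of all other letters literally fixed; everything else is a short formal manipulation with Lemma \ref{lem:comutspi}.
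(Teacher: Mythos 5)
Your proposal is correct and follows essentially the same route as the paper: part (1) by commuting $\theta_i$ past $\mathsf{SP}_{i,i+1}$ via Lemma \ref{lem:comutspi}(3) and then invoking $\mathsf{SP}_{i+1,i}\mathsf{SP}_{i,i+1}=1$, part (2) from the fact that $\mathsf{t}_i$ only touches the letters $\{i,i+1\}'$, part (3) from the weight-preservation of $\mathsf{SP}_{i,j}$ in Lemma \ref{lem:comutspi}(1), and the bijection as an immediate consequence of (1) and (3). The extra care you take with the consistency of the two expressions in Definition \ref{def:sbk_mov} and with the commutativity of $\mathsf{SP}_{i,i+1}$ and $\mathsf{SP}_{j,j+1}$ is a welcome elaboration but not a different argument.
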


\begin{proof}
By Lemma \ref{lem:comutspi}, we have 
$$\mathsf{t}_i^2 = \theta_i \mathsf{SP}_{i,i+1}\theta_i \mathsf{SP}_{i,i+1} = \mathsf{SP}_{i+1,i}  \theta_i^2 \mathsf{SP}_{i,i+1} = \mathsf{SP}_{i+1,i} \mathsf{SP}_{i,i+1} = 1.$$ 
The second assertion results from $\mathsf{t}_i$ acting only on the letters $\{i,i+1\}'$, leaving the others unchanged. For the third statement, Lemma \ref{lem:comutspi}, ensures that 
$$\mathsf{wt}(\mathsf{t}_i (T)) = \mathsf{wt}(\mathsf{SP}_{i+1,i} \theta_i (T)) = \mathsf{wt} (\theta_i (T)) = \theta_i (\mathsf{wt}(T)).$$
\end{proof}

\begin{obs}
Since the operators $\mathsf{t}_i$ act on the weight of a shifted semistandard tableau $T$ as the simple transposition $\theta_i$, for each $i$, they can be used to derive a proof that the Schur $P$-functions are symmetric, similarly to the one for classic Schur functions using Bender--Knuth moves.
\end{obs}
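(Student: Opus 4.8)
The plan is to imitate, in the shifted setting, the classical proof that a Schur polynomial $s_\lambda$ is symmetric because the Bender--Knuth involution $t_i$ is a shape-preserving bijection on semistandard Young tableaux of shape $\lambda$ that interchanges the multiplicities of $i$ and $i+1$. I would start from the combinatorial formula $P_\lambda(x_1,\dots,x_n)=\sum_T x^{\mathsf{wt}(T)}$, where $T$ runs over the shifted semistandard tableaux of straight shape $\lambda$ in the alphabet $[n]'$ with no primed entry on the main diagonal and $x^{\mathsf{wt}(T)}:=x_1^{wt_1}\cdots x_n^{wt_n}$. Grouping this sum by weight rewrites it as $P_\lambda=\sum_\nu c_\nu\, x^\nu$, where $\nu$ ranges over the compositions of $|\lambda|$ with at most $n$ parts and $c_\nu$ counts the tableaux of shape $\lambda$ and weight $\nu$. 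Since $\mathfrak{S}_n$ is generated by the adjacent transpositions $\theta_1,\dots,\theta_{n-1}$, symmetry of $P_\lambda$ reduces to the equalities $c_\nu=c_{\theta_i(\nu)}$ for every such $\nu$ and every $i\in[n-1]$.

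To obtain these, I would first check that $\mathsf{t}_i$ restricts to a self-inverse bijection on the tableau set occurring in this formula. The operators $\mathsf{SP}_{i,i+1}$ and $\theta_i$, hence $\mathsf{t}_i=\theta_i\,\mathsf{SP}_{i,i+1}$, are defined on arbitrary fillings of a shifted shape, so this makes sense even though such a tableau need not be in canonical form, and the arguments in Lemma \ref{lem:comutspi} and Proposition \ref{prop:ti_props} apply verbatim: $\mathsf{t}_i$ preserves the shape (the switching only rearranges the boxes of $T^i\sqcup T^{i+1}$, and $\theta_i$ relabels without moving a box), satisfies $\mathsf{t}_i^2=1$, and sends a tableau of weight $\nu$ to one of weight $\theta_i(\nu)$. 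The one extra point, special to the shifted case, is that $\mathsf{t}_i$ must keep the tableau inside the set defining $P_\lambda$, i.e.\ must not produce a primed letter on the main diagonal; this is exactly what the exceptional diagonal switches \textbf{(S3)}, \textbf{(S4)}, \textbf{(S7)} guarantee (they play the same role as the exceptional diagonal rules for shifted \textit{jeu de taquin}), while the relabelling $\theta_i$ touches no primes. Hence $\mathsf{t}_i$ is a bijection from the tableaux of shape $\lambda$ and weight $\nu$ onto those of shape $\lambda$ and weight $\theta_i(\nu)$, which gives $c_\nu=c_{\theta_i(\nu)}$.

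It then follows that $P_\lambda(x_1,\dots,x_n)$ is invariant under interchanging $x_i$ and $x_{i+1}$ for each $i\in[n-1]$, hence under all of $\mathfrak{S}_n$, so it is a symmetric polynomial; as this holds for every $n$, the Schur $P$-function is symmetric. (The same argument run over the set of all marked shifted tableaux of shape $\lambda$ likewise yields symmetry of $Q_\lambda$.) I expect the only real work — the remainder being a line-by-line translation of the classical proof, with Proposition \ref{prop:ti_props} supplying the weight-twisting involution — to be in that first step: confirming that the shifted tableau switching $\mathsf{SP}_{i,i+1}$ respects the normalization used to define $P_\lambda$, so that $\mathsf{t}_i$ never leaves the tableau set that computes it.
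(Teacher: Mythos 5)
Your proposal is correct and matches the paper's intent: the remark is justified precisely by the final assertion of Proposition \ref{prop:ti_props}, that $\mathsf{t}_i$ is a shape-preserving involution carrying weight $\nu$ to weight $\theta_i(\nu)$, which gives $c_\nu=c_{\theta_i(\nu)}$ and hence invariance of $P_\lambda$ under each $\theta_i$. Your extra care about the normalization (canonical form versus unprimed diagonal) is the right point to flag, and it is handled in the paper by the proposition showing the shifted tableau switching is compatible with canonical form.
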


\begin{figure}[h]
\includegraphics[scale=0.6]{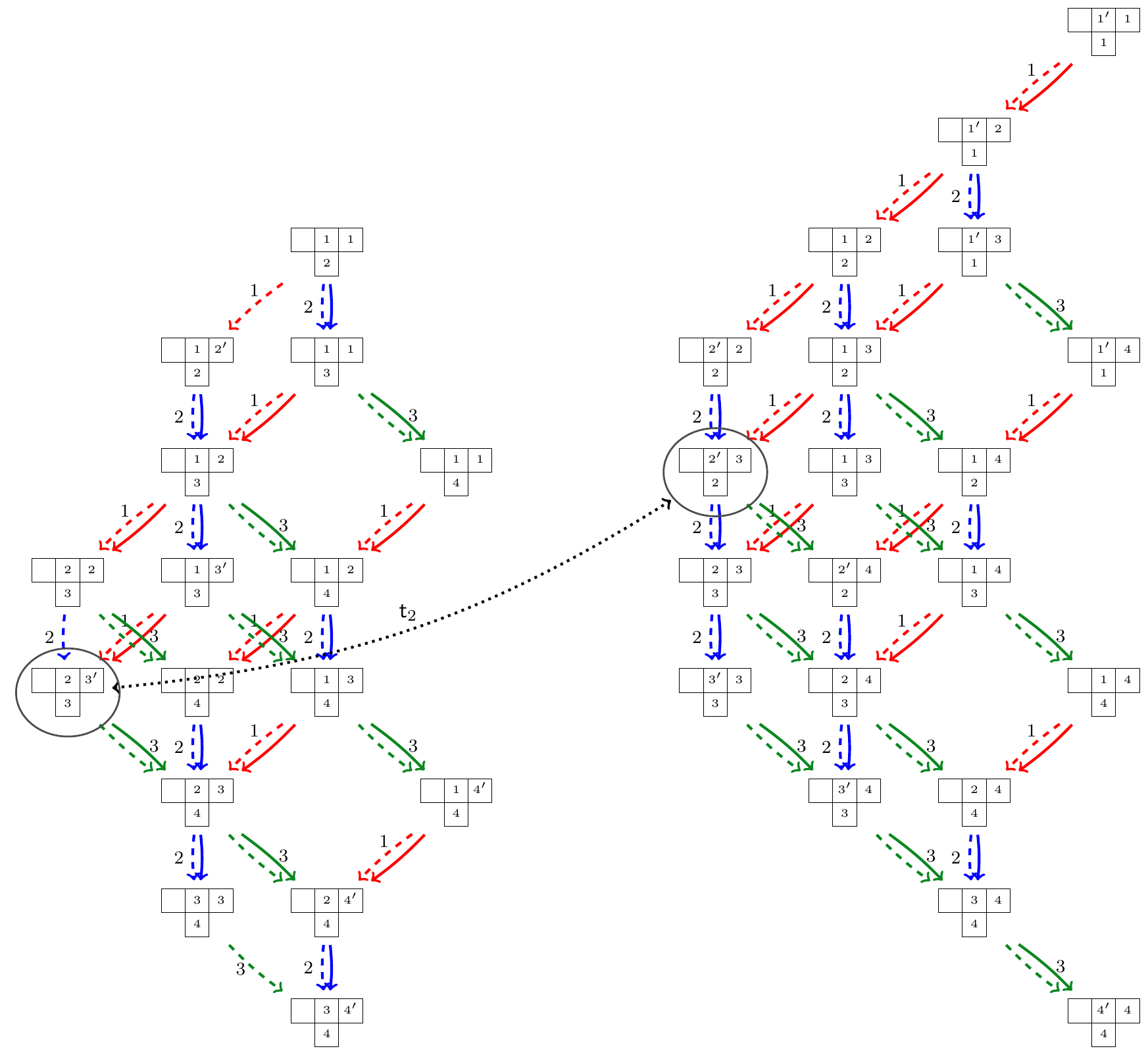}
\caption{An example of the action of $\mathsf{t}_2$ on a shifted tableau crystal $\mathsf{ShST}(\lambda/\mu,4)$, with $\lambda=(3,1)$ and $\nu=(1)$, which has two connected components.}
\label{fig:crystal_t2}
\end{figure}

As in the ordinary case, the operators $\mathsf{t}_i$ do not commute with the \textit{jeu de taquin}, as shown in Example \ref{ex:not_copl}. In general, $\mathsf{t}_i$ does not coincide with $\sigma_i$ (although $\mathsf{t}_1$ and $\sigma_1$ coincide on straight-shaped tableaux). Moreover, if $T$ is in a $i$-string $\mathcal{B}_i$, it is not necessary for $\mathsf{t}_i (T)$ to be in the same $i$-string (see Figure \ref{fig:crystal_t2}).

\begin{ex}\label{ex:not_copl}
Considering $T$ of the previous example, we have
$$T = \begin{ytableau}
1 & 1 & 1 & 2' & 2\\
\none & 2 & 2 & 3\\
\none & \none & 3
\end{ytableau} \equiv_k \begin{ytableau}
{} & {} & 1' & 2' & 2\\
\none & 1 & 1 & 2 & 3\\
\none & \none & 2 & 3
\end{ytableau} = T'$$
and $$\mathsf{t}_2 (T) = \begin{ytableau}
1 & 1 & 1 & 3' & 3\\
\none & 2 & 2 & 3'\\
\none & \none & 3
\end{ytableau} \not\equiv_k  \begin{ytableau}
{} & {} & 1' & 2 & 2\\
\none & 1 & 1 & 3' & 3\\
\none & \none & 3 & 3
\end{ytableau} = \mathsf{t}_2 (T').$$
Moreover, note that (see Example \ref{ex:sigma2}) 
$$\sigma_2 (T) = \begin{ytableau}
1 & 1 & 1 & 2 & 3'\\
\none & 2 & 3' & 3\\
\none & \none & 3
\end{ytableau} \neq \mathsf{t}_2 (T).$$ 
\end{ex}

Like the case for type $A$, we can define a shifted version of the \emph{promotion} operator due to Schützenberger, using the shifted Bender--Knuth involutions, and then recover the shifted evacuation and shifted crystal reflection operators for straight-shaped tableaux.

\begin{defin}\label{def:prom}
Given $T \in \mathsf{ShST}(\lambda/\mu,n)$ and $i \in I$, we define the \emph{shifted promotion operator} $\mathsf{p}_i$ as
$$\mathsf{p}_i (T) := \mathsf{t}_i \mathsf{t}_{i-1} \cdots \mathsf{t}_1 (T).$$
As a result of $\mathsf{t}_i$ being involutions, we have $\mathsf{p}_i^{-1} = \mathsf{t}_1 \cdots \mathsf{t}_{i-1} \mathsf{t}_i$.
\end{defin}

We will show that the promotion $\mathsf{p}_i (T)$ coincides with the shifted tableau switching on the pairs $(T^1, T^2 \sqcup \cdots \sqcup T^{i+1})$, followed by an adequate cyclic substitution of the letters. We first prove some auxiliary results.

\begin{lema}\label{lem:zeta_comut}
Let $T \in \mathsf{ShST}(\lambda/\mu,n)$ and let $1 \leq i < j \leq n-1$. Then, $T^{i+1} \sqcup \cdots \sqcup T^{j}$ extends $T^{i}$, and for any $\tau \in \mathfrak{S}_n$ we have
$$\tau \mathsf{SW}_{i|i+1, \ldots, j} (T) = \mathsf{SW}_{\tau(i)| \tau(i+1), \ldots, \tau(j)} \tau (T).$$
\end{lema}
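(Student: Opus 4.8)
The plan is to reduce the claim about $\mathsf{SW}_{i|i+1,\ldots,j}$ to the corresponding statement about the building block $\mathsf{SP}_{i,k}$, which is already available as the third part of Lemma~\ref{lem:comutspi}, namely $\tau \mathsf{SP}_{i,k}(T) = \mathsf{SP}_{\tau(i),\tau(k)} \tau(T)$ for any $\tau \in \mathfrak{S}_n$. First I would note that $T^{i+1} \sqcup \cdots \sqcup T^j$ extends $T^i$ because in a shifted semistandard tableau filled in $[n]'$, the $k$-border strip $T^{k+1}$ extends $T^{k}$ for every $k$ (the letters are weakly increasing along rows and columns), and extension is transitive; this justifies that $\mathsf{SW}_{i|i+1,\ldots,j}(T)$ is well defined via the composition $\mathsf{SP}_{i,j}\mathsf{SP}_{i,j-1}\cdots\mathsf{SP}_{i,i+1}$ of Definition~\ref{defSWIJ}.

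Next I would argue by induction on $j - i$. The base case $j = i+1$ is exactly Lemma~\ref{lem:comutspi}(3). For the inductive step, write $\mathsf{SW}_{i|i+1,\ldots,j}(T) = \mathsf{SP}_{i,j}\bigl(\mathsf{SW}_{i|i+1,\ldots,j-1}(T)\bigr)$, apply $\tau$ on the outside, use Lemma~\ref{lem:comutspi}(3) to push $\tau$ past $\mathsf{SP}_{i,j}$ turning it into $\mathsf{SP}_{\tau(i),\tau(j)}\tau$, and then apply the induction hypothesis to $\tau\,\mathsf{SW}_{i|i+1,\ldots,j-1}(T) = \mathsf{SW}_{\tau(i)|\tau(i+1),\ldots,\tau(j-1)}\tau(T)$. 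Reassembling gives $\mathsf{SP}_{\tau(i),\tau(j)}\,\mathsf{SW}_{\tau(i)|\tau(i+1),\ldots,\tau(j-1)}\,\tau(T) = \mathsf{SW}_{\tau(i)|\tau(i+1),\ldots,\tau(j)}\tau(T)$, as desired. One bookkeeping point to check here: when $\tau$ permutes the indices, the border strips $T^{\tau(i)},\ldots,T^{\tau(j)}$ of $\tau(T)$ may occur in a different relative order, but since $\mathsf{SP}_{i,k}$ only depends on the pair of border strips being switched (and one extending the other), and since $\tau$ merely relabels letters without moving boxes, each intermediate extension relation is preserved; the point is really that $\tau$ commutes with the switches box-by-box, which is exactly the content of Lemma~\ref{lem:comutspi}(3).

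The main obstacle I anticipate is a subtle well-definedness issue rather than a computational one: one must make sure that at each stage of the composition defining $\mathsf{SW}_{\tau(i)|\ldots}$ the relevant "extends" hypothesis of Definition~\ref{defSWIJ} still holds after applying $\tau$, so that $\mathsf{SP}_{\tau(i),\tau(k)}$ is legitimately defined on the intermediate filling. This is where I would be careful: the intermediate fillings produced by $\mathsf{SW}_{i|i+1,\ldots,j-1}(T)$ need not be semistandard (as remarked after Definition~\ref{defSWIJ}), but the shifted tableau switching still only requires one perforated strip to extend the other shape-wise, and this shape condition is manifestly preserved by $\tau$ since $\tau$ does not move boxes. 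Once this is spelled out, the induction goes through cleanly, and no further case analysis of the individual switches \textbf{(S1)}--\textbf{(S7)} is needed beyond what is already absorbed into Lemma~\ref{lem:comutspi}(3).
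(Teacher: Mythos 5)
Your proposal is correct and follows essentially the same route as the paper: the paper's proof simply unwinds $\mathsf{SW}_{i|i+1,\ldots,j} = \mathsf{SP}_{i,j}\cdots\mathsf{SP}_{i,i+1}$ and pushes $\tau$ across each factor via Lemma \ref{lem:comutspi}(3), which is exactly your induction written as a single chain of equalities. Your additional remarks on the extension hypothesis and well-definedness of the intermediate (not necessarily semistandard) fillings are a reasonable elaboration of a point the paper leaves implicit, but they do not change the argument.
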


\begin{proof}
By Definition \ref{defSWIJ} and Lemma \ref{lem:comutspi}, we have
\begin{align*}
\tau \mathsf{SW}_{i|i+1, \ldots, j} (T) &= \tau \mathsf{SP}_{i,j} \mathsf{SP}_{i,j-i} \cdots \mathsf{SP}_{i,i+1} (T)\\
&= \mathsf{SP}_{\tau(i), \tau(j)} \mathsf{SP}_{\tau(i),\tau(j-1)} \cdots \mathsf{SP}_{\tau(i),\tau(i+1)} \tau (T)\\
&= \mathsf{SW}_{\tau(i)| \tau(i+1), \ldots, \tau(j)} \tau (T).
\end{align*}
\end{proof}

\begin{lema}\label{lem:zetai} 
Let $T \in \mathsf{ShST}(\lambda/\mu,n)$ and let $1 < i \leq n-1$. We have 
$$\zeta_i \mathsf{SW}_{i|i+1} \mathsf{SW}_{i-1|i,i+1} \cdots \mathsf{SW}_{2|3, \ldots,i+1} (T) = \mathsf{SW}_{i-1|i} \mathsf{SW}_{i-2|i-1,i} \cdots \mathsf{SW}_{1|2, \ldots,i} \zeta_i (T).$$
\end{lema}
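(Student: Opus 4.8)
The plan is to derive the identity by commuting $\zeta_i$ rightward, one factor at a time, through the composition $\mathsf{SW}_{i|i+1}\,\mathsf{SW}_{i-1|i,i+1}\cdots\mathsf{SW}_{2|3,\ldots,i+1}$, using Lemma~\ref{lem:zeta_comut} at each step; no input beyond that lemma (which itself rests on the commutation of $\mathsf{SP}_{i,j}$ with permutations in Lemma~\ref{lem:comutspi}) should be required. First I would record the action of $\zeta_i=(1,i+1,i,\ldots,2)$ on indices: $\zeta_i(1)=i+1$, $\zeta_i(k)=k-1$ for $2\le k\le i+1$, and $\zeta_i(m)=m$ for $m>i+1$; in particular, for $2\le k\le i$ it carries the increasing list $(k,k+1,\ldots,i+1)$ to the increasing list $(k-1,k,\ldots,i)$. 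Applying Lemma~\ref{lem:zeta_comut} with $\tau=\zeta_i$ to $\mathsf{SW}_{k|k+1,\ldots,i+1}$ then yields the operator identity
\[
\zeta_i\circ\mathsf{SW}_{k|k+1,\ldots,i+1}=\mathsf{SW}_{k-1|k,k+1,\ldots,i}\circ\zeta_i,\qquad 2\le k\le i,
\]
the right-hand factor being $\mathsf{SW}_{i-1|i}$ when $k=i$ and $\mathsf{SW}_{1|2,\ldots,i}$ when $k=2$.

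Then I would telescope. Beginning with $\zeta_i\,\mathsf{SW}_{i|i+1}\,\mathsf{SW}_{i-1|i,i+1}\cdots\mathsf{SW}_{2|3,\ldots,i+1}$, I push $\zeta_i$ past the leftmost factor by the displayed identity with $k=i$ (replacing $\mathsf{SW}_{i|i+1}$ by $\mathsf{SW}_{i-1|i}$), then past the next factor with $k=i-1$, and so on down to $k=2$, after which $\zeta_i$ has reached the far right. The word in the $\mathsf{SW}$'s that results is precisely $\mathsf{SW}_{i-1|i}\,\mathsf{SW}_{i-2|i-1,i}\cdots\mathsf{SW}_{1|2,\ldots,i}$, and applying the whole composition to $T$ gives the right-hand side of the claimed identity. (One may also organize this as an induction on $i$, the base case $i=2$ being the single instance $\zeta_2\,\mathsf{SW}_{2|3}=\mathsf{SW}_{1|2}\,\zeta_2$.)

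The two points needing care — neither a genuine obstacle — are the index bookkeeping for $\zeta_i$, where an off-by-one in the shift of the block $\{2,\ldots,i+1\}$ would spoil the match with the right-hand factors, and the well-definedness of each intermediate $\mathsf{SW}$, i.e.\ that the relevant border strips keep extending one another throughout the computation. For the latter, since $\zeta_i$ acts purely by relabeling entries it preserves every shape and every \emph{extends} relation among the pieces $T^j$; hence, under the standing assumption that the left-hand side is defined, every commutation step is legitimate, and the same reasoning shows the right-hand side is defined as well.
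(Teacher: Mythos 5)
Your proposal is correct and follows essentially the same route as the paper: both push $\zeta_i$ rightward through the composition by repeated application of Lemma~\ref{lem:zeta_comut}, using that $\zeta_i(k)=k-1$ for $2\le k\le i+1$ so that each $\mathsf{SW}_{k|k+1,\ldots,i+1}$ becomes $\mathsf{SW}_{k-1|k,\ldots,i}$. Your additional remarks on index bookkeeping and preservation of the \emph{extends} relation are sound but not needed beyond what the cited lemma already provides.
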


\begin{proof}
Applying successively Lemma \ref{lem:zeta_comut}, we have 
\begin{align*}
\zeta_i \mathsf{SW}_{i|i+1} \mathsf{SW}_{i-1|i,i+1} \cdots \mathsf{SW}_{2|3, \ldots,i+1} &= \mathsf{SW}_{\zeta_i (i)|\zeta_i(i+1)} \mathsf{SW}_{\zeta_i(i-1)|\zeta_i(i),\zeta_i(i+1)} \cdots \mathsf{SW}_{\zeta_i(2)|\zeta_i(3), \ldots,\zeta_i(i+1)} \zeta_i\\
&= \mathsf{SW}_{i-1|i} \mathsf{SW}_{i-2|i-1,i} \cdots \mathsf{SW}_{1|2, \ldots,i} \zeta_i.
\end{align*}
\end{proof}

\begin{prop}\label{prop:prom_swi}
Given $T \in \mathsf{ShST}(\lambda/\mu,n)$, and $i\in I$, we have
$$\mathsf{p}_i (T) = \zeta_i \mathsf{SW}_{1| 2,\ldots,i+1} (T).$$
\end{prop}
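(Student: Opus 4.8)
The plan is to induct on $i$, at each step stripping the leftmost factor $\mathsf{t}_i = \theta_i\,\mathsf{SP}_{i,i+1}$ off $\mathsf{p}_i = \mathsf{t}_i\mathsf{t}_{i-1}\cdots\mathsf{t}_1$ and converting it into one extra switch $\mathsf{SP}_{1,i+1}$ by commuting the cyclic permutation past it. For the base case $i=1$ I would simply unwind the definitions: $\zeta_1 = \theta_1$ and $\mathsf{SW}_{1|2} = \mathsf{SP}_{1,2}$ by Definition \ref{defSWIJ}, while $\mathsf{p}_1(T) = \mathsf{t}_1(T) = \theta_1\mathsf{SP}_{1,2}(T)$ by Definitions \ref{def:sbk_mov} and \ref{def:prom}, so both sides equal $\theta_1\mathsf{SP}_{1,2}(T)$.

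For the inductive step, assume $\mathsf{p}_{i-1}(T') = \zeta_{i-1}\,\mathsf{SW}_{1|2,\ldots,i}(T')$ for every $T'$, and set $V := \mathsf{p}_{i-1}(T)$ and $U := \mathsf{SW}_{1|2,\ldots,i}(T)$, so that $V = \zeta_{i-1}U$ and, importantly, $V \in \mathsf{ShST}(\lambda/\mu,n)$ by Proposition \ref{prop:ti_props}. Then $\mathsf{p}_i(T) = \mathsf{t}_i(V) = \theta_i\,\mathsf{SP}_{i,i+1}(V)$. Since $\zeta_{i-1}$ acts on the marked alphabet as the cycle $(1,i,i-1,\ldots,2)$, supported on $\{1,\ldots,i\}$, we have $\zeta_{i-1}^{-1}(i)=1$ and $\zeta_{i-1}^{-1}(i+1)=i+1$, so applying Lemma \ref{lem:comutspi}(3) to the semistandard tableau $V$ with $\tau = \zeta_{i-1}^{-1}$ and the indices $i<i+1$ gives $\zeta_{i-1}^{-1}\mathsf{SP}_{i,i+1}(V) = \mathsf{SP}_{1,i+1}\,\zeta_{i-1}^{-1}(V) = \mathsf{SP}_{1,i+1}(U)$, hence $\mathsf{SP}_{i,i+1}(V) = \zeta_{i-1}\mathsf{SP}_{1,i+1}(U)$. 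Substituting this and using $\theta_i\zeta_{i-1} = \zeta_i$ together with $\mathsf{SP}_{1,i+1}\mathsf{SW}_{1|2,\ldots,i} = \mathsf{SW}_{1|2,\ldots,i+1}$ (again Definition \ref{defSWIJ}), I would conclude
\[
\mathsf{p}_i(T) \;=\; \theta_i\zeta_{i-1}\,\mathsf{SP}_{1,i+1}(U) \;=\; \zeta_i\,\mathsf{SP}_{1,i+1}\,\mathsf{SW}_{1|2,\ldots,i}(T) \;=\; \zeta_i\,\mathsf{SW}_{1|2,\ldots,i+1}(T),
\]
which closes the induction.

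The only point requiring care is that all the operations above are well defined. Feeding Lemma \ref{lem:comutspi}(3) the genuine shifted semistandard tableau $V$ (rather than $U$, which in general is not a valid tableau) avoids having to argue separately that $\mathsf{SP}_{1,i+1}$ may be applied to $U$; once this is arranged, all that remains is the elementary permutation arithmetic $\zeta_{i-1}^{-1}(i)=1$, $\zeta_{i-1}^{-1}(i+1)=i+1$ and $\theta_i\zeta_{i-1}=\zeta_i$, immediate from $\zeta_k = \theta_k\theta_{k-1}\cdots\theta_1$. I therefore do not expect a real obstacle: the whole content of the statement is the single commutation $\mathsf{SP}_{i,i+1}\,\zeta_{i-1} = \zeta_{i-1}\,\mathsf{SP}_{1,i+1}$, together with the telescoping of $\mathsf{SP}_{1,2},\ldots,\mathsf{SP}_{1,i+1}$ into $\mathsf{SW}_{1|2,\ldots,i+1}$. (Equivalently, one could skip the induction and push every $\theta_k$ in $\mathsf{p}_i$ to the left at once via repeated use of Lemma \ref{lem:comutspi}(3), in the spirit of Lemma \ref{lem:zeta_comut}.)
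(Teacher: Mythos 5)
Your proof is correct and follows essentially the same route as the paper's: induction on $i$ with the identical base case, and an inductive step whose only content is the commutation $\mathsf{SP}_{i,i+1}\,\zeta_{i-1} = \zeta_{i-1}\,\mathsf{SP}_{1,i+1}$ supplied by Lemma \ref{lem:comutspi}(3). The one (minor) refinement is that you apply that lemma to the genuine semistandard tableau $V=\mathsf{p}_{i-1}(T)$ rather than to the intermediate filling $\mathsf{SW}_{1|2,\ldots,i+1}(T)$ as the paper does, which sidesteps any quibble about the lemma's stated hypotheses.
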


\begin{proof}
The proof is done by induction on $i$. For $i=1$, we have 
$$\mathsf{p}_1 (T) = \mathsf{t}_1 (T) = \theta_1 \mathsf{SP}_{1,2} (T) = \zeta_1 \mathsf{SW}_{1|2} (T).$$ Assuming the result is true for some $i \geq 1$, by Definition \ref{defSWIJ} and Lemma \ref{lem:comutspi}, we have
\begin{align*}
\mathsf{p}_{i+1}(T) &= \mathsf{t}_{i+1} \mathsf{p}_i (T)\\
&= \theta_{i+1} \mathsf{SP}_{i+1,i+2} \zeta_{i} \mathsf{SW}_{1|2, \ldots, i+1} (T)\\
&= \theta_{i+1} \zeta_{i} \mathsf{SP}_{\zeta_i^{-1}(i+1), \zeta_i^{-1}(i+2)} \mathsf{SW}_{1|2, \ldots, i+1} (T)\\
&= \theta_{i+1} \zeta_{i} \mathsf{SP}_{1,i+2} \mathsf{SW}_{1|2, \ldots, i+1} (T)\\
&= \zeta_{i+1}  \mathsf{SW}_{1|2, \ldots, i+1, i+2} (T).
\end{align*}
\end{proof}

For $i \geq 1$, we define
\begin{equation}\label{eq:def_qi}
\mathsf{q}_i := \mathsf{t}_1 (\mathsf{t}_2 \mathsf{t}_1) \cdots (\mathsf{t}_i \cdots \mathsf{t}_1).
\end{equation}
Recall that $\widetilde{\mathsf{evac}}_k$ is the operator obtained by allowing skew-shaped tableaux on the algorithm of Figure \ref{fig:SGE_k}, which differs from the reversal on skew shapes. We will show that $\widetilde{\mathsf{evac}}_k$ and $\mathsf{evac}_k$ may be written as a composition of promotion operators. As a consequence, $\mathsf{q}_i$ coincides with $\widetilde{\mathsf{evac}}_{i+1}$ on skew-shaped shifted tableaux and with $\mathsf{evac}_{i+1}$ on straight-shapes ones. This coincidence implies that $\mathsf{q}_i$ are involutions, for any $i \geq 1$. 

\begin{prop}\label{prop:evac_ti}
Given $T \in \mathsf{ShST}(\lambda/\mu,n)$ and $i \in I$, we have
$$\widetilde{\mathsf{evac}}_{i+1} (T) = \mathsf{q}_i(T) = \mathsf{p}_1 \mathsf{p}_2 \cdots \mathsf{p}_i (T) = \mathsf{t}_1 (\mathsf{t}_2 \mathsf{t}_1) \cdots (\mathsf{t}_i \mathsf{t}_{i-1} \cdots \mathsf{t}_1)(T).$$
In particular, when $T \in \mathsf{ShST}(\nu,n)$ we have
$$\eta_{1,i+1}(T)=\mathsf{evac}_{i+1} (T) = \mathsf{q}_i(T)  = \mathsf{p}_1 \mathsf{p}_2 \cdots \mathsf{p}_i (T) = \mathsf{t}_1 (\mathsf{t}_2 \mathsf{t}_1) \cdots (\mathsf{t}_i \mathsf{t}_{i-1} \cdots \mathsf{t}_1)(T).$$
\end{prop}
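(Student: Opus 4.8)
The equalities $\mathsf{q}_i=\mathsf{p}_1\mathsf{p}_2\cdots\mathsf{p}_i=\mathsf{t}_1(\mathsf{t}_2\mathsf{t}_1)\cdots(\mathsf{t}_i\cdots\mathsf{t}_1)$ are immediate from \eqref{eq:def_qi} and Definition \ref{def:prom}, so the real content is the identity $\widetilde{\mathsf{evac}}_{i+1}=\mathsf{q}_i$ (the straight-shaped case will then follow). The plan is to write both $\mathsf{q}_i$ and $\widetilde{\mathsf{evac}}_{i+1}$ in the form $\theta_{1,i+1}\,\mathsf{W}_i$, where $\mathsf{W}_i:=\mathsf{SW}_{i|i+1}\,\mathsf{SW}_{i-1|i,i+1}\cdots\mathsf{SW}_{1|2,\ldots,i+1}$, and then compare. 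Throughout I use the factorization $\zeta_1\zeta_2\cdots\zeta_k=\theta_{1,k+1}$, which is immediate from $\zeta_j=\theta_j\theta_{j-1}\cdots\theta_1$ and the presentation $\theta_{1,k+1}=\theta_1(\theta_2\theta_1)\cdots(\theta_k\cdots\theta_1)$ recorded in Section \ref{sec:background}.

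First I would show, by induction on $i$, that $\mathsf{q}_i=\theta_{1,i+1}\,\mathsf{W}_i$. The base case $i=1$ reads $\mathsf{q}_1=\mathsf{p}_1=\zeta_1\mathsf{SW}_{1|2}=\theta_{1,2}\,\mathsf{W}_1$, which is Proposition \ref{prop:prom_swi}. For the inductive step, from $\mathsf{q}_{i+1}=\mathsf{q}_i\mathsf{p}_{i+1}$, the inductive hypothesis and Proposition \ref{prop:prom_swi} one gets $\mathsf{q}_{i+1}=\theta_{1,i+1}\,\mathsf{W}_i\,\zeta_{i+1}\,\mathsf{SW}_{1|2,\ldots,i+2}$. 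Applying Lemma \ref{lem:zetai} with index $i+1$ yields $\mathsf{W}_i\,\zeta_{i+1}=\zeta_{i+1}\,\mathsf{SW}_{i+1|i+2}\,\mathsf{SW}_{i|i+1,i+2}\cdots\mathsf{SW}_{2|3,\ldots,i+2}$; substituting, and using $\theta_{1,i+1}\zeta_{i+1}=\theta_{1,i+2}$, gives $\mathsf{q}_{i+1}=\theta_{1,i+2}\,\mathsf{W}_{i+1}$, which closes the induction.

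Next I would unwind the restricted evacuation algorithm of Figure \ref{fig:SGE_k} to obtain $\widetilde{\mathsf{evac}}_{i+1}=\theta_{1,i+1}\,\mathsf{W}_i$. Applied to $T=T^{1,i+1}\sqcup T^{i+2,n}$ it leaves $T^{i+2,n}$ untouched and, on $T^{1,i+1}$, carries out successively, for $\ell=1,\ldots,i$: the relabelling $\mathsf{neg}_\ell$, followed by the shifted tableau switching that moves the $(-\ell)$-strip past the strips of letters $\ell+1,\ldots,i+1$ (the already-processed negated strips staying fixed); then $\mathsf{neg}_{i+1}$; and finally $\mathsf{d}_{i+1}$. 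Since the switches \textbf{(S1)}--\textbf{(S7)} of Figure \ref{fig:switches} depend only on the relative order of the two strips involved and on which of their entries are primed, and since $-\ell',-\ell$ stand in the same order as $\ell',\ell$ while both pairs precede $(\ell+1)'$, the relabelling $\mathsf{neg}_\ell$ commutes with that switching, in the sense that $\mathsf{neg}_\ell$ followed by the switching of the $(-\ell)$-strip past the strips of letters $\ell+1,\ldots,i+1$ coincides with $\mathsf{SW}_{\ell|\ell+1,\ldots,i+1}$ followed by $\mathsf{neg}_\ell$. Rewriting each of the $i$ switching blocks in this way and then commuting every extracted $\mathsf{neg}_a$ leftwards past the remaining $\mathsf{neg}_b$'s and past each $\mathsf{SW}_{b|b+1,\ldots,i+1}$ with $a\notin\{b,\ldots,i+1\}$ (these act on disjoint families of boxes), one arrives at $\widetilde{\mathsf{evac}}_{i+1}=\mathsf{d}_{i+1}\,\mathsf{neg}_{i+1}\mathsf{neg}_i\cdots\mathsf{neg}_1\,\mathsf{W}_i$. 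Since $\mathsf{d}_{i+1}=\theta_{1,i+1}\,\mathsf{neg}_1^{-1}\cdots\mathsf{neg}_{i+1}^{-1}$ by \eqref{eq:d_neg} and the $\mathsf{neg}$ operators pairwise commute, the $\mathsf{neg}$ factors cancel, leaving $\widetilde{\mathsf{evac}}_{i+1}=\theta_{1,i+1}\,\mathsf{W}_i=\mathsf{q}_i$.

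Finally, for $T\in\mathsf{ShST}(\nu,n)$ of straight shape the algorithm of Figure \ref{fig:SGE_k} applied to $T^{1,i+1}$ is the evacuation of $T^{1,i+1}$, so $\widetilde{\mathsf{evac}}_{i+1}(T)=\mathsf{evac}(T^{1,i+1})\sqcup T^{i+2,n}=\mathsf{evac}_{i+1}(T)=\eta_{1,i+1}(T)$, the last equality being the identification recalled just before Corollary \ref{cor:eta_cactus}; combining with the previous paragraph and with $\mathsf{q}_i=\mathsf{p}_1\cdots\mathsf{p}_i$ yields the second displayed chain (and, since $\eta_{1,i+1}$ is an involution, so is $\mathsf{q}_i$). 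The step I expect to be the main obstacle is the bookkeeping in the third paragraph: one must pin down precisely the order in which the negated strips are switched through the positive ones in Figure \ref{fig:SGE_k}, check the commutation of $\mathsf{neg}_\ell$ with the switching against the switch rules of Figure \ref{fig:switches} (also with respect to canonical form), and track carefully the order of extraction of the $\mathsf{neg}$ operators; the manipulations of Proposition \ref{prop:prom_swi} and Lemma \ref{lem:zetai} in the second paragraph are routine by comparison.
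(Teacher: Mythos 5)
Your proposal is correct and follows essentially the same route as the paper's proof: both decompose the algorithm of Figure \ref{fig:SGE_k} into alternating $\mathsf{neg}$ and $\mathsf{SW}$ blocks, commute the $\mathsf{neg}$'s out and cancel them against $\mathsf{d}_{i+1}$ via \eqref{eq:d_neg}, and then identify $\theta_{1,i+1}\,\mathsf{SW}_{i|i+1}\cdots\mathsf{SW}_{1|2,\ldots,i+1}$ with $\mathsf{p}_1\cdots\mathsf{p}_i$ by the same induction on Lemma \ref{lem:zetai} and Proposition \ref{prop:prom_swi}. The commutation of $\mathsf{neg}_\ell$ with the switching, which you flag as the delicate point, is exactly the step the paper also asserts without detailed verification.
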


\begin{proof}
The proof is analogous either for straight or skew shape cases, as $\mathsf{evac}_{i+1}$ and  $\widetilde{\mathsf{evac}}_{i+1}$ coincide on straight-shaped tableaux. By \eqref{eq:d_neg}, we have $\mathsf{d}_{i+1} \mathsf{neg}_{i+1} \cdots \mathsf{neg}_{1} = \theta_{1,i+1} = \zeta_1 \cdots \zeta_{i}$. Moreover, it is clear that, for $l < k < i$,
\begin{equation}
\begin{split}
\mathsf{SW}_{\shortminus k|k+1, \ldots, i+1} \mathsf{neg}_k &= \mathsf{neg}_k \mathsf{SW}_{k|k+1, \ldots, i+1}\\
\mathsf{SW}_{k|k+1, \ldots, i+1} \mathsf{neg}_l &= \mathsf{neg}_l \mathsf{SW}_{k|k+1, \ldots, i+1}.
\end{split}
\end{equation}
Then, the algorithm for $\widetilde{\mathsf{evac}}_{i+1}$ (see Figure \ref{fig:SGE_k}) performed on $T$ can be written as:

\begin{align*}
\widetilde{\mathsf{evac}}_{i+1}(T) &= \mathsf{d}_{i+1} \mathsf{neg}_{i+1} \mathsf{SW}_{\shortminus i|i+1} \mathsf{neg}_{i} \cdots \mathsf{SW}_{\shortminus 2|3,\ldots,i+1} \mathsf{neg}_2 \mathsf{SW}_{\shortminus 1|2, \ldots, i+1} \mathsf{neg}_1 (T)\\
&= \mathsf{d}_{i+1} \mathsf{neg}_{i+1} \mathsf{neg}_{i} \mathsf{SW}_{i|i+1} \cdots \mathsf{neg}_2 \mathsf{SW}_{2|3,\ldots,i+1}  \mathsf{neg}_1 \mathsf{SW}_{1|2, \ldots, i+1} (T)\\
&= \mathsf{d}_{i+1} \mathsf{neg}_{i+1} \cdots  \mathsf{neg}_2  \mathsf{neg}_1 \mathsf{SW}_{i|i+1} \cdots \mathsf{SW}_{2|3,\ldots,i+1} \mathsf{SW}_{1|2, \ldots, i+1} (T)\\
&= \zeta_{1} \cdots \zeta_{i} \mathsf{SW}_{i|i+1} \cdots \mathsf{SW}_{2|3,\ldots,i+1} \mathsf{SW}_{1|2, \ldots, i+1} (T).
\end{align*}
To conclude the proof, we claim that
\begin{equation}\label{eq:evac_zeta}
\zeta_{1} \cdots \zeta_{i} \mathsf{SW}_{i|i+1} \cdots \mathsf{SW}_{2|3,\ldots,i+1} \mathsf{SW}_{1|2, \ldots, i+1} (T) = \zeta_1 \mathsf{SW}_{1|2} \zeta_2 \mathsf{SW}_{1|2,3} \cdots  \zeta_i \mathsf{SW}_{1|2,\ldots,i} (T) = \mathsf{p}_1 \mathsf{p}_2 \cdots \mathsf{p}_i (T).
\end{equation}

We prove \eqref{eq:evac_zeta} by induction on $i$. The base case is trivial. For the induction step, assume the claim holds for some $i \geq 1$. Then, by Lemma \ref{lem:zetai} and Proposition \ref{prop:prom_swi}, we have
\begin{align*}
\zeta_{1} \cdots \zeta_{i} &\zeta_{i+1} \mathsf{SW}_{i+1|i+2} \cdots \mathsf{SW}_{2|3,\ldots,i+1,i+2} \mathsf{SW}_{1|2, \ldots, i+1,i+2} (T) = \\ 
&=  \zeta_{1} \cdots \zeta_{i} \mathsf{SW}_{i|i+1} \cdots  \mathsf{SW}_{1|2, \ldots, i+1} \zeta_{i+1} \mathsf{SW}_{1|2, \ldots, i+1, i+2}\\
&= \zeta_1 \mathsf{SW}_{1|2} \zeta_2 \mathsf{SW}_{1|2,3} \cdots  \zeta_i \mathsf{SW}_{1|2,\ldots,i} \zeta_{i+1} \mathsf{SW}_{1|2, \ldots, i+1, i+2} (T)\\
&=  \mathsf{p}_1 \mathsf{p}_2 \cdots \mathsf{p}_i \mathsf{p}_{i+1} (T).
\end{align*}
\end{proof}

\begin{cor}\label{cor:qi_inv}
Let $i \in I$. Then $\mathsf{q}_i^2 = 1$ and $\mathsf{wt} (\mathsf{q}_i (T)) = \theta_{1,i+1} (T)$.
\end{cor}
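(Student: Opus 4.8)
The plan is to deduce both claims from Proposition~\ref{prop:evac_ti}, which identifies $\mathsf{q}_i$ with an evacuation-type operator, together with the weight behaviour of the shifted Bender--Knuth moves recorded in Proposition~\ref{prop:ti_props}.

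First I would dispose of the weight statement, which is immediate: by Proposition~\ref{prop:ti_props}\,(3) one has $\mathsf{wt}(\mathsf{t}_j(S)) = \theta_j(\mathsf{wt}(S))$ for every shifted semistandard tableau $S$ and every $j \in I$, so applying this repeatedly along the word $\mathsf{q}_i = \mathsf{t}_1(\mathsf{t}_2\mathsf{t}_1)\cdots(\mathsf{t}_i\cdots\mathsf{t}_1)$ gives $\mathsf{wt}(\mathsf{q}_i(T)) = \theta_1(\theta_2\theta_1)\cdots(\theta_i\cdots\theta_1)\big(\mathsf{wt}(T)\big)$. Since $\theta_1(\theta_2\theta_1)\cdots(\theta_i\cdots\theta_1) = \theta_{1,i+1}$ by the definition of $\theta_{1,i+1}$ recorded in Section~\ref{sec:background}, this is exactly $\theta_{1,i+1}(\mathsf{wt}(T))$. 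In particular $\mathsf{q}_i^2$ preserves both shape (trivially) and weight.

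For $\mathsf{q}_i^2 = 1$ I would invoke Proposition~\ref{prop:evac_ti}: on $\mathsf{ShST}(\lambda/\mu,n)$ we have $\mathsf{q}_i = \widetilde{\mathsf{evac}}_{i+1}$, and on $\mathsf{ShST}(\nu,n)$ we have $\mathsf{q}_i = \mathsf{evac}_{i+1} = \eta_{1,i+1}$. In the straight-shaped case nothing remains: $\eta_{1,i+1}$ is the partial Schützenberger involution of Definition~\ref{def:schu_ij}, hence an involution by Proposition~\ref{prop:Schu}. For a general skew shape I would use that $\widetilde{\mathsf{evac}}_{i+1}$ is an involution; concretely, the algorithm of Figure~\ref{fig:SGE_k} is assembled from shifted switches and the relabellings $\mathsf{neg}_k$, $\mathsf{d}_{i+1}$, so it is compatible with standardization, and on shifted \emph{standard} tableaux it coincides with the (restricted) type $C$ evacuation computed by the growth diagrams of Section~\ref{sec:gd}, which is an involution because the local growth rules of \cite{TY16} are symmetric. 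Feeding this back: $\mathsf{q}_i^2(T)$ has the same shape and weight as $T$ and becomes the identity after standardization, so Lemma~\ref{standard} forces $\mathsf{q}_i^2(T) = T$.

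The step I expect to be the main obstacle is precisely the involutivity of $\widetilde{\mathsf{evac}}_{i+1}$ on genuinely skew shapes: unlike $\mathsf{evac}_{i+1} = \eta_{1,i+1}$, it is not a restriction of the Schützenberger involution $\eta$ (cf.\ Example~\ref{ex:tilevac}), so it is not covered by Proposition~\ref{prop:Schu}, and one must instead either run the argument step by step using the fact that the shifted tableau switching is an involution \cite[Theorem~4.3]{CNO17} --- mirroring the type $A$ reasoning of \cite[Section~5]{BSS96} --- or appeal to the growth-diagram description developed in Section~\ref{sec:gd}. Everything else is bookkeeping.
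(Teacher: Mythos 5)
Your proposal is correct and follows essentially the same route as the paper: the weight statement is obtained by iterating Proposition~\ref{prop:ti_props}, and $\mathsf{q}_i^2=1$ is deduced from the identification $\mathsf{q}_i=\widetilde{\mathsf{evac}}_{i+1}$ of Proposition~\ref{prop:evac_ti} together with the involutivity of $\widetilde{\mathsf{evac}}_{i+1}$, which the paper simply asserts (it rests on the shifted tableau switching being an involution, i.e.\ your second suggested route; note that the growth-diagram route is less direct here, since the skew diagrams of Section~\ref{sec:gd} compute the reversal rather than $\widetilde{\mathsf{evac}}$). The extra care you take over the skew case is reasonable but not a departure from the paper's argument.
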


\begin{proof}
Since $\widetilde{\mathsf{evac}}_{i+1}$ is an involution, for any $i \geq 1$, then so it is $\mathsf{q}_i$.
From Proposition \ref{prop:ti_props}, we have $\mathsf{wt} (\mathsf{q}_i (T)) = \mathsf{wt} (\mathsf{t}_1 (\mathsf{t}_2 \mathsf{t}_1) \cdots (\mathsf{t}_i \mathsf{t}_{i-1} \cdots \mathsf{t}_1)(T)) = \theta_{1} (\theta_2 \theta_1) \cdots (\theta_i \cdots \theta_{1})  (T) = \theta_{1,i+1} (T)$.
\end{proof}

\begin{cor}\label{cor:sigma_prom}
Given $T \in \mathsf{ShST}(\nu,n)$ and $i \in I$, we have
$$\sigma_i (T) = \mathsf{evac}_{i+1} \mathsf{evac}_2 \mathsf{evac}_{i+1} (T) = \mathsf{q}_i \mathsf{t}_1 \mathsf{q}_i (T) = \mathsf{p_1} (\mathsf{p}_2 \cdots \mathsf{p}_i)^2 (T).$$
\end{cor}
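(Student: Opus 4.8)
The plan is to assemble the identity from three results already proved, with essentially no new computation. First I would invoke Theorem~\ref{thm:sigma_schu} to rewrite the left-hand side as a partial Schützenberger involution: for every $T \in \mathsf{ShST}(\nu,n)$ and every $i \in I$ we have $\sigma_i(T) = \eta_{i,i+1}(T)$. Note that $i \in I = [n-1]$ guarantees $i+1 \leq n$, so the pair $(i,i+1)$ is a legitimate index for the partial evacuation operators.

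Next I would apply Corollary~\ref{cor:eta_cactus} with $j = i+1$. Then $j - i + 1 = 2$, so the first displayed identity there yields $\eta_{i,i+1} = \eta_{1,i+1}\,\eta_{1,2}\,\eta_{1,i+1}$ as maps, and since $T$ has straight shape the second displayed identity upgrades this to $\eta_{i,i+1}(T) = \mathsf{evac}_{i+1}\,\mathsf{evac}_{2}\,\mathsf{evac}_{i+1}(T)$. This is the first claimed equality.

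For the remaining two equalities I would substitute the promotion-operator descriptions from Proposition~\ref{prop:evac_ti}. That proposition gives $\mathsf{evac}_{i+1} = \mathsf{q}_i$ on straight-shaped tableaux, and in particular (taking $i=1$) $\mathsf{evac}_2 = \mathsf{q}_1 = \mathsf{t}_1$, the latter by \eqref{eq:def_qi}; hence $\sigma_i(T) = \mathsf{q}_i\,\mathsf{t}_1\,\mathsf{q}_i(T)$. Finally, using $\mathsf{q}_i = \mathsf{p}_1\mathsf{p}_2\cdots\mathsf{p}_i$ from the same proposition together with $\mathsf{p}_1 = \mathsf{t}_1$ (Definition~\ref{def:prom}) and $\mathsf{p}_1^2 = \mathsf{t}_1^2 = 1$ (Proposition~\ref{prop:ti_props}), the middle factor cancels:
\[
\mathsf{q}_i\,\mathsf{t}_1\,\mathsf{q}_i = (\mathsf{p}_1\mathsf{p}_2\cdots\mathsf{p}_i)\,\mathsf{p}_1\,(\mathsf{p}_1\mathsf{p}_2\cdots\mathsf{p}_i) = \mathsf{p}_1\,(\mathsf{p}_2\cdots\mathsf{p}_i)^2,
\]
which is the last equality.

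I expect no serious obstacle: the argument is purely a matter of stringing together Theorem~\ref{thm:sigma_schu}, Corollary~\ref{cor:eta_cactus} and Proposition~\ref{prop:evac_ti}. The only points requiring a word of care are that every identity involved is an identity of bijections on $\mathsf{ShST}(\nu,n)$ (so associativity and the cancellation $\mathsf{p}_1^2 = 1$ may be used freely), and that the passage from $\eta_{1,k}$ to $\mathsf{evac}_k$ in the second step genuinely relies on $T$ being straight-shaped — on skew shapes one would instead get the analogous statement with $\widetilde{\mathsf{evac}}$ in place of $\mathsf{evac}$, but $\sigma_i$ itself is only defined on $\mathsf{ShST}(\nu,n)$ here via this restriction.
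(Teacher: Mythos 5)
Your proposal is correct and follows essentially the same route as the paper: both derive $\sigma_i(T)=\mathsf{evac}_{i+1}\mathsf{evac}_2\mathsf{evac}_{i+1}(T)$ from Theorem~\ref{thm:sigma_schu} and Corollary~\ref{cor:eta_cactus}, then substitute $\mathsf{evac}_{i+1}=\mathsf{q}_i$ and $\mathsf{evac}_2=\mathsf{q}_1=\mathsf{t}_1$ via Proposition~\ref{prop:evac_ti}, and finally cancel $\mathsf{t}_1=\mathsf{p}_1$ against the leading $\mathsf{p}_1$ of the second $\mathsf{q}_i$. The only cosmetic difference is that you make the intermediate step $\eta_{i,i+1}$ and the straight-shape caveat explicit, which the paper leaves implicit.
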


\begin{proof}
By Theorem \ref{thm:sigma_schu} and Corollary \ref{cor:eta_cactus}, we have $\sigma_i (T) = \mathsf{evac}_{i+1} \mathsf{evac}_2 \mathsf{evac}_{i+1} (T)$. From Proposition \ref{prop:evac_ti}, we have 
\begin{align*}
\mathsf{evac}_{i+1} \mathsf{evac}_2 \mathsf{evac}_{i+1} (T) &= \mathsf{q}_i \mathsf{q}_1 \mathsf{q}_i (T) = \mathsf{q}_i \mathsf{t}_1 \mathsf{q}_i (T)\\
&= (\mathsf{p}_1 \mathsf{p}_2 \cdots \mathsf{p}_i) \mathsf{t}_1 (\mathsf{p}_1 \mathsf{p}_2 \cdots \mathsf{p}_i)(T)\\
&= (\mathsf{p}_1 \mathsf{p}_2 \cdots \mathsf{p}_i) \mathsf{t}_1 (\mathsf{t}_1 \mathsf{p}_2 \cdots \mathsf{p}_i)(T)\\
&= \mathsf{p}_1 (\mathsf{p}_2 \cdots \mathsf{p}_i)(\mathsf{p}_2 \cdots \mathsf{p}_i)(T).
\end{align*}
\end{proof}

It is natural to consider the restriction of the operator $\widetilde{\mathsf{evac}}_k$ to an interval $[i,j]'$, for $1 \leq i < j \leq n$, in the same fashion as Definition \ref{def:schu_ij}. For $T \in \mathsf{ShST}(\lambda/\mu,n)$ and $1 \leq i < j \leq n$, we define
\begin{equation}\label{eq:evac_tilde_ij}
\widetilde{\mathsf{evac}}_{i,j}(T):= T^{1,i-1} \sqcup \widetilde{\mathsf{evac}}(T^{i,j}) \sqcup  T^{j+1,n}.
\end{equation}
Clearly, $\widetilde{\mathsf{evac}}_{1,k} = \widetilde{\mathsf{evac}}_k$ and $\widetilde{\mathsf{evac}}_{i,j}$ coincides with $\eta_{i,j}$, on straight-shaped shifted tableaux. However, these operators do not satisfy the relation $\widetilde{\mathsf{evac}}_{i,j}=\widetilde{\mathsf{evac}}_{j} \widetilde{\mathsf{evac}}_{j-i+1} \widetilde{\mathsf{evac}}_{j}$, for $\mu \neq \emptyset$, unlike the operators $\eta_{i,j}$ (Corollary \ref{cor:eta_cactus}), as shown in the next example.

\begin{ex}\label{ex:evac_tilde_neq}
Let $T = \begin{ytableau}
1 & 1 & 1 & 1 & 3'\\
\none & 2 & 2 & 3'\\
\none & \none & 3
\end{ytableau}$.

We have 
$$\widetilde{\mathsf{evac}}_{2,3} (T) = \begin{ytableau}
1 & 1 & 1 & 1 & 2'\\
\none & 2 & 2 & 3 \\
\none & \none & 3
\end{ytableau} \neq 
\begin{ytableau}
1 & 1 & 1 & 1 & 2\\
\none & 2 & 2 & 3'\\
\none & \none & 3
\end{ytableau} =  \widetilde{\mathsf{evac}}_{3} \widetilde{\mathsf{evac}}_{2} \widetilde{\mathsf{evac}}_{3} (T).$$
\end{ex}

\begin{obs}\label{rmk:stembridge}
Stembridge introduced a shifted version of Bender--Knuth moves in \cite[Section 6]{Stem90}. These are two-to-two maps acting on adjacent letters by reverting their weight. Shifted tableaux are not required to be in canonical form here, and in general, these maps are not compatible with canonical form. For instance, consider the following tableau, in canonical form:
 
$$T=\begin{ytableau}
\none & \none & \none & \none & \none & \none & 1 & 2'\\
\none & \none & \none & \none & 2' & 2 & 2\\
1' & 1 & 1 & 1 & 2\\
1 & 2'\\
\none & 2
\end{ytableau}$$
and consider the representatives of $T$:
$$T_1 = \begin{ytableau}
\none & \none & \none & \none & \none & \none & 1 & 2'\\
\none & \none & \none & \none & 2' & 2 & 2\\
1' & 1 & 1 & 1 & 2\\
1 & 2'\\
\none & 2
\end{ytableau} \quad 
T_2 = \begin{ytableau}
\none & \none & \none & \none & \none & \none & 1 & 2'\\
\none & \none & \none & \none & 2' & 2 & 2\\
1' & 1 & 1 & 1 & 2\\
1 & 2'\\
\none & 2'
\end{ytableau}
\quad
T_3 = \begin{ytableau}
\none & \none & \none & \none & \none & \none & 1 & 2'\\
\none & \none & \none & \none & 2' & 2 & 2\\
1' & 1 & 1 & 1 & 2\\
1' & 2'\\
\none & 2
\end{ytableau} \quad
T_4 = \begin{ytableau}
\none & \none & \none & \none & \none & \none & 1 & 2'\\
\none & \none & \none & \none & 2' & 2 & 2\\
1' & 1 & 1 & 1 & 2\\
1' & 2'\\
\none & 2'
\end{ytableau}.$$
Using the maps in \cite{Stem90}, we have:
$$\{T_1, T_2\} \longrightarrow \Big\{ \begin{ytableau}
\none & \none & \none & \none & \none & \none & 1' & 2\\
\none & \none & \none & \none & 1' & 1 & 1\\
1' & 1 & 2 & 2 & 2\\
1 & 2'\\
\none & 2
\end{ytableau},
\begin{ytableau}
\none & \none & \none & \none & \none & \none & 1' & 2\\
\none & \none & \none & \none & 1' & 1 & 1\\
1' & 1 & 2 & 2 & 2\\
1 & 2'\\
\none & 2'
\end{ytableau} \Big\} =: \{\hat{T}_1, \hat{T}_2\}$$

$$\{T_3, T_4\} \longrightarrow \Big\{
\begin{ytableau}
\none & \none & \none & \none & \none & \none & 1' & 2\\
\none & \none & \none & \none & 1' & 1 & 1\\
1' & 1 & 2' & 2 & 2\\
1' & 2'\\
\none & 2
\end{ytableau},
\begin{ytableau}
\none & \none & \none & \none & \none & \none & 1' & 2\\
\none & \none & \none & \none & 1' & 1 & 1\\
1' & 1 & 2' & 2 & 2\\
1' & 2'\\
\none & 2'
\end{ytableau}
\Big\} =: \{\hat{T}_3, \hat{T}_4\}$$	

The tableaux in $\{\hat{T}_1, \hat{T}_2\}$ do not have the same canonical form as the ones in $\{\hat{T}_3,\hat{T}_4\}$.
\end{obs}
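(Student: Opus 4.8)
The plan is to show that Stembridge's shifted Bender--Knuth operators of \cite[Section 6]{Stem90} do not descend to well-defined maps on the canonical-form classes in $\mathsf{ShST}(\lambda/\mu,n)$, and in particular do not agree with the operators $\mathsf{t}_i$ of Definition \ref{def:sbk_mov}; the displayed tableau $T$, with $i=1$, is the witness, so the task reduces to (i) recalling Stembridge's rule precisely enough to justify the four outputs $\hat T_1,\dots,\hat T_4$, and (ii) verifying that the two resulting pairs lie in distinct canonical classes. For (i), I would recall that for fixed $i$ the operator acts only on the letters $\mathbf i$ and $\mathbf{i+1}$, i.e.\ on $T^i\sqcup T^{i+1}$, which is a double border strip; it decomposes this strip into its maximal horizontal runs and on each run swaps the multiplicities of $i$ and $i+1$ exactly as the classical Bender--Knuth move does on a horizontal strip, together with an auxiliary rule prescribing the prime of the entry that enters or leaves the main diagonal. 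The structural fact to isolate is that this priming rule is read off from the primes of the \emph{input}: the operator sends a representative of a canonical-form tableau to a representative, generally not itself in canonical form, and the ``two-to-two'' description records that a pair of representatives differing by priming a single first occurrence is sent to another such pair. With the rule in hand, applying it to each of the four representatives $T_1,\dots,T_4$ of $T$ is a finite, mechanical computation producing the tableaux $\hat T_1,\dots,\hat T_4$ as listed.

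Next I would spell out why this computation furnishes the claimed obstruction. The four tableaux $T_1,\dots,T_4$ are exactly the representatives of the single canonical-form tableau $T$, differing only in the priming of the leftmost $\mathbf 1$ and the leftmost $\mathbf 2$ of $w(T)$; hence any operator on $\mathsf{ShST}(\lambda/\mu,n)$ that restricted to Stembridge's move would be forced to send all of $T_1,\dots,T_4$ to one and the same canonical-form tableau. So it suffices to canonicalise $\hat T_1$ and $\hat T_3$ and observe that they differ. Reading words bottom-to-top, the leftmost $\mathbf 1$ of $w(\hat T_1)$ is already unprimed, so $\hat T_1$ is its own canonical representative, whereas the leftmost $\mathbf 1$ of $w(\hat T_3)$ is primed, so canonicalisation unprimes exactly that one entry; comparing the results, they still disagree in the long row, where one carries $2$ and the other $2'$ in the third box. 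Thus $\hat T_1$ and $\hat T_3$, and therefore the pairs $\{\hat T_1,\hat T_2\}$ and $\{\hat T_3,\hat T_4\}$, lie in different canonical classes, so Stembridge's operator is not compatible with canonical form, and in particular it is not the shifted Bender--Knuth involution $\mathsf{t}_1$.

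The main obstacle here is purely expository: Stembridge works in a slightly different convention (notably for which entry crossing the main diagonal receives a prime), so care is needed to transcribe that rule faithfully into the present setup, since a misreading would change the output tableaux and could invalidate the example. A secondary subtlety is to be precise about what the ``two-to-two'' formulation contributes: the point is not that one unlucky representative misbehaves, but that the entire fibre over $T$ is split by the operator into subsets landing in distinct canonical classes, so the failure of well-definedness is intrinsic and cannot be repaired by any choice of representative. Once these two points are settled, the remainder is the routine canonicalisation check described above.
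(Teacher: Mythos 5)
Your proposal matches the paper's argument: the remark is established exactly by listing the four representatives of $T$, applying Stembridge's two-to-two moves to each, and observing that the two output pairs canonicalise differently (the third box of the long row carrying $2$ versus $2'$, which canonicalisation cannot reconcile since the leftmost $\mathbf{2}$ of the reading word lies elsewhere and is already unprimed). Your additional framing — that the whole fibre over $T$ is split into distinct canonical classes, so the failure is intrinsic to the operator rather than to a choice of representative — is a correct and slightly more explicit account of what the paper's example demonstrates.
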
	
	
\subsection{The Berenstein--Kirillov group}\label{subsec:bk}

The Bender--Knuth involutions $t_i$, for $i \in I$, are involutions on semistandard Young tableaux filled in $[n]$, that act only on the letters $\{i,i+1\}$, reverting their weight \cite{BeKn72}. They are known to coincide with the tableau switching on type $A$ on two consecutive letters, together with a swapping of those letters \cite{BSS96}. The \emph{Berenstein--Kirillov group} $\mathcal{BK}$ (or \emph{Gelfand-Tsetlin group}), is the free group generated by these involutions $t_i$, for $i > 0$, modulo the relations they satisfy on semistandard Young tableaux of any shape \cite{BK16, BK95, CGP16}. Some of the known relations to hold in $\mathcal{BK}$ \cite[Corollary 1.1]{BK95} are
\begin{equation}\label{eq:bkrelations}
t_i^2 = 1, \qquad t_i t_j = t_j t_i,\; \text{for}\; |i-j|>1, \qquad (t_1 q_i)^4 = 1,\; \text{for}\; i > 2,
\end{equation}
where $q_{i} := t_1 (t_2 t_1) \cdots (t_i t_{i-1} \cdots t_1)$, for $i \geq 1$, are involutions, and
\begin{equation}\label{eq:bkrelations_special}
(t_1 t_2)^6 = 1.
\end{equation}

The restriction of the evacuation to the alphabet $\{1, \ldots, i\}$, on straight-shaped semistandard Young tableaux, may be regarded as an element of $\mathcal{BK}$, and it is computed by $q_{i-1}$ \cite{BK95,CGP16,Hala20,Hala16}. We also let $q_{j,k} := q_{k-1} q_{k-j} q_{k-1}$, for $j<k$. In particular, $q_i = q_{1,i+1}$ and $q_{j,k}$ computes the restriction of the evacuation to the alphabet $\{j, \ldots, k\}$, as an element of $\mathcal{BK}$. Chmutov, Glick and Pylyavskyy found another relation\cite[Theorem 1.6]{CGP16}.
\begin{equation}\label{eq:bkrelationextra}
(t_i q_{j,k})^2 = 1, \;\text{for}\; i+1<j<k.
\end{equation}

The relation \eqref{eq:bkrelationextra} does not follow from the previous known relations \eqref{eq:bkrelations} and \eqref{eq:bkrelations_special} in $\mathcal{BK}$, but is instead a consequence from the cactus relations satisfied by the operators $q_{i,j}$ in $\mathcal{BK}$, studied by Halacheva \cite{Hala20,Hala16} and Chmutov, Glick and Pylyavskyy \cite{CGP16}. We remark that \eqref{eq:bkrelationextra} generalizes the relation $(t_1 q_i)^4=1$, since
$$(t_1 q_i)^4 = (t_1 q_{i} t_1 q_{i})^2 = (t_1 q_{i} q_1 q_{i})^2 = (t_1 q_{i,i+1})^2.$$

Let $\mathcal{BK}_n$ be the subgroup of $\mathcal{BK}$ generated by $t_1, \ldots, t_{n-1}$. The involutions $q_i$, for $i \in I$, provide another set of generators for $\mathcal{BK}_n$, and their action on straight-shaped Young tableaux coincide with the one of the restriction of the Schützenberger involution (or evacuation) to $[i+1]$ \cite[Remark 1.3]{BK95}. It was shown in \cite{CGP16}, using semistandard growth diagrams, that $\mathcal{BK}_n$ is isomorphic to a quotient of the cactus group. This result could also be derived by noting the coincidence of the actions of $J_n$ \cite{Hala16} and $\mathcal{BK}_n$ on a straight-shaped semistandard Young tableau crystal $\mathsf{SSYT}(\nu,n)$, as noted in \cite[Remark 3.9]{Hala20}.

\begin{teo}
The group $\mathcal{BK}_n$ is isomorphic to a quotient of $J_n$, as a result of the following being group epimorphisms from $J_n$ to $\mathcal{SBK}_n$:
\begin{enumerate}
\item $s_{i,j} \mapsto q_{i,j}$ \cite[Theorem 1.4]{CGP16}.

\item $s_{1,j} \mapsto q_{j-1}$ \cite[Remark 1.3]{BK95}, \cite[Section 10.2]{Hala16}, \cite[Remark 3.9]{Hala20}.
\end{enumerate}
\end{teo}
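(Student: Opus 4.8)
The plan is to construct a single group epimorphism $\psi\colon J_n\twoheadrightarrow\mathcal{BK}_n$ and then to note that the two displayed assignments are $\psi$ expressed on two different generating sets of $J_n$. Because $q_{i,j}=q_{j-1}q_{j-i}q_{j-1}$ mirrors the identity $s_{i,j}=s_{1,j}s_{1,j-i+1}s_{1,j}$ of \eqref{eq:cactus_s1k}, it suffices to define $\psi$ on the generators $s_{1,j}$ by $\psi(s_{1,j})=q_{j-1}$ and to check that this respects the defining relations of $J_n$; then $\psi(s_{i,j})=q_{i,j}$ follows automatically. By Definition~\ref{def:cactus}, together with \eqref{eq:cactus_s1k}, respecting those relations amounts to verifying, for the elements $q_{i,j}\in\mathcal{BK}_n$, the three families $q_{i,j}^2=1$, $q_{i,j}q_{k,l}=q_{k,l}q_{i,j}$ for $[i,j]\cap[k,l]=\emptyset$, and $q_{i,j}q_{k,l}=q_{i+j-l,\,i+j-k}q_{i,j}$ for $[k,l]\subseteq[i,j]$. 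The first is immediate from $q_i^2=1$, and the second from the fact that for disjoint intervals $q_{i,j}$ and $q_{k,l}$ move disjoint sets of entries; the third is the substantive one.

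To obtain the nested-interval relation I would exploit that, on straight-shaped tableaux, $q_{i,j}$ computes the partial Schützenberger involution on $\mathsf{SSYT}(\nu,n)$ restricted to $\{i,\dots,j\}$ \cite[Remark 1.3]{BK95}; for such operators the three relations are precisely the cactus relations realized by Halacheva's action of $J_n$ on $\mathfrak{gl}_n$-crystals \cite[Section 10.2]{Hala16}, \cite[Remark 3.9]{Hala20}. Since $\mathcal{BK}_n$ records the relations satisfied by the $t_i$ on semistandard Young tableaux of \emph{every} shape, one also needs these identities on skew shapes; this is supplied by the semistandard growth diagram argument of \cite[Theorem 1.4]{CGP16}, and, in the sharper form \cite[Theorem 1.8]{CGP16} (here Theorem~\ref{thm:rel_cact_bk}), the cactus relations among the $q_{i,j}$ are shown equivalent to the subset \eqref{eq:bkrelations}, \eqref{eq:bkrelationextra} of already-known $\mathcal{BK}$-relations, all of which hold in $\mathcal{BK}_n$. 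Either route yields that $\psi$ is a well-defined homomorphism.

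Surjectivity is then immediate: $q_1=t_1$ and, as recalled before \eqref{eq:bkrelations}, the involutions $q_1,\dots,q_{n-1}$ already generate $\mathcal{BK}_n$, while $q_i=\psi(s_{1,i+1})$. Hence $\psi$ is onto, so $\mathcal{BK}_n\cong J_n/\ker\psi$ is a quotient of $J_n$, and both displayed maps are epimorphisms. The step I expect to be the main obstacle is exactly the nested-interval relation $q_{i,j}q_{k,l}=q_{i+j-l,\,i+j-k}q_{i,j}$ for $[k,l]\subseteq[i,j]$: deriving it combinatorially from \eqref{eq:bkrelations} and \eqref{eq:bkrelationextra} is delicate (it is the heart of \cite[Theorems 1.4 and 1.6]{CGP16}), whereas reading it off as an identity of partial evacuations on tableau crystals requires the non-trivial passage from the action on straight shapes, where $q_{i,j}$ is a partial Schützenberger involution, to the action on arbitrary skew shapes, where $q_{i,j}$ is only the prescribed word in the non-coplactic Bender--Knuth involutions.
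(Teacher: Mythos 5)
Your proposal is correct and follows essentially the same route as the paper, which states this result as a recollection from the literature and cites precisely the two arguments you reconstruct: the growth-diagram/relation-equivalence route of \cite[Theorems 1.4, 1.6 and 1.8]{CGP16} and the crystal-action route of \cite{Hala16,Hala20}. Your added caveat---that the crystal action only gives the cactus relations on straight shapes, while $\mathcal{BK}_n$ is defined by relations on all shapes, so the skew case must be supplied by the growth-diagram argument or by deducing the cactus relations formally from \eqref{eq:bkrelations} and \eqref{eq:bkrelationextra} via Theorem \ref{thm:rel_cact_bk}---is accurate and is the one genuinely delicate point, which the paper leaves implicit in its citations.
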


Chmutov, Glick and Pylyavskyy established in \cite[Theorem 1.8]{CGP16} an equivalence between the relations \eqref{eq:bkrelations} and \eqref{eq:bkrelationextra} that are satisfied in $\mathcal{BK}_n$ and the ones of the cactus group $J_n$ (Definition \ref{def:cactus}), thus obtaining an alternative presentation for the latter via the Bender--Knuth moves. More precisely, they consider the free group generated by $t_i$, for $i\in \mathbb{Z}_{>0}$, and consider another free group generated by $q_{i,j}$, $1 \leq i \leq j$.

\begin{teo}[{\cite[Theorem 1.8]{CGP16}}]\label{thm:rel_cact_bk}
The relations 
\begin{equation}\label{eq:rel1}
t_i^2 = 1, \qquad t_i t_j = t_j t_i, \;\text{for}\; |i-j| > 1, \qquad (t_i q_{k-1} q_{k-j} q_{k-1})^2 = 1, \;\text{for}\; i+1<j<k
\end{equation}
where $q_i := t_1 (t_2 t_1) \cdots (t_i t_{i-1} \cdots t_1)$, are equivalent to the relations
\begin{equation}\label{eq:rel2}
q_{i,j}^2 = 1, \qquad q_{i,j}q_{k,l} = q_{i+j-l,i+j-k} q_{i,j}, \;\text{for}\; i \leq k < l \leq j, \qquad q_{i,j} q_{k,l} = q_{k,l} q_{i,j}, \;\text{for}\; j <k.
\end{equation}
\end{teo}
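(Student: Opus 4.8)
The plan is to recast this equivalence as an equality of two normally generated subgroups of a free group and verify it by two implications. I would work in the free group $F$ on the symbols $\{t_i\}_{i\ge 1}$, and inside $F$ regard $q_m := t_1(t_2t_1)\cdots(t_mt_{m-1}\cdots t_1)$ and $q_{i,j} := q_{j-1}q_{j-i}q_{j-1}$ (with the convention $q_{-1}=q_0 := 1$) as fixed words. Write $N_1$, $N_2$ for the normal closures in $F$ of the relator families \eqref{eq:rel1} and \eqref{eq:rel2}, respectively; the assertion is $N_1 = N_2$. I would prove $N_2 \subseteq N_1$ by checking that each relation of \eqref{eq:rel2} holds in $F/N_1$, and $N_1 \subseteq N_2$ by checking that each relation of \eqref{eq:rel1} holds in $F/N_2$; composing the two resulting homomorphisms on generators then yields the identity, hence the isomorphism.

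For $N_2\subseteq N_1$, I would first record the auxiliary fact, provable by induction on $m$ from the two word identities $q_m = q_{m-1}(t_mt_{m-1}\cdots t_1) = (t_1t_2\cdots t_m)q_{m-1}$ together with $t_i^2=1$ and the far-commutations, that each $q_m$ is an involution. This gives at once $q_{i,j}^2 = q_{j-1}q_{j-i}(q_{j-1}q_{j-1})q_{j-i}q_{j-1} = 1$, the first relation of \eqref{eq:rel2}. For the far-commutation $q_{i,j}q_{k,l}=q_{k,l}q_{i,j}$ with $j<k$: the word $q_{i,j}=q_{j-1}q_{j-i}q_{j-1}$ involves only the generators $t_a$ with $1\le a\le j-1$, and for any such $a$ one has $a+1\le j\le k-1<k<l$, so the relation $(t_aq_{k,l})^2=1$ of \eqref{eq:rel1} applies; combined with $t_a^2=q_{k,l}^2=1$ it gives $t_aq_{k,l}=q_{k,l}t_a$, and multiplying these commutations yields the claim.

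The remaining relation of \eqref{eq:rel2}, the nested braid-like relation $q_{i,j}q_{k,l}q_{i,j}=q_{i+j-l,\,i+j-k}$ for $i\le k<l\le j$, is the technical heart. The strategy is an induction on the length $j-i$ of the outer interval: using the decomposition $q_{k,l}=q_{1,l}q_{1,l-k+1}q_{1,l}$ one reduces to understanding conjugation of the left-justified restricted evacuations $q_{1,m}$ by $q_{i,j}$, and peeling off $q_{i,j}=q_{j-1}q_{j-i}q_{j-1}$ then exposes, at each step, a conjugation to which either the inductive hypothesis or one of the relations $(t_aq_{b,c})^2=1$ of \eqref{eq:rel1} applies; matching the resulting index shifts against the reflection $[k,l]\mapsto[i+j-l,i+j-k]$ closes the induction. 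I expect this to be the main obstacle: unlike the two relations above, the words $q_{i,j}$ and $q_{k,l}$ have overlapping supports, the simplifications are not termwise, and keeping the bookkeeping of supports and index shifts consistent through the induction is delicate — this is exactly where \cite{CGP16} concentrates its effort.

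For $N_1\subseteq N_2$, I would use the identity $t_i = q_{i-1}q_iq_{i-1}q_{i-2}$ in $F/N_2$ (equivalently $t_i = q_{1,i}q_{1,i+1}q_{1,i}q_{1,i-1}$, with the $i=1$ case reading $t_1=q_{1,2}$ once $q_1^2=1$), which expresses each Bender--Knuth generator through the left-justified symbols $q_{1,m}$; this is available since each $q_{1,m}^2=1$ is part of \eqref{eq:rel2}. Substituting it into $t_i^2=1$, $t_it_j=t_jt_i$ for $|i-j|>1$, and $(t_iq_{k-1}q_{k-j}q_{k-1})^2=1$ turns each into an identity among products of $q_{1,m}$'s, and each such identity is verified by repeated use of the three relations of \eqref{eq:rel2} — precisely the cactus-group calculus of Definition \ref{def:cactus} restricted to the generators $s_{1,m}$, the same bookkeeping already used for \eqref{eq:cactus_s1k} and Corollary \ref{cor:eta_cactus}. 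This direction is the more routine one. The complete argument is \cite[Theorem 1.8]{CGP16}.
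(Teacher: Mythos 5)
First, note that the paper itself offers no proof of this statement: Theorem \ref{thm:rel_cact_bk} is imported verbatim from \cite[Theorem 1.8]{CGP16} and used as a black box (for instance, immediately after Proposition \ref{prop:rels_SBK} to obtain the presentation \eqref{eq:cactus_alt_prst}), so there is no in-paper argument to compare yours against; the only possible comparison is with the source you also cite.

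On its own terms, your proposal is a sound reconstruction of the architecture of that proof, but it is not a complete proof. The framework (equality of the two normal closures in the free group on the $t_i$, verified by two inclusions, plus the Tietze-type check that the two substitutions are mutually inverse) is the right one, and the two verifications you actually carry out are correct: $q_m^2=1$ does follow by induction from $q_m=q_{m-1}(t_m\cdots t_1)=(t_1\cdots t_m)q_{m-1}$, which uses only $t_i^2=1$ and the far commutations, and the commutation $q_{i,j}q_{k,l}=q_{k,l}q_{i,j}$ for $j<k$ does reduce, letter by letter, to the relations $(t_aq_{k,l})^2=1$ with $a\le j-1\le k-2$, hence $a+1<k$. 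The gap is the nested relation $q_{i,j}q_{k,l}q_{i,j}=q_{i+j-l,i+j-k}$ for $i\le k<l\le j$: you correctly identify it as the technical heart and name a plausible induction on $j-i$ via the decomposition $q_{k,l}=q_{1,l}q_{1,l-k+1}q_{1,l}$, but you never exhibit the reduction step or the index bookkeeping, and you close by deferring to \cite{CGP16}. Since that relation is exactly where the content of the equivalence lives (the other two families of \eqref{eq:rel2} are near-formal consequences of \eqref{eq:rel1}, as your own computations show), the proposal as written establishes the theorem only modulo the citation. The same remark applies, less severely, to the converse inclusion: substituting $t_i=q_{1,i}q_{1,i+1}q_{1,i}q_{1,i-1}$ and checking the relations \eqref{eq:rel1} inside the cactus presentation is indeed routine cactus calculus, but none of those checks is actually displayed.
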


As a consequence, we have the following group isomorphism
$$ \langle t_i, i \in I | \;\text{relations in \eqref{eq:rel1}} \rangle \simeq \langle q_{i,j}, 1 \leq i < j \leq n |\; \text{relations in \eqref{eq:rel2}} \rangle = J_n.$$

\begin{obs}
In type $A$ crystals, the crystal reflection operators $\varsigma_i$ (see \cite{BumpSchi17,LaSchu81}) acting on straight-shaped Young tableaux are elements of the group $\mathcal{BK}_n$, as they can be written as $\varsigma_i := q_i t_1 q_i $, for $i \in I$. Moreover, they satisfy the relation \cite[Proposition 1.4]{BK95}
\begin{equation}\label{eq:sigma_br_6}
(\varsigma_i \varsigma_{i+1})^3 = q_i t_1 p_{i+1} t_1 (t_1 t_2)^6 t_1 p_{i+1}^{-1}t_1 q_i
\end{equation}
for $i \in [n-2]$, where $p_i := t_1 (t_2 t_1) \cdots (t_i t_{i-1} \cdots t_1)$. Thus, the relation $(t_1 t_2)^6 = 1$ is equivalent to the braid relation relations $(\varsigma_i \varsigma_{i+1})^3 = 1$, for all $1 \leq i \leq n-2$. It is known that the operators $\varsigma_i$ define an action of the symmetric group on a type $A$ crystal (for instance, see \cite[Theorem 11.14]{BumpSchi17}). We shall see in Proposition \ref{prop:braid_t6} that the shifted crystal reflection operators $\sigma_i$ satisfy a similar identity, but since the braid relations do not need to be satisfied by $\sigma_i$ (see Example \ref{ex:braid}), then the relation $(\mathsf{t}_1 \mathsf{t}_2)^6 =1$ does not need to hold as well (see Example \ref{ex:t1t26}).
\end{obs}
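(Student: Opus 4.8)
The plan is to prove the three assertions of the remark in turn, with the group identity \eqref{eq:sigma_br_6} as the hinge: once it is read as a conjugation, the claimed equivalence of relations is formal.

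First I would record the presentation $\varsigma_i = q_i t_1 q_i$. The operator $\varsigma_i$ reflects each $i$-string of a type $A$ crystal about its midpoint and, exactly as in the shifted case, coincides with the restriction of the Schützenberger involution to the letters $\{i,i+1\}$, i.e.\ with $\eta_{i,i+1}$. Specializing the cactus identity \eqref{eq:cactus_s1k} to $s_{i,i+1}=s_{1,i+1}s_{1,2}s_{1,i+1}$ and pushing it through the homomorphism $s_{1,k}\mapsto q_{k-1}$ of \cite[Remark 1.3]{BK95}, together with $q_1=t_1$, yields $\varsigma_i=q_iq_1q_i=q_it_1q_i$; since $q_i$ is a word in the $t_j$, this exhibits $\varsigma_i\in\mathcal{BK}_n$. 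This is the precise type $A$ counterpart of $\sigma_i=\mathsf{q}_i\mathsf{t}_1\mathsf{q}_i$ from Corollary \ref{cor:sigma_prom}.

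Next comes the identity \eqref{eq:sigma_br_6}, which is \cite[Proposition 1.4]{BK95}. I would import it, but the useful observation is that it is a conjugation: setting $g_i:=q_it_1p_{i+1}t_1$ and using that $t_1$ and $q_i$ are involutions gives $g_i^{-1}=t_1p_{i+1}^{-1}t_1q_i$, so that \eqref{eq:sigma_br_6} reads $(\varsigma_i\varsigma_{i+1})^3=g_i(t_1t_2)^6g_i^{-1}$. To reprove it inside $\mathcal{BK}_n$ one would first simplify $\varsigma_i\varsigma_{i+1}=q_it_1q_i\,q_{i+1}t_1q_{i+1}$ using the relation between the evacuation $q_{i+1}$ and the promotion $p_{i+1}$ (so that $q_iq_{i+1}$ collapses to $p_{i+1}$), rewriting $\varsigma_i\varsigma_{i+1}$ as a conjugate of a short word in $t_1,t_2$; the evacuation conjugation rule $q_it_jq_i=t_{i+1-j}$ and the relation $(t_1q_i)^4=1$ of \eqref{eq:bkrelations} then turn the third power into the displayed conjugate of $(t_1t_2)^6$. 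This word manipulation is the only laborious part and the main obstacle to a self-contained argument, but it is exactly what \cite[Proposition 1.4]{BK95} carries out.

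Finally---the only step I would write in full---the equivalence follows formally. Conjugation by $g_i$ is an automorphism of $\mathcal{BK}_n$, so $(t_1t_2)^6=1$ forces $(\varsigma_i\varsigma_{i+1})^3=g_i\cdot 1\cdot g_i^{-1}=1$ for every $i\in[n-2]$; conversely, if $(\varsigma_i\varsigma_{i+1})^3=1$ for even a single $i$, then \eqref{eq:sigma_br_6} gives $g_i(t_1t_2)^6g_i^{-1}=1$ and hence $(t_1t_2)^6=1$. That $(t_1t_2)^6=1$ does hold in $\mathcal{BK}_n$---equivalently, that the $\varsigma_i$ satisfy the braid relations and generate a genuine $\mathfrak{S}_n$-action on the type $A$ crystal---is \cite[Theorem 11.14]{BumpSchi17}, going back to \cite{LaSchu81}. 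The purpose of assembling these facts is comparative: Proposition \ref{prop:braid_t6} supplies the shifted analogue of \eqref{eq:sigma_br_6}, but Example \ref{ex:braid} shows $(\sigma_i\sigma_{i+1})^3\neq 1$, so running the same conjugation argument backwards forces $(\mathsf{t}_1\mathsf{t}_2)^6\neq 1$ in general, as confirmed by Example \ref{ex:t1t26}.
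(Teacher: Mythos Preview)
The paper does not supply its own proof of this remark: it is stated as an observation, with the identity \eqref{eq:sigma_br_6} imported from \cite[Proposition 1.4]{BK95} and the $\mathfrak{S}_n$-action from \cite{BumpSchi17,LaSchu81}. Your proposal therefore goes further than the paper does here, and your treatment of the equivalence---reading \eqref{eq:sigma_br_6} as $(\varsigma_i\varsigma_{i+1})^3=g_i(t_1t_2)^6g_i^{-1}$ and arguing both directions via conjugation---is correct and is precisely the mechanism the paper exploits in the shifted setting.

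Where your sketch diverges from the paper is in the word manipulation behind the identity itself. The paper's proof of the shifted analogue, Proposition \ref{prop:braid_t6}, uses neither an ``evacuation conjugation rule $q_it_jq_i=t_{i+1-j}$'' nor the relation $(t_1q_i)^4=1$. Instead, after the collapse $q_iq_{i+1}=p_{i+1}$ that you correctly identify, it rewrites $(\varsigma_i\varsigma_{i+1})^m$ as $q_it_1p_{i+1}t_1\,(p_{i+1}^{-1}t_1p_{i+1}t_1)^m\,t_1p_{i+1}^{-1}t_1q_i$ and then proves the single claim $p_{i+1}^{-1}t_1p_{i+1}t_1=(t_1t_2)^2$ by a short induction on $i$, using only $t_j^2=1$ and the commutation $t_1t_{i+2}=t_{i+2}t_1$. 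If you want a self-contained argument rather than a citation to \cite{BK95}, that induction is the concrete missing step; the rest of your outline matches the paper's method for the shifted version.
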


\subsection{A shifted Berenstein--Kirillov group and a cactus group action}
Motivated by the definition of the Berenstein--Kirillov group, we consider $\mathcal{SBK}$ to be the free group generated by the shifted Bender--Knuth involutions $\mathsf{t}_i$, for $i > 0$,  modulo the relations they satisfy when acting on shifted semistandard tableaux of any shape. We call it the \emph{shifted Berenstein--Kirillov group}, and consider its subgroup $\mathcal{SBK}_n$ generated by $\mathsf{t}_1, \ldots, \mathsf{t}_{n-1}$. From Proposition \ref{prop:ti_props}, we know that the relations $\mathsf{t}_i^2 = 1$ and $\mathsf{t}_i \mathsf{t}_j = \mathsf{t}_j \mathsf{t}_i$, for $|i-j|>1$, hold in $\mathcal{SBK}$. Recall from \eqref{eq:def_qi}, that
$$\mathsf{q}_i := \mathsf{t}_1 (\mathsf{t}_2 \mathsf{t}_1) \cdots (\mathsf{t}_i \mathsf{t}_{i-1} \cdots \mathsf{t}_1)$$
for $i\geq 1$. From Proposition \ref{prop:evac_ti}, the shifted evacuation restricted to the primed interval $[1,i+1]'$, on straight-shaped shifted tableaux, is an element of $\mathcal{SBK}$, being computed by $\mathsf{q}_{i}$. In particular, the operators $\mathsf{q}_i$ are involutions. We will show in Proposition \ref{prop:rels_SBK} that the relation $(\mathsf{t}_i\mathsf{q}_{j,k})^2 = 1$, for $2 \leq i+1 < j < k \leq n$, which is the shifted version of \eqref{eq:bkrelationextra} (see \cite[Theorem 1.6]{CGP16}), also holds in $\mathcal{SBK}$.

Recall from Definition \ref{def:prom} that $\mathsf{p}_i = \mathsf{t_1} (\mathsf{t}_2 \mathsf{t}_1) \cdots (\mathsf{t}_{i} \mathsf{t}_{i-1} \cdots \mathsf{t}_1)$ and the promotion operators $\mathsf{p}_i$ are elements of $\mathcal{SBK}$. By Corollary \ref{cor:sigma_prom}, the shifted crystal reflection operators $\sigma_i$ are also elements of $\mathcal{SBK}$, for $i \geq 1$, as they can be written as $\sigma_i = \mathsf{q}_i \mathsf{t}_1 \mathsf{q}_i$. Following a similar computation in \cite[Proposition 1.4]{BK95}, we show that they satisfy the following identity.

\begin{prop}\label{prop:braid_t6}
Let $i \in [n-2]$ and $m \in \mathbb{N}$. Then, writing $\sigma_i = \mathsf{q}_i \mathsf{t}_1 \mathsf{q}_i$, we have
\begin{equation}
(\sigma_i \sigma_{i+1})^m = \mathsf{q}_i \mathsf{t}_1 \mathsf{p}_{i+1} \mathsf{t}_1 (\mathsf{t}_1 \mathsf{t}_2)^{2m} \mathsf{t}_1 \mathsf{p}_{i+1}^{-1} \mathsf{t}_1 \mathsf{q}_i.
\end{equation}
Thus, in particular we have
\begin{equation}\label{eq:braidt12}
(\sigma_i \sigma_{i+1})^3 = \mathsf{q}_i \mathsf{t}_1 \mathsf{p}_{i+1} \mathsf{t}_1 (\mathsf{t}_1 \mathsf{t}_2)^6 \mathsf{t}_1 \mathsf{p}_{i+1}^{-1} \mathsf{t}_1 \mathsf{q}_i.
\end{equation}
\end{prop}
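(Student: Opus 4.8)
The plan is to mimic, step by step, the classical computation of Berenstein and Kirillov in \cite[Proposition 1.4]{BK95}, replacing the type $A$ Bender--Knuth involutions by the shifted ones $\mathsf{t}_i$. Since the identity to be proved is a purely group-theoretic consequence of the defining relations of $\mathcal{SBK}_n$ (together with the way $\sigma_i$, $\mathsf{q}_i$ and $\mathsf{p}_i$ are \emph{defined} as words in the $\mathsf{t}_i$), the first task is to recall which relations among the $\mathsf{t}_i$ are available: the involution relation $\mathsf{t}_i^2 = 1$ and the commutation $\mathsf{t}_i \mathsf{t}_j = \mathsf{t}_j \mathsf{t}_i$ for $|i-j|>1$ from Proposition \ref{prop:ti_props}, the fact that $\mathsf{q}_i^2 = 1$ from Corollary \ref{cor:qi_inv}, and the identity $\mathsf{q}_{i+1} = \mathsf{p}_{i+1} \mathsf{q}_i$, which is immediate from the definitions $\mathsf{q}_{i+1} = \mathsf{t}_1(\mathsf{t}_2\mathsf{t}_1)\cdots(\mathsf{t}_{i+1}\cdots\mathsf{t}_1)$ and $\mathsf{p}_{i+1} = \mathsf{t}_1(\mathsf{t}_2\mathsf{t}_1)\cdots(\mathsf{t}_{i+1}\cdots\mathsf{t}_1)$ wait --- here one must be careful, since by Definition \ref{def:prom} $\mathsf{p}_{i+1} = \mathsf{t}_{i+1}\mathsf{t}_i\cdots\mathsf{t}_1$, so the relevant factorization is rather $\mathsf{q}_{i+1} = \mathsf{q}_i \mathsf{p}_{i+1}$, which will be used repeatedly.

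The second step is the core algebraic manipulation. Write $\sigma_i = \mathsf{q}_i \mathsf{t}_1 \mathsf{q}_i$ and $\sigma_{i+1} = \mathsf{q}_{i+1} \mathsf{t}_1 \mathsf{q}_{i+1}$ (using Corollary \ref{cor:sigma_prom}), and substitute $\mathsf{q}_{i+1} = \mathsf{q}_i \mathsf{p}_{i+1}$ into the second one to get $\sigma_{i+1} = \mathsf{q}_i \mathsf{p}_{i+1} \mathsf{t}_1 \mathsf{p}_{i+1}^{-1} \mathsf{q}_i$. Then
\[
\sigma_i \sigma_{i+1} = (\mathsf{q}_i \mathsf{t}_1 \mathsf{q}_i)(\mathsf{q}_i \mathsf{p}_{i+1} \mathsf{t}_1 \mathsf{p}_{i+1}^{-1} \mathsf{q}_i) = \mathsf{q}_i \mathsf{t}_1 \mathsf{p}_{i+1} \mathsf{t}_1 \mathsf{p}_{i+1}^{-1} \mathsf{q}_i,
\]
using $\mathsf{q}_i^2 = 1$ in the middle. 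Raising to the $m$-th power, all the inner $\mathsf{q}_i \cdots \mathsf{q}_i$ pairs again collapse via $\mathsf{q}_i^2=1$, leaving
\[
(\sigma_i \sigma_{i+1})^m = \mathsf{q}_i \big(\mathsf{t}_1 \mathsf{p}_{i+1} \mathsf{t}_1 \mathsf{p}_{i+1}^{-1}\big)^m \mathsf{q}_i .
\]
So the whole problem reduces to showing that the ``middle word'' $\big(\mathsf{t}_1 \mathsf{p}_{i+1} \mathsf{t}_1 \mathsf{p}_{i+1}^{-1}\big)^m$ equals $\mathsf{t}_1 \mathsf{p}_{i+1} \mathsf{t}_1 (\mathsf{t}_1\mathsf{t}_2)^{2m} \mathsf{t}_1 \mathsf{p}_{i+1}^{-1} \mathsf{t}_1$, i.e. (multiplying on the left by $\mathsf{t}_1$ and on the right by $\mathsf{t}_1$, and using $\mathsf{t}_1^2=1$) that $\big(\mathsf{p}_{i+1} \mathsf{t}_1 \mathsf{p}_{i+1}^{-1} \mathsf{t}_1\big)^m = \mathsf{p}_{i+1} \mathsf{t}_1 (\mathsf{t}_1\mathsf{t}_2)^{2m} \mathsf{t}_1 \mathsf{p}_{i+1}^{-1}$, which is a conjugation statement: $\big(\mathsf{p}_{i+1} \mathsf{t}_1 \mathsf{p}_{i+1}^{-1} \cdot \mathsf{t}_1\big)^m = \mathsf{p}_{i+1}\big(\mathsf{t}_1 \cdot \mathsf{p}_{i+1}^{-1}\mathsf{t}_1\mathsf{p}_{i+1}\big)^m \mathsf{p}_{i+1}^{-1}$, so it suffices to prove $\mathsf{p}_{i+1}^{-1} \mathsf{t}_1 \mathsf{p}_{i+1} = \mathsf{t}_2$.

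The third step is therefore the key lemma $\mathsf{p}_{i+1}^{-1}\mathsf{t}_1\mathsf{p}_{i+1} = \mathsf{t}_2$ (equivalently $\mathsf{t}_1 \mathsf{p}_{i+1} = \mathsf{p}_{i+1}\mathsf{t}_2$), valid for $i \ge 1$; granting it, the middle word becomes $(\mathsf{p}_{i+1}\mathsf{t}_1\mathsf{t}_2\mathsf{p}_{i+1}^{-1}\mathsf{t}_1 \cdots )$ --- more precisely $\big(\mathsf{t}_1\cdot\mathsf{t}_2\big)^m$ conjugated by $\mathsf{p}_{i+1}$, which after reinserting the outer $\mathsf{t}_1$'s and using $\mathsf{t}_1^2 = 1$ rearranges exactly into $\mathsf{q}_i\mathsf{t}_1\mathsf{p}_{i+1}\mathsf{t}_1(\mathsf{t}_1\mathsf{t}_2)^{2m}\mathsf{t}_1\mathsf{p}_{i+1}^{-1}\mathsf{t}_1\mathsf{q}_i$ once one absorbs the leading/trailing $\mathsf{t}_1$ into the power (note $(\mathsf{t}_1\mathsf{t}_2)^m$ flanked by $\mathsf{t}_1$ on each side rewrites as $(\mathsf{t}_1\mathsf{t}_2)^{2m}$-type expressions via $\mathsf{t}_1\mathsf{t}_2\cdots\mathsf{t}_1 = \mathsf{t}_2\mathsf{t}_1\cdots\mathsf{t}_2$-style bookkeeping — the same bookkeeping used in the classical proof). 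To prove the key lemma, I would argue exactly as in the symmetric-group identity $\zeta_i^{-1}\theta_1\zeta_i = \theta_{i+1}$ for the cyclic permutation $\zeta_i = \theta_i\cdots\theta_1$: since $\mathsf{t}_j$ acts only on the letters $\{j,j+1\}'$ and $\mathsf{t}_j\mathsf{t}_k = \mathsf{t}_k\mathsf{t}_j$ for $|j-k|>1$, one has the braid-like ``shift'' relation $\mathsf{t}_j(\mathsf{t}_j\mathsf{t}_{j-1}\cdots\mathsf{t}_1) = (\mathsf{t}_j\mathsf{t}_{j-1}\cdots\mathsf{t}_1)\mathsf{t}_{j+1}$ --- but this requires the relation $(\mathsf{t}_j\mathsf{t}_{j+1})^{?}$ \emph{not} at all; rather it only needs $\mathsf{t}_1 \mathsf{p}_{i+1} = \mathsf{p}_{i+1}\mathsf{t}_2$, which unwinds by induction on $i$ using $\mathsf{p}_{i+1} = \mathsf{t}_{i+1}\mathsf{p}_i$ together with the far-commutation $\mathsf{t}_{i+1}\mathsf{t}_1 = \mathsf{t}_1\mathsf{t}_{i+1}$ (for $i\ge 2$) and $\mathsf{t}_{i+1}\mathsf{t}_2 = \mathsf{t}_2\mathsf{t}_{i+1}$ (for $i \ge 3$), reducing the base case to $\mathsf{p}_2 = \mathsf{t}_2\mathsf{t}_1$ and the identity $\mathsf{t}_1\mathsf{t}_2\mathsf{t}_1 = \mathsf{t}_2\mathsf{t}_1\mathsf{t}_2$ — and here is the one genuinely delicate point: \textbf{that} identity is the braid relation for $\mathsf{t}_1,\mathsf{t}_2$, which does \emph{not} hold in $\mathcal{SBK}$ in general. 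I expect this to be the main obstacle, and I believe the resolution is that the correct key lemma is the weaker conjugation statement that only moves $\mathsf{t}_1$ past $\mathsf{p}_{i+1}$ \emph{in the presence of the surrounding structure}, exactly as in \cite[Proposition 1.4]{BK95} where no braid relation is assumed either; so the plan is to follow the Berenstein--Kirillov derivation verbatim, checking at each rewriting that only $\mathsf{t}_j^2 = 1$, the far-commutations, and the definitions of $\mathsf{q}_i,\mathsf{p}_i$ are invoked, and never the braid relation $(\mathsf{t}_1\mathsf{t}_2)^3 = 1$ — the whole content of the proposition being precisely that the $(\mathsf{t}_1\mathsf{t}_2)^{2m}$ obstruction to $(\sigma_i\sigma_{i+1})^m = 1$ survives the conjugation. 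Finally, specializing $m = 3$ gives \eqref{eq:braidt12}.
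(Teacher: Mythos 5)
Your setup is correct and matches the paper's: using $\mathsf{q}_{i+1} = \mathsf{q}_i\mathsf{p}_{i+1}$ and $\mathsf{q}_i^2 = 1$ you correctly reach $(\sigma_i\sigma_{i+1})^m = \mathsf{q}_i(\mathsf{t}_1\mathsf{p}_{i+1}\mathsf{t}_1\mathsf{p}_{i+1}^{-1})^m\mathsf{q}_i$, and the remaining task is indeed to rewrite the middle word. But the key lemma you reduce to, $\mathsf{p}_{i+1}^{-1}\mathsf{t}_1\mathsf{p}_{i+1} = \mathsf{t}_2$, is false in $\mathcal{SBK}$: already for $i=1$ it reads $\mathsf{t}_1\mathsf{t}_2\mathsf{t}_1\mathsf{t}_2\mathsf{t}_1 = \mathsf{t}_2$, i.e.\ $(\mathsf{t}_1\mathsf{t}_2)^3=1$, which is precisely the braid-type relation that fails in this setting (even $(\mathsf{t}_1\mathsf{t}_2)^6=1$ fails, see Example \ref{ex:t1t26}). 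You flag this obstacle yourself at the end, but the resolution is left as a hope rather than supplied. A further symptom that the reduction is wrong: if your lemma did hold, your conjugation identity would yield $\mathsf{p}_{i+1}(\mathsf{t}_1\mathsf{t}_2)^m\mathsf{p}_{i+1}^{-1}$ in the middle, i.e.\ exponent $m$ rather than the $2m$ of the statement; the exponent doubling is exactly the obstruction your lemma would conjugate away.

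The correct key identity, which is what the paper's proof establishes, is
\begin{equation*}
\mathsf{p}_{i+1}^{-1}\,\mathsf{t}_1\,\mathsf{p}_{i+1}\,\mathsf{t}_1 \;=\; (\mathsf{t}_1\mathsf{t}_2)^2,
\end{equation*}
that is, one keeps the trailing $\mathsf{t}_1$ attached and identifies the whole four-letter block, instead of conjugating $\mathsf{t}_1$ alone. This holds using only $\mathsf{t}_j^2=1$ and the far commutations: the base case is the free computation $(\mathsf{t}_1\mathsf{t}_2)\mathsf{t}_1(\mathsf{t}_2\mathsf{t}_1)\mathsf{t}_1=(\mathsf{t}_1\mathsf{t}_2)^2$, and the induction step commutes $\mathsf{t}_{i+2}$ past $\mathsf{t}_1$ and cancels $\mathsf{t}_{i+2}^2$, reducing to the previous case; no braid relation is ever invoked. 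One then inserts the identity element in the form $\mathsf{t}_1\mathsf{p}_{i+1}\mathsf{t}_1\cdot\mathsf{t}_1\mathsf{p}_{i+1}^{-1}\mathsf{t}_1$ to the right of $(\mathsf{t}_1\mathsf{p}_{i+1}\mathsf{t}_1\mathsf{p}_{i+1}^{-1})^m$ and rewrites it as $\mathsf{t}_1\mathsf{p}_{i+1}\mathsf{t}_1(\mathsf{p}_{i+1}^{-1}\mathsf{t}_1\mathsf{p}_{i+1}\mathsf{t}_1)^m\mathsf{t}_1\mathsf{p}_{i+1}^{-1}\mathsf{t}_1$, after which the displayed identity gives the result. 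Your instinct to follow the Berenstein--Kirillov computation without ever using the braid relation is right, but as written the proposal is missing the step that makes this possible.
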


\begin{proof}
By Corollary \ref{cor:sigma_prom} and the fact that $\mathsf{q}_i$ is an involution, we have
\begin{align*}
(\sigma_i \sigma_{i+1})^m &= (\mathsf{q}_i \mathsf{t}_1 \mathsf{q}_i \mathsf{q}_{i+1} \mathsf{t}_1 \mathsf{q}_{i+1})^m\\
&= (\mathsf{q}_i \mathsf{t}_1 \mathsf{q}_i \mathsf{q}_{i} \mathsf{p}_{i+1} \mathsf{t}_1 \mathsf{q}_{i+1})^m\\
&= (\mathsf{q}_i \mathsf{t}_1  \mathsf{p}_{i+1} \mathsf{t}_1 \mathsf{q}_{i+1})^m\\
&= (\mathsf{q}_i \mathsf{t}_1  \mathsf{p}_{i+1} \mathsf{t}_1 \mathsf{p}_{i+1}^{-1} \mathsf{q}_{i})^m\\
&= \mathsf{q}_i (\mathsf{t}_1  \mathsf{p}_{i+1} \mathsf{t}_1 \mathsf{p}_{i+1}^{-1})^m \mathsf{q}_i\\
&= \mathsf{q}_i (\mathsf{t}_1  \mathsf{p}_{i+1} \mathsf{t}_1 \mathsf{p}_{i+1}^{-1})^m (\mathsf{t}_1 \mathsf{p}_{i+1} \mathsf{t}_1 \mathsf{t}_1 \mathsf{p}_{i+1}^{-1} \mathsf{t}_1) \mathsf{q}_i\\
&= \mathsf{q}_i \mathsf{t}_1 \mathsf{p}_{i+1} \mathsf{t}_1 (\mathsf{p}_{i+1}^{-1} \mathsf{t}_1 \mathsf{p}_{i+1} \mathsf{t}_1)^m \mathsf{t}_1 \mathsf{p}_{i+1}^{-1} \mathsf{t}_1 \mathsf{q}_i.
\end{align*}
To conclude the proof, we claim that $\mathsf{p}_{i+1}^{-1} \mathsf{t}_1 \mathsf{p}_{i+1} \mathsf{t}_1 = (\mathsf{t}_1 \mathsf{t}_2)^2$, for any $i \geq 1$. The proof is done by induction. For $i=1$, we have 
$$\mathsf{p}_2^{-1} \mathsf{t}_1 \mathsf{p}_2 \mathsf{t}_1 = (\mathsf{t}_1 \mathsf{t}_2) \mathsf{t}_1 (\mathsf{t}_2 \mathsf{t}_1) \mathsf{t}_1 = (\mathsf{t}_1 \mathsf{t}_2)^2.$$
For the induction step, assume the claim holds for some $i \geq 1$. Then, due to Proposition \ref{prop:ti_props}, as $|(i+2)+1| > 1$ , we have
\begin{align*}
\mathsf{p}_{i+2}^{-1} \mathsf{t}_1 \mathsf{p}_{i+2} \mathsf{t}_1 & = (\mathsf{t}_1 \cdots \mathsf{t}_{i+1} \mathsf{t}_{i+2}) \mathsf{t}_1 (\mathsf{t}_{i+2} \mathsf{t}_{i+1} \cdots \mathsf{t}_1) \mathsf{t}_1\\
&= \mathsf{p}_{i+1}^{-1} \mathsf{t}_{i+2} \mathsf{t}_1 \mathsf{t}_{i+2} \mathsf{p}_{i+1} \mathsf{t}_1\\
&= \mathsf{p}_{i+1}^{-1} \mathsf{t}_1  (\mathsf{t}_{i+2})^2 \mathsf{p}_{i+1} \mathsf{t}_1\\
&= \mathsf{p}_{i+1}^{-1} \mathsf{t}_1  \mathsf{p}_{i+1} = (\mathsf{t}_1 \mathsf{t}_2)^2.
\end{align*}
\end{proof}

Recall that the braid relations $(\sigma_i \sigma_{i+1})^3=1$, for $1 \leq i \leq n-2$, for the shifted crystal reflection operators do not need to hold (see Example \ref{ex:braid}). Thus, \eqref{eq:braidt12} ensures that the relation $(\mathsf{t}_1 \mathsf{t}_2)^6=1$ does not need to hold either, as illustrated in Example \ref{ex:t1t26}. This will habe no effects on our results, as none of the cactus group relations is equivalent to this one \cite[Remark 1.9]{CGP16}. 

\begin{ex}\label{ex:t1t26}
Let $T = \begin{ytableau}
1 & 1 & 2' & 2 & 3\\
\none & 2 & 3' & 3\\
\none & \none & 3
\end{ytableau}$. Then, we have

\begin{align*}
T= \begin{ytableau}
1 & 1 & 2' & 2 & 3\\
\none & 2 & 3' & 3\\
\none & \none & 3
\end{ytableau} &\xrightarrow{\mathsf{t}_2}
\begin{ytableau}
1 & 1 & 2 & 2& 2\\
\none & 2 & 3' & 3\\
\none & \none & 3
\end{ytableau} \xrightarrow{\mathsf{t}_1}
\begin{ytableau}
1 & 1 & 1 & 1& 2'\\
\none & 2 & 3' & 3\\
\none & \none & 3
\end{ytableau} \xrightarrow{\mathsf{t}_2}
\begin{ytableau}
1 & 1 & 1 & 1& 3'\\
\none & 2 & 2 & 2\\
\none & \none & 3
\end{ytableau}\\
&\xrightarrow{\mathsf{t}_1}
\begin{ytableau}
1 & 1 & 1 & 2'& 3'\\
\none & 2 & 2 & 2\\
\none & \none & 3
\end{ytableau} \xrightarrow{\mathsf{t}_2}
\begin{ytableau}
1 & 1 & 1 & 2'& 3'\\
\none & 2 & 3' & 3\\
\none & \none & 3
\end{ytableau} \xrightarrow{\mathsf{t}_1}
\begin{ytableau}
1 & 1 & 2 & 2 & 3'\\
\none & 2 & 3' & 3\\
\none & \none & 3
\end{ytableau}\\
&\xrightarrow{\mathsf{t}_2}
\begin{ytableau}
1 & 1 & 2' & 2 & 3\\
\none & 2 & 2 & 3\\
\none & \none & 3
\end{ytableau} \xrightarrow{\mathsf{t}_1}
\begin{ytableau}
1 & 1 & 1 & 1& 3\\
\none & 2 & 2 & 3\\
\none & \none & 3
\end{ytableau} \xrightarrow{\mathsf{t}_2}
\begin{ytableau}
1 & 1 & 1 & 1& 2\\
\none & 2 & 2 & 3'\\
\none & \none & 3
\end{ytableau}\\
&\xrightarrow{\mathsf{t}_1}
\begin{ytableau}
1 & 1 & 1 & 2'& 2\\
\none & 2 & 2 & 3'\\
\none & \none & 3
\end{ytableau} \xrightarrow{\mathsf{t}_2}
\begin{ytableau}
1 & 1 & 1 & 3'& 3\\
\none & 2 & 2 & 3\\
\none & \none & 3
\end{ytableau} \xrightarrow{\mathsf{t}_1}
\begin{ytableau}
1 & 1 & 2' & 3'& 3\\
\none & 2 & 2 & 3\\
\none & \none & 3
\end{ytableau}=(\mathsf{t}_1 \mathsf{t}_2)^6 (T) \neq T.
\end{align*}
\end{ex}

\begin{prop}\label{prop:ti_si}
As elements of $\mathcal{SBK}$, we have
$$\mathsf{t}_1 = \mathsf{q}_1, \qquad \mathsf{t}_i = \mathsf{q}_{i-1} \mathsf{q}_{i} \mathsf{q}_{i-1} \mathsf{q}_{i-2}, \,\text{for}\; i\geq 2,$$
considering $\mathsf{q}_0 := 1$. Consequently, the elements $\mathsf{q}_1, \ldots, \mathsf{q}_{n-1}$ are generators of $\mathcal{SBK}_n$.
\end{prop}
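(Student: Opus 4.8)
The plan is to reduce everything to the promotion operators $\mathsf{p}_i$ of Definition \ref{def:prom} and then translate back. First I would record two observations that are immediate from the definitions: that $\mathsf{q}_k = \mathsf{p}_1\mathsf{p}_2\cdots\mathsf{p}_k$ for all $k\ge 1$ (this is literally \eqref{eq:def_qi}, since $\mathsf{p}_j = \mathsf{t}_j\mathsf{t}_{j-1}\cdots\mathsf{t}_1$), and that $\mathsf{p}_k = \mathsf{t}_k\mathsf{p}_{k-1}$ again by Definition \ref{def:prom}, where one sets $\mathsf{p}_0 := 1 =: \mathsf{q}_0$. From the first, $\mathsf{q}_k = \mathsf{q}_{k-1}\mathsf{p}_k$, hence $\mathsf{p}_k = \mathsf{q}_{k-1}^{-1}\mathsf{q}_k$; and since $\mathsf{q}_{k-1}$ is an involution by Corollary \ref{cor:qi_inv}, this yields the key identity $\mathsf{p}_k = \mathsf{q}_{k-1}\mathsf{q}_k$. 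In particular $\mathsf{t}_1 = \mathsf{p}_1 = \mathsf{q}_0\mathsf{q}_1 = \mathsf{q}_1$.

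Next I would use $\mathsf{p}_i = \mathsf{t}_i\mathsf{p}_{i-1}$ to write $\mathsf{t}_i = \mathsf{p}_i\mathsf{p}_{i-1}^{-1}$ for $i\ge 1$, and then substitute the key identity into both factors: $\mathsf{t}_i = (\mathsf{q}_{i-1}\mathsf{q}_i)(\mathsf{q}_{i-2}\mathsf{q}_{i-1})^{-1} = \mathsf{q}_{i-1}\mathsf{q}_i\mathsf{q}_{i-1}\mathsf{q}_{i-2}$, where the final equality again invokes Corollary \ref{cor:qi_inv} to rewrite $(\mathsf{q}_{i-2}\mathsf{q}_{i-1})^{-1} = \mathsf{q}_{i-1}\mathsf{q}_{i-2}$. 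For $i\ge 2$ every index appearing is nonnegative, which is exactly the asserted formula; for $i=1$ this degenerates to $\mathsf{t}_1 = \mathsf{q}_1$ as already computed, under the convention $\mathsf{q}_0 = 1$ (with the formal $\mathsf{q}_{-1}$ term discarded), which is why the two cases are stated separately.

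Finally, for the consequence: $\mathcal{SBK}_n$ is generated by $\mathsf{t}_1,\ldots,\mathsf{t}_{n-1}$ by definition, and the identities just proved express each $\mathsf{t}_i$ with $1\le i\le n-1$ as a word in $\mathsf{q}_0 = 1, \mathsf{q}_1,\ldots,\mathsf{q}_{n-1}$ (since $i-2, i-1, i \le n-1$); conversely each $\mathsf{q}_j$ is a word in the $\mathsf{t}_k$ by \eqref{eq:def_qi}. Hence $\langle\mathsf{q}_1,\ldots,\mathsf{q}_{n-1}\rangle = \langle\mathsf{t}_1,\ldots,\mathsf{t}_{n-1}\rangle = \mathcal{SBK}_n$.

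There is no genuine obstacle in this argument: the entire content is a short rearrangement of relations, mirroring exactly the classical identities $t_1 = q_1$ and $t_i = q_{i-1}q_iq_{i-1}q_{i-2}$ in $\mathcal{BK}$. The only points demanding care are the bookkeeping of inverses and the treatment of the boundary cases $i = 1$ (and implicitly $i=2$) together with the convention $\mathsf{q}_0 = 1$.
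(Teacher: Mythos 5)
Your proposal is correct and follows essentially the same route as the paper: both arguments rest on the identities $\mathsf{t}_i=\mathsf{p}_i\mathsf{p}_{i-1}^{-1}$ and $\mathsf{p}_i=\mathsf{q}_{i-1}\mathsf{q}_i$ (the latter using that the $\mathsf{q}_j$ are involutions, Corollary \ref{cor:qi_inv}), and then substitute. The explicit treatment of the boundary cases and of the generation statement is a slightly more careful write-up of the same argument.
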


\begin{proof}
The first identity is a direct consequence of the definition of $\mathsf{q}_1$. For the second one, we note that by definition of the promotion operators, we have $\mathsf{p}_i = \mathsf{t}_i \mathsf{p}_{i-1}$, for $i\geq 2$, and thus $\mathsf{t}_i = \mathsf{p}_i \mathsf{p}_{i-1}^{-1}$. It also follows from the definition that, for $i \geq 2$, $\mathsf{q}_{i} = \mathsf{q}_{i-1} \mathsf{p}_i$, which is equivalent to $\mathsf{p}_i = \mathsf{q}_{i-1} \mathsf{q}_i$, as $\mathsf{q}_j$ are involutions, for any $j \geq 1$. Then, we have
$$\mathsf{t}_i = \mathsf{p}_i \mathsf{p}_{i-1}^{-1} = \mathsf{q}_{i-1} \mathsf{q}_{i} (\mathsf{q}_{i-2} \mathsf{q}_{i-1})^{-1} =  \mathsf{q}_{i-1} \mathsf{q}_{i} \mathsf{q}_{i-1} \mathsf{q}_{i-2}.$$
\end{proof}

We denote, for $1 \leq i < j \leq n$,
\begin{equation}\label{eq:qij}
\mathsf{q}_{i,j} := \mathsf{q}_{j-1} \mathsf{q}_{j-i} \mathsf{q}_{j-1}.
\end{equation}
In particular, we have $\mathsf{q}_{i} = \mathsf{q}_{1,i+1}$. Corollary \ref{cor:eta_cactus} ensures that $\mathsf{q}_{i,j}$ is realized by $\eta_{i,j}=\mathsf{evac}_{j} \mathsf{evac}_{j-i+1} \mathsf{evac}_{j}$ when acting on straight-shaped shifted tableaux. As an element of the $\mathcal{SBK}$ group, the shifted  Schützenberger involution restricted to the alphabet $[i,j]'$, on straight-shaped shifted tableaux, is computed by $\mathsf{q}_{i,j}$, for $1 \leq i < j \leq n$. In general, $\mathsf{q}_{i,j}$ is not realized by $\eta_{i,j}$ when acting on skew shapes (see Example \ref{ex:evac_tilde_neq}). 

As a consequence of the internal action of the cactus group in $\mathsf{ShST}(\nu,n)$ (Theorem \ref{teo:cact_evaci}), we have the following result.

\begin{teo}\label{teo:cact_sbk}
The following map is an epimorphism, for $1 \leq i < j \leq n$. 
\begin{align*}
\psi: J_n & \longrightarrow \mathcal{SBK}_n \\
s_{i,j} & \longmapsto \mathsf{q}_{i,j}.
\end{align*}
Hence $\mathcal{SBK}_n$ is isomorphic to $J_n / \ker \psi$.
\end{teo}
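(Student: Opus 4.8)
The plan is to obtain Theorem~\ref{teo:cact_sbk} as a formal consequence of the Chmutov--Glick--Pylyavskyy equivalence (Theorem~\ref{thm:rel_cact_bk}), fed with the relations that the shifted Bender--Knuth moves have already been shown to satisfy. I would deliberately avoid trying to check the cactus relations among the $\mathsf{q}_{i,j}$ directly on skew shapes, where $\widetilde{\mathsf{evac}}_{i,j}$ disagrees with $\eta_{i,j}$ (Example~\ref{ex:evac_tilde_neq}); instead I would route everything through the already-established identities among the $\mathsf{t}_i$ and the abstract group-theoretic input, where the skew case has effectively already been absorbed.

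The steps I would carry out are: (1) record that, as elements of $\mathcal{SBK}_n$, the generators $\mathsf{t}_i$ satisfy exactly the relations~\eqref{eq:rel1} --- the identities $\mathsf{t}_i^2=1$ and $\mathsf{t}_i\mathsf{t}_j=\mathsf{t}_j\mathsf{t}_i$ for $|i-j|>1$ are Proposition~\ref{prop:ti_props}, and $(\mathsf{t}_i\,\mathsf{q}_{k-1}\mathsf{q}_{k-j}\mathsf{q}_{k-1})^2=(\mathsf{t}_i\mathsf{q}_{j,k})^2=1$ for $i+1<j<k$ is Proposition~\ref{prop:rels_SBK}, and since these are verified on shifted tableaux of \emph{every} shape they are genuinely relations in $\mathcal{SBK}_n$; (2) since $\mathcal{SBK}_n$ is generated by $\mathsf{t}_1,\dots,\mathsf{t}_{n-1}$ subject to (at least) these relations, conclude that there is a group epimorphism $\rho$ from the group presented by generators $t_i$, $1\le i\le n-1$, and relations~\eqref{eq:rel1} onto $\mathcal{SBK}_n$, with $\rho(t_i)=\mathsf{t}_i$; (3) invoke Theorem~\ref{thm:rel_cact_bk} to identify the source of $\rho$ with $J_n$, the isomorphism carrying the abstract word $q_{i,j}=q_{j-1}q_{j-i}q_{j-1}$ to $s_{i,j}$ (the inverse change of variables being $t_1=q_1$, $t_i=q_{i-1}q_iq_{i-1}q_{i-2}$, exactly as in Proposition~\ref{prop:ti_si}); (4) compose to get an epimorphism $J_n\twoheadrightarrow\mathcal{SBK}_n$ sending $s_{i,j}$ to $\rho(q_{j-1}q_{j-i}q_{j-1})=\mathsf{q}_{j-1}\mathsf{q}_{j-i}\mathsf{q}_{j-1}=\mathsf{q}_{i,j}$ by~\eqref{eq:qij}, which is precisely the map $\psi$; (5) conclude $\mathcal{SBK}_n\cong J_n/\ker\psi$ by the first isomorphism theorem. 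As an independent check one can note that composing $\psi$ with the action of $\mathcal{SBK}_n$ on a straight-shaped crystal $\mathsf{ShST}(\nu,n)$ recovers the cactus action $\widehat{\phi}$ of Theorem~\ref{teo:cact_evaci}, since there $\mathsf{q}_{i,j}$ acts as $\mathsf{evac}_j\mathsf{evac}_{j-i+1}\mathsf{evac}_j=\eta_{i,j}$.

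I do not expect a genuinely hard step inside this proof: it is a diagram chase between two surjections. The points needing care --- and where the real content sits --- are upstream: that Proposition~\ref{prop:rels_SBK} really gives $(\mathsf{t}_i\mathsf{q}_{j,k})^2=1$ on all shapes (this is where the skew case must be handled, via the switching/infusion description of the $\mathsf{t}_i$ together with $\mathsf{q}_i=\widetilde{\mathsf{evac}}_{i+1}$ from Proposition~\ref{prop:evac_ti}), and that the change of variables used in Theorem~\ref{thm:rel_cact_bk} matches on the nose the operators $\mathsf{q}_i$, $\mathsf{q}_{i,j}$ defined in~\eqref{eq:def_qi}--\eqref{eq:qij}, so that the composite epimorphism is literally $s_{i,j}\mapsto\mathsf{q}_{i,j}$ and not some conjugate.
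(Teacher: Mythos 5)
Your architecture is sound as abstract group theory, but as a proof of Theorem \ref{teo:cact_sbk} it is circular within the paper's logical structure. The essential input of your step (1) is the relation $(\mathsf{t}_i\mathsf{q}_{j,k})^2=1$ in $\mathcal{SBK}_n$, i.e.\ item (3) of Proposition \ref{prop:rels_SBK}; but in the paper that item is \emph{derived from} Theorem \ref{teo:cact_sbk} (its proof begins ``By Theorem \ref{teo:cact_sbk}, the action of the operator $\mathsf{q}_{j,k}$ on straight-shaped shifted tableaux defines an action of the cactus group\ldots''). You correctly sense that the real content lies in verifying $(\mathsf{t}_i\mathsf{q}_{j,k})^2=1$ on tableaux of \emph{every} shape, and you gesture at a route (switching/infusion together with $\mathsf{q}_i=\widetilde{\mathsf{evac}}_{i+1}$ from Proposition \ref{prop:evac_ti}), but you do not carry it out, and no such independent verification appears in the paper. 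The skew case is genuinely delicate here, because $\mathsf{q}_{i,j}$ is \emph{not} realized by $\eta_{i,j}$ on skew shapes (Example \ref{ex:evac_tilde_neq}), so one cannot simply import the cactus relations satisfied by the partial Schützenberger involutions. Without an independent proof of that relation, your chain of epimorphisms has no starting point.

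The paper's own proof runs in the opposite direction and is more direct: surjectivity follows from Proposition \ref{prop:ti_si} ($\mathcal{SBK}_n$ is generated by the $\mathsf{q}_i=\mathsf{q}_{1,i+1}=\psi(s_{1,i+1})$), and the homomorphism property follows from Theorem \ref{teo:cact_evaci}, that is, from the fact that on straight-shaped crystals each $\mathsf{q}_{i,j}$ acts as $\mathsf{evac}_j\mathsf{evac}_{j-i+1}\mathsf{evac}_j=\eta_{i,j}$, and these already satisfy the cactus relations. Proposition \ref{prop:rels_SBK}(3) and the presentation \eqref{eq:cactus_alt_prst} via Theorem \ref{thm:rel_cact_bk} are then deduced as corollaries --- precisely the two ingredients you tried to use as inputs. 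If you wish to keep your route, you must first establish $(\mathsf{t}_i\mathsf{q}_{j,k})^2=1$ on all shapes without appealing to Theorem \ref{teo:cact_sbk}; otherwise the argument should be reorganized along the paper's lines.
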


\begin{proof}
From Proposition \ref{prop:ti_si}, $\mathcal{SBK}_n$ is generated by $\mathsf{q}_i$, for $i \in I$. Then, considering that $\mathsf{q}_i = \mathsf{q}_{1,i}$ we have $\mathsf{q}_i = \psi (s_{1,i})$, and thus $\psi$ is a surjection. Since $\mathsf{q}_i = \mathsf{evac}_{i+1}$ for straight-shaped tableaux, Theorem \ref{teo:cact_evaci} then ensures that $\psi$ is an homomorphism. Thus, $\mathcal{SBK}_n$ is isomorphic to the quotient of $J_n$ by $\ker \psi$.
\end{proof}

As a consequence, we are able to recover the relation \eqref{eq:bkrelationextra} for the shifted operators. The known relations that are satisfied in $\mathcal{SBK}$ are listed below.

\begin{prop}\label{prop:rels_SBK}
The following relations hold in $\mathcal{SBK}_n$:
	\begin{enumerate}
	\item $\mathsf{t}_i^2 = 1$, for $i \in I$.
	\item $\mathsf{t}_i \mathsf{t}_j = \mathsf{t}_j \mathsf{t}_i$, for $|i-j| >1$.
	\item $(\mathsf{t}_i\mathsf{q}_{j,k})^2 = 1$, for $2 \leq i+1 < j < k \leq n$.
	\end{enumerate}
\end{prop}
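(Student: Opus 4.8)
The plan is to prove the three relations in turn, leveraging the structural results already established in the excerpt rather than performing direct tableau computations. The first two relations, $\mathsf{t}_i^2 = 1$ and $\mathsf{t}_i \mathsf{t}_j = \mathsf{t}_j \mathsf{t}_i$ for $|i-j|>1$, are literally restatements of parts (1) and (2) of Proposition \ref{prop:ti_props}, so for those I would simply cite that proposition; they hold in $\mathcal{SBK}$ precisely because they hold for the operators $\mathsf{t}_i$ acting on $\mathsf{ShST}(\lambda/\mu,n)$ for every shape.

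The substantive part is relation (3), $(\mathsf{t}_i \mathsf{q}_{j,k})^2 = 1$ for $2 \le i+1 < j < k \le n$. My approach is to transfer this to a statement about the cactus group via Theorem \ref{teo:cact_sbk}. First I would observe that $\mathsf{t}_i$ itself lies in $\mathcal{SBK}_n$ and, by Proposition \ref{prop:ti_si}, can be written in terms of the $\mathsf{q}_\ell$, hence is in the image of $\psi$ under a suitable product of generators $s_{\ell,m}$; more directly, $\mathsf{t}_i = \mathsf{q}_{i-1}\mathsf{q}_i\mathsf{q}_{i-1}\mathsf{q}_{i-2}$ is $\psi$ applied to $s_{1,i-1}s_{1,i}s_{1,i-1}s_{1,i-2}$ (with $\mathsf{q}_0=1$ handled as the empty word), while $\mathsf{q}_{j,k} = \psi(s_{j,k})$ by definition \eqref{eq:qij} and the defining map of $\psi$. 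Then $(\mathsf{t}_i\mathsf{q}_{j,k})^2 = \psi\big((s_{1,i-1}s_{1,i}s_{1,i-1}s_{1,i-2}\, s_{j,k})^2\big)$, so it suffices to show the corresponding word is trivial in $J_n$ — but that is exactly the cactus-group side of Theorem \ref{thm:rel_cact_bk} (equation \eqref{eq:rel2} versus \eqref{eq:rel1}), which tells us that the relation $(t_i q_{k-1}q_{k-j}q_{k-1})^2=1$ for $i+1<j<k$ is a consequence of the cactus relations. Since $\psi$ is a group homomorphism and $J_n$ satisfies those relations, the image relation holds in $\mathcal{SBK}_n$.

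Alternatively, and perhaps more cleanly, I would argue as follows: by Theorem \ref{teo:cact_sbk} the map $\psi\colon J_n \to \mathcal{SBK}_n$ sending $s_{i,j}\mapsto \mathsf{q}_{i,j}$ is a surjective homomorphism, and $\mathsf{t}_i, \mathsf{q}_{j,k}$ are both in its image. By Theorem \ref{thm:rel_cact_bk} the relation $(t_i q_{j,k})^2=1$ (for $i+1<j<k$) is \emph{equivalent} to a set of relations all of which are among the defining relations \eqref{eq:rel2} of $J_n$; hence in $J_n$ we have $(s_{1,i-1}s_{1,i}s_{1,i-1}s_{1,i-2})\,s_{j,k}$-word trivial after squaring — i.e. the pre-image relation holds in $J_n$. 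Applying $\psi$ and using that $\psi$ is a homomorphism, $(\mathsf{t}_i\mathsf{q}_{j,k})^2 = 1$ in $\mathcal{SBK}_n$. One must be slightly careful that the $\mathsf{q}_{j,k}$ appearing in $\mathcal{SBK}$ genuinely match $\psi(s_{j,k})$ — this is immediate from \eqref{eq:qij} and the definition of $\psi$ — and that the translation of $\mathsf{t}_i$ into the $\mathsf{q}$-generators from Proposition \ref{prop:ti_si} is faithfully reflected on the $J_n$ side, which it is since $\psi$ is defined on generators and extended multiplicatively.

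The main obstacle I anticipate is making the bookkeeping between the two presentations airtight: Theorem \ref{thm:rel_cact_bk} is phrased for the classical $\mathcal{BK}$ generators $t_i, q_{i,j}$ and for $J_n$ abstractly, so I need to verify that every step of the equivalence proof there uses only the relations $\mathsf{t}_i^2=1$, $\mathsf{t}_i\mathsf{t}_j=\mathsf{t}_j\mathsf{t}_i$ for $|i-j|>1$, and the cactus relations — all of which we have independently established hold in $\mathcal{SBK}_n$ (the first two from Proposition \ref{prop:ti_props}, the cactus relations because $\psi$ is a homomorphism out of $J_n$). Once that is checked, no new tableau computation is needed. A secondary subtlety is the boundary cases where $i=1$ or $i=2$ forces $\mathsf{q}_{i-2}$ or $\mathsf{q}_{i-1}$ to be the identity, which should be handled by the convention $\mathsf{q}_0:=1$ as in Proposition \ref{prop:ti_si}; since the hypothesis $i+1<j$ already pushes $i$ small only when $j,k$ are large, this causes no difficulty.
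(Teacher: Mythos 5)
Your proposal is correct in substance and, for parts (1) and (2), identical to the paper's proof, which is just a citation of Proposition \ref{prop:ti_props}. For part (3) both you and the paper ultimately rest on Theorem \ref{teo:cact_sbk}, but the routes differ. The paper's argument is a one-liner: since $\psi$ is a homomorphism, the elements $\mathsf{q}_{l,m}$ satisfy the cactus relations inside $\mathcal{SBK}_n$; the hypothesis $i+1<j$ makes the relevant intervals disjoint, so the commutation relation of Definition \ref{def:cactus} applies and, both factors being involutions, the square is trivial. Your argument instead routes through the full presentation equivalence of Theorem \ref{thm:rel_cact_bk}: the $\mathsf{q}_{l,m}$ satisfy \eqref{eq:rel2} in $\mathcal{SBK}_n$, they are by construction exactly the prescribed words in the $\mathsf{t}_l$, hence the relations \eqref{eq:rel1} --- among them $(\mathsf{t}_i\mathsf{q}_{j,k})^2=1$ --- follow. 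This is heavier machinery, but it is sound and has a genuine advantage: it handles $i\ge 2$ explicitly, whereas the paper's displayed identity $(\mathsf{t}_i\mathsf{q}_{j,k})^2=(\mathsf{q}_{1,2}\mathsf{q}_{j,k})^2$ is only literal for $i=1$ (where $\mathsf{t}_1=\mathsf{q}_1=\mathsf{q}_{1,2}$); your appeal to Proposition \ref{prop:ti_si} supplies the translation of $\mathsf{t}_i$ into the $\mathsf{q}$-generators that the paper leaves implicit. Two small corrections: the word in $J_n$ mapping to $\mathsf{t}_i=\mathsf{q}_{i-1}\mathsf{q}_i\mathsf{q}_{i-1}\mathsf{q}_{i-2}$ is $s_{1,i}s_{1,i+1}s_{1,i}s_{1,i-1}$, not $s_{1,i-1}s_{1,i}s_{1,i-1}s_{1,i-2}$, since $\mathsf{q}_m=\mathsf{q}_{1,m+1}=\psi(s_{1,m+1})$; and the re-verification of ``every step'' of the Chmutov--Glick--Pylyavskyy equivalence that you anticipate as an obstacle is unnecessary, because Theorem \ref{thm:rel_cact_bk} is a statement about abstract group presentations (as the paper itself notes immediately after this proposition), so the implication from \eqref{eq:rel2} to \eqref{eq:rel1} may be invoked as a black box once the cactus relations for the $\mathsf{q}_{l,m}$ are known.
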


\begin{proof}
The first two relations correspond to Proposition \ref{prop:ti_props}. By Theorem \ref{teo:cact_sbk}, the action of the operator $\mathsf{q}_{j,k}$ on straight-shaped shifted tableaux defines an action of the cactus group. Thus, since $[1,2] \cap [j,k] = \emptyset$, we have $(\mathsf{t}_i\mathsf{q}_{j,k})^2 = (\mathsf{q}_{1,2} \mathsf{q}_{j,k})^2 = 1$.
\end{proof}

Theorem \ref{thm:rel_cact_bk} is stated and proved in terms of group generators satisfying the relations in Proposition \ref{prop:rels_SBK}, and do not depend on specific operators. This ensures that the relations in Proposition \ref{prop:rels_SBK} are equivalent to
$$\mathsf{q}_{i,j}^2 = 1, \qquad \mathsf{q}_{i,j} \mathsf{q}_{k,l} \mathsf{q}_{i,j} = \mathsf{q}_{i+j-l,i+j-k}, \;\text{for}\; i \leq k < l \leq j, \qquad \mathsf{q}_{i,j} \mathsf{q}_{k,l} = \mathsf{q}_{k,l} \mathsf{q}_{i,j}, \;\text{for}\; j <k.$$
Then, we have the following alternative presentation for the cactus group, via the shifted Bender--Knuth moves:
\begin{equation}\label{eq:cactus_alt_prst}
\begin{split}
J_n = \langle \mathsf{t}_{i},\; i \in I \; | \; \mathsf{t}_i^2 = 1, \mathsf{t}_i \mathsf{t}_j= \mathsf{t}_j \mathsf{t}_i, \;\text{if}\; |i-j| >1,\\ (\mathsf{t}_i \mathsf{q}_{k-1} \mathsf{q}_{k-j} \mathsf{q}_{k-1})^2 = 1, \;\text{for} \; i+1 < j < k \rangle.
\end{split}
\end{equation}

\subsection{Final remarks and further questions}

Berenstein and Kirillov have also showed in \cite{BK16} that the Berenstein--Kirillov group is isomorphic to a quotient of the cactus group. This was done independently of the work of Chmutov, Glick and Pylyavskyy \cite{CGP16}. Moreover, the Berenstein--Kirillov group $\widetilde{\mathcal{BK}}_n$ considered in \cite{BK16} differs from the one considered here, being defined as the free group generated by $t_1, \ldots, t_{n-1}$, subject only to the relations
\begin{enumerate}
\item $(t_i)^2 = 1$,
\item $(t_1 t_2) ^6 =1$,
\item $t_i t_j = t_j t_i$, for $|i-j|>1$,
\item $(t_1 q_i)^4=1$, for $i>2$.
\end{enumerate}

Thus, besides concluding that $\widetilde{\mathcal{BK}}_n$ is isomorphic to a quotient $J_n / \ker \tilde{\phi}$ of the cactus group, where $\tilde{\phi}$ is an epimorphism from $J_n$ to $\widetilde{\mathcal{BK}}_n$, the quotient is completely described as $\ker \tilde{\phi}$ is $\{(s_{1,2} s_{1,3})^{6m}, m \in \mathbb{Z}\}$, the normal subgroup of $J_n$ generated by $(s_{1,2} s_{1,3})^6$. For the Berenstein--Kirillov group presented in 
\cite{CGP16}, considering $\phi: s_{i,j} \mapsto q_{i,j}$, one concludes that $\ker \phi$, must contain $\{(s_{1,2} s_{1,3})^{6m}, m \in \mathbb{Z}\}$, as it follows from the relation \eqref{eq:bkrelations_special} holding on $\mathcal{BK}_n$ that is not equivalent to any relation of the cactus group. But as a comprehensive set of relations for $\mathcal{BK}_n$ is not known, it could be the case that there would be other relations not following from the cactus group.

For the shifted case, we have seen that the relation $(\mathsf{t}_1 \mathsf{t}_2)^6=1$ does not need to hold. However, fixing a shifted tableau crystal $\mathsf{ShST}(\nu,n)$, which is finite, there must exist some $m > 6$ such that $(\mathsf{t}_1 \mathsf{t}_2)^m (T) = T$, for all $T \in \mathsf{ShST}(\nu,n)$. Previous computations suggested that if there exists $r \in \mathbb{Z}_{>0}$ such that $(\sigma_1 \sigma_2)^r = 1$, then $r \geq 90$ 
\cite[Appendix A]{Ro20b}. Thus, if there exists $m$ such that $(\mathsf{t}_1 \mathsf{t}_2)^m = 1$, for any shape $\nu$, Proposition \ref{prop:braid_t6} implies that $m \geq 180$. We do not know whether there exists such $m$ valid for any shifted tableau crystal. Thus, considering the epimorphism $\psi$ from $J_n$ to $\mathcal{SBK}_n$ of Theorem \ref{teo:cact_sbk}, an explicit element of the kernel $\ker \psi$ is not known, although we can state that the kernel does not contain $\{(s_{1,2} s_{1,3})^{6m}, m \in \mathbb{Z}\}$. Proposition \ref{prop:braid_t6} shows that the study of $\ker \psi$ is closely related to the group generated by the shifted crystal reflection operators $\sigma_i$ on $\mathsf{ShST}(\nu,n)$. For future work, it would also be interesting to find whether there are other relations that are satisfied in $\mathcal{SBK}_n$ that do not follow from the cactus group relations. We refer to 
\cite[Problem 1.7]{BGL19} for similar problems.

\section{Shifted growth diagrams}\label{sec:gd}
In this section we will recall the notion of growth diagrams for shifted tableaux, following the work of Thomas and Yong \cite{TY16} for shifted standard tableaux. We present alternative formulations for some of the algorithms presented before in the same fashion as \cite{CGP16}, namely, the shifted \textit{jeu de taquin}, tableau switching, evacuation and its restrictions.
Using the semistandardization process of Pechenik and Yong \cite{PY17}, these algorithms may be applied to shifted semistandard tableaux. We then provide a proof for Theorem \ref{thm:cactusaction} \cite[Theorem 5.7]{Ro20b} using growth diagrams for shifted tableaux. This proof relies on the algorithmic description of partial Schützenberger involutions as the restrictions of the shifted reversal to primed intervals, while the one in \cite[Theorem 5.7]{Ro20b} uses the description in terms of the shifted tableau crystal operators (see \cite[Lemma 5.4]{Ro20b}).

We remark that, unlike the case for semistandard growth diagrams for Young tableaux introduced by Chmutov, Glick and Pylyavskyy \cite[Section 3]{CGP16}, shifted semistandard tableau, filled in a primed alphabet, are not encoded by a sequence of shape chains, as both each entry $i$ and $i'$ contribute the same to the weight.

\begin{defin}
Let $T$ be a shifted standard tableau of shape $\lambda/\mu$. Its \emph{shape chain} is the saturated chain of strict partitions
$$\mu = \lambda^{(0)} \subseteq \lambda^{(1)} \subseteq \cdots \subseteq \lambda^{(k)} = \lambda$$
where $k = |\lambda| - |\mu|$ and $\lambda^{(i)}$ is the shape of $T^1 \sqcup \cdots \sqcup T^i$, for $i \geq 1$. Since $T$ is standard, each shape $\lambda^{(i)}$ has exactly one more box than $\lambda^{(i-1)}$.
\end{defin}

The shape chain uniquely represents $T$. Since $T$ is standard, $\lambda^{(i)}$ differs from $\lambda^{(i-1)}$ by a single box. If $T$ is straight-shaped, then the chain starts with $\mu = \emptyset$. 
Moreover, the sub-chain 
$$\lambda^{(i-1)} \subseteq \lambda^{(i)} \subseteq \cdots \subseteq \lambda^{(j)},$$
for $i \geq j$, encodes the tableau $T^{i,j}$. More precisely, it encodes the shifted standard tableau with the same shape as $T^{i,j}$, filled by the letters $\{1, \ldots, i-j+1\}$, but one may consider a relabelling of those letters, in order to have $T^{i,j}$.

\begin{ex}\label{ex:sh_chain}
Consider the following shifted standard tableau of shape $(5,3,1)/(3,1)$, 
$$T = \begin{ytableau}
{} & {} & {} & 1 & 3\\
\none & {} & 2  & 5\\
\none & \none & 4
\end{ytableau}$$
which is represented by
$$(3,1) \subseteq (4,1) \subseteq (4,2) \subseteq (5,2) \subseteq (5,2,1) \subseteq (5,3,1).$$
\end{ex}

Given a skew-shaped standard tableau of shape $\lambda/\mu$, a sequence of slides to rectify it may be encoded by a straight-shaped standard tableau of shape $\mu$, where the slides are performed starting on the inner corner corresponding to the largest entry. 

\begin{ex}\label{ex:sh_rect_seq}
Considering the tableau of the previous example, we have the following rectification sequences (corresponding to the straight-shaped tableaux in the inner shape of $T$, with gray letters):

\begin{align*}
T &= \begin{ytableau}
{\color{lblue}1} & {\color{lblue}2} & {\color{lblue}3} & 1 & 3\\
\none & {\color{lblue}4} & 2  & 5\\
\none & \none & 4
\end{ytableau} \longrightarrow
\begin{ytableau}
{\color{lblue}1} & {\color{lblue}2} & {\color{lblue}3}& 1 & 3\\
\none & 2 & 4  & 5\\
\end{ytableau} \longrightarrow
\begin{ytableau}
{\color{lblue}1} & {\color{lblue}2} & 1 & 3\\
\none & 2 & 4  & 5\\
\end{ytableau} \longrightarrow
\begin{ytableau}
{\color{lblue}1} & 1 & 3 & 5\\
\none & 2 & 4\\
\end{ytableau} \longrightarrow
\begin{ytableau}
1 & 2 & 3 & 5\\
\none & 4\\
\end{ytableau} = \mathsf{rect}(T) \\
T &= \begin{ytableau}
{\color{lblue}1} & {\color{lblue}2} & {\color{lblue}4} & 1 & 3\\
\none & {\color{lblue}3} & 2  & 5\\
\none & \none & 4
\end{ytableau} \longrightarrow
\begin{ytableau}
{\color{lblue}1} & {\color{lblue}2} & 1 & 3\\
\none & {\color{lblue}3} & 2  & 5\\
\none & \none & 4
\end{ytableau} \longrightarrow
\begin{ytableau}
{\color{lblue}1}  & {\color{lblue}2} & 1 & 3\\
\none & 2 & 4 & 5
\end{ytableau} \longrightarrow
\begin{ytableau}
{\color{lblue}1} & 1 & 3 & 5\\
\none & 2 & 4
\end{ytableau}
\longrightarrow
\begin{ytableau}
1 & 2 & 3 & 5\\
\none& 4
\end{ytableau} = \mathsf{rect}(T).
\end{align*}
The order in which the shifted \textit{jeu de taquin} slides must be performed in these two cases is encoded by the following shape chains, respectively,
$$\emptyset \subseteq (1) \subseteq (2) \subseteq (3) \subseteq (3,1),$$
$$\emptyset \subseteq (1) \subseteq (2) \subseteq (2,1) \subseteq (3,1).$$
\end{ex}

Each of the tableaux that appear in the intermediate steps of the rectification process may be encoded as well, thus we have the following definition.

\begin{defin}[{\cite[Section 2.1]{TY16}}]
A \emph{shifted rectification growth diagram} for $T$ a standard tableau of shape $\lambda/\mu$ is a table with $|\mu|$ rows and $|\lambda|-|\mu|$ columns, where the leftmost column is filled with the chain encoding a fixed rectification sequence, the top row is filled with the chain encoding $T$, and the subsequent rows are filled with the chain encoding the intermediate tableaux corresponding to the said rectification sequence. In particular, the bottom row will encode $\mathsf{rect}(T)$ and the rightmost column encodes the order in which the outer corners were vacated during the rectification process.
\end{defin}

The following table is a shifted rectification growth diagram for the tableau $T$ of Example \ref{ex:sh_chain}, fixing the first rectification sequence of Example \ref{ex:sh_rect_seq}. It is also convenient to display these diagrams under a rotation, as depicted in Figure \ref{fig:gr_sh_rect}.

\begin{center}
\begin{tabular}{|c|c|c|c|c|c|}
\hline 
$(3,1)$ & $(4,1)$ & $(4,2)$ & $(5,2)$ & $(5,2,1)$ & $(5,3,1)$ \\ 
\hline 
$(3)$ & $(4)$ & $(4,1)$  & $(5,1)$  & $(5,2)$  & $(5,3)$  \\ 
\hline 
$(2)$  & $(3)$  & $(3,1)$  & $(4,1)$  & $(4,2)$  & $(4,3)$  \\ 
\hline 
$(1)$  & $(2)$  & $(2,1)$  & $(3,1)$  & $(3,2)$  & $(4,2)$  \\ 
\hline 
$\emptyset$ & $(1)$  & $(2)$  & $(3)$  & $(3,1)$  & $(4,1)$  \\ 
\hline 
\end{tabular} 
\end{center}

\begin{figure}[h!]
\begin{center}
\includegraphics[scale=0.6]{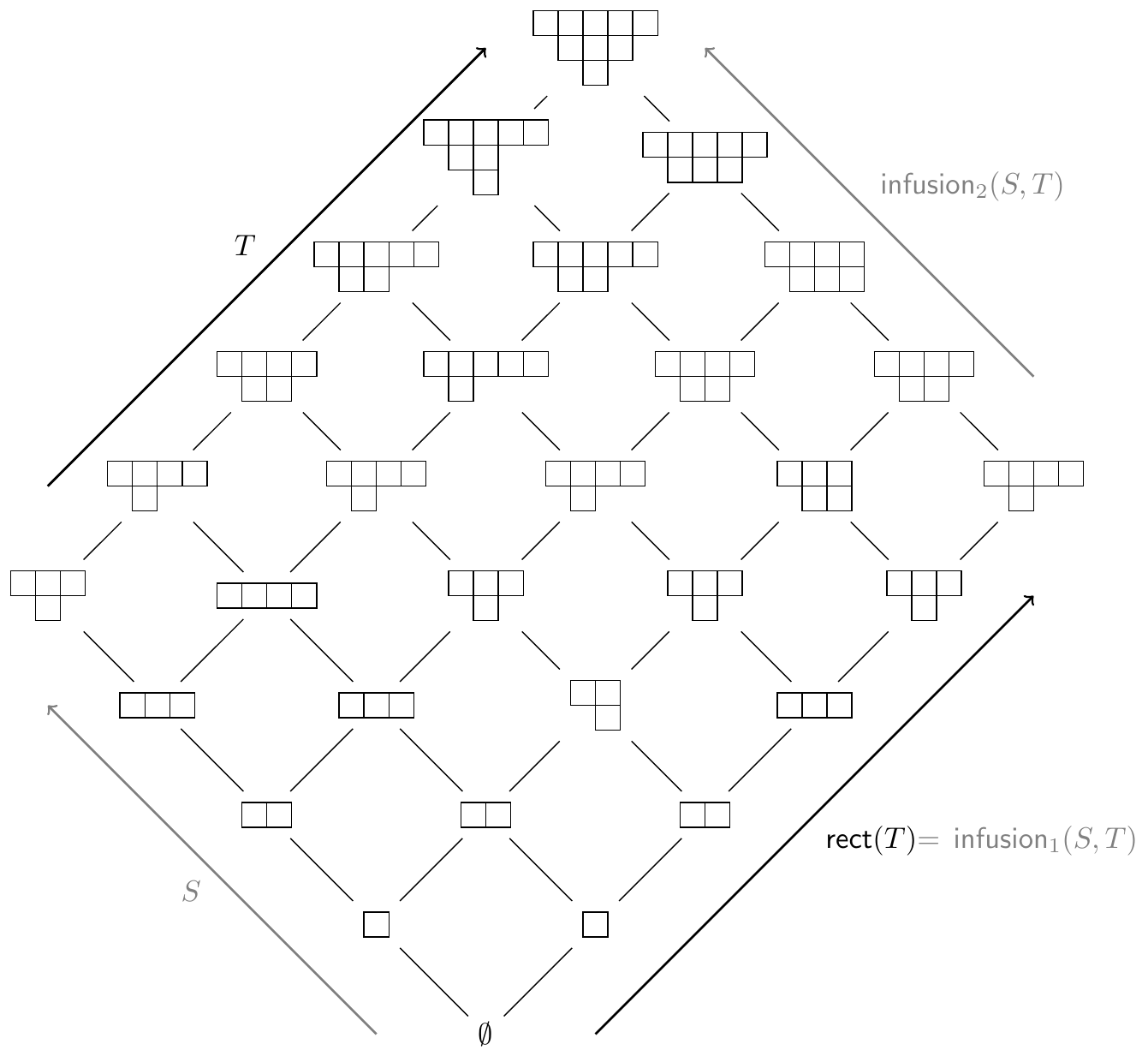}
\end{center}
\caption{A growth diagram depicting rectification of $T$, according to a rectification sequence encoded by $S$. This may also be used to compute the type $C$ infusion on a pair of shifted standard tableaux $(S,T)$.}
\label{fig:gr_sh_rect}
\end{figure}

We have seen in Lemma \ref{lem:sw_inf_std} that the shifted tableau switching and the type $C$ infusion maps agree on shifted standard tableaux, and both can be regarded as a sequence of shifted \textit{jeu de taquin} slides. Thus, given $(S,T)$ a pair of shifted standard tableaux, where $S$ is a straight-shaped shifted tableau extended by $T$, we may place $S$ and $T$ on the southwestermost and northwesternmost sides of a shifted rectification growth diagram, respectively, and then the southeasternmost and northeasternmost sides will encode $\mathsf{infusion}_1(S,T)$ and $\mathsf{infusion}_2(S,T)$, respectively. Thus, the diagram in Figure \ref{fig:gr_sh_rect} is also referred to as a \emph{shifted infusion growth diagram}.

\begin{ex}
Consider the following pair of shifted standard tableaux (these correspond to $T$ and the first rectification sequence, as in Example \ref{ex:sh_rect_seq}):
$$(S,T) = \begin{ytableau}
*(lblue)1 & *(lblue)2 & *(lblue)3 & 1 & 3\\
\none & *(lblue)4 & 2  & 5\\
\none & \none & 4
\end{ytableau}.
$$
This pair is encoded in the southwestern and northwestern edges of the diagram of Figure \ref{fig:gr_sh_rect}. Thus, we have
$$\mathsf{infusion}(S,T) = \begin{ytableau}
1 & 2 & 3 & 5 & *(lblue)3\\
\none & 4 & *(lblue)1 & *(lblue)2\\
\none & \none & *(lblue)3
\end{ytableau}.$$
The obtained pair is encoded in the southeastern and  northeastern edges of the said diagram.
\end{ex}

Similar to the growth diagrams for standard Young tableaux, which are characterized by \emph{local rules}, due to Fomin \cite[Proposition A1.2.7]{Sta99}, the shifted growth diagrams may also be described by similar rules.

\begin{teo}[{\cite[Theorem 2.1]{TY16}}]
An array of straight shapes is a shifted growth diagram if and only if for any subgrid of the form
\begin{center}
\begin{tikzpicture}[yscale=0.7,xscale=0.7]
\node at (1,0) (1) {$\nu$};
\node at (0,1) (2) {$\mu$};
\node at (1,2) (3) {$\lambda$};
\node at (2,1) (4) {$\mu'$};
\draw[-] (1) to (2) to (3) to (4) to (1);
\end{tikzpicture}
\end{center}
where $\nu \subseteq \mu \subseteq \lambda$ and $\nu \subseteq \mu' \subseteq \lambda$, the Fomin growth conditions hold:
	\begin{enumerate}
	\item $\lambda/\mu$, $\lambda/\mu'$, $\mu/\nu$ and $\mu'/\nu$ consist of a single box.
	\item If $\mu$ is the unique shape that is contained in $\lambda$ and contains $\nu$, then $\mu'=\mu$.
	\item Otherwise, there exists exactly one strict partition in the same conditions other than $\mu$, which is $\mu'$.
	\end{enumerate}
\end{teo}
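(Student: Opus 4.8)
The plan is to characterize shifted growth diagrams by a purely local condition, exactly as in the classical Fomin setup \cite[Proposition A1.2.7]{Sta99}, and the heart of the matter is the ``diamond lemma'' for the shifted poset: given strict partitions $\nu \subseteq \mu \subseteq \lambda$ with $\lambda/\mu$ and $\mu/\nu$ each a single box, there is at most one other strict partition $\mu' \neq \mu$ with $\nu \subseteq \mu' \subseteq \lambda$ and $\lambda/\mu'$, $\mu'/\nu$ single boxes, and this $\mu'$ exists if and only if $\mu$ is not the unique such intermediate shape. I would first establish this combinatorial fact about the shifted Young lattice, then bootstrap from it to the global statement about growth diagrams via a straightforward induction on the size of the diagram.

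\textbf{Step 1: the local diamond condition on the shifted lattice.} Let the box $b_1 = \lambda/\mu$ and $b_2 = \mu/\nu$. If $b_1$ and $b_2$ do not lie in diagonally adjacent positions (i.e.\ $b_2$ is not the box immediately northwest of $b_1$, nor in a position that would force a conflict with the strictness of partitions), then one may add $b_1$ to $\nu$ and remove $b_2$ from $\lambda$ in the opposite order, producing a distinct valid $\mu'$; here I must check the single exceptional feature of the shifted setting, namely the behaviour on the main diagonal, where removing or adding a box can be obstructed by the strict-decrease condition on parts. The only case in which no alternative $\mu'$ exists is precisely when $b_1$ is forced to be added after $b_2$ is removed and vice versa --- i.e.\ when $\mu$ is the unique shape squeezed between $\nu$ and $\lambda$. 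This is a finite case analysis on the relative positions of $b_1$ and $b_2$ (same row, same column, diagonally adjacent, disjoint, and the diagonal-boundary subcases), entirely analogous to \cite[Theorem 2.1]{TY16}, and I would present it compactly by reduction to those configurations.

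\textbf{Step 2: from local to global.} Given Step 1, the equivalence is proved by induction on the number of boxes of $\lambda/\mu$ (equivalently the number of columns of the diagram). For the forward direction: in a shifted rectification growth diagram, every elementary square records one \textit{jeu de taquin} slide step, and the Fomin growth conditions (1)--(3) are exactly the record of that step --- condition (1) says a single box moves, and (2)--(3) say the box either stays put (no slide available) or moves to the unique admissible neighbour. Hence each square of a genuine diagram satisfies the local rules. Conversely, given an array all of whose elementary squares satisfy (1)--(3), one reconstructs the corresponding sequence of slides column by column: the top row encodes a standard tableau $T$, the left column a rectification sequence $S$, and the local rule at each square determines uniquely the intermediate shape, so by uniqueness of rectification (Theorem 11.1 of \cite{Sag87}, cited before the definition of $\mathsf{rect}$) the array must coincide with the rectification growth diagram of that pair. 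Invoking Lemma \ref{lem:sw_inf_std} identifies this with the infusion growth diagram, so the two descriptions agree.

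\textbf{Main obstacle.} The delicate point is Step 1, specifically the diagonal exception. In the ordinary Young lattice the diamond lemma is clean because the lattice is distributive; the shifted Young lattice is \emph{not} distributive, and near the main diagonal the shifted \textit{jeu de taquin} has its exceptional slide rule (the priming/unpriming move of Definition \ref{def:sh_jdt}), which at the level of standard shapes manifests as a genuinely different combinatorial behaviour of box addition/removal. I expect the bulk of the verification to consist of checking that even in those boundary configurations the ``at most one alternative'' assertion holds and that the uniqueness-vs-existence dichotomy is correctly captured by conditions (2)--(3) --- this is where one must be careful rather than merely routine, and it is essentially the content of \cite[Theorem 2.1]{TY16} which I would cite for the detailed case analysis rather than reproduce in full.
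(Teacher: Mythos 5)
The paper does not prove this statement: it is quoted directly from Thomas and Yong \cite[Theorem 2.1]{TY16} and used as a black box, so there is no internal proof to compare your proposal against. Your outline is the standard Fomin-style strategy and is structurally the right one --- a local ``diamond'' analysis of two-box intervals in the shifted Young lattice, followed by the observation that the local rules propagate deterministically from the top row and left column, so that any array satisfying them must coincide with the rectification/infusion growth diagram of the encoded pair. However, as written the proposal is not an independent proof: the two places where all the actual work lives are both deferred to the citation. In Step 1 you say explicitly that the case analysis ``is essentially the content of \cite[Theorem 2.1]{TY16} which I would cite,'' and in Step 2 the forward direction --- that the shape chains of successive intermediate tableaux under a single shifted \textit{jeu de taquin} slide satisfy conditions (1)--(3), including near the main diagonal where the exceptional slide of Definition \ref{def:sh_jdt} lives --- is asserted rather than argued. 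Deferring to \cite{TY16} is exactly what the paper itself does, so this is a reasonable reading of the literature, but it is circular as a proof of the theorem itself.

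One smaller point: in the converse direction of Step 2 you invoke uniqueness of rectification (\cite[Theorem 11.1]{Sag87}), but that result says the rectified tableau is independent of the chosen slide sequence, not that the array of intermediate shapes is determined. What you actually need --- and what you also state in passing --- is that conditions (2)--(3) determine $\mu'$ uniquely from $(\nu,\mu,\lambda)$, so the whole array is forced by its top row and left column; combined with the forward direction this yields the converse without any appeal to uniqueness of rectification.
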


These growth conditions exhibit a symmetry under a vertical reflection. Thus, vertically reflecting the diagram of Figure \ref{fig:gr_sh_rect}, we obtain 
\begin{align*}
S &= \mathsf{infusion}_1 ( \mathsf{infusion}_1(S,T), \mathsf{infusion}_2(S,T) )\\
T &= \mathsf{infusion}_2 ( \mathsf{infusion}_1(S,T), \mathsf{infusion}_2(S,T) )
\end{align*}
which explains that the infusion is an involution.

\begin{cor}[{\cite[Lemma 2.2]{TY16}}]
Let $(S,T)$ be a pair of shifted standard tableaux, with $T$ extending $S$. Then, $\mathsf{infusion} (\mathsf{infusion} (S,T)) = (S,T)$.
\end{cor}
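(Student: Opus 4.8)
The plan is to derive the corollary directly from the local-rule characterization of shifted growth diagrams (Theorem 2.1 of \cite{TY16}, quoted above) together with the symmetry of the Fomin growth conditions under vertical reflection. First I would fix a pair $(S,T)$ of shifted standard tableaux, with $S$ straight-shaped of shape $\mu$ and $T$ of shape $\lambda/\mu$, and form the shifted infusion growth diagram $D$ for $(S,T)$: the southwestern edge encodes $S$, the northwestern edge encodes $T$, and by the discussion following Figure \ref{fig:gr_sh_rect}, the northeastern edge encodes $\mathsf{infusion}_2(S,T)$ while the southeastern edge encodes $\mathsf{infusion}_1(S,T)$.

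The key observation is that the growth conditions (1)--(3) of Theorem 2.1 are symmetric with respect to swapping the roles of $\mu$ and $\mu'$ in the elementary square, i.e.\ they are invariant under a vertical reflection of the square. Hence, reflecting the whole array $D$ across a vertical axis produces another array $D'$ that still satisfies the local rules, so $D'$ is again a shifted growth diagram. In $D'$, the southwestern edge now encodes what was the southeastern edge of $D$, namely $\mathsf{infusion}_1(S,T)$; the northwestern edge of $D'$ encodes $\mathsf{infusion}_2(S,T)$; and the southeastern and northeastern edges of $D'$ encode $S$ and $T$, respectively. But $D'$ is, by definition, precisely the shifted infusion growth diagram for the pair $\bigl(\mathsf{infusion}_1(S,T),\mathsf{infusion}_2(S,T)\bigr)$. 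Reading off its eastern edges therefore gives
\[
\mathsf{infusion}\bigl(\mathsf{infusion}_1(S,T),\mathsf{infusion}_2(S,T)\bigr) = (S,T),
\]
which is the claim. One small point to check is that $\mathsf{infusion}_1(S,T)$ really is straight-shaped so that it is a legitimate first argument for $\mathsf{infusion}$: this holds because in any growth diagram the leftmost edge of the reflected diagram (bottom-left to top-left) starts from $\emptyset$ exactly when the corresponding tableau is straight-shaped, and the southeastern edge of $D$ starts at the shape reached after the full rectification, which by construction begins at $\emptyset$.

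The main obstacle is essentially bookkeeping: one must be careful about which of the four edges of the diagram encodes which tableau, and about the fact that the infusion map's second component records an \emph{order of slides} rather than a tableau on the ambient shape in the naive way, so the identification of edges with tableaux has to be made precise before the reflection argument can be invoked. Once the edge-labelling conventions of Figure \ref{fig:gr_sh_rect} are pinned down and the vertical symmetry of the local rules is observed, the argument is immediate, and no new combinatorics beyond Theorem 2.1 of \cite{TY16} is needed; indeed this is exactly the standard way the involutivity of (ordinary) infusion is proved via Fomin's local rules \cite[Proposition A1.2.7]{Sta99}, transported to the shifted setting.
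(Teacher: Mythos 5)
Your proposal is correct and takes essentially the same route as the paper: the paper likewise deduces the corollary from the vertical-reflection symmetry of the Fomin growth conditions in Theorem 2.1 of \cite{TY16}, reflecting the infusion growth diagram of Figure \ref{fig:gr_sh_rect} so that the eastern edges become the western ones. The only difference is your added bookkeeping about which edge is straight-shaped, which the paper leaves implicit.
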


\subsection{Evacuation and reversal}
We may obtain growth diagrams for the shifted evacuation and reversal (Section \ref{ss:evac}), by combining the previous diagrams and local rules. As in the previous sections, most results will be stated for shifted standard tableaux, and may be extended to the semistandard case using the semistandardization process \cite{PY17}. Throughout the next sections, unless otherwise stated, we consider any standard shifted tableau to have $n$ boxes, filled with the letters in $[n]$.

\begin{prop}\label{prop:gr_evac}
Let $T$ be straight-shaped shifted standard tableau. Consider an equilateral triangular array such that the shape chain encoding $T$ is placed on the northwestern edge and each vertex of the bottom edge is filled with $\emptyset$ and apply the local growth rules from left to right. Then, the shape chain on the northeastern edge corresponds to $\mathsf{evac}(T)$.
\end{prop}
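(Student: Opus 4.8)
The plan is to reduce the evacuation-via-growth-diagram statement to the already-established combinatorial facts about shifted growth diagrams, namely the local growth rules (\cite[Theorem 2.1]{TY16}) and the shifted rectification/infusion growth diagram of Figure \ref{fig:gr_sh_rect}. The guiding idea is that the classical proof for standard Young tableaux (see \cite[Appendix A1.2]{Sta99}) goes through verbatim once one has the local rules and the symmetry of those rules, so the only work is to unwind the definitions and check that the triangular array really computes $\mathsf{rect}(\mathsf{c}_n(T))$, which by the Proposition before Example~6 in Section~\ref{ss:evac} equals $\mathsf{evac}(T)$.

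First I would recall that $\mathsf{evac}(T) = \mathsf{rect}(\mathsf{c}_n(T))$ and observe that, since $T$ is straight-shaped with $n$ boxes filled by $[n]$, applying $\mathsf{c}_n$ to a \emph{standard} tableau simply reverses the reading order of the entries: the box receiving label $n+1-k$ in $\mathsf{c}_n(T)$ is the one labelled $k$ in $T$ (after the anti-diagonal reflection), so the shape chain of $\mathsf{c}_n(T)$ is the chain $\emptyset = \gamma^{(0)} \subseteq \cdots \subseteq \gamma^{(n)} = \lambda$ obtained from the chain $\emptyset = \lambda^{(0)} \subseteq \cdots \subseteq \lambda^{(n)} = \lambda$ of $T$ by the "complementary differences'' recipe. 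Then I would set up the equilateral triangular array: the $n+1$ vertices of the bottom edge all carry $\emptyset$; the northwestern edge carries the shape chain of $T$, read from the bottom vertex $\emptyset$ up to the top vertex $\lambda$; one fills the interior by repeatedly applying the local rule of \cite[Theorem 2.1]{TY16} to each elementary square, sweeping left to right. The content claim is that the northeastern edge, read from the top vertex $\lambda$ down to the rightmost bottom vertex $\emptyset$, is the shape chain of $\mathsf{evac}(T)$.

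The key step is the identification of the rows (or the nested sub-triangles) of this array with iterated rectifications. Here I would argue inductively on $n$: the bottom-most elementary strip of the triangle, together with the two edges adjacent to a fixed interior diagonal, is exactly a shifted rectification growth diagram of the type in Figure~\ref{fig:gr_sh_rect} — it rectifies the skew tableau encoded by the upper part of the chain against the straight tableau encoded by the lower part — so each application of the local rules along one diagonal "promotes'' the tableau by one step in precisely the way the shifted promotion and evacuation are built (this mirrors the fact, used in Section~\ref{secBK}, that $\mathsf{evac}_{i+1} = \mathsf{q}_i = \mathsf{p}_1\mathsf{p}_2\cdots\mathsf{p}_i$ and that each $\mathsf{p}_i$ is a rectification/infusion against a single cell). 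Peeling off the outermost diagonal of the triangle leaves a triangle of size $n-1$ whose northwestern edge encodes the tableau obtained from $T$ by one round of shifted promotion, and the inductive hypothesis applies. Matching the output with $\mathsf{rect}(\mathsf{c}_n(T))$ then amounts to checking the base case $n=1$ (trivial) and verifying that the complementation on chains induced by $\mathsf{c}_n$ is compatible with the reflection symmetry of the Fomin growth conditions — i.e., that reflecting the triangle through its vertical axis of symmetry sends the chain of $T$ on the NW edge to the chain of $\mathsf{c}_n(T)$, up to relabelling.

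I expect the main obstacle to be this last compatibility check: making precise that the shape-chain operation induced by $\mathsf{c}_n$ on a standard tableau is exactly the one realized by the symmetry of the local growth rules, and that no "diagonal exception'' of the shifted \emph{jeu de taquin} (Definition~\ref{def:sh_jdt}) causes a discrepancy — on standard tableaux the exceptional slide cannot occur (as noted before Lemma~\ref{lem:sw_inf_std}), so the local rules of \cite{TY16} genuinely govern every square, and this is what lets the classical argument survive. Once that is in place, the statement follows by combining the inductive peeling of diagonals with the identity $\mathsf{evac}(T) = \mathsf{rect}(\mathsf{c}_n(T))$ and Lemma~\ref{lem:sw_inf_std}; the semistandard version, if needed later, will follow by the semistandardization process of \cite{PY17} exactly as for the infusion diagrams.
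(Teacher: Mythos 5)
The core of your argument is the paper's proof. Your ``key step'' --- cutting the triangle into strips parallel to the northwestern edge, recognising each strip as a shifted infusion growth diagram that moves a single box through the remaining tableau, identifying that strip with a promotion operator $\mathsf{p}_i$, and invoking $\mathsf{evac}_{i+1}=\mathsf{q}_i=\mathsf{p}_1\mathsf{p}_2\cdots\mathsf{p}_i$ --- is exactly what the paper does: Proposition \ref{prop:prom_swi} gives $\mathsf{p}_i(T)=\zeta_i\,\mathsf{SW}_{1|2,\ldots,i+1}(T)$, each factor is an infusion growth diagram with southwestern edge of length one, and Proposition \ref{prop:evac_ti} gives $\mathsf{evac}(T)=\mathsf{p}_1\cdots\mathsf{p}_{n-1}(T)$, so the concatenated strips (Figure \ref{fig:gr_evac_prom}) compute $\mathsf{evac}(T)$. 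At that point the proof is finished.

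The problem is that you do not stop there. You frame the target as showing that the triangle computes $\mathsf{rect}(\mathsf{c}_n(T))$ and you name as the ``main obstacle'' a compatibility check between the complementation $\mathsf{c}_n$ on shape chains and the reflection symmetry of the local growth rules. That step is unnecessary: Proposition \ref{prop:evac_ti} already identifies $\mathsf{p}_1\cdots\mathsf{p}_{n-1}$ with $\mathsf{evac}$ (that identification was established earlier through the switching algorithm of Figure \ref{fig:SGE}, which coincides with the evacuation of Section \ref{ss:evac} by \cite[Theorem 5.6]{CNO17}), so there is nothing left to match. It is also incorrectly described: by the symmetry of the Fomin growth conditions, reflecting the triangle through its vertical axis exchanges the northwestern and northeastern edges, i.e.\ it sends the chain of $T$ to the chain of $\mathsf{evac}(T)$ --- this is what exhibits $\mathsf{evac}^2=1$, as remarked right after the proposition --- and it does not produce the chain of $\mathsf{c}_n(T)$. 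Indeed $\mathsf{c}_n(T)$ is a skew tableau of shape $\delta/\lambda^{\vee}$ inside the staircase, while every vertex of the triangle is a shape contained in $\lambda$, so the chain of $\mathsf{c}_n(T)$ does not appear on the diagram at all; pursuing that route would amount to re-proving Proposition \ref{prop:evac_ti} from scratch. Delete the $\mathsf{c}_n$ thread and conclude directly from Propositions \ref{prop:prom_swi} and \ref{prop:evac_ti}, and your argument coincides with the paper's.
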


\begin{proof}
Proposition \ref{prop:evac_ti} states that the evacuation of $T$ may be obtained by applying sequentially the promotion operators $\mathsf{p}_{n-1}, \mathsf{p}_{n-2}, \ldots, \mathsf{p}_{1}$ to $T$, where we recall that we are assuming that $T$ has $n$ boxes, filled in $[n]$.
By Proposition \ref{prop:prom_swi}, 
$$\mathsf{p}_i (T) = \zeta_i \mathsf{SW}_{1|2,\ldots,i+1} (T) = \zeta_i ( \mathsf{infusion}_1(T^1, T^{2,i+1}) \sqcup \mathsf{infusion}_2(T^1, T^{2,i+1})),$$
and then each of promotion operator $\mathsf{p}_i$, acting on standard tableaux, may be computed using a shifted infusion growth diagram with the southwestern edge having length 1, and northwestern edge having length $i$. Then, the diagram in Figure \ref{fig:gr_evac_prom} corresponds to sequentially concatenate, from left to right, the growth diagrams of promotion operators $\mathsf{p}_{n-1}, \mathsf{p}_{n-2}, \ldots, \mathsf{p}_{1}$, thus coinciding with $\mathsf{evac}(T)$.
\end{proof}

The symmetry of the local growth rules ensures that the diagram is symmetric under a vertical reflection. Thus, taking a shifted evacuation growth diagram on input $\mathsf{evac}(T)$, we obtain $\mathsf{evac}(\mathsf{evac}(T))=T$, thus exhibiting the fact that the shifted evacuation is an involution. 
Since $\mathsf{evac}_i (T) := \mathsf{evac}(T^{1,i}) \sqcup T^{i+1,n}$, we have the following result.

\begin{cor}\label{cor:gr_evac}
Let $T$ be a straight-shaped standard shifted tableau and let $i \in [n]$. Consider the shifted evacuation growth diagram having $T$ as input on the northestern edge and $\mathsf{evac}(T)$ on the northeastern one. 
Then:
\begin{enumerate}
\item Removing the $n-i$ rightmost northeastern edges of the diagram yields the shifted evacuation growth diagram on input $T^{1,i}$.
\item Removing the $n-j$ leftmost northwestern edges of the diagram yields the shifted evacuation growth diagram computing on input $\mathsf{rect}(T^{n-j+1,n})$.
\item Removing simultaneously the $n-i$ rightmost northeastern edges and the $n-j$ leftmost northwestern edges of the diagram, for $i \geq j$, yields the shifted evacuation growth diagram on input $\mathsf{rect}(T^{i-j+1,i})$.
\end{enumerate}
\end{cor}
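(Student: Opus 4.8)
The plan is to obtain all three items as immediate consequences of Proposition \ref{prop:gr_evac} together with the locality of the shifted growth rules (\cite[Theorem 2.1]{TY16}), without any new computation. Since the entries of the array are produced one $2\times 2$ square at a time by rules that see only that square, any triangular sub-array of a shifted evacuation growth diagram whose two outer legs lie on the boundary of the ambient diagram is itself a shifted evacuation growth diagram, namely the one on input the standard tableau recorded along its northwestern leg (when that leg encodes a straight shape) — and when that leg encodes a skew shape, filling the sub-triangle against its all-$\emptyset$ base performs the shifted rectification built into the growth rules, exactly as in the shifted rectification growth diagrams recalled earlier in this section. Thus the entire content of Corollary \ref{cor:gr_evac} reduces to identifying, for each prescribed deletion of edges of Figure \ref{fig:gr_evac_prom}, which restriction $T^{a,b}$ of $T$ is read along the northwestern leg of the surviving sub-triangle; here I would invoke the observation made after Example \ref{ex:sh_chain} that the segment $\lambda^{(a-1)}\subseteq\cdots\subseteq\lambda^{(b)}$ of the shape chain of $T$ encodes $T^{a,b}$ up to relabelling its letters by $\{1,\dots,b-a+1\}$.

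With that mechanism in place, item (1) follows by noting that deleting the $n-i$ rightmost northeastern edges leaves the sub-triangle whose northwestern leg is the initial segment $\lambda^{(0)}\subseteq\cdots\subseteq\lambda^{(i)}$ of the chain of $T$, which encodes $T^{1,i}$; this is a straight shape, so by Proposition \ref{prop:gr_evac} the sub-triangle is the shifted evacuation growth diagram on input $T^{1,i}$, and its northeastern leg encodes $\mathsf{evac}(T^{1,i})$, in accordance with $\mathsf{evac}_i(T)=\mathsf{evac}(T^{1,i})\sqcup T^{i+1,n}$. For item (2), deleting the $n-j$ leftmost northwestern edges leaves the sub-triangle whose northwestern leg is the terminal segment $\lambda^{(n-j)}\subseteq\cdots\subseteq\lambda^{(n)}$ of the chain of $T$; this encodes the skew tableau $T^{n-j+1,n}$ of shape $\lambda/\lambda^{(n-j)}$, and since the base of the surviving sub-triangle is still all $\emptyset$, the growth rules first rectify it, so the sub-triangle is the shifted evacuation growth diagram on input $\mathsf{rect}(T^{n-j+1,n})$. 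Item (3) is just the simultaneous deletion: for $i\ge j$ the surviving northwestern leg is the segment $\lambda^{(i-j)}\subseteq\cdots\subseteq\lambda^{(i)}$, encoding the skew tableau $T^{i-j+1,i}$, and as in item (2) the sub-triangle is the shifted evacuation growth diagram on input $\mathsf{rect}(T^{i-j+1,i})$.

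The main obstacle, and really the only non-formal point, is the geometric bookkeeping: matching the phrases ``rightmost northeastern edges'' and ``leftmost northwestern edges'' of Figure \ref{fig:gr_evac_prom} with the initial, terminal, and middle segments of the shape chain of $T$, and checking that after each deletion the surviving region is genuinely a triangular sub-array with an all-$\emptyset$ base (so that Proposition \ref{prop:gr_evac} applies verbatim). Once the orientation of the figure is fixed this is routine; it is best verified by tracking the concatenation of the promotion growth diagrams $\mathsf{p}_{n-1},\dots,\mathsf{p}_1$ from the proof of Proposition \ref{prop:gr_evac}, since each deletion simply amounts to discarding an outer band of that concatenation. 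This corollary will then be exactly what is needed, via Corollary \ref{cor:eta_cactus}, for the growth-diagram proof of Theorem \ref{thm:cactusaction}.
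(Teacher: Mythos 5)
The paper states this corollary without a written proof (it is presented as following directly from Proposition \ref{prop:gr_evac} and the identity $\mathsf{evac}_i(T)=\mathsf{evac}(T^{1,i})\sqcup T^{i+1,n}$), so I can only judge your argument on its own merits. Item (1) is fine: the surviving sub-triangle genuinely has the initial segment $\lambda^{(0)}\subseteq\cdots\subseteq\lambda^{(i)}$ of the chain of $T$ on its northwestern leg and an all-$\emptyset$ base, so locality of the growth rules plus Proposition \ref{prop:gr_evac} finishes it exactly as you say.

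There is, however, a genuine gap in your treatment of items (2) and (3): you misidentify the surviving region. You describe it as a sub-triangle whose northwestern leg is the terminal segment $\lambda^{(n-j)}\subseteq\cdots\subseteq\lambda^{(n)}$ of the chain of $T$ and whose base is still all $\emptyset$. No triangular sub-array of the diagram has both of these as its legs: the two legs of a sub-triangle must meet at a vertex, and the terminal segment begins at the non-empty shape $\lambda^{(n-j)}$ while every vertex of the base is $\emptyset$. The region bounded by that terminal segment and a piece of the all-$\emptyset$ base is a trapezoid whose remaining side is an internal column carrying a rectification sequence for $T^{n-j+1,n}$ --- data inherited from the portion of the diagram you deleted --- and ``skew chain on top plus empty base'' is not sufficient input for the local rules without that extra side. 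The actual surviving sub-triangle has its northwestern leg on an \emph{internal} diagonal of the big diagram, and the whole content of items (2) and (3) --- the reason $\mathsf{rect}$ appears in the statement --- is the identification of that internal diagonal with the shape chain of the straight-shaped tableau $\mathsf{rect}(T^{n-j+1,n})$ (respectively $\mathsf{rect}(T^{i-j+1,i})$). This is what needs to be argued: the strip between two consecutive internal diagonals is precisely a shifted rectification/infusion growth diagram for the corresponding restriction of $T$, so the vertex in row $k$ and column $l$ of the array carries the shape of $\mathsf{rect}(T^{k+1,l})$, and hence the $(n-j)$-th internal diagonal encodes $\mathsf{rect}(T^{n-j+1,n})$. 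Once that is supplied, the surviving sub-triangle has a straight-shape chain on its northwestern leg and an all-$\emptyset$ base, and Proposition \ref{prop:gr_evac} applies verbatim as in your item (1). Your phrase ``the growth rules first rectify it'' points at the right phenomenon but attributes the rectification to the surviving region rather than to the deleted strip, which is exactly where the missing step lies.
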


\begin{ex}\label{ex:gr_std_evac}
Consider the following shifted standard tableau 
$$T = \begin{ytableau}
1 & 2 & 3 & 5 \\
\none & 4 & 6\\
\none & \none & 7
\end{ytableau}.$$
Then, the left side of the triangular array in Figure \ref{fig:gr_evac} corresponds to the shape chain of $T$, while the right side corresponds to $\mathsf{evac}(T)$. Then, we have
$$\mathsf{evac}(T) = \begin{ytableau}
1 & 2 & 3 & 7\\
\none & 4 & 5\\
\none & \none & 6
\end{ytableau}.$$
Using the same diagram we also obtain restrictions of $\mathsf{evac}$. For instance, removing the rightmost 3 northeastern edges (see the gray area in Figure \ref{fig:gr_evac}), we have
$$\mathsf{evac}_4 (T) = 
\mathsf{evac}(T^{1,4}) \sqcup T^{5,7}
= \begin{ytableau}
1 & 2 & 4 & {}\\
\none & 3 & {}\\
\none & \none & {}
\end{ytableau} \sqcup \begin{ytableau}
{} & {} & {} & 7\\
\none & {} & 5\\
\none & \none & 6
\end{ytableau} = \begin{ytableau}
1 & 2 & 4 & 7\\
\none & 3 & 5\\
\none & \none & 6
\end{ytableau}.$$
\end{ex}

\begin{figure}
\includegraphics[scale=0.6]{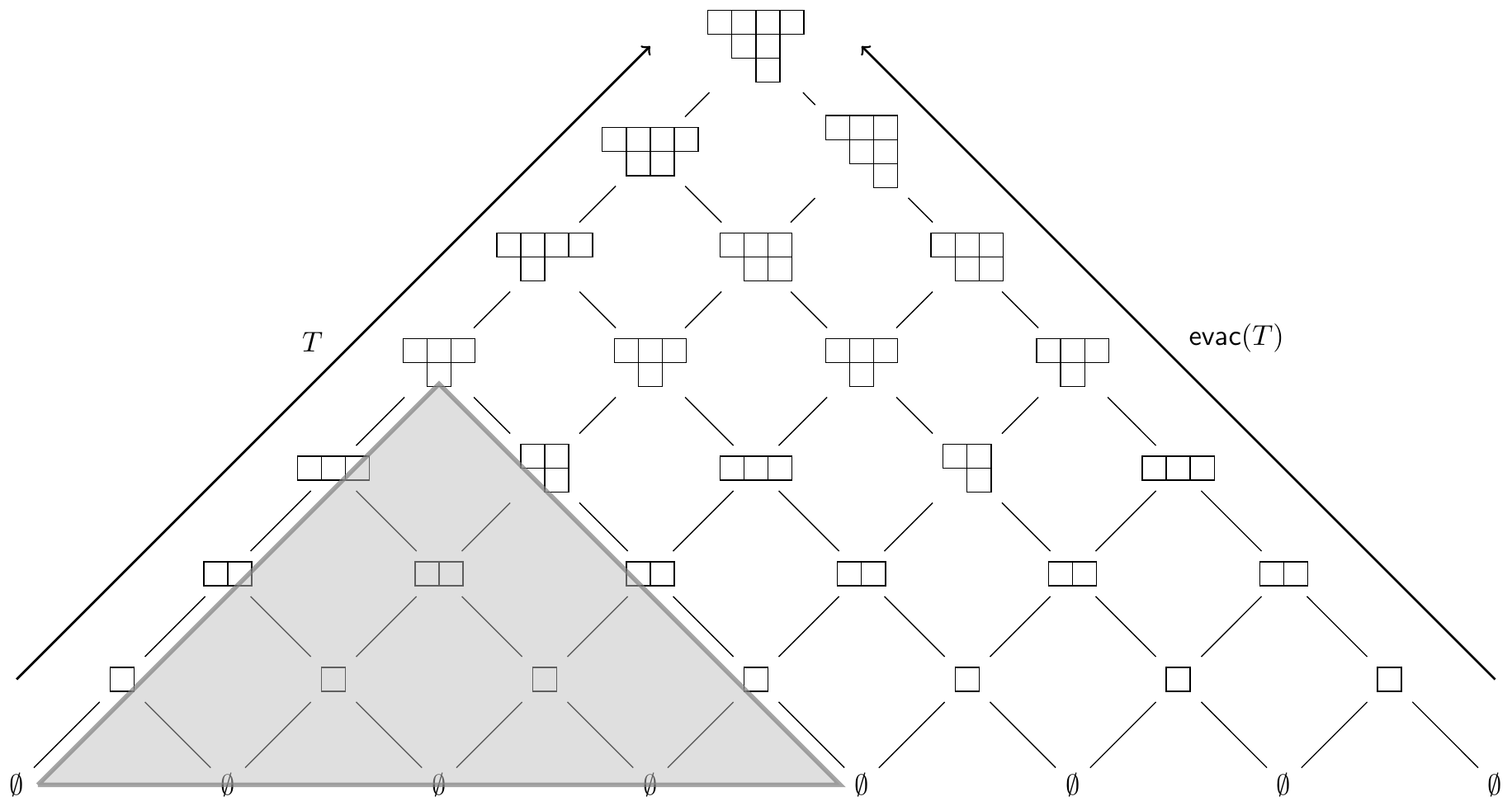}
\caption{A shifted evacuation growth diagram. The smaller gray diagram computes the restriction $\mathsf{evac}_4$, on $T^{1,4}$.}
\label{fig:gr_evac}
\end{figure}

\begin{figure}
\includegraphics[scale=0.6]{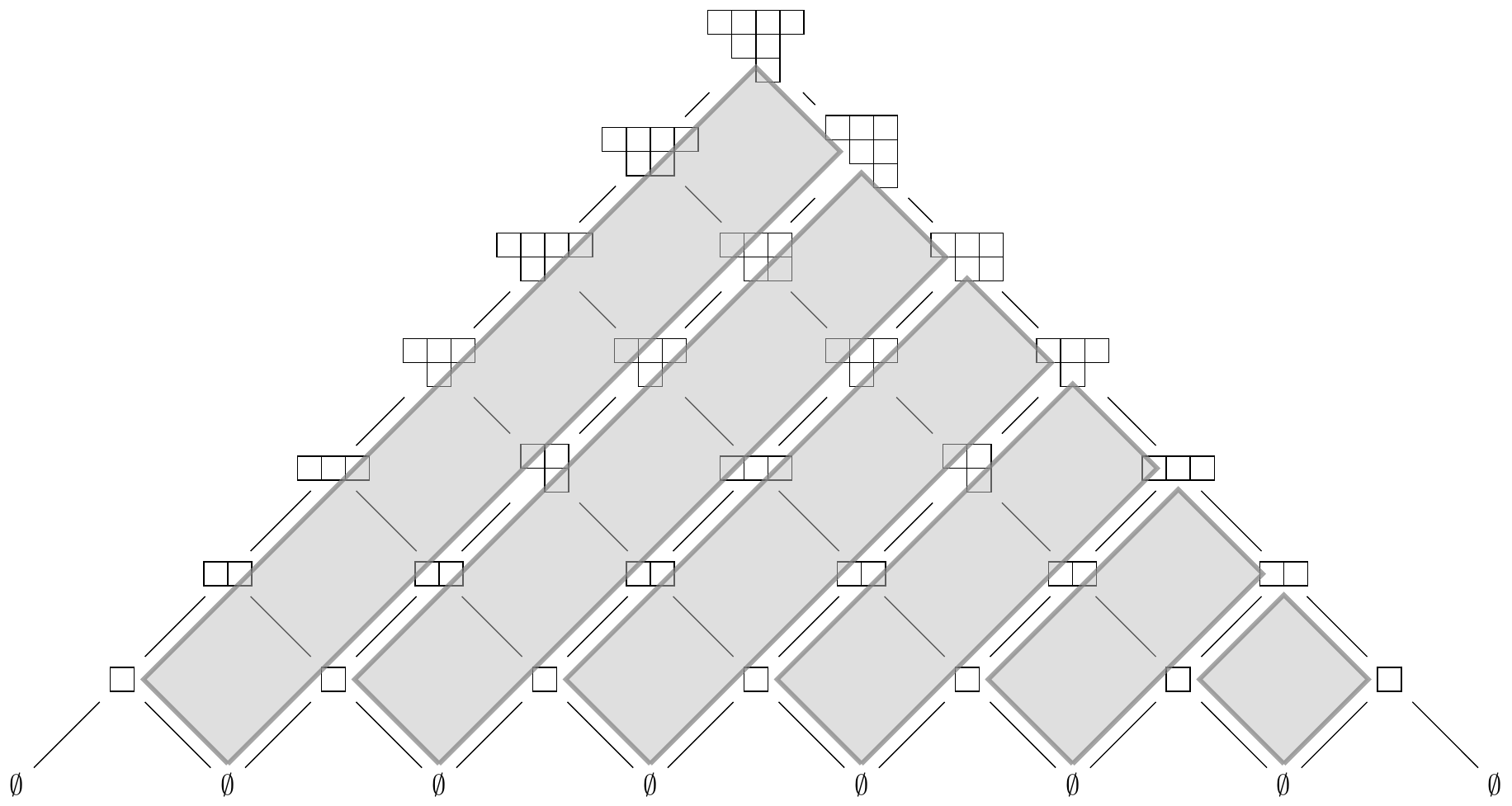}
\caption{Illustration of the shifted evacuation as a composition of promotion operators, corresponding to the gray rectangles.}
\label{fig:gr_evac_prom}
\end{figure}

The shifted \textit{jeu de taquin} and shifted tableau switching are compatible with standardization. Thus, the previous characterizations with growth diagrams may be applied to a shifted semistandard tableau $T$, by first standardizing it, then apply the standard growth diagrams, and then compute the adequate semistandardization of the obtained tableau.

\begin{ex}
Consider the following shifted semistandard tableau, which has weight $\nu = (2,2,3)$, 
$$T = \begin{ytableau}
1 & 1 & 2' & 3'\\
\none & 2 & 3'\\
\none & \none & 3
\end{ytableau}.$$
To compute $\mathsf{evac}(T)$ using growth diagrams, we consider its standardization and compute the growth diagram (see Example \ref{ex:gr_std_evac}) and then apply the $\nu'$-semistandardization, with $\nu'= \theta_{1,3} (\nu) =(3,2,2)$:
$$\mathsf{std}(T)= \begin{ytableau}
1 & 2 & 3 & 5 \\
\none & 4 & 6\\
\none & \none & 7
\end{ytableau} \overset{\mathsf{evac}}\longrightarrow 
\begin{ytableau}
1 & 2 & 3 & 7\\
\none & 4 & 5\\
\none & \none & 6
\end{ytableau}
\overset{\mathsf{sstd}_{\nu'}}\longrightarrow 
\begin{ytableau}
1 & 1 & 1 & 3\\
\none & 2 & 2\\
\none & \none & 3
\end{ytableau} = \mathsf{evac}(T).
$$
\end{ex}

Given $T$ a skew-shaped shifted standard tableau, Proposition \ref{prop:rev_sw} says that the reversal $T^e$ may be computed by filling the diagram of $\mu$ with a standard tableau $U$, applying the shifted infusion (or shifted tableau switching) to the pair $(S,T)$ obtaining $\mathsf{infusion}(S,T) = \big( \mathsf{rect}(T), \mathsf{infusion}_2(S,T) \big)$, applying the evacuation to $\mathsf{rect}(T)$, and then the shifted tableau switching again to the pair $\big( \mathsf{evac}(\mathsf{rect}(T)), \mathsf{infusion}_2(S,T) \big)$. Then, 
\begin{equation}\label{eq:sw_reversal}
T^e = \mathsf{infusion}_2 \big( \mathsf{evac}(\mathsf{rect}(T)), \mathsf{infusion}_2(U,T) \big)
\end{equation}
Thus, we have the following.

\begin{prop}\label{prop:rev_gr}
Let $T$ be a shifted standard tableau of shape $\lambda/\mu$. Consider a diagram as in Figure \ref{fig:gr_reversal}, with $T$ on the segment $[bc]$ and any standard tableau $S$ of shape $\mu$ on the segment $[ab]$\footnote{We consider a segment $[ab]$ to be directed, from $a$ to $b$. In the growth diagrams, segments are read from bottom to top.}, and such that $[dc] = [df]$. Then, the segment $[gf]$ encodes $T^e$.
\end{prop}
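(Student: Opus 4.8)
The plan is to assemble the growth diagram of Figure~\ref{fig:gr_reversal} out of two smaller pieces that we have already analysed, and then read off the claim from the identity \eqref{eq:sw_reversal}. Concretely, fix a standard tableau $S$ of shape $\mu$, and place $T$ on the segment $[bc]$ and $S$ on $[ab]$. The lower-left portion of the diagram, spanned by the edges $[ab]$ and $[bc]$, is by definition a shifted infusion (equivalently, shifted tableau switching) growth diagram for the pair $(S,T)$; by the discussion following Figure~\ref{fig:gr_sh_rect} (and Lemma~\ref{lem:sw_inf_std}), its remaining two edges encode $\mathsf{infusion}_1(S,T) = \mathsf{rect}(T)$ and $\mathsf{infusion}_2(S,T)$. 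The condition $[dc] = [df]$ is exactly what glues the second piece on: the edge carrying $\mathsf{rect}(T)$ is shared, and the sub-triangle based at $[df]$ with $\emptyset$ along the bottom is, by Proposition~\ref{prop:gr_evac}, a shifted evacuation growth diagram with input $\mathsf{rect}(T)$, hence its opposite edge encodes $\mathsf{evac}(\mathsf{rect}(T))$.

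Next I would identify the final region of the diagram, bounded by the edge encoding $\mathsf{evac}(\mathsf{rect}(T))$ and the edge encoding $\mathsf{infusion}_2(S,T)$, as a second shifted infusion growth diagram, now for the pair $\big(\mathsf{evac}(\mathsf{rect}(T)),\, \mathsf{infusion}_2(S,T)\big)$. This requires checking that $\mathsf{infusion}_2(S,T)$ extends $\mathsf{evac}(\mathsf{rect}(T))$ (they have the correct relative shapes since $\mathsf{rect}(T)$ and $\mathsf{evac}(\mathsf{rect}(T))$ share a shape, and $\mathsf{infusion}_2(S,T)$ extends $\mathsf{rect}(T)$), so that the growth-diagram formalism of Thomas--Yong applies. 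The edge $[gf]$ is then, by construction of the infusion growth diagram, the shape chain of $\mathsf{infusion}_2\big(\mathsf{evac}(\mathsf{rect}(T)),\, \mathsf{infusion}_2(S,T)\big)$. By \eqref{eq:sw_reversal} (which is Proposition~\ref{prop:rev_sw} rephrased), this tableau is exactly $T^e$, independently of the chosen $S$. One should also note that the output does not depend on which standard filling $S$ of $\mu$ is used: this is precisely the content of Proposition~\ref{prop:rev_sw}, which asserts $\mathsf{SW}(\mathsf{evac}(\mathsf{rect}(T)), W') = \mathsf{SW}(\mathsf{evac}(\mathsf{rect}(T)), U')$ for any two standard tableaux $U, W$ of shape $\mu$.

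The main obstacle I anticipate is bookkeeping the orientation conventions: growth diagrams read bottom-to-top along segments, evacuation triangles have their "degenerate" side ($\emptyset$'s) on one edge, and the infusion diagram of Figure~\ref{fig:gr_sh_rect} puts the straight-shaped member of the pair on a specific (southwestern) edge. I would need to verify that when the three regions are glued as in Figure~\ref{fig:gr_reversal}, the orientations match up so that the shared edge $[dc]=[df]$ really does carry $\mathsf{rect}(T)$ read consistently by both the infusion rules and the evacuation rules, and that the local growth rules (Theorem~\ref{prop:gr_evac}'s underlying \cite[Theorem 2.1]{TY16}) are being applied in a compatible direction throughout the combined figure. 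Once the geometry is pinned down, the proof is just the concatenation of three already-established facts; no genuinely new computation is needed.

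Finally, I would remark that, as in the straight-shaped case and via the compatibility of shifted \emph{jeu de taquin} and shifted tableau switching with standardization \eqref{eq:sw_std}, the same diagram computes the reversal of a shifted semistandard tableau $T$ of weight $\nu$: one standardizes, runs the diagram on $\mathsf{std}(T)$, and then applies $\mathsf{sstd}_{\theta_{1,n}(\nu)}$ to the resulting tableau on $[gf]$, since reversal reverses the weight. This extension is immediate from Proposition~\ref{prop:sw_infu} together with the fact that $\mathsf{c}_n$ sends weight $\nu$ to $\theta_{1,n}(\nu)$.
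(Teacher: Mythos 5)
Your proposal is correct and follows essentially the same route as the paper's proof: decompose the diagram of Figure \ref{fig:gr_reversal} into an infusion region for $(S,T)$, an evacuation triangle on $\mathsf{rect}(T)$ via Proposition \ref{prop:gr_evac}, and a second infusion region for $\big(\mathsf{evac}(\mathsf{rect}(T)),\mathsf{infusion}_2(S,T)\big)$, then conclude with \eqref{eq:sw_reversal}. The extra remarks on orientation bookkeeping, independence of $S$, and the semistandard extension are sensible but do not change the argument.
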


\begin{proof}
The diagram $[abcd]$ computes the shifted tableau switching on the pair $(S,T)$, thus $[ad]$ encodes $\mathsf{infusion}_1(S,T) = \mathsf{rect}(T)$ and $[dc]$ encodes $\mathsf{infusion}_2(S,T)$. By Proposition \ref{prop:gr_evac}, the diagram $[ade]$ computes the evacuation with input $[ad]$, thus the segment $[ed]$ corresponds to $\mathsf{evac}(\mathsf{rect}(T))$. Finally, since $[df] = [dc]$, the diagram $[edfg]$ computes the shifted tableau switching on the pair $\big(\mathsf{evac}(\mathsf{rect}(T)), \mathsf{infusion}_2(S,T)\big)$. It then follows from \eqref{eq:sw_reversal} that $[gf]$ corresponds to $T^e$.
\end{proof}

\begin{figure}[h!]
\includegraphics[scale=0.6]{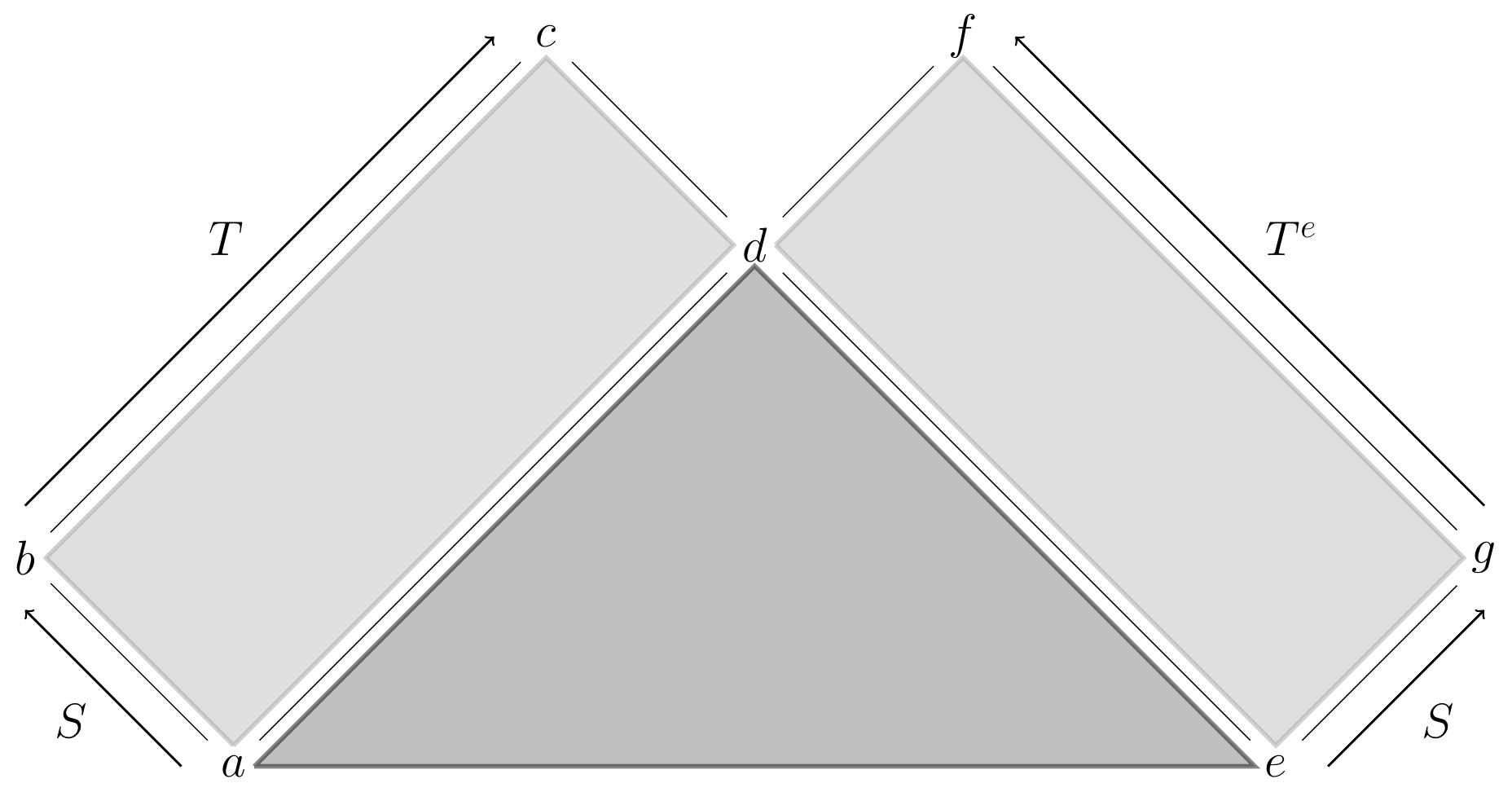}
\caption{Growth diagram to compute the shifted reversal on skew shapes. By construction, we put $[dc] = [df]$.}
\label{fig:gr_reversal}
\end{figure}

\subsection{Partial Schützenberger involutions}
Following the same approach as in \cite[Section 4.1]{CGP16}, we may use the shifted growth diagrams for rectification and evacuation to construct an array that computes $\eta_{i,j}$ for straight-shaped shifted tableaux. From \eqref{eq:qij} and Proposition \ref{prop:evac_ti}, and since $\eta_{i,j}$ is computed by $\mathsf{q}_{i,j}$ when acting on straight shapes, we have $\eta_{i,j} (T) = \mathsf{q}_{i,j} (T)$, for $T \in \mathsf{ShST} (\nu,n)$, thus the next growth diagram computes $\mathsf{q}_{i,j}$ as well. From Definition \ref{def:schu_ij}, given $T \in \mathsf{ShST}(\lambda/\mu,n)$, then $\eta_{i,j}(T) = T^{1,i-1} \sqcup \eta (T^{i,j}) \sqcup T^{j+1,n} = \eta_{i,j} (T^{1,j}) \sqcup T^{j+1,n}$.

\begin{figure}[h!]
\includegraphics[scale=0.6]{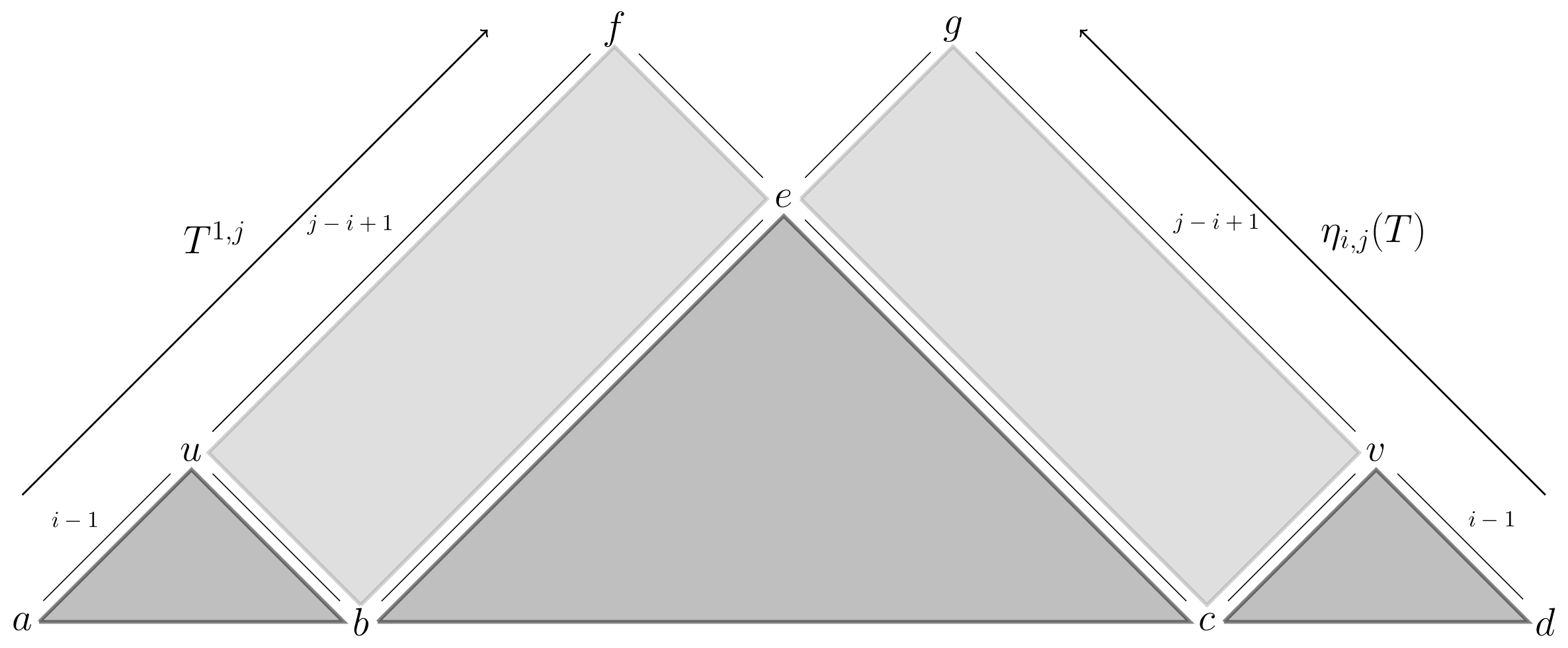}
\caption{The growth diagram to compute $\eta_{i,j}$ or $\mathsf{q}_{i,j}$ on straight-shaped tableaux \cite[Figure 6]{CGP16}. By construction, $[ef]=[eg]$.}
\label{fig:gr_eta}
\end{figure}

\begin{prop}\label{prop:gr_eta_ij}
Let $T$ be a straight-shaped shifted standard tableau filled in $[n]$ and let $1 \leq i < j \leq n$. 
Consider the diagram in Figure \ref{fig:gr_eta}, which consists, from left to right, in the growth diagrams of $\mathsf{evac}_{i-1}$, infusion, $\mathsf{evac}_{j-i+1}$, infusion, and $\mathsf{evac}_{i-1}$, and such that the segments $[ef]$ and $[eg]$ coincide. Then, if the segment $[af]$ encodes $T^{1,j}$, then the segment $[dg]$ encodes  $\eta_{i,j} (T^{1,j})$ .
\end{prop}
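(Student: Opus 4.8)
The statement is essentially a bookkeeping assertion about how the composite growth diagram of Figure \ref{fig:gr_eta} decomposes into the five named blocks, so the plan is to chase the shape chains along the edges through each block in turn, invoking the growth-diagram characterizations already established for $\mathsf{evac}_k$ (Proposition \ref{prop:gr_evac} and Corollary \ref{cor:gr_evac}) and for infusion (the shifted infusion growth diagram of Figure \ref{fig:gr_sh_rect}, together with Lemma \ref{lem:sw_inf_std}). First I would recall, using Corollary \ref{cor:eta_cactus} and \eqref{eq:qij}, that on a straight-shaped tableau $\eta_{i,j} = \mathsf{q}_{i,j} = \mathsf{q}_{j-1}\,\mathsf{q}_{j-i}\,\mathsf{q}_{j-1}$, and by Proposition \ref{prop:evac_ti} each $\mathsf{q}_{k-1}$ restricted to straight shapes equals $\mathsf{evac}_k$; hence it suffices to show the composite diagram computes $\mathsf{evac}_{j}\,\mathsf{evac}_{j-i+1}\,\mathsf{evac}_{j}$ applied to $T^{1,j}$, and then that $\eta_{i,j}(T) = \eta_{i,j}(T^{1,j})\sqcup T^{j+1,n}$ reduces the general case to the $T^{1,j}$ case (this last reduction is immediate from Definition \ref{def:schu_ij} since $\eta$ ignores letters outside $[i,j]$).

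\textbf{Main steps.} The diagram splits the alphabet $[1,j]$ as $[1,i-1]$ against $[i,j]$. Reading block by block: (1) the $\mathsf{evac}_{i-1}$ block on the left applies evacuation to the $[1,i-1]$-part while leaving the $[i,j]$-part as a rectification chain, so the edge leaving that block encodes $\mathsf{evac}(T^{1,i-1})\sqcup T^{i,j}$ in the appropriate chain form; (2) the first infusion block slides the $[i,j]$-part through, so that after it the $[1,i-1]$-part sits in its evacuated shape and the $[i,j]$-part has been rectified — this is exactly the content of Corollary \ref{cor:gr_evac}(2)–(3) combined with the infusion growth diagram; (3) the middle $\mathsf{evac}_{j-i+1}$ block applies evacuation to the (now rectified, length $j-i+1$) block of letters $\{i,\dots,j\}$, i.e. computes $\mathsf{evac}(\mathsf{rect}(T^{i,j}))$; (4) the second infusion block slides the evacuated $[1,i-1]$-part back out, undoing the rectification recording of step (2), which by the involutivity of infusion (Corollary after Theorem 2.1 of \cite{TY16}, reproduced in the excerpt) returns the $[i,j]$-letters to their original skew positions while carrying along the evacuation performed in step (3); (5) the final $\mathsf{evac}_{i-1}$ block re-evacuates the $[1,i-1]$-part, returning it to $T^{1,i-1}$. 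The condition $[ef]=[eg]$ is precisely the requirement that the recording chain produced by the first infusion is reused as the input to the second infusion, which is what makes steps (2) and (4) inverse to each other in the sense of Proposition \ref{prop:rev_sw}. Assembling these, the output edge $[dg]$ encodes $T^{1,i-1}\sqcup \eta(T^{i,j})$, which is $\eta_{i,j}(T^{1,j})$ by Definition \ref{def:schu_ij}.

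\textbf{The main obstacle.} The delicate point is not any single block — each is an instance of an already-proved growth-diagram identity — but the matching of edges between blocks, i.e. verifying that the chain appearing on the shared boundary of two adjacent blocks is simultaneously the \emph{output} of the left block and a \emph{valid input} of the right block, and in particular that the recording tableau $\mathsf{infusion}_2$ from the first infusion block is exactly what the second infusion block consumes. This is where one must be careful about the convention that segments are directed (read bottom-to-top) and that evacuation blocks are vertically symmetric; I would handle it by tracking, for the $[1,i-1]$-letters and the $[i,j]$-letters separately, which block acts as an evacuation and which acts as a passive rectification/un-rectification, and then citing the symmetry of the Fomin local rules to justify that reflecting an evacuation block corresponds to running evacuation backwards. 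Once the edge identifications are pinned down, the conclusion that $[dg]$ encodes $\eta_{i,j}(T^{1,j})$ follows by reading off $\mathsf{q}_{j-1}\mathsf{q}_{j-i}\mathsf{q}_{j-1}(T^{1,j}) = \eta_{i,j}(T^{1,j})$, which is Corollary \ref{cor:eta_cactus} together with \eqref{eq:qij}. I expect this bookkeeping to be the bulk of the argument, with everything else being a direct appeal to Propositions \ref{prop:gr_evac}, \ref{prop:rev_sw}, \ref{prop:evac_ti} and the infusion involution.
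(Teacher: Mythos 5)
Your proposal follows essentially the same route as the paper's proof: decompose Figure \ref{fig:gr_eta} into its five blocks, read off $[bu]=\mathsf{evac}(T^{1,i-1})=:S$, $[be]=\mathsf{rect}(T^{i,j})$ with recording chain $[ef]=[eg]$, $[ce]=\mathsf{evac}(\mathsf{rect}(T^{i,j}))$, then use Proposition \ref{prop:rev_sw} (equivalently \eqref{eq:sw_reversal}) to get $[vg]=\eta(T^{i,j})$ and the first component back to $S$, whence $[dv]=\mathsf{evac}(S)=T^{1,i-1}$ and $[dg]=T^{1,i-1}\sqcup\eta(T^{i,j})=\eta_{i,j}(T^{1,j})$. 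The one point the paper spells out that you treat slightly more implicitly is the verification $[cv]=S$ (done there via $\mathsf{infusion}_1(X,Y)=\mathsf{rect}(Y)$ and the involutivity of infusion), but your appeal to Proposition \ref{prop:rev_sw} covers both output components at once, so the argument is sound.
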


\begin{proof}

We will show that $[dv] = T^{1,i-1} \sqcup \eta (T^{i,j})$.
We have $[af] = T^{1,j}$, $[au] = T^{1,i-1}$ and $[uf] = T^{i,j}$, thus, by Proposition \ref{prop:gr_evac}, $[bu] = \mathsf{evac}(T^{1,i-1}) =: S$. Applying the shifted infusion growth diagram on inputs $[bu]$ and $[uf]$, we have
\begin{equation}\label{eq:gr_eg}
\begin{split}
[be] &= \mathsf{infusion}_1 (S,T^{i,j}) = \mathsf{rect}(T^{i,j})\\
[ef] &= \mathsf{infusion}_2 (S,T^{i,j}) = [eg],
\end{split}
\end{equation}
and by Corollary \ref{cor:gr_evac}, applying the shifted evacuation growth diagram, we have
\begin{equation}\label{eq:gr_ce}
[ce] = \mathsf{evac} (\mathsf{rect} (T^{i,j})).
\end{equation}
Then, applying the shifted infusion growth diagram on inputs $[eg]$ \eqref{eq:gr_eg} and $[ce]$ \eqref{eq:gr_ce}, we obtain
\begin{equation}\label{eq:gr_cv}
\begin{split}
[cv] &= \mathsf{infusion}_1 \big( \mathsf{evac}(\mathsf{rect}(T^{i,j})) , \mathsf{infusion}_2 (S,T^{i,j}) \big)\\
[vg] &= \mathsf{infusion}_2 \big( \mathsf{evac}(\mathsf{rect}(T^{i,j})) , \mathsf{infusion}_2 (S,T^{i,j}) \big).
\end{split}
\end{equation}
By \eqref{eq:sw_reversal}, we have 
\begin{equation}
[vg] = \eta (T^{i,j}).
\end{equation}
We recall that $\mathsf{infusion}_1 (S,T) = \mathsf{rect}(T)$, for any standard straight-shaped tableau $S$ extended by $T$. Considering that rectification does not depend on the chosen rectification sequence, from \eqref{eq:gr_cv}, we have
\begin{align*}
[cv] &= \mathsf{infusion}_1 \big( \mathsf{evac}(\mathsf{rect}(T^{i,j})) , \mathsf{infusion}_2 (S,T^{i,j}) \big) \\
&= \mathsf{infusion}_1 \big( \mathsf{evac}(\mathsf{infusion}_1(S,T^{i,j})) , \mathsf{infusion}_2 (S,T^{i,j}) \big) \\
&= \mathsf{rect} \big( \mathsf{infusion}_2(S,T^{i,j}) \big)\\
&= \mathsf{infusion}_1 \big( \mathsf{infusion}_1(S,T^{i,j}) , \mathsf{infusion}_2 (S,T^{i,j})\big)\\
&= \mathsf{infusion}_1 \big( \mathsf{infusion}(S,T^{i,j}) \big) = S.
\end{align*}
Finally, the shifted evacuation growth diagram ensures that 
\begin{equation}\label{eq:gr_dv}
[dv] = \mathsf{evac} (S) = \mathsf{evac}^2 (T^{1,i-1})= T^{1,i-1}.
\end{equation}
Thus, by \eqref{eq:gr_cv} and \eqref{eq:gr_dv}, we have
\begin{equation}
[dg] = T^{1,i-1} \sqcup \eta (T^{i,j}) = \eta_{i,j} (T^{1,j}). 
\end{equation}
\end{proof}

Using Proposition \ref{prop:rev_gr}, and considering that $\eta_{i,j}$ commutes with the shifted \textit{jeu de taquin}, we may generalize the previous growth diagram for skew-shaped tableaux. We remark this generalization is not valid for $\mathsf{q}_{i,j}$, as it does not commute with the shifted \textit{jeu de taquin}.

\begin{cor}\label{cor:gr_eta_ij}
Let $T$ be a skew-shaped shifted standard tableau of shape $\lambda/\mu$ and let $1 \leq i < j \leq n$. Consider the diagram on Figure \ref{fig:gr_eta_skew}, where the segment $[pr]$ encodes $T^{1,j}$, $S$ is any standard tableau  of shape $\mu$, being encoded by $[ap]$, and the segments $[er]$ and $[es]$ coincide. Then, $\eta_{i,j}(T^{1,j})$ is encoded by segment $[ws]$.
\end{cor}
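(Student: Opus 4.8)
The plan is to read the array of Figure~\ref{fig:gr_eta_skew} as a left-to-right concatenation of three blocks that mirrors the reversal diagram of Proposition~\ref{prop:rev_gr}, with the middle evacuation block replaced by the diagram of Proposition~\ref{prop:gr_eta_ij}, and then to reduce the statement to a coplactic analogue of the identity \eqref{eq:sw_reversal}. First I would fix the auxiliary standard tableau $S$ of shape $\mu$ on $[ap]$ and observe that the leftmost block, being a shifted infusion (equivalently, by Lemma~\ref{lem:sw_inf_std}, shifted tableau switching) growth diagram on the pair $(S,T^{1,j})$, has its inner edge encoding $\mathsf{infusion}_1(S,T^{1,j})=\mathsf{rect}(T^{1,j})$ and the edge $[er]$ encoding the recording tableau $R:=\mathsf{infusion}_2(S,T^{1,j})$ of the chosen rectification sequence. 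By Proposition~\ref{prop:gr_eta_ij}, applied to the straight-shaped standard tableau $\mathsf{rect}(T^{1,j})$, the middle block turns it into $\eta_{i,j}(\mathsf{rect}(T^{1,j}))$ on the edge feeding the last block. Since $[es]=[er]$ by hypothesis, the rightmost block is a shifted infusion growth diagram on the pair $\bigl(\eta_{i,j}(\mathsf{rect}(T^{1,j})),R\bigr)$, so by the local growth rules \cite[Theorem~2.1]{TY16} the edge $[ws]$ encodes $\mathsf{infusion}_2\bigl(\eta_{i,j}(\mathsf{rect}(T^{1,j})),R\bigr)$.

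It then remains to prove that $\mathsf{infusion}_2\bigl(\eta_{i,j}(\mathsf{rect}(T^{1,j})),R\bigr)=\eta_{i,j}(T^{1,j})$, which I would establish by repeating the dual-equivalence argument of Proposition~\ref{prop:rev_sw}. Since $\eta_{i,j}(\mathsf{rect}(T^{1,j}))$ and $\mathsf{rect}(T^{1,j})$ have the same straight shape they are shifted dual equivalent; as $R$ extends both ($\eta_{i,j}$ is shape-preserving on straight shapes), Proposition~\ref{prop:sw_dual}(2) gives that $\mathsf{SW}_2\bigl(\eta_{i,j}(\mathsf{rect}(T^{1,j})),R\bigr)$ is shifted dual equivalent to $\mathsf{SW}_2\bigl(\mathsf{rect}(T^{1,j}),R\bigr)=\mathsf{infusion}_2\bigl(\mathsf{rect}(T^{1,j}),R\bigr)=T^{1,j}$, the last equality because shifted infusion is an involution \cite[Lemma~2.2]{TY16}. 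On the other hand the switching algorithm makes $\mathsf{SW}_2\bigl(\eta_{i,j}(\mathsf{rect}(T^{1,j})),R\bigr)$ shifted Knuth equivalent to $\eta_{i,j}(\mathsf{rect}(T^{1,j}))=\mathsf{rect}\bigl(\eta_{i,j}(T^{1,j})\bigr)$, the equality holding because $\eta_{i,j}$ commutes with the shifted \textit{jeu de taquin}. Thus $\mathsf{SW}_2\bigl(\eta_{i,j}(\mathsf{rect}(T^{1,j})),R\bigr)$ and $\eta_{i,j}(T^{1,j})$ have the same shape, the same rectification, and (coplactic operators preserving dual equivalence classes, cf.\ Proposition~\ref{prop:Schu}) the same shifted dual equivalence class; since a skew shifted tableau is determined by its rectification together with its dual equivalence class (equivalently, by its shifted Robinson--Schensted insertion and recording tableaux), the two coincide, which proves the identity and hence that $[ws]$ encodes $\eta_{i,j}(T^{1,j})$. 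For a general skew $T$ one then appends $T^{j+1,n}$ unchanged, using $\eta_{i,j}(T)=\eta_{i,j}(T^{1,j})\sqcup T^{j+1,n}$ from Definition~\ref{def:schu_ij}.

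The step I expect to cost the most care — essentially the only genuinely new input beyond the bookkeeping already present in Proposition~\ref{prop:rev_gr} — is verifying that $\eta_{i,j}(T^{1,j})$ really carries the three invariants used above (its shape, its rectification $\eta_{i,j}(\mathsf{rect}(T^{1,j}))$, and the shifted dual equivalence class of $T^{1,j}$); all three follow from coplacticity of $\eta_{i,j}$, i.e.\ from its commuting with every sequence of inner and outer shifted \textit{jeu de taquin} slides. I would also record, exactly as for Proposition~\ref{prop:gr_eta_ij}, that the output is independent of the standard tableau $S$ placed on $[ap]$: this is because $\mathsf{rect}(T^{1,j})=\mathsf{infusion}_1(S,T^{1,j})$ does not depend on the rectification sequence, and the remaining blocks feed only on $\mathsf{rect}(T^{1,j})$ and the recording $R$, which by the identity above determine $\eta_{i,j}(T^{1,j})$ regardless of $S$.
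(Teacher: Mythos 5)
Your proposal is correct and follows essentially the same route as the paper: the same block decomposition of Figure \ref{fig:gr_eta_skew} into an infusion diagram on $(S,T^{1,j})$, the diagram of Proposition \ref{prop:gr_eta_ij} applied to $\mathsf{rect}(T^{1,j})$, and a second infusion diagram, reducing everything to the identity $\mathsf{infusion}_2\bigl(\eta_{i,j}(\mathsf{rect}(T^{1,j})),\mathsf{infusion}_2(S,T^{1,j})\bigr)=\eta_{i,j}(T^{1,j})$. The only divergence is in how that identity is closed: the paper uses Proposition \ref{prop:sw_dual}(1) to replace $\mathsf{infusion}_2(S,T^{1,j})$ by $\mathsf{infusion}_2(S,\eta_{i,j}(T^{1,j}))$ and then invokes that infusion is an involution, whereas you use Proposition \ref{prop:sw_dual}(2) plus Haiman's uniqueness (same Knuth class, same dual equivalence class), exactly as in the proof of Proposition \ref{prop:rev_sw} — both are valid and rest on the same ingredients.
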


\begin{proof}
Since $[pr] = T^{1,j}$ and $[ap]=S$, then 
\begin{equation}
\begin{split}
[fr] &= \mathsf{infusion}_2(S,T^{1,j})\\
[af] &= \mathsf{infusion}_1(S,T^{1,j}) = \mathsf{rect}(T^{1,j}) = (\mathsf{rect}(T))^{1,j}. 
\end{split}
\end{equation}
By Proposition \ref{prop:gr_eta_ij}, the segment $[dg]$ encodes $\eta_{i,j} ((\mathsf{rect}(T))^{1,j})$. By construction, $[er]=[es]$, and thus $[gs] = [fr] = \mathsf{infusion}_2(S,T^{1,j})$. Then, considering the shifted infusion growth diagram on inputs $[ap]$ and $[pr]$,
\begin{equation}\label{eq:ws_dw}
\begin{split}
[ws] &= \mathsf{infusion}_2 (\eta_{i,j} ((\mathsf{rect}(T))^{1,j}), \mathsf{infusion}_2(S,T^{1,j}))\\
[dw] &= \mathsf{infusion}_1 (\eta_{i,j} ((\mathsf{rect}(T))^{1,j}), \mathsf{infusion}_2(S,T^{1,j}))
\end{split}
\end{equation} 
Since $\eta_{i,j}$ commutes with the shifted \textit{jeu de taquin}, in particular we have
\begin{equation}\label{eq:eta_rect_com}
\eta_{i,j} ((\mathsf{rect}(T))^{1,j}) = \eta_{i,j} (\mathsf{rect}(T^{1,j})) = \mathsf{rect}( \eta_{i,j} (T^{1,j}))
\end{equation}
Moreover, the operator $\eta_{i,j}$ preserves shifted dual equivalence, and thus $T^{1,j}$ and $\eta_{i,j}(T^{1,j})$ are in the same shifted dual equivalence class. Then, by Proposition \ref{prop:sw_dual},
\begin{equation}\label{eq:infu_dual_S}
\mathsf{infusion}_2 (S,T^{1,j}) = \mathsf{infusion}_2 (S, \eta_{i,j} (T^{1,j})).
\end{equation}

Then, by \eqref{eq:ws_dw}, \eqref{eq:eta_rect_com} and \eqref{eq:infu_dual_S}, and since $\mathsf{infusion}$ is an involution, we have
\begin{align*}
[ws] &= \mathsf{infusion}_2 (\eta_{i,j} ((\mathsf{rect}(T))^{1,j}), \mathsf{infusion}_2(S,T^{1,j}))\\
&= \mathsf{infusion}_2 (\eta_{i,j} ((\mathsf{rect}(T))^{1,j}), \mathsf{infusion}_2 (S, \eta_{i,j} (T^{1,j})))\\
&= \mathsf{infusion}_2 (\mathsf{rect}( \eta_{i,j} (T^{1,j})), \mathsf{infusion}_2 (S, \eta_{i,j} (T^{1,j})))\\
&= \mathsf{infusion}_2 (\mathsf{infusion}_1(S, \eta_{i,j} (T^{1,j})), \mathsf{infusion}_2 (S, \eta_{i,j} (T^{1,j})))\\
&= \eta_{i,j} (T^{i,j}).
\end{align*}
\end{proof}

\begin{figure}
\includegraphics[scale=0.5]{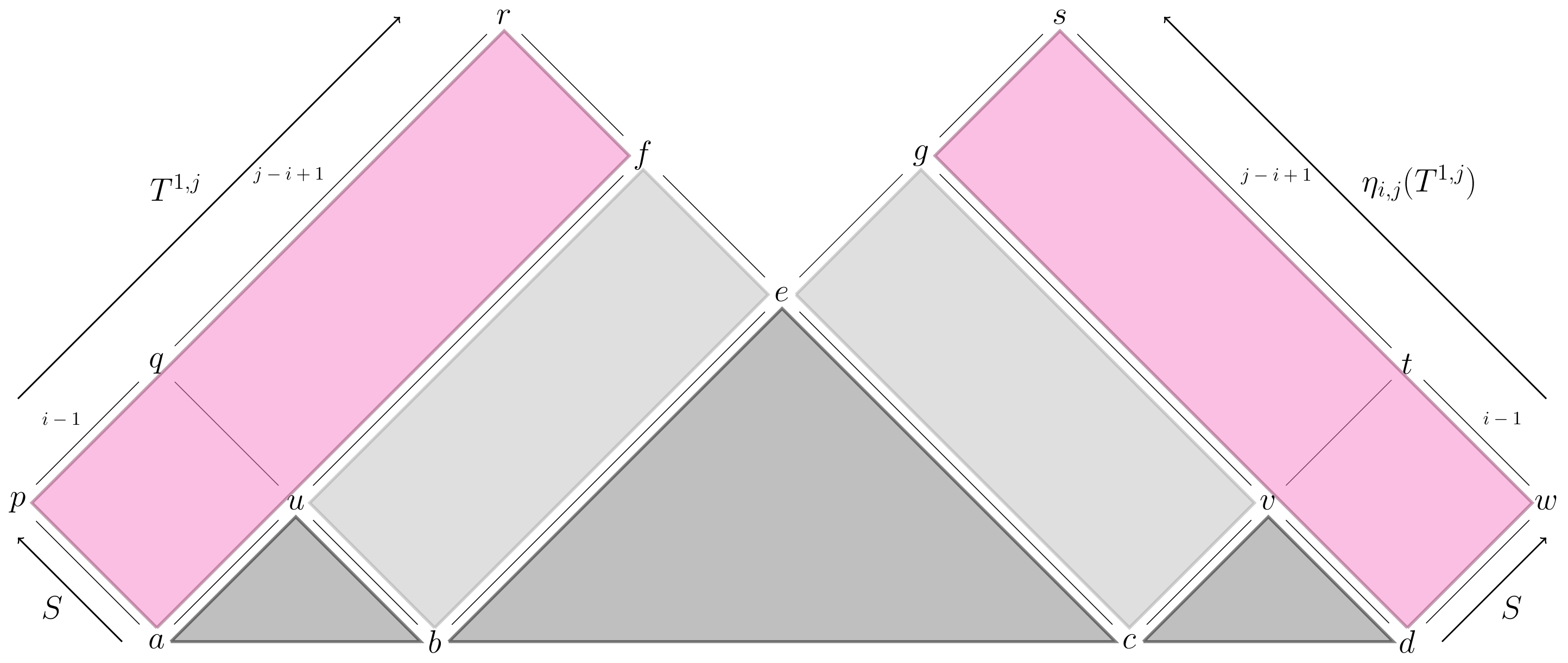}
\caption{A growth diagram to compute $\eta_{i,j}$ on shifted standard tableaux of shape $\lambda/\mu$, with $S$ being any standard tableau of shape $\mu$. By construction, $[er]=[es]$. A diagram to compute $\mathsf{q}_{i,j}$ on straight-shaped shifted standard tableaux is obtained by removing the pink sections.}
\label{fig:gr_eta_skew}
\end{figure}

As before, the growth diagrams for $\eta_{i,j}$ may me used on a shifted semistandard tableau $T$, with weight $\nu$. Since we have
\begin{equation}\label{eq:eta_std}
\mathsf{std} (\eta_{i,j} (T)) = \eta_{k,l} (\mathsf{std}(T)),
\end{equation}
where $k := \min \mathcal{P}_i (\nu)$ and $l := \max \mathcal{P}_j (\nu)$, we may standardize $T$, apply $\eta_{k,l}$, and then apply the semistandardization (see Definition \ref{def:sstd}) with respect to $\nu'$ to the obtained tableau, with $\nu' = \theta_{i,j} (\nu)$, that is,
\begin{equation}\label{eq:eta_std_sstd}
\eta_{i,j} (T) = \mathsf{sstd}_{\nu'} \big( \eta_{k,l} (\mathsf{std} (T)) \big).
\end{equation}

\begin{ex}
Consider the following shifted semistandard tableau of weight $ \nu = (2,2,3)$,
$$T=\begin{ytableau}
{} & {} & 1 & 2\\
\none & 1 & 2 & 3'\\
\none & \none & 3 & 3
\end{ytableau}.$$

To compute $\eta_{2,3} (T)$, we use the growth diagram in Figure \ref{fig:gr_eta_37} on the standardization of $T$, followed by rectification, using the rectification sequence encoded by $S= \begin{ytableau}
*(lblue)1 & *(lblue)2
\end{ytableau}$.

$$(S,T)=\begin{ytableau}
*(lblue)1 & *(lblue)2 & 1 & 2\\
\none & 1 & 2 & 3'\\
\none & \none & 3 & 3
\end{ytableau} \overset{\mathsf{std}}\longrightarrow
\begin{ytableau}
*(lblue)1 & *(lblue)2 & 2 & 4\\
\none & 1 & 3 & 5\\
\none & \none & 6 & 7
\end{ytableau} \overset{\mathsf{infusion}}\longrightarrow
\begin{ytableau}
1 & 2 & 4 & 7\\
\none & 3 & 5 & *(lblue)1\\
\none & \none & 6 & *(lblue)2
\end{ytableau} = (\mathsf{rect}( \mathsf{std} (T)), S')$$
where $S' := \mathsf{infusion}_2 (S,T)$. In the Figure \ref{fig:gr_eta_skew}, $\mathsf{rect}( \mathsf{std} (T))$ corresponds to the segment $[af]$ and $S'$ to $[fr]$. Then, by \eqref{eq:cal_p_nu}, we have
$$\mathcal{P}_2 (\nu) = \{3,4\} \qquad \mathcal{P}_3 (\nu) = \{5,6,7\}.$$
Thus, to obtain $\eta_{2,3} (T)$, we must apply $\eta_{3,7}$ to  $\mathsf{rect}(\mathsf{std}(T))$. Note that the $\eta_{3,7} \big( \mathsf{rect} (\mathsf{std} (T)) \big)$ is encoded in the segment corresponding to $[dg]$ in Figure \ref{fig:gr_eta_37}.

$$\mathsf{rect}( \mathsf{std} (T)) \begin{ytableau}
1 & 2 & 4 & 7\\
\none & 3 & 5\\
\none & \none & 6
\end{ytableau} \overset{\eta_{3,7}}\longrightarrow 
\begin{ytableau}
1 & 2 & 3 & 6\\
\none & 4 & 5\\
\none & \none & 7
\end{ytableau} = \eta_{3,7} \big( \mathsf{rect} (\mathsf{std} (T)) \big) =: T'.$$
Then, we apply the shifted infusion growth diagram (the rightmost pink region, in Figure \ref{fig:gr_eta_37}), to recover the skew shape before the rectification:
$$(T',S') = \begin{ytableau}
1 & 2 & 3 & 6\\
\none & 4 & 5 & *(lblue)1\\
\none & \none & 7 & *(lblue)2
\end{ytableau} \overset{\mathsf{infusion}}\longrightarrow
\begin{ytableau}
*(lblue)1 & *(lblue)2 & 2 & 3\\
\none & 1 & 4 & 6\\
\none & \none & 5 & 7
\end{ytableau} = \mathsf{infusion}(T',S').$$
This corresponds to the tableau of the segment $[ws]$.
Finally, we apply the semistandardization with respect to $\nu'$, where $\nu'= \theta_{2,3} (2,2,3) = (2,3,2)$:
$$ \mathsf{infusion}_2(T',S') = \begin{ytableau}
{} & {} & 2 & 3\\
\none & 1 & 4 & 6\\
\none & \none & 5 & 7
\end{ytableau} \overset{\mathsf{sstd}_{\nu'}}\longrightarrow
\begin{ytableau}
{} & {} & 1 & 2'\\
\none & 1 & 2' & 3'\\
\none & \none & 2 & 3
\end{ytableau} = \eta_{2,3} (T). 
$$
\end{ex}

\begin{figure}[H]
\includegraphics[scale=0.5]{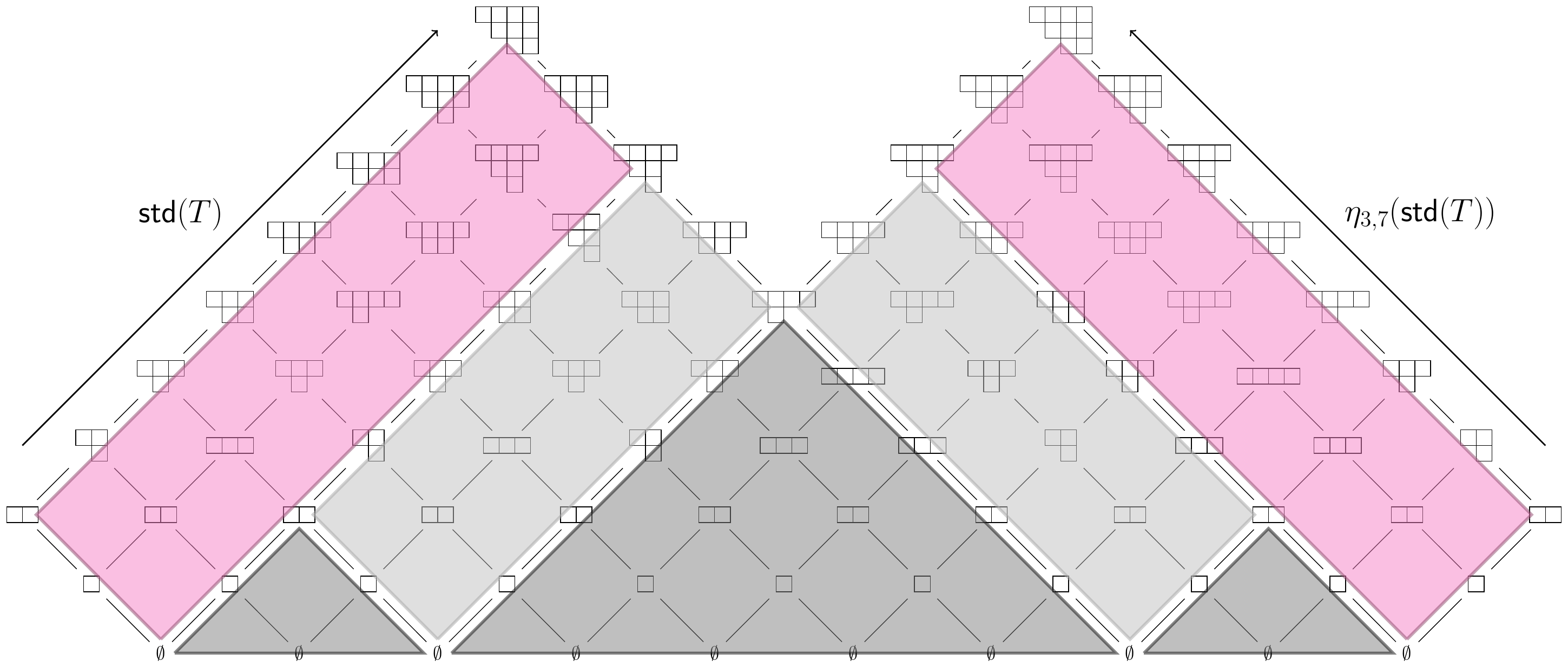}
\caption{A growth diagram to compute $\eta_{3,7}$ on a skew-shaped tableau. A diagram to compute $\mathsf{q}_{3,7}$ on straight-shaped shifted standard tableaux is obtained by removing the pink sections.}
\label{fig:gr_eta_37}
\end{figure}

The shifted growth diagrams may be used to obtain an alternative proof to Theorem \ref{thm:cactusaction}, which then implies Theorem \ref{teo:cact_evaci} and Theorem \ref{teo:cact_sbk}, similarly to the one presented by Chmutov, Glick and Pylyavskyy \cite[Theorem 1.4]{CGP16}. The proof is done for shifted standard tableaux, and may be generalized for the semistandard case using \eqref{eq:eta_std}. More precisely, we will consider the diagram in Figure \ref{fig:gr_cactus_geral}, to prove that the partial Schützenberger involutions satisfy the third cactus relation (recall Definition \ref{def:cactus}),
$$\eta_{i,j} \eta_{k,l} = \eta_{i+j-l,i+j-k} \eta_{i,j}, \quad \text{for}\, [k,l] \subseteq [i,j],$$
when acting on shifted standard tableaux.

\begin{figure}
\includegraphics[scale=0.6]{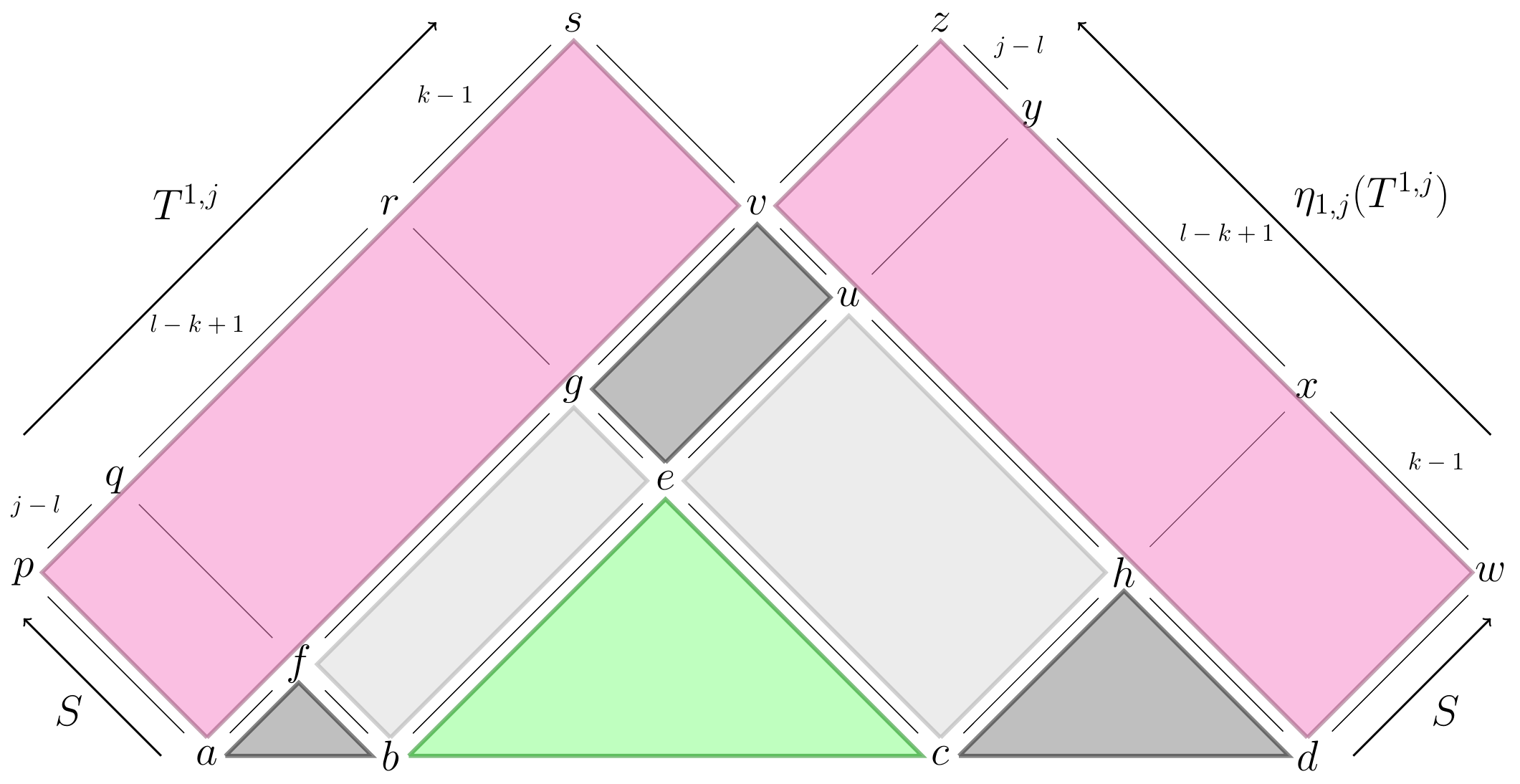}
\caption{A growth diagram with input $T^{1,j}$, a shifted standard tableau of shape $\lambda/\mu$, which is encoded on the segment $[ps]$, and having $\eta_{1,j}(T^{1,j})$ encoded on segment $[wz]$, with $S$ being any standard tableau of shape $\mu$. By construction, $[vs]=[vz]$. The corresponding diagram with primed vertices has $\eta_{1,j} \eta_{k,l} \eta_{1,j} (T^{1,j})$ on the segment $[p's']$ and $\eta_{k,l} \eta_{1,j} (T^{1,j})$ on $[w'z']$. }
\label{fig:gr_cactus_geral}
\end{figure}

\begin{proof}[Proof of Theorem \ref{thm:cactusaction} {\cite[Theorem 5.7]{Ro20b}}]
The relations $\eta_{i,j}^2 = 1$ and $\eta_{i,j} \eta_{k,l} = \eta_{k,l} \eta_{i,j}$, for $[k,l] \cap [i,j] = \emptyset$, are trivial, thus it remains to show that $\eta_{i,j} \eta_{k,l} = \eta_{i+j-l,i+j-k} \eta_{i,j}$, for $[k,l] \subseteq [i,j]$. We claim that it suffices to do that for $i=1$. Indeed, assume that, for any $[k,l] \subset [1,j]$ we have
\begin{equation}\label{eq:cact_rel_claim}
\eta_{1,j} \eta_{k,l} = \eta_{j-l+1,j-k+1} \eta_{1,j}.
\end{equation}
We show that \eqref{eq:cact_rel_claim} implies the third cactus group relation. Indeed, we have $[i,j] \subseteq [1,j]$, and thus \eqref{eq:cact_rel_claim} ensures that
\begin{equation}\label{eq:3cact_1}
\eta_{1,j} \eta_{i,j} = \eta_{1,j-i+1} \eta_{1,j}.
\end{equation}
Moreover, $[k,l] \subseteq [i,j]$ implies that $[k-i+1,l-i+1] \subseteq [1,j-i+1]$ and thus, by \eqref{eq:cact_rel_claim},
\begin{equation}\label{eq:3cact_2}
\eta_{1,j-i+1} \eta_{k-i+1,l-i+1} = \eta_{j-l+1,j-k+1} \eta_{1,j-i+1}.
\end{equation} 
Similarly, since $[j+i-l,j+i-k] \subseteq [1,j]$, we have
\begin{equation}\label{eq:3cact_3}
\eta_{1,j} \eta_{j+i-l,j+i-k} = \eta_{k-i+1,l-i+1} \eta_{1,j}.
\end{equation}
Then, using the fact that $\eta_{i,j}$ is an involution, we have, for any $[i,j]$,
\begin{align*}
\eta_{i,j} \eta_{k,l} &= \eta_{1,j} \eta_{1,j-i+1} \eta_{1,j} \eta_{1,j} \eta_{j-l+1,j-k+1} \eta_{1,j} & \text{by \eqref{eq:cact_rel_claim} and \eqref{eq:3cact_1}} \\
&= \eta_{1,j} \eta_{1,j-i+1} \eta_{j-l+1,j-k+1} \eta_{1,j}\\
&= \eta_{1,j} \eta_{1,j-i+1} (\eta_{1,j-i+1} \eta_{k-i+1,l-i+1} \eta_{1,j-i+1}) \eta_{1,j} & \text{by \eqref{eq:3cact_2}}\\
&= \eta_{1,j} \eta_{k-i+1,l-i+1} \eta_{1,j-i+1} \eta_{1,j}\\
&= \eta_{1,j} (\eta_{1,j} \eta_{j+i-l, j+i-k} \eta_{1,j}) \eta_{1,j-i+1} \eta_{1,j} & \text{by \eqref{eq:3cact_3}}\\
&= \eta_{j+i-l,j+i-k} \eta_{i,j}  & \text{by \eqref{eq:3cact_1}}
\end{align*}

We will now prove \eqref{eq:cact_rel_claim}, using growth diagrams. Let $T \in \mathsf{ShST}(\lambda/\mu,n)$ be a standard tableau and consider the diagram in Figure \ref{fig:gr_cactus_geral}, where the segment $[ap]$ encodes a fixed standard tableau $S$ of shape $\mu$,  $[ps]$ encodes $T^{1,j}$, $[av]$ encodes $\mathsf{rect}(T^{1,j}) = (\mathsf{rect}(T))^{1,j}$, $[dv]$ encodes $\eta_{1,j} (\mathsf{rect} (T^{1,j}))$ and $[wz]$ encodes $\eta_{1,j} (T^{1,j})$. Consider also another growth diagram similar to this one, with the vertices labelled as $\{a', b', c', \ldots\}$, with the segment $[a'p']$ encoding the same $S$ as before,  $[p's']$ encoding $T' := \eta_{1,j} \eta_{k,l} \eta_{1,j} (T^{1,j})$ and $[w'z']$ encoding $\eta_{1,j} (T') = \eta_{k,l} \eta_{i,j} (T^{1,j})$. The proof then mimics the one in \cite[Theorem 1.4]{CGP16}.
Since $[ps]$ and $[p's']$ encode $T^{1,j}$ and $\eta_{1,j} \eta_{k,l} \eta_{1,j} (T^{1,j})$, respectively, we have
\begin{equation}\label{eq:gr_cactus_1}
\begin{split}
[av] &= \mathsf{rect} (T^{1,j})\\
[a'v'] &= \mathsf{rect} (\eta_{1,j} \eta_{k,l} \eta_{1,j} (T^{1,j})).
\end{split}
\end{equation}
Taking the shifted evacuation growth diagrams, for $\eta_{1,j}$, with the inputs in \eqref{eq:gr_cactus_1}, which correspond to $\eta_{1,j}$, and considering that the operators $\eta_{i,j}$ are coplactic, we have
\begin{equation}\label{eq:gr_cactus_2}
\begin{split}
[dv] &= \eta_{1,j} (\mathsf{rect}(T^{1,j}))\\
[d'v'] &= \eta_{1,j} (\mathsf{rect} (\eta_{1,j} \eta_{k,l} \eta_{1,j} (T^{1,j}))) = \eta_{k,l} \eta_{1,j} (\mathsf{rect}(T^{1,j})).
\end{split}
\end{equation}
Thus, in particular, $[d'v'] = \eta_{k,l} [dv]$. Since $[dv] = [dh] \sqcup [hu] \sqcup [uv]$ and $[d'v'] = [d'h'] \sqcup [h'u'] \sqcup [u'v']$, by definition of $\eta_{k,l}$ we have
$$\eta_{k,l} ([dv]) = [dh] \sqcup \eta ([hu]) \sqcup [uv] = [d'v'],$$
and consequently
\begin{equation}\label{eq:gr_cactus_3}
\begin{split}
[dh] &= [d'h'], [uv] = [u'v'],\\
[hu] &= \eta ([h'u'])
\end{split}
\end{equation}  
Since $[dh] = [d'h']$, taking the shifted evacuation growth diagrams on those inputs yield
\begin{equation}\label{eq:gr_cactus_4}
[ch] = [c'h'].
\end{equation}
From \eqref{eq:gr_cactus_3} and \eqref{eq:gr_cactus_4}, considering shifted infusion growth diagrams, we have
$$[ce] = \mathsf{infusion}_1 ([ch],[hu]) = \mathsf{infusion}_1 ([c'h'], \eta ([h'u']))$$
and by Corollary \ref{cor:sw_rev_com}, 
$$\mathsf{infusion}_1 ([c'h'], \eta ([h'u'])) = \eta (\mathsf{infusion}_1 ([c'h'], [h'u'])) = \eta ([c'e'])
$$
and thus
\begin{equation}\label{eq:gr_cactus_5}
[ce] = \eta([c'e']).
\end{equation}
Considering the same shifted infusion growth diagrams, we have
$$[eu] = \mathsf{infusion}_2 ([ch],[hu]) = \mathsf{infusion}_2 ([c'h'], \eta([h'u'])),$$ 
and by Proposition \ref{prop:sw_dual}, as $[h'u']$ is shifted dual equivalent to $\eta ([h'u'])$, we have
$$ 
\mathsf{infusion}_2 ([c'h'], \eta([h'u'])) = \mathsf{infusion}_2 ([c'h'], [h'u']) = [e'u'],
$$
and thus
\begin{equation}\label{eq:gr_cactus_6}
[eu]=[e'u'].
\end{equation}
Considering now the shifted infusion growth diagrams on inputs $[eu]$ and $[uv]$, and on inputs $[e'u']$ and $[u'v']$, respectively, from \eqref{eq:gr_cactus_3} and \eqref{eq:gr_cactus_6}, we have
\begin{equation}\label{eq:gr_cactus_7}
[eg]=[e'g'], [gv]=[g'v'].
\end{equation}
Then, considering the shifted evacuation growth diagrams, on inputs $[be]$ and $[b'e']$, respectively, we have, from \eqref{eq:gr_cactus_5},
$$\eta ([be]) = [ce] = \eta ([c'e']) = [b'e'],$$
and thus
\begin{equation}\label{eq:gr_cactus_8}
\eta([be]) = [b'e'].
\end{equation}
We now consider the shifted infusion growth diagrams on inputs $[be]$ and $[eg]$, and on inputs $[b'e']$ and $[e'g']$, respectively. Then, by Proposition \ref{prop:sw_dual}, since $\eta([be])$ is shifted dual equivalent to $[be]$, we have
$$[bf] = \mathsf{infusion}_1 ([be],[eg]) = \mathsf{infusion}_1 (\eta ([be]),[eg]),$$
and by \eqref{eq:gr_cactus_7} and \eqref{eq:gr_cactus_8},
$$\mathsf{infusion}_1 (\eta ([be]),[eg]) =  \mathsf{infusion}_1 ([b'e'],[e'g']) = [b'f'],$$
and consequently
\begin{equation}\label{eq:gr_cactus_9}
[bf] = [b'f'].
\end{equation}
Finally, taking the shifted evacuation growth diagram with inputs in \eqref{eq:gr_cactus_9}, we get
\begin{equation}\label{eq:gr_cactus_10}
[af] = [a'f'].
\end{equation}
By \eqref{eq:gr_cactus_7} and \eqref{eq:gr_cactus_9}, we have $[gv] = [g'v']$ and $[af]=[a'f']$. Thus, $\mathsf{rect}(T^{1,j})$ agrees with $\mathsf{rect}(\eta_{1,j} \eta_{k,l} \eta_{1,j} (T^{1,j}))$ on the letters outside of $[j-l+1,j-k+1]$ and may differ on the segments $[fg]$ and $[f'g']$. Considering the shifted infusion diagram on inputs $[be]$ and $[eg]$, and on inputs $[b'e']$ and $[e'g']$, respectively, by \eqref{eq:gr_cactus_7} and \eqref{eq:gr_cactus_8}, we have
$$[f'g'] = \mathsf{infusion}_2 ([b'e'], [e'g'])  =  \mathsf{infusion}_2 (\eta ([be]), [eg]),$$
and by Corollary \ref{cor:sw_rev_com}, we have
$$\mathsf{infusion}_2 (\eta ([be]), [eg]) = \eta (\mathsf{infusion}_2 ([be],[eg])) = \eta ([fg]),$$
and thus,
\begin{equation}\label{eq:gr_cactus_10a}
\eta([fg]) = [f'g'].
\end{equation}
Then, from the definition of $\eta_{j-l+1,j-k+1}$ and the fact that it is coplactic, we have
\begin{align*}
\mathsf{rect}(\eta_{1,j} \eta_{k,l} \eta_{1,j} (T^{1,j})) &= [a'v'] = [a'f'] \sqcup [f'g'] \sqcup [g'v']\\
&= [af] \sqcup \eta([fg]) \sqcup [gv] &\text{by \eqref{eq:gr_cactus_7}, \eqref{eq:gr_cactus_10} and \eqref{eq:gr_cactus_10a}}\\
&= \eta_{j-l+1,j-k+1} ([av])\\
&= \eta_{j-l+1,j-k+1} (\mathsf{rect} (T^{1,j}))\\
&= \mathsf{rect} (\eta_{j-l+1,j-k+1} (T^{1,j})).
\end{align*}
It remains to show that the segments $[ps]$ and $[p's']$ differ only on $[qr]$ and $[q'r']$, and that $\eta ([qr]) = [q'r']$. We have $[pq] = T^{1,j-l}$. By the definition $\eta_{1,j} \eta_{k,l} \eta_{1,j}$ and since $j-l \leq j$, we have
\begin{align*}
[p'q'] &= (\eta_{1,j} \eta_{k,l} \eta_{1,j} (T^{1,j}))^{1,j-l}\\
&= \eta_{1,j} \eta_{k,l} \eta_{1,j} (T^{1,j-l})\\
&= \eta_{1,j} \eta_{k,l} \eta_{1,j} ([pq]).
\end{align*}
We recall that, by construction, $[ap] = [a'p'] = S$. Since the partial Schützenberger involutions preserve shifted dual equivalence, $[pq]$ is shifted dual equivalent to $[p'q']$, and thus, by Proposition \ref{prop:sw_dual}, we have
$$
[fq] = \mathsf{infusion}_2 ([ap],[pq]) = \mathsf{infusion}_2 ([a'p'],[p'q']) = [f'q'],
$$
that is,
\begin{equation}\label{eq:gr_cactus_11}
[fq] = [f'q'].
\end{equation}
Then, by \eqref{eq:gr_cactus_10} and \eqref{eq:gr_cactus_11},
$$
[pq] = \mathsf{infusion}_2 ([af],[fq]) = \mathsf{infusion}_2 ([a'f'],[f'q']) = [p'q'],
$$
and thus
\begin{equation}\label{eq:gr_cactus_12}
[pq] = [p'q'].
\end{equation}
Since $[p's'] = \eta_{1,j} \eta_{k,l} \eta_{1,j} (T^{1,j}) = \eta_{1,j} \eta_{k,l} \eta_{1,j} ([ps])$, and the partial Schützenberger involutions preserve shifted dual equivalence, then $[ps]$ is shifted dual equivalent to $[p's']$. Then, Proposition \ref{prop:sw_dual} and the fact that $[ap] = [a'p']$ ensure that
$$
[vs] = \mathsf{infusion}_2 ([ap],[ps])= \mathsf{infusion}_2 ([a'p'],[p's']) = [v's'],
$$
that is,
\begin{equation}\label{eq:gr_cactus_13}
[vs] = [v's'].
\end{equation}
Then, by \eqref{eq:gr_cactus_7} and \eqref{eq:gr_cactus_13},
$$
[rs] = \mathsf{infusion}_2 ([gv],[vs])= \mathsf{infusion}_2 ([g'v'],[v's']) = [r's']
$$
and then,
\begin{equation}\label{eq:gr_cactus_14}
[rs] = [r's'].
\end{equation}
From \eqref{eq:gr_cactus_7} and \eqref{eq:gr_cactus_13} we also conclude that
$$
[gr]  = \mathsf{infusion}_1 ([gv],[vs])= \mathsf{infusion}_1 ([g'v'],[v's']) = [g'r'] 
$$
and thus, by \eqref{eq:gr_cactus_7}, we have
\begin{equation}\label{eq:gr_cactus_15}
[er] = [eq] \sqcup [gr] = [e'q'] \sqcup [g'r'] = [e'r'].
\end{equation}
Then, by \eqref{eq:gr_cactus_8} and \eqref{eq:gr_cactus_15}, we have
$$[q'r'] =  \mathsf{infusion}_2 ([b'e'],[e'r']) = \mathsf{infusion}_2 (\eta([be]), [er])$$
 and by Corollary \ref{cor:sw_rev_com},
$$\mathsf{infusion}_2 (\eta([be]), [er]) = \eta (\mathsf{infusion}_2 ([be], [er])) = \eta ([qr]), $$ 
and then
\begin{equation}\label{eq:gr_cactus_16}
\eta([qr]) = [q'r'].
\end{equation}
To conclude the proof, we remark that by the definition of $\eta_{j-l+1,j-k+1}$, we have
\begin{align*}
\eta_{1,j} \eta_{k,l} \eta_{1,j} (T^{1,j}) &= [p's'] = [p'q'] \sqcup [q'r'] \sqcup [r's']\\
&= [pq] \sqcup \eta([qr]) \sqcup [rs] & \text{by \eqref{eq:gr_cactus_12}, \eqref{eq:gr_cactus_14} and \eqref{eq:gr_cactus_16}}\\
&= \eta_{j-l+1,j-k+1} ([ps])\\
&= \eta_{j-l+1,j-k+1} (T^{1,j}).
\end{align*}
\end{proof}

\section*{Acknowledgements}
The author is grateful to her supervisors O. Azenhas and M. M. Torres, and acknowledges the hospitality of the Department of Mathematics of University of Coimbra. 
Thanks also to Seung-Il Choi and Thomas Lam for pointing to O. Azenhas the references  \cite{CNO17,CNOrev} and \cite{CGP16}, respectively, during the FPSAC18 and the 82nd Séminaire Lotharingien de Combinatoire. 

The author is partially supported by the LisMath PhD program (funded by the Portuguese Science Foundation). This research was made within the activities of the Group for Linear, Algebraic and Combinatorial Structures of the Center for Functional Analysis, Linear Structures and Applications (University of Lisbon), and was partially supported by the Portuguese Science Foundation, under the project UIDB/04721/2020.

\bibliographystyle{siam}
\bibliography{bibliography}

\end{document}